  \newcommand{\tikztableau}[2][scale=0.6,every node/.style={font=\small}]{
      \def\newtableau{#2}
      \begin{array}{c}
      \begin{tikzpicture}[#1]
      \coordinate (x) at (-0.5,0.5);
      \coordinate (y) at (-0.5,0.5);
      \foreach \row in \newtableau {
          \coordinate (x) at ($(x)-(0,1)$);
          \coordinate (y) at (x);
          \foreach \entry in \row {
              \node (y) at ($(y) + (1,0)$) {\entry};
              \draw ($(y)-(0.5,0.5)$) rectangle +(1,1);
              }
          }
      \end{tikzpicture}
      \end{array}}
  \newcommand{\tikztableausmall}[1]{\tikztableau[scale=0.45,every node/.style={font=\small}]{#1}}
  \CheckCommand*\refstepcounter[1]{\stepcounter{#1}%
      \protected@edef\@currentlabel
       {\csname p@#1\endcsname\csname the#1\endcsname}%
  }
  \renewcommand*\refstepcounter[1]{\stepcounter{#1}%
    \protected@edef\@currentlabel
      {\csname p@#1\expandafter\endcsname\csname the#1\endcsname}%
  }
  \def\labelformat#1{\expandafter\def\csname p@#1\endcsname##1}
  \DeclareRobustCommand\Ref[1]{\protected@edef\@tempa{\ref{#1}}%
     \expandafter\MakeUppercase\@tempa
  }
  \newcommand{\numberlike}[2]{%
     \expandafter\def\csname c@#1\endcsname{%
         \expandafter\csname c@#2\endcsname}%
  }
  \def\DefaultNumberTheoremWithin{section}
  \theoremstyle{plain}
  \newtheorem{Lemma}{Lemma}
     \numberwithin{Lemma}{\DefaultNumberTheoremWithin}
     \numberwithin{Claim}{\DefaultNumberTheoremWithin}
  \newtheorem{Theorem}{Theorem}
     \numberwithin{Theorem}{\DefaultNumberTheoremWithin}
  \newtheorem{Corollary}{Corollary}
     \numberwithin{Corollary}{\DefaultNumberTheoremWithin}
  \newtheorem{Proposition}{Proposition}
     \numberwithin{Proposition}{\DefaultNumberTheoremWithin}
  \newtheorem{Conjecture}{Conjecture}
     \numberwithin{Conjecture}{\DefaultNumberTheoremWithin}
  \theoremstyle{definition}
  \newtheorem{Definition}{Definition}
     \numberwithin{Definition}{\DefaultNumberTheoremWithin}
  \theoremstyle{definition}
  \newtheorem{Question}{Question}
     \numberwithin{Question}{\DefaultNumberTheoremWithin}
  \theoremstyle{definition}
  \newtheorem{Problem}{Problem}
     \numberwithin{Problem}{\DefaultNumberTheoremWithin}
  \theoremstyle{remark}
  \newtheorem{Remark}{Remark}
     \numberwithin{Remark}{\DefaultNumberTheoremWithin}
  \theoremstyle{remark}
  \newtheorem{Example}{Example}
     \numberwithin{Example}{\DefaultNumberTheoremWithin}
     \numberwithin{Case}{Lemma}
     \numberwithin{Step}{Lemma}
  \def\eqref{\ref}
  \newcommand{\mb}{\mathbb}
  \newcommand{\mc}{\mathcal}
  \newcommand{\mf}{\mathfrak}
  \def\binomial(#1,#2){{#1\choose #2}}
\def\delplus(#1,#2,#3){\ensuremath{\pi}_{#1,#2,#3}}
\def\delminus(#1,#2){\ensuremath{\operatorname{\partial}}_{#1,#2}}
  \def\redword(#1){{\ensuremath{\tilde{#1}}}}
  \newcommand{\spi}{{\frac{\pi}{3}}}
  \newcommand{\npi}{{\hskip1pt{\mathbf\not}\hskip-0pt\frac{\pi}{3}}}
  \newcommand{\maj}{\ensuremath{\operatorname{maj}}}
  \newcommand{\inv}{\ensuremath{\operatorname{inv}}}
  \newcommand{\ninv}{\ensuremath{\operatorname{noninv}}}
  \newcommand{\oddcols}{\ensuremath{\operatorname{oddcols}}}
  \newcommand{\rev}{\ensuremath{\operatorname{rev}}}
  \newcommand{\sgn}{\ensuremath{\operatorname{sgn}}}
  \newcommand{\Mat}{\ensuremath{\operatorname{Mat}}}
  \newcommand{\Centr}{\ensuremath{\operatorname{Z}}}
  \newcommand{\Stab}{\ensuremath{\operatorname{Stab}}}
  \newcommand{\Norma}{\ensuremath{\operatorname{N}}}
  \newcommand{\chain}{{\ensuremath{\operatorname{\text{\sc c}}}}}
  \newcommand{\ReducedChainGroup}{{\ensuremath{\operatorname{\widetilde{\text{\sc C}}}}}}
  \newcommand{\Homology}{{\ensuremath{\operatorname{{\text{\sc H}}}}}}
  \newcommand{\ReducedHomology}{{\ensuremath{\operatorname{\widetilde{\text{\sc H}}}}}}
  \newcommand{\WH}{{\ensuremath{\operatorname{\text{\sc{WH}}}}}}
  \newcommand{\grid}{{\ensuremath{\operatorname{{\mathfrak 1}}}}}
  \newcommand{\semigrid}{{\ensuremath{\operatorname{{\mathfrak 1}}}}}
  \newcommand{\idem}{{\ensuremath{\operatorname{{\mathfrak e}}}}}
  \newcommand{\unitbase}{{\ensuremath{\operatorname{\text{\sc e}}}}}
  \newcommand{\matid}{{\ensuremath{\operatorname{I}}}}
  \newcommand{\matone}{{\ensuremath{\operatorname{{1 \hskip-4pt 1}}}}}
  \newcommand{\size}{\ensuremath{\operatorname{size}}}
  \newcommand{\trivial}{{\mathbf{1}}}
  \newcommand{\Fix}{\ensuremath{\operatorname{Fix}}}
  \newcommand{\Ind}{\ensuremath{\operatorname{Ind}}}
  \newcommand{\Res}{\ensuremath{\operatorname{Res}}}
  \newcommand{\Irr}{\ensuremath{\operatorname{Irr}}}
  \newcommand{\Trace}{\ensuremath{\operatorname{Trace}}}
  \newcommand{\ch}{\ensuremath{\operatorname{ch}}}
  \newcommand{\im}{\ensuremath{\operatorname{im}}}
  \newcommand{\Des}{\ensuremath{\operatorname{Des}}}
  \newcommand{\Sym}{\ensuremath{\operatorname{Sym}}}
  \newcommand{\Lie}{\ensuremath{\operatorname{Lie}}}
  \newcommand{\Lin}{\ensuremath{\operatorname{Lin}}}
  \newcommand{\shape}{\ensuremath{\operatorname{shape}}}
  \newcommand{\eig}{\ensuremath{\operatorname{eig}}}
  \newcommand{\SYT}{\ensuremath{\operatorname{SYT}}}
  \newcommand{\demote}{\ensuremath{\operatorname{demote}}}
  \newcommand{\rank}{\ensuremath{\operatorname{rank}}}
  \newcommand{\concat}{\ensuremath{\bullet}}
  \newcommand{\OS}{\ensuremath{\operatorname{OS}}}
  \newcommand{\AAA}{\mc{A}}
  \newcommand{\CCC}{\mc{C}}
  \newcommand{\FFF}{\mc{F}}
  \newcommand{\III}{\mc{I}}
  \newcommand{\LLL}{\mc{L}}
  \newcommand{\OOO}{\mc{O}}
  \newcommand{\ZZ}{\mb{Z}}
  \newcommand{\QQ}{\mb{Q}}
  \newcommand{\RR}{\mb{R}}
  \newcommand{\CC}{\mb{C}}
  \newcommand{\KK}{\mb{K}}
  \newcommand{\xx}{\mathbf{x}}
  \newcommand{\yy}{\mathbf{y}}
  \newcommand{\nicet}{\mathbf{t}}
  \newcommand{\pp}{\mathbf{p}}
  \newcommand{\frL}{\mathfrak{L}}
  \newcommand{\BHR}{{\sf BHR}\xspace}
  \newcommand{\End}{{\mathrm{End}}}
  \newcommand{\GL}{\mathrm{GL}}
  \newcommand{\oo}{\mf{o}}
  \newcommand{\Prob}{\mathcal{P}}
  \newcommand{\symm}{\mf{S}}
  \newcommand{\supp}{{\mathrm{supp}}}
  \newcommand{\deffont}{\it}
  \newcommand{\emphfont}{\sf}
  \newenvironment{note}[1][Note]
   {\bigskip\begin{center}\begin{boxedminipage}{4.5in}\setlength{\parindent}{1em}\noindent\textbf{#1. }}
   {\end{boxedminipage}\end{center}\bigskip}
\begin{document}


 \title[Spectra of symmetrized shuffling operators]{Spectra of symmetrized shuffling operators}


  \author{Victor Reiner}
     \address{School of Mathematics\\
              University of Minnesota\\
              Minneapolis MN 55455\\
              USA}
     \email{reiner@math.umn.edu}

  \author{Franco Saliola}
     \address{Laboratoire de Combinatoire et d'Informatique Math\'ematique (LaCIM) \\
              Universit\'e du Qu\'ebec \`a Montr\'eal \\
              CP 8888, Succ. Centre-ville \\
              Montr\'eal (Qu\'ebec) H3C 3P8 \\
              Canada}
    \email{saliola.franco@uqam.ca}

  \author{Volkmar Welker}
     \address{Fachbereich Mathematik und Informatik\\
              Philipps-Universit\"at Marburg\\
              35032 Marburg\\
              Germany}
     \email{welker@mathematik.uni-marburg.de}

  \thanks{Work of first author supported by NSF grant DMS-0601010. 
  Work of the second author was supported by
      Agence Nationale de la Recherche (France) grant ANR-06-BLAN-0380
  and 
      the Canada Research Chair of N. Bergeron.
  The work of the third author was supported by DFG}

  \begin{abstract}
     For a finite real reflection group $W$ and a $W$-orbit 
     $\OOO$ of flats in its reflection arrangement 
     -- or equivalently a conjugacy class of its parabolic subgroups -- 
     we introduce a statistic $\ninv_\OOO(w)$ on $w$ in $W$ that counts the number of ``$\OOO$-noninversions''
     of $w$.  This generalizes the classical (non-)inversion statistic for permutations $w$
     in the symmetric group $\symm_n$.  We then study the 
     operator $\nu_\OOO$ of right-multiplication
     within the group algebra $\CC W$ by the element 
     that has $\ninv_\OOO(w)$ as its coefficient on $w$.
     
     We reinterpret $\nu_\OOO$  geometrically in terms of the arrangement of 
     reflecting hyperplanes for $W$, and more generally, for any real arrangement of linear
     hyperplanes.  At this level of generality, one finds that, after appropriate scaling, 
     $\nu_\OOO$ corresponds to a Markov chain on the chambers of the arrangement.  
     We show that $\nu_\OOO$ is self-adjoint and positive semidefinite, via 
     two explicit factorizations into a symmetrized form $\pi^t \pi$.  
     In one such factorization, the matrix $\pi$ is a generalization of the 
     projection of a simplex onto the {\it linear ordering polytope} from the 
     theory of social choice.

     In the other factorization of $\nu_\OOO$ as $\pi^t \pi$, the matrix $\pi$ 
     is the transition matrix for one of the well-studied
     {\it Bidigare-Hanlon-Rockmore random walks} on the chambers of an arrangement.  
     We study closely the example of the family of 
     operators $\{ \nu_{(k,1^{n-k})} \}_{k=1,2,\ldots,n}$,
     corresponding to the case where $\OOO$ is 
     the conjugacy classes of Young subgroups in $W=\symm_n$ of type
     $(k,1^{n-k})$.  The $k=n-1$ special case within this family is the operator
     $\nu_{(n-1,1)}$ corresponding to {\it random-to-random shuffling}, 
     factoring as $\pi^t \pi$ where $\pi$  corresponds to {\it random-to-top shuffling}.
     We show in a purely enumerative fashion that this family of 
     operators $\{ \nu_{(k,1^{n-k})} \}$
     pairwise commute.  We furthermore conjecture that they have integer spectrum,
     generalizing a conjecture of Uyemura-Reyes for the case $k=n-1$.  Although we
     do not know their complete simultaneous eigenspace decomposition, we give a coarser 
     block-diagonalization of these operators, along with explicit descriptions 
     of the $\CC W$-module structure on each block.

     We further use representation theory to show that 
     if $\OOO$ is a conjugacy class of {\it rank one} parabolics in $W$, 
     multiplication by $\nu_\OOO$ has integer spectrum;
     as a very special case, this holds for the matrix 
     $(\inv(\sigma \tau^ {-1}))_{\sigma,\tau \in \symm_n}$.
     The proof uncovers a fact of independent interest.   
     Let $W$ be an irreducible finite reflection group and $s$ any reflection in $W$,
     with reflecting hyperplane $H$. Then the $\{\pm 1\}$-valued character $\chi$ of the centralizer subgroup $Z_W(s)$
     given by its action on the line $H^\perp$ has the property that $\chi$ is 
     multiplicity-free when induced up to $W$.  In other words, $(W, Z_W(s), \chi)$ forms a
     {\it twisted Gelfand pair}.

     We also closely study the example of the family of 
     operators $\{ \nu_{(2^k,1^{n-2k})} \}_{k=0,1,2,\ldots,\lfloor \frac{n}{2} \rfloor}$
      corresponding to the case where $\OOO$ is 
     the conjugacy classes of Young subgroups in $W=\symm_n$ of type
     $(2^k,1^{n-2k})$. Here the 
     construction of a {\it Gelfand model} for $\symm_n$ shows
     both that these operators pairwise pairwise commute,
     and that they have integer spectrum.
     
     We conjecture that, apart from these two commuting families 
     $\{ \nu_{(k,1^{n-k})} \}$ and $\{ \nu_{(2^k,1^{n-2k})} \}$
     and trivial cases, 
     no other pair of operators of the form $\nu_\OOO$ commutes for $W=\symm_n$.
  \end{abstract}

  \maketitle

\section{Introduction}
           \label{sec:introduction}

  This work grew from the desire to understand why a certain family of
  combinatorial matrices were pairwise-commuting and had only integer 
  eigenvalues. We start by describing them.

  \subsection{The original family of matrices}

    The matrices are constructed from certain statistics
    on the symmetric group 
    \nomenclature[al]{$\symm_n$}{symmetric group on n $n$ letters}%
    $W=\symm_n$ on $n$ letters.
    \index{symmetric group}%
    \index{group!symmetric}%
    Given a permutation $w$ in $W$,
    define the {\deffont $k$-noninversion number}
    \index{$k$-noninversion number}%
    \footnote{The terminology comes
    from the case $k=2$, where $\ninv_2(w)$ counts the pairs $(i,j)$ with 
    $1 \leq i < j \leq n$ that index a {\deffont noninversion} in a permutation
    \index{noninversion number}%
    $w$ in $W$, meaning that $w_i < w_j$.} 
    \nomenclature[co]{$\ninv_k(w)$}{$k$-noninversion number of $w$}%
    $\ninv_k(w)$ to be the number of
    $k$-element subsets $\{i_1,\ldots,i_k\}$ 
    with $1 \leq i_1 < \cdots < i_k \leq n$ for which 
    $w_{i_1} < \cdots < w_{i_k}$.
    In the literature on {\deffont permutation patterns}, one
    \index{permutation!pattern}%
    might call $\ninv_k(w)$ the number of occurrences of the
    permutation pattern $1 2 \cdots k$.  Alternately, $\ninv_k(w)$ is
    the number of {\deffont increasing subsequences} 
    \index{increasing subsequence}%
    of length $k$ occurring in the word $w=w_1w_2\cdots w_n$. 

    From this statistic $\ninv_k(-)$ on the group $W=\symm_n$,
    create a matrix 
    \nomenclature[co]{$\nu_{(k,1^{n-k})}$}{matrix of $k$-noninversion numbers}%
    $\nu_{(k,1^{n-k})}$ in $\ZZ^{|W| \times |W|}$,
    having rows and columns indexed by the 
    permutations $w$ in $W$, and whose $(u,v)$-entry is $\ninv_k(v^{-1}u)$.
    One of the original mysteries that began this project was the
    following result, now proven in \ref{sec:original-family}.

    \begin{Theorem}
      \label{thm:original-family-commutativity}
      The operators from the family $\{\nu_{(k,1^{n-k})} \}_{k=1,2,\ldots,n}$
      pairwise commute.
    \end{Theorem}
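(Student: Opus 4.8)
The plan is to translate the statement into the group algebra $\CC\symm_n$ and then reduce it to a combinatorial identity. Identifying $\CC^{|\symm_n|}$ with $\CC\symm_n$, the matrix $\nu_{(k,1^{n-k})}$ is exactly the operator of right multiplication by $X_k:=\sum_{w\in\symm_n}\ninv_k(w)\,w$, and two such right multiplications commute as operators precisely when the corresponding elements commute in $\CC\symm_n$; so it suffices to prove that $X_kX_\ell=X_\ell X_k$ for all $k,\ell$. I would first record the elementary symmetry $\ninv_k(w)=\ninv_k(w^{-1})$: an increasing subsequence of $w$ at positions $i_1<\cdots<i_k$ with values $w(i_1)<\cdots<w(i_k)$ is carried to the same pair of subsets, with positions and values interchanged, when one passes from $w$ to $w^{-1}$. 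Hence each $X_k$ is fixed by the standard anti-involution $x\mapsto x\opp$ of $\CC\symm_n$ (the linear extension of $w\mapsto w^{-1}$), so $X_kX_\ell=X_\ell X_k$ is equivalent to $X_kX_\ell$ being $\ast$-fixed, i.e.\ to $[w]\bigl(X_kX_\ell\bigr)=[w]\bigl(X_\ell X_k\bigr)$ for every $w$. Expanding the coefficient, using $\ninv_\ell(a^{-1}w)=\ninv_\ell(w^{-1}a)$ and reindexing $a\mapsto wa$, this becomes the purely enumerative identity
\[
  \sum_{a\in\symm_n}\ninv_k(wa)\,\ninv_\ell(a)
  \;=\;
  \sum_{a\in\symm_n}\ninv_\ell(wa)\,\ninv_k(a)
  \qquad\text{for all }w\in\symm_n.
\]

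Before attacking this identity I would record a double-coset factorization of $X_k$ that clarifies its structure and, as a bonus, yields the self-adjointness and positive semidefiniteness announced in the abstract. Writing $\ninv_k(w)$ as the number of pairs $(P,V)$ of $k$-element subsets of $\{1,\dots,n\}$ on which $w$ restricts to the order-preserving bijection $P\to V$, and letting $\gamma_V\in\symm_n$ be the unique permutation that is increasing on each of $\{1,\dots,k\}$ and $\{k+1,\dots,n\}$ and satisfies $\gamma_V(\{1,\dots,k\})=V$, the permutations restricting to the order-preserving bijection $P\to V$ are exactly those in $\gamma_V\bigl(\{e\}\times\symm_{\{k+1,\dots,n\}}\bigr)\gamma_P^{-1}$. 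Summing over $(P,V)$ yields
\[
  X_k\;=\;A_k\,\eta_k\,A_k\opp,
  \qquad
  A_k:=\sum_{|V|=k}\gamma_V,
  \quad
  \eta_k:=\sum_{\sigma\in\symm_{\{k+1,\dots,n\}}}\sigma;
\]
since $\eta_k=(n-k)!\,e_k$ for an idempotent $e_k=e_k\opp$, this says $X_k=(n-k)!\,(A_ke_k)(A_ke_k)\opp$, which is manifestly self-adjoint and positive semidefinite, and it recasts commutativity as the symmetry in $k,\ell$ of $X_kX_\ell=A_k\eta_kA_k\opp A_\ell\eta_\ell A_\ell\opp$, the crux being the cross term $A_k\opp A_\ell$.

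The remaining and hardest step is the displayed identity, which I would prove by an explicit rank-swapping bijection. Its left-hand side counts triples $(a,A,B)$ with $a\in\symm_n$, $|A|=k$ and $|B|=\ell$, such that $wa$ is increasing on $A$ and $a$ is increasing on $B$; its right-hand side counts the same data with the two sizes interchanged. One must therefore produce a bijection that, given a size-$k$ increasing subset for $wa$ together with a size-$\ell$ increasing subset for $a$, returns --- for a suitably modified permutation --- a size-$\ell$ increasing subset for $wa$ and a size-$k$ increasing subset for $a$, disentangling the positions where the two subsets (and their $wa$- and $a$-images) overlap and keeping track of the freedom left in the permutation off those positions. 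I expect this reconciliation of two interleaved order-preserving partial permutations of differing ranks to be the genuine obstacle; everything before it is formal, and once the bijection is in hand the theorem follows.
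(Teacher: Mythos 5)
Your reduction is correct but it is exactly the paper's own first step, and the part you defer is the entire content of the theorem. Translating the matrices into right multiplication by $X_k=\sum_w \ninv_k(w)\,w$ and using $\ninv_k(w)=\ninv_k(w^{-1})$ to arrive at the identity
$$\sum_{a\in\symm_n}\ninv_k(wa)\,\ninv_\ell(a)\;=\;\sum_{a\in\symm_n}\ninv_\ell(wa)\,\ninv_k(a)\qquad(w\in\symm_n)$$
is the same move the paper makes when it writes $\nu_{(k,1^{n-k})}\nu_{(\ell,1^{n-\ell})}=\sum_w d^{k,\ell}_w\,w$ and sets out to show $d^{k,\ell}_w=d^{\ell,k}_w$ (your $\ast$-fixedness observation is just a coefficientwise restatement of commutativity, so it buys nothing beyond that). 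Likewise your factorization $X_k=A_k\eta_k A_k^{\ast}$ is, in substance, the square-root factorizations of \ref{prop:rectangular-square-root} and \ref{prop:second-square-root}; it gives self-adjointness and positive semidefiniteness, but, as you yourself note, it leaves the cross term untouched and contributes nothing to commutativity.

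The genuine gap is the displayed identity itself: you do not prove it, you only specify what a ``rank-swapping bijection'' would have to accomplish and assert that the theorem follows once it is in hand. That reconciliation of two interleaved increasing subsequences of different sizes is precisely where the difficulty lives, and there is no evidence it admits a direct bijection of the kind you describe. The paper's proof of $d^{k,\ell}_w=d^{\ell,k}_w$ is a multi-stage argument: it reinterprets the count via linear extensions of a poset $P_{K,w(L)}$, fixes the intersection $I=K\cap w(L)$ and union $J=K\cup w(L)$, reduces to the case $J=[n]$, passes to ``divided permutations'' and counts ``decomposition-shuffles,'' and then proceeds by induction on the size of the divided permutation with a case analysis on the entry following the largest letter in its shuffle. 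No explicit bijection is produced even there, and the authors pose \ref{prob:better-commutativity-proof} asking for a more enlightening (noninductive) proof — so the step you label as ``the genuine obstacle'' and leave open is exactly the open heart of the argument. As it stands, your proposal establishes only the formal reformulation and the (already known) semidefiniteness, not the theorem.
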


    It is not hard to see (and will be shown in 
    \ref{prop:rectangular-square-root}) 
    that one can factor each of these matrices
    $\nu_{(k,1^{n-k})} = \pi^T \pi$ for certain other integer (even $0/1$) 
    matrices $\pi$.
    Therefore, each $\nu_{(k,1^{n-k})}$ is symmetric positive semidefinite, 
    and hence diagonalizable with only real nonnegative eigenvalues.  
    \ref{thm:original-family-commutativity} asserts that they
    form a commuting family, and hence can be simultaneously diagonalized.
    The following conjecture also motivated this project, but has
    seen only partial progress here.

    \begin{Conjecture}
       \label{conj:original-family-integrality}
       The operators $\{\nu_{(k,1^{n-k})} \}_{k=1,2,\ldots,n}$ have only
       integer eigenvalues.
    \end{Conjecture}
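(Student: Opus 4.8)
Since $\nu_{(k,1^{n-k})}$ is right-multiplication on $\CC\symm_n$ by the element $z_k := \sum_{w \in \symm_n} \ninv_k(w)\, w$, the first step is to pass through the Wedderburn decomposition $\CC\symm_n \cong \bigoplus_{\lambda \vdash n} \End(V_\lambda)$. Right-multiplication by $z_k$ preserves each block $\End(V_\lambda)$ and acts there as right-multiplication by $\rho_\lambda(z_k)$; its multiset of eigenvalues is the multiset of eigenvalues of $\rho_\lambda(z_k) \in \End(V_\lambda)$, each repeated $f^\lambda = \dim V_\lambda$ times. Hence \ref{conj:original-family-integrality} is equivalent to the assertion that $\rho_\lambda(z_k)$ has integer spectrum for every $\lambda \vdash n$ and every $k$. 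By \ref{thm:original-family-commutativity} the operators $\rho_\lambda(z_1),\dots,\rho_\lambda(z_n)$ commute, and by \ref{prop:rectangular-square-root} each is self-adjoint positive semidefinite; so on each $V_\lambda$ they are simultaneously diagonalizable with real nonnegative eigenvalues, and the problem becomes: exhibit a common eigenbasis on which the eigenvalues are visibly integral.

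The concrete plan is to produce such a basis combinatorially. I would first pin down the endpoints of the family: $\ninv_1(w) = n$ for all $w$, so $z_1 = n\sum_w w$ and $\rho_\lambda(z_1)$ is the scalar $n\cdot n!$ on the trivial block and $0$ elsewhere; and $\ninv_n(w) = \delta_{w, e}$, so $z_n = e$ and $\rho_\lambda(z_n) = \mathrm{id}$. The decisive test case is $k = n-1$, \emph{random-to-random} shuffling, where the conjectured integrality is Uyemura-Reyes' conjecture; one would seek an eigenbasis of $V_\lambda$ indexed by standard Young tableaux of shape $\lambda$ (or by saturated chains in Young's lattice), with eigenvalue a manifestly integer statistic of the tableau --- a sum of contents or hook lengths, or a count of order-ideals/sub-tableaux --- and verify it against small cases. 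The hope is that the very same combinatorial indexing diagonalizes all $z_k$ at once, with the $k$-th eigenvalue an explicit integer-valued polynomial in $n$ and the tableau data; the block-diagonalization already established in the paper, together with the explicit $\CC\symm_n$-module structure of each block, is the tool for reducing the search to one block at a time and for checking the guessed formula.

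An alternative, more structural route would avoid closed formulas. Using the factorization $\nu_{(k,1^{n-k})} = \pi^{T}\pi$ with $\pi$ the Bidigare--Hanlon--Rockmore transition matrix (\emph{random-to-top} shuffling when $k=n-1$), one knows from BHR theory that $\pi$ has integer spectrum, since its eigenvalues are integer combinations of the integer face-weights; the nonzero eigenvalues of $\pi^{T}\pi$ coincide with those of $\pi\pi^{T}$. As $\pi$ is not normal this does not transfer directly, so the idea would instead be to show, on each $V_\lambda$, that $\rho_\lambda(z_k)$ is conjugate to an integer matrix whose characteristic polynomial is forced to split over $\ZZ$ --- for instance by setting up an inductive $\Res^{\symm_n}_{\symm_{n-1}}$ recursion in the spirit of the Okounkov--Vershik approach and comparing with the Jucys--Murphy elements, which generate an integral, integrally diagonalizable commutative subalgebra, hoping that $z_k$ lies in, or is polynomially expressible over $\QQ$ in terms of, an integral self-adjoint commutative subalgebra containing it.

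The main obstacle is exactly the non-centrality of $z_k$ for $1 < k < n$: $\rho_\lambda(z_k)$ is genuinely non-scalar, so integrality of its eigenvalues is a real constraint rather than a consequence of any symmetry, and no closed form for those eigenvalues is known. Even the single case $k = n-1$ is hard --- it requires an explicit standard-Young-tableau eigenbasis --- and a uniform treatment across all $k$, be it the eigenbasis or the integral-subalgebra containment, is precisely what is missing. The commuting-family structure and the known block decomposition constrain the eigenvalues but do not by themselves force them to be integers, which is why the statement is, at present, a conjecture.
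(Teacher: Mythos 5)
The statement you are addressing is \ref{conj:original-family-integrality}, and the paper gives no proof of it: it is stated as an open conjecture, with only partial progress elsewhere in the text. So there is no proof of the paper's to compare yours against, and — appropriately — your write-up is a strategy sketch that concedes at the end that the argument does not close. Within that framing, your reductions track the paper's partial progress closely: the passage to Wedderburn blocks is exactly the ``Fourier transform'' reduction of \ref{Fourier-transform-prop}; the factorization $\nu_{(k,1^{n-k})}=\pi^T\pi$ with $\pi$ a \BHR operator is \ref{prop:rectangular-square-root}, \ref{prop:second-square-root} and \ref{cor:second-square-root-is-bhr}; and your hoped-for standard-Young-tableau eigenbasis is realized in the paper only at the coarser level of the kernel filtration: the blocks $V_{n,j}$ of \eqref{eqn:block-diagonalization} are described tableau by tableau in \ref{tableaux-model-for-layers} and \ref{thm:Z2-equivariant-filtration-factor}, but these blocks are preserved by, and are not eigenspaces of, the $\nu_{(k,1^{n-k})}$, and the finer simultaneous eigenspace decomposition and its eigenvalues remain unknown.

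The concrete obstruction, which your proposal circles but does not name sharply, is that the only integrality mechanism in the paper is \ref{prop:integrality-principle}, which needs a stable subspace on which $\symm_n$ acts \emph{without multiplicity}. That hypothesis is what powers every case the paper does settle: rank-one orbits via the twisted Gelfand pair \ref{thm:Gelfand-triple}, the family $\{\nu_{(2^k,1^{n-2k})}\}$ via the Gelfand model of \ref{prop:second-family-essence}, and the $\chi^{(n-1,1)}$-isotypic pieces in \ref{subsec:defining-representation}, where each $V_{n,j}$ carries a single copy of $\chi^{(n-1,1)}$. It fails for general isotypic components of the $V_{n,j}$, and that is precisely why neither the paper nor your sketch can conclude; commutativity (\ref{thm:original-family-commutativity}) and positive semidefiniteness constrain but do not force integrality, as you say. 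Two smaller points: with your normalization $\ninv_1(w)=n$ one gets $z_1=n\sum_w w$, whereas the geometric $\nu_{(1^n)}$ in the paper's tables is $\sum_w w$ (a harmless scalar discrepancy); and the Jucys--Murphy/Okounkov--Vershik route you float is nowhere in the paper, and you offer no evidence that $z_k$ lies in, or is rationally polynomial over, an integrally diagonalizable commutative subalgebra — so that step is speculation rather than something the paper's tools would let you close.
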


    In the special case $k=n-1$, this matrix $\nu_{(n-1,1)}$ 
    was studied already in the
    Stanford University PhD thesis of Jay-Calvin Uyemura-Reyes \cite{UyemuraReyes2002}.
    Uyemura-Reyes examined a certain random walk on $W$ called the
    {\deffont random-to-random shuffling} operator, whose Markov matrix is a
    \index{random!-to-random shuffle}%
    \index{shuffle!random-to-random}%
    rescaling of $\nu_{(n-1,1)}$.  He was interested in its eigenvalues
    in order to investigate the rate of convergence of this random walk 
    to the uniform distribution on $W$. He was surprised to discover 
    empirically, and conjectured, that $\nu_{(n-1,1)}$ has only 
    {\emphfont integer} eigenvalues\footnote{In addition, the thesis \cite[p 152-153]{UyemuraReyes2002} mentions other
    shuffling operators that have ``{\it eigenvalues with surprising
    structure}''. We have been informed by Persi Diaconis, the advisor of
    Uyemura-Reyes, that among others this refers to computational experiments on 
    shuffling operators that are convex combinations with rational coefficients of the
    shuffling operators corresponding to $\nu_{(k,1^{n-k})}$.
    Uyemura-Reyes observed  
    integral spectrum for small $n$ after suitable scaling.  
    Clearly, using \ref{thm:original-family-commutativity}
    this fact for general $n$ is implied by \ref{conj:original-family-integrality}.}.
    This was one of many unexpected connections encountered during the
    work on this project, since a question from computer science 
    (see \ref{subsec:linear-ordering-polytopes}) independently led to our 
    \ref{thm:original-family-commutativity} and 
    \ref{conj:original-family-integrality}.

  \subsection{Using the $W$-action}

    One can readily check that the matrix $\nu_{(k,1^{n-k})}$ in 
    $\ZZ^{|W| \times |W|}$ represents multiplication on the {\emphfont right} 
    within the group algebra 
    \nomenclature[al]{$\ZZ W$}{group algebra of $W$ over the integers $\ZZ$}%
    \index{algebra!group}%
    \index{group!algebra}%
    $\ZZ W$ by the following element of $\ZZ W$ 
    (also denoted $\nu_{(k,1^{n-k})}$, by an abuse of notation):
    $$
      \nu_{(k,1^{n-k})} := \sum_{w \in W} \ninv_k(w) \cdot w.
    $$
    Consequently, the action of $\nu_{(k,1^{n-k})}$ commutes with the 
    {\emphfont left}-regular action of $\RR W$
    \nomenclature[al]{$\RR$}{real numbers}%
    \nomenclature[al]{$\RR W$}{group algebra of $W$ over $\RR$}%
    on itself, and the (simultaneous) eigenspaces of the
    matrices $\nu_{(k,1^{n-k})}$ are all representations of $W$.  
    This extra structure will prove to be extremely useful in the
    rest of the work.

    In fact, Uyemura-Reyes \cite{UyemuraReyes2002} conjectured 
    descriptions for the $\RR W$-irreducible decompositions of certain 
    of the eigenspaces of $\nu_{(n-1,1)}$, and was able to prove some of 
    these conjectures in special cases. 
    Furthermore, he reported \cite[\S5.2.3]{UyemuraReyes2002} 
    an observation of R.~Stong noting that one of the 
    factorizations of $\nu_{(n-1,1)} = \pi^T \pi$
    mentioned earlier can be obtained by letting $\pi$ be the 
    well-studied {\deffont random-to-top} shuffling operator on $W$.  
    \index{random!-to-top shuffle}%
    \index{shuffle!random-to-top}%
    These operators are one example from a family of 
    very well-behaved random walks on $W$ that were introduced 
    by Bidigare, Hanlon, and Rockmore, \BHR 
    \nomenclature[co]{\BHR}{Bidigare, Hanlon and Rockmore}%
    for short, in
     \cite{Bidigare1997} and \cite{BidigareHanlonRockmore1999}. 
    These authors showed that the 
    {\deffont \BHR random walks} have very simply predictable integer 
    \index{random!walk}%
    eigenvalues, and the $W$-action on their eigenspaces are also 
    well-described.

    We exploit this connection further, as follows.  First, we will show
    (in \ref{prop:second-square-root} and 
    \ref{cor:second-square-root-is-bhr}) 
    that more generally one has a factorization 
    $\nu_{(k,1^{n-k})} = \pi^T \pi$
    in which $\pi$ is another family of \BHR random walks.  
    Second, we will use the fact that this implies 
    $\ker\nu_{(k,1^{n-k})} =\ker \pi$,
    along with \ref{thm:original-family-commutativity}, to obtain a
    $W$-equivariant filtration of $\RR W$ that is preserved by each 
    $\nu_{(k,1^{n-k})}$,
    with a complete description of the $\RR W$-structure on each
    filtration factor.  This has consequences (see e.g.
    \ref{subsec:defining-representation})
    for the $\RR W$-module structure on the simultaneous 
    eigenspaces of the commuting family of $\nu_{(k,1^{n-k})}$.

  \subsection{An eigenvalue integrality principle}

    Another way in which we will exploit the $W$-action comes from
    a simple but powerful {\deffont eigenvalue integrality principle} 
    \index{eigenvalue integrality principle}%
    for combinatorial operators.  We record it here, as we will
    use it extensively later.

    To state it, recall that for a finite group $W$, when one considers
    representations of $W$ over fields $\KK$ 
    \nomenclature[al]{$\KK$}{generic field}%
    of characteristic zero, any
    finite-dimensional $\KK W$-module 
    \nomenclature[al]{$\KK W$}{group algebra of $W$ over $\KK$}%
    $U$ is semisimple, 
    that is, it can be decomposed as a direct sum of simple $\KK W$-modules.  
    When considering field extensions $\KK' \supset \KK$, the simple 
    $\KK W$-modules may or may not split further when extended to 
    $\KK' W$-modules;  one says that a simple $\KK W$-module is 
    {\deffont absolutely irreducible} if it remains irreducible as a
    \index{absolutely irreducible module}%
    \index{module!absolutely irreducible}%
    $\KK' W$-module for any extension $\KK'$ of $\KK$.  
    Given any finite group $W$, a {\deffont splitting field} 
    \index{splitting field}%
    (see \cite[Chapter X]{CurtisReiner1962}) 
    for $W$ over $\QQ$ is a field extension $\KK$ of $\QQ$ such that
    \nomenclature[al]{$\QQ$}{rational numbers}%
    every simple $\KK W$-module is absolutely irreducible. 
    Equivalently, $\KK$ is a splitting
    field of $W$ over $\QQ$ if and only if every irreducible matrix 
    representation of $W$ over $\QQ$ is realizable with entries in $\KK$
    \cite[Theorem 70.3]{CurtisReiner1962}.
    For such a field $\KK$, the simple
    $\KK W$-modules biject with the simple $\CC W$-modules, that is, 
    \nomenclature[al]{$\CC$}{complex numbers}%
    the set of simple $\KK W$-modules when extended to $\CC W$-modules gives
    exactly the set of simple $\CC W$-modules corresponding
    to the complex irreducible $W$-characters $\chi$.
    \index{character!irreducible}%
    \nomenclature[al]{$\chi$}{character of a group}%
    For finite $W$ the
    splitting field $\KK$ over $\QQ$ can always be chosen 
    to be a finite, and hence algebraic, extension of $\QQ$
    \cite[Theorem 70.23]{CurtisReiner1962}.
    If $W$ is a reflection group, then there is a unique 
    minimal
    \index{reflection!group}%
    \index{group!reflection}%
    extension $\KK$ of $\QQ$ such that $\KK$ is a splitting field for
    $W$ in characteristic $0$ (see \cite[Theorem 0.2]{Bessis1997}, \cite[Theorem 1]{Benard1976},
    and \cite[\S 1.7]{LehrerTaylor2009}).
    
    Denote by $\oo$ 
    \nomenclature[al]{$\oo$}{ring of integers within a fixed number field $\KK$}%
    the ring of integers within the unique minimal splitting 
    field $\KK$ for the reflection group $W$ in characteristic $0$;
    that is, the elements of $\KK$ that are roots of monic polynomials with 
    coefficients in $\ZZ$.  
    An important example occurs when $W$ is a crystallographic reflection group or
    equivalently a Weyl group.
    \index{Weyl group}%
    \index{group!Weyl}%
    Here it is known that one can take as a splitting field 
    $\KK=\QQ$ itself (see \cite[Corollary 1.15]{Springer1978}),
    and hence that $\oo=\ZZ$. 

    \begin{Proposition}[Eigenvalue integrality principle]
        \label{prop:integrality-principle}
        Let $W$ be a finite group acting in a $\ZZ$-linear fashion on $\ZZ^n$ and 
        let $\KK$ be a splitting field of $W$ in characteristic $0$. 
        Further let $A : \ZZ^n \rightarrow \ZZ^n$ be a $\ZZ$-linear operator 
        that commutes with the action $W$.  Extend the action of $A$ and of $W$ to $\KK^n$

        Then for any subspace $U \subseteq \KK^n$ which is stable under both $A$ and $W$,
        and on which $W$ acts without multiplicity (that is,
        each simple $\KK W$-module occurs at most once),
        all eigenvalues of the restriction of $A$ to $U$ lie in the ring of 
        integers $\oo$ of $\KK$.

        In particular, if $W$ is a Weyl group these eigenvalues of $A$ 
        lie in $\ZZ$.
    \end{Proposition}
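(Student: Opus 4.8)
The plan is to reduce the statement to the classical fact that the eigenvalues of an integer matrix commuting with a group action, when restricted to a multiplicity-free summand, are algebraic integers, and then to identify precisely which algebraic integers can arise. First I would decompose $U$ as a $\KK W$-module. Since $\KK$ is a splitting field, $U \cong \bigoplus_{\chi \in S} V_\chi$ for some set $S$ of (absolutely) irreducible $\KK W$-characters $\chi$, each appearing at most once by the multiplicity hypothesis. Because $A$ commutes with the $W$-action on $U$, Schur's lemma applied over the splitting field $\KK$ shows that $A$ acts as a scalar $\lambda_\chi \in \KK$ on each isotypic component $V_\chi$ — here multiplicity-freeness is exactly what makes $\End_{\KK W}(U)$ a product of copies of $\KK$ rather than a product of matrix algebras, so that $A|_U$ is forced to be diagonalizable with the $\lambda_\chi$ as its eigenvalues.

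Next I would argue that each $\lambda_\chi$ is an algebraic integer, i.e.\ lies in $\oo$. The operator $A$ is defined over $\ZZ$, so its characteristic polynomial on $\KK^n$ has coefficients in $\ZZ$, hence is a monic polynomial in $\ZZ[x]$; every eigenvalue of $A$ on all of $\KK^n$ — in particular every $\lambda_\chi$ appearing on $U$ — is a root of this monic integer polynomial, and therefore an algebraic integer. Since each $\lambda_\chi$ also lies in $\KK$ (by the Schur's lemma step), it lies in $\KK \cap \overline{\ZZ} = \oo$, the ring of integers of $\KK$. This proves the general assertion.

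For the final sentence, I would invoke the remarks made just before the statement: when $W$ is a Weyl group (crystallographic), one may take $\KK = \QQ$ as the splitting field, so $\oo = \ZZ$; the eigenvalues, being algebraic integers lying in $\QQ$, are rational integers. Strictly, one should note that the given $W$-action on $\ZZ^n$ need not itself be the reflection representation — but the splitting field depends only on the abstract group $W$, not on the particular representation, so $\KK = \QQ$ still suffices and the conclusion $\oo = \ZZ$ stands.

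The only genuinely delicate point is the application of Schur's lemma: one must be careful that it is the combination of \emph{absolute} irreducibility (guaranteed by passing to a splitting field) and \emph{multiplicity one} that pins $A$ down to a scalar on each isotypic piece. Over a non-splitting field an irreducible module could have endomorphism ring a larger division algebra, and with multiplicities $>1$ one would only get that $A$ preserves isotypic components without acting as a scalar; in either case the eigenvalues could fail to lie in $\oo$. So I expect the write-up to spend its care on stating the Schur's lemma step cleanly, while the algebraic-integer part is immediate from the integrality of the characteristic polynomial of $A$.
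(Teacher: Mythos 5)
Your proposal is correct and follows essentially the same route as the paper's own proof: decompose $U$ into isotypic components, use the fact that $A$ commutes with $W$ together with absolute irreducibility and multiplicity one to force $A$ to act by scalars $\lambda_\chi$ lying in $\KK$, and combine this with the monic integer characteristic polynomial of $A$ to conclude $\lambda_\chi \in \oo$. The only (harmless) difference is one of ordering and emphasis — you invoke Schur's lemma directly over $\KK$ via absolute irreducibility where the paper briefly passes to the algebraic closure and then descends — but the argument is the same.
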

    \begin{proof}
       An eigenvalue of $A$ is a root of its characteristic
       polynomial $\det(t \cdot \matid_{\KK^n} - A)$, a monic polynomial with $\ZZ$
       coefficients.  As usual $\matid_{\KK^n}$ denotes the identity matrix. 
       Hence, it is enough to show that the
       eigenvalues of $A$ acting on the $\KK$-subspace $U$ all lie in $\KK$.

       Because $\KK$ is a splitting field for $W$, one has an isotypic
       $\KK W$-module decomposition
       $ 
         U = \bigoplus_\chi U^{\chi}
       $
       \nomenclature{$U^\chi$}{$\chi$-isotypic component of $W$-module $U$}%
       in which the sum is over the irreducible characters $\chi$ of $W$.
       Since $A$ commutes with the $W$-action, it preserves this decomposition.
       The assumption that $U$ is multiplicity-free says each $U^{\chi}$ is
       a single copy of a simple $\KK W$-module.  Schur's Lemma 
       asserts that, on extending $\KK$ to its algebraic closure,
       $A$ must act on each $U^{\chi}$ by some scalar $\lambda_\chi$.
       However, $\lambda_\chi$ must lie in $\KK$ since $A$ acts $\KK$-linearly.
       Thus the isotypic decomposition diagonalizes the action of $A$ on $U$, 
       and all its eigenvalues lie in $\KK$ (and hence in $\oo$).
    \end{proof}

  \subsection{A broader context, with more surprises}

    Some of the initial surprises led us to consider a more general
    family of operators, in the context of {\deffont real reflection groups} 
    $W$,
    \index{real reflection group}%
    \index{reflection!group!real}%
    \index{group!real reflection}%
    leading to even more surprises.  We describe some of these
    briefly and informally here, indicating where they are
    discussed later.

    Let $W$ be a finite real reflection group, acting on an $\RR$-vector space
    \index{hyperplane!reflecting}%
    $V$, with set of reflecting hyperplanes 
    \nomenclature[ar]{$\AAA$}{arrangement of hyperplanes}%
    $\AAA$, and 
    \nomenclature[ar]{$\LLL(\AAA)$}{intersection lattice of the arrangement $\AAA$ of hyperplanes}%
    and let $\LLL$ be the (partially-ordered)
    \index{arrangement!hyperplanes}%
    \index{arrangement!intersection lattice}%
    \index{intersection lattice}%
    \index{lattice!intersection}%
    set of subspaces $X$ that arise as intersections of hyperplanes from some subset of $\AAA$.
    The hyperplanes in $\AAA$ dissect $V$ into connected
    components called {\deffont chambers}, and the set 
    \nomenclature[ar]{$\CCC(\AAA)$}{chambers of the arrangement $\AAA$ of hyperplanes}%
    \index{chamber}%
    $\CCC$ of all chambers carries a simply-transitive 
    action of $W$. Thus, if $\grid$ 
    \nomenclature[al]{$\grid$}{identity element of a group}%
    denotes the identity element of $W$, then one can choose 
    an identity chamber 
    \nomenclature[ar]{$c_\grid$}{chamber indexed by neutral element $\grid$ of group}%
    $c_\grid$ and an indexing of the chambers
    $\CCC=\{c_w:=w(c_\grid)\}_{w \in W}$.
    \index{identity chamber}%
    \index{chamber!identity}%

    Given a $W$-orbit $\OOO$ 
    \nomenclature[ar]{$\OOO$}{set of intersection subspaces of an arrangement. Often orbit or union or orbits under group action.}%
    of intersection subspaces, define
    $\ninv_\OOO(w)$
    \nomenclature[co]{$\ninv_\OOO(w)$}{number of subspaces in $\OOO$ for which the chambers indexed by $\grid$ and $w$ lie on the same side}%
    to be the number of subspaces $X$ in $\OOO$
    for which the two chambers $c_w$ and $c_\grid$ lie on the same side
    of every hyperplane $H \supseteq X$.  In the case where $W=\symm_n$
    acts on $V=\RR^n$ by permuting coordinates, if one takes $\OOO$ to
    be the $W$-orbit of intersection subspaces of the form
    $x_{i_1} = \cdots = x_{i_k}$, one finds that $\ninv_\OOO(w)=\ninv_k(w)$.

    Again consider the operator $\nu_\OOO$ representing multiplication by 
    $\sum_{w \in W} \ninv_\OOO(w) \cdot w$ within 
    $\ZZ W$ or $\RR W$.  As before, one can show that $\nu_\OOO=\pi^T \pi$ 
    for certain integer matrices $\pi$, and again one such
    choice of a matrix $\pi$ is the transition matrix for a \BHR random walk on $W$.
    In this general context, but when $\OOO$ is taken to be a $W$-orbit of
    {\emphfont codimension one} subspaces (that is, {\deffont hyperplanes})
    \index{hyperplane}%
    one encounters the following surprise, proven in \ref{subsec:rank-one-proof}.

    \begin{Theorem}
      \label{thm:rank-one}
      For any finite irreducible real reflection group $W$, and
      any (transitive) $W$-orbit $\OOO$ of hyperplanes,
      the matrix $\nu_\OOO$ has all its eigenvalues within
      the ring of integers of the unique minimal splitting field for $W$.
      In particular, when $W$ is crystallographic, these
      eigenvalues all lie in $\ZZ$.
    \end{Theorem}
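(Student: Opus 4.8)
The plan is to combine the factorization $\nu_\OOO=\pi^T\pi$ with the eigenvalue integrality principle, \ref{prop:integrality-principle}, after transporting the spectral question to a $W$-representation small enough to be almost multiplicity-free. Since here $\OOO$ is a $W$-orbit of hyperplanes, recall that $\pi$ may be taken to be the $0/1$-matrix whose rows are indexed by the set $\mathcal{S}_\OOO$ of closed half-spaces bounded by the hyperplanes $H\in\OOO$, and whose $(F,c_w)$-entry is $1$ exactly when $c_w\subseteq F$. This $\pi$ is $W$-equivariant for the permutation actions of $W$ on $\CCC$ and on $\mathcal{S}_\OOO$, and $(\pi^T\pi)_{u,v}=\#\{F\in\mathcal{S}_\OOO: c_u\cup c_v\subseteq F\}=\ninv_\OOO(v^{-1}u)$, so $\pi^T\pi$ is indeed the matrix $\nu_\OOO$. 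As $\pi^T\pi$ and $\pi\pi^T$ have the same nonzero eigenvalues and $0\in\ZZ\subseteq\oo$, it will be enough to show that the $W$-equivariant operator $\pi\pi^T$ on $\RR[\mathcal{S}_\OOO]$ has all its eigenvalues in $\oo$; note that its matrix entry $(\pi\pi^T)_{F,F'}$ is the number of chambers of $\AAA$ contained in $F\cap F'$, an integer.

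The key device is the antipodal symmetry of $\AAA$. Since every hyperplane of $\AAA$ is linear, the map $v\mapsto -v$ permutes $\CCC$ and carries each half-space $F$ to the opposite half-space $-F$; hence $\#(F\cap F')=\#((-F)\cap(-F'))$ for all $F,F'$, which says exactly that the fixed-point-free involution $\iota\colon F\mapsto -F$ of $\mathcal{S}_\OOO$ commutes with $\pi\pi^T$. As $\iota$ also commutes with the $W$-action, $\pi\pi^T$ preserves the decomposition $\RR[\mathcal{S}_\OOO]=\RR[\mathcal{S}_\OOO]^{+}\oplus\RR[\mathcal{S}_\OOO]^{-}$ into $\iota$-symmetric and $\iota$-antisymmetric parts, each a $W$-submodule. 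Fix a hyperplane $H_0\in\OOO$, let $s$ be the reflection in $H_0$ and $Z_W(s)$ its centralizer (equivalently the $W$-stabilizer of $H_0$), and choose one half-space $F_H$ of each $H\in\OOO$. Then $\{F_H+(-F_H)\}_H$ is a basis of $\RR[\mathcal{S}_\OOO]^{+}\cong\Ind_{Z_W(s)}^{W}\trivial$ (the permutation module on $\OOO\cong W/Z_W(s)$), while $\{F_H-(-F_H)\}_H$ is a basis of $\RR[\mathcal{S}_\OOO]^{-}\cong\Ind_{Z_W(s)}^{W}\chi$, where $\chi$ is the $\{\pm1\}$-valued character giving the action of $Z_W(s)$ on the line $H_0^{\perp}$.

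The symmetric part contributes only integers: since each half-space $F'$ contains exactly half of the chambers of $\AAA$, one computes $\pi\pi^T\big(F_H+(-F_H)\big)=\tfrac{1}{2}|\CCC|\sum_{F'\in\mathcal{S}_\OOO}F'$, independently of $H$, so $\pi\pi^T$ restricted to $\RR[\mathcal{S}_\OOO]^{+}$ is $\tfrac{1}{2}|\CCC|$ times a rank-one all-ones operator, with eigenvalues $0$ and $\tfrac{1}{2}|W|\,|\OOO|\in\ZZ$. On the antisymmetric part, the matrix of $\pi\pi^T$ in the basis $\{F_H-(-F_H)\}_H$ has integer entries $\#(F_H\cap F_{H'})-\#((-F_H)\cap F_{H'})$ and commutes with the signed-permutation, hence $\ZZ$-linear, action of $W$ on the lattice $\ZZ[\mathcal{S}_\OOO]^{-}$. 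Consequently, if $\Ind_{Z_W(s)}^{W}\chi$ is multiplicity-free as a $\KK W$-module, then \ref{prop:integrality-principle} applies with $U=\KK[\mathcal{S}_\OOO]^{-}$ and shows that all eigenvalues of $\pi\pi^T$, hence all eigenvalues of $\nu_\OOO$, lie in $\oo$, and in $\ZZ$ when $W$ is crystallographic.

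This reduces the theorem to the assertion that $(W,Z_W(s),\chi)$ is a twisted Gelfand pair, which I expect to be the real obstacle and the point worth isolating. I would first attempt a uniform Gelfand-type argument: the inversion map $w\mapsto w^{-1}$ fixes $Z_W(s)$ setwise and is compatible with $\chi$ (which has order at most $2$), so it would suffice to show that every $Z_W(s)$-double coset in $W$ is stable under inversion with the correct $\chi$-sign, so that the twisted Hecke algebra $\KK[Z_W(s)\backslash W/Z_W(s)]^{\chi}$ is commutative. If no uniform argument is available, I would fall back on the classification of finite irreducible real reflection groups: for $W=\symm_n$ one has $Z_W(s)\cong\symm_2\times\symm_{n-2}$ with $\chi$ the sign character of the $\symm_2$-factor, and the Pieri rule gives $\Ind_{Z_W(s)}^{W}\chi\cong S^{(n-1,1)}\oplus S^{(n-2,1,1)}$, which is multiplicity-free; the cases $W=B_n$ and the dihedral groups $I_2(m)$ are handled by analogous explicit computations; and the exceptional types $H_3,H_4,F_4,E_6,E_7,E_8$ are checked directly from their character tables.
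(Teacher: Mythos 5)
Your argument is, in substance, the paper's second proof of \ref{thm:rank-one}: your operator $\pi\pi^T$ on half-spaces is the paper's $\mu_\OOO$ acting on $\RR^{\Phi_\OOO}$ (half-spaces of $H\in\OOO$ correspond to the roots $\pm\alpha$ normal to $H$), your $\iota$-symmetric/antisymmetric splitting is its decomposition $\RR^{\Phi_\OOO,+}\oplus\RR^{\Phi_\OOO,-}$, and your two computations (a rank-one operator with nonzero eigenvalue $\tfrac{1}{2}|W|\,|\OOO|$ on the symmetric part; $\Ind_{\Centr_W(s)}^W\chi$ on the antisymmetric part, followed by \ref{prop:integrality-principle}) are exactly \ref{prop:rank-one-plus-minus} plus the integrality principle. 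The one ingredient you leave open --- multiplicity-freeness of $\Ind_{\Centr_W(s)}^W\chi$ --- is precisely the paper's \ref{thm:Gelfand-triple}, so your reduction isolates the right statement; but note that your hoped-for uniform argument, as phrased, does not quite go through: the paper proves the Gelfand-pair property via the twisted Gelfand trick (\ref{prop:Gelfand-trick}) and must handle orthogonal pairs $H$, $H'=w(H)$ separately by showing $\idem\, w\, \idem=0$, because (as its remark on $H_3$ shows) such double cosets $ZwZ$ can contain no involutions at all, so one cannot simply argue coset-by-coset that inversion acts trivially with the right sign. Your classification fallback (Pieri rule in type $A$, explicit checks in $B_n$, $I_2(m)$ and the exceptional types) would also suffice, though the paper's actual proof avoids the full classification by reducing to dihedral parabolics via the odd-labeled-path criterion for conjugacy of simple reflections.
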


    This result will follow from 
    applying the integrality principle (\ref{prop:integrality-principle})
    together with the discovery of the following (apparently) new 
    family of {\emphfont twisted Gelfand pairs}.  This is proven in \ref{subsec:new-twisted-Gelfand-pair},
    but only via a case-by-case proof.

    \begin{Theorem}
      \label{thm:Gelfand-triple}
      Let $W$ be a finite irreducible real reflection group and let
      $H$ be the reflecting hyperplane for a reflection $s \in W$. 

      Then the linear character $\chi$ of the $W$-centralizer $\Centr_W(s)$ 
      \nomenclature[al]{$\Centr_W(w)$}{centralizer of the element $w$ in the group $W$}%
      given by its action on the line $V/H$ or $H^\perp$ has a multiplicity-free
      induced $W$-representation $\Ind_{\Centr_W(s)}^W \chi$.
       \nomenclature[al]{$\Ind_H^G$}{induction of a representation from the subgroup $H$ to the group $G$}%
    \end{Theorem}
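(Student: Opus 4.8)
The plan is to verify the assertion by a case-by-case analysis over the classification of finite irreducible real reflection groups, since the statement is really a fact about the pair $(W, \Centr_W(s))$ together with a distinguished linear character, and such facts tend not to admit uniform proofs. First I would reduce the problem to one representative reflection $s$ per conjugacy class of reflections in $W$ (there are one or two such classes, the latter only in types $B_n$, $F_4$, $G_2$, $I_2(m)$ with $m$ even, and $H_3, H_4$ having only one), since conjugate reflections give isomorphic data. The key object is $\Centr_W(s)$: writing $V = H \oplus H^\perp$ with $s$ acting as $-1$ on the line $L := H^\perp$, the centralizer preserves this decomposition, so $\Centr_W(s) \cong \langle s \rangle \times W'$ where $W'$ is the (not necessarily irreducible) reflection group induced on $H$ by the pointwise stabilizer of $L$; concretely $W'$ is the reflection group of the subarrangement of those hyperplanes of $\AAA$ containing $L$, restricted to $H$. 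The linear character $\chi$ is then the sign character $s \mapsto -1$ on the $\langle s \rangle$ factor, tensored with the trivial character on $W'$.

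Next I would compute $\Ind_{\Centr_W(s)}^W \chi$ and check multiplicity-freeness. By Frobenius reciprocity, the multiplicity of an irreducible $W$-character $\psi$ in $\Ind \chi$ equals $\langle \Res_{\Centr_W(s)} \psi, \chi \rangle$, so the claim is equivalent to: for every $\psi \in \Irr(W)$, the $\chi$-isotypic part of $\psi$ restricted to the centralizer has dimension at most one. Equivalently, summing squares, $\langle \Ind\chi, \Ind\chi\rangle = \sum_\psi \langle\Res\psi,\chi\rangle^2$ should equal the number of irreducible constituents; but more efficiently one computes $\langle \Ind\chi, \Ind\chi\rangle$ directly via Mackey's formula as a sum over $(\Centr_W(s), \Centr_W(s))$-double cosets in $W$ of inner products of $\chi$ with its conjugates, and one wants this number to equal the number of distinct irreducible constituents of $\Ind\chi$. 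Since $[W : \Centr_W(s)]$ is the size of the reflection class, which is $\binom{n}{2}$ in type $A$, and similarly modest in the other types, $\Ind\chi$ has dimension equal to that index, and in the infinite families $A_{n-1}$, $B_n$, $D_n$ one can identify $\Ind\chi$ explicitly: for $W = \symm_n$ and $s$ a transposition, $\Centr_W(s) = \symm_2 \times \symm_{n-2}$ and $\chi$ is $\sgn \otimes \trivial$, so $\Ind\chi = \Ind_{\symm_2\times\symm_{n-2}}^{\symm_n}(\sgn\boxtimes\trivial)$, whose decomposition into Schur functions is governed by Pieri-type rules and is visibly multiplicity-free (the constituents are indexed by partitions of $n$ with at most two columns of unequal length, or the like). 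Types $B_n$ and $D_n$ admit an analogous wreath-product / hyperoctahedral bookkeeping. For the exceptional groups $F_4, H_3, H_4, E_6, E_7, E_8$ and the dihedral groups $I_2(m)$, I would fall back on a direct character-table computation (by hand for $I_2(m)$ and small exceptionals, by computer algebra for $E_7, E_8$), tabulating $\langle\Res_{\Centr_W(s)}\psi,\chi\rangle$ for all $\psi\in\Irr(W)$ and both reflection classes where applicable.

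The main obstacle is the absence of a conceptual, classification-free argument: unlike the untwisted Gelfand-pair case (where $(W, \Centr_W(s))$ is essentially never a Gelfand pair), the twisted version depends delicately on the interplay of the sign character on $\langle s\rangle$ with the branching of $W$-irreducibles, and I do not expect a slick proof — the honest path is careful case analysis, with the dihedral and exceptional types requiring either explicit small character tables or machine computation. A secondary technical point worth stating cleanly is the structure lemma $\Centr_W(s) = \langle s\rangle \times \Stab_W^{\mathrm{pt}}(L)|_H$ and the identification of that second factor as a real reflection group on $H$; this is standard (it follows from Steinberg's theorem that centralizers of elements acting as reflections decompose with the pointwise stabilizer being generated by the reflections it contains), but it is the linchpin that makes the character computations tractable, so I would prove it first in full.
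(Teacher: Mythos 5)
Your strategy is mathematically sound and would succeed, but it is a genuinely different route from the paper's, and it is worth seeing what each buys. You verify multiplicity-freeness representation-by-representation over the classification: the (correct) structural lemma $\Centr_W(s)=\langle s\rangle\times K$ with $K$ the pointwise stabilizer of $H^\perp$ (a reflection group by Steinberg), then explicit branching -- Pieri in type $A$ (where indeed $\Ind\chi=\chi^{(n-1,1)}+\chi^{(n-2,1,1)}$, cf.\ \ref{ex:rank-one-constituents}; your parenthetical description of the constituents is off), wreath-product bookkeeping in types $B_n$, $D_n$, and character-table or machine computation for the dihedral and exceptional types. The paper instead never decomposes $\Ind_{\Centr_W(s)}^W\chi$ at all: it applies the twisted Gelfand trick (\ref{prop:Gelfand-trick}), showing that for every double coset $ZwZ$, $Z=\Centr_W(s)$, either $\idem w\idem=0$ (this happens exactly when $H$ and $w(H)$ are orthogonal, killed by factoring $\idem$ through $e-s_H$ on one side and $e+s_{w(H)}$ on the other) or the coset contains an involution (reduced via \ref{Coxeter-subgraph-proposition} to a two-line check inside a dihedral parabolic $W_{H\cap H'}$). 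That argument uses only general Coxeter combinatorics (odd-labelled paths, the tree structure of finite Coxeter graphs) plus the dihedral case, so it is essentially uniform and computation-free -- contrary to your opening expectation that no classification-free argument should exist; the subtlety it must dodge is real (for $H_3$ there are double cosets of orthogonal pairs containing no involution at all, see the Remark following the proof), but the vanishing of $\idem w\idem$ handles precisely those. Your approach, by contrast, yields more information (the actual constituents, as in \ref{at-most-two-constituents-corollary}) at the cost of case work you have not executed: the $B_n$/$D_n$ branching, the two reflection classes in $B_n$, $F_4$, $I_2(2m)$, and the $E_7$, $E_8$ computations are all deferred, and note that your Mackey-formula criterion alone gives $\sum_i m_i^2$, which certifies multiplicity-freeness only once you independently know the number of distinct constituents -- so in practice you are thrown back on the explicit decompositions anyway.
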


    We mention a further surprise proven via
    \ref{prop:integrality-principle}
    and some standard representation theory of the symmetric group.
    With $W=\symm_n$ acting on $V=\RR^n$ by permuting coordinates,
    for each $k=1,2,\ldots,\lfloor \frac{n}{2} \rfloor$ consider the $W$-orbit
    $\OOO$ of codimension $k$ intersection subspaces of the form
    $$
      \{ x_{i_1}=x_{i_2} \} \cap \{ x_{i_3}=x_{i_4} \} \cap  \cdots 
      \cap \{ x_{i_{2k-1}}=x_{i_{2k}} \},
    $$
    where $\{ i_1, i_2 \}, \ldots, \{ i_{2k-1}, i_{2k} \}$ are $k$
    pairwise disjoint sets of cardinality two.
    Let $\nu_{(2^k,1^{n-2k})}$ denote the operator $\nu_\OOO$ for this
    orbit $\OOO$.

    \begin{Theorem}
       \label{thm:second-family}
       The operators from the family 
       $\{\nu_{(2^k,1^{n-2k})}\}_{k=1,2\ldots,\lfloor \frac{n}{2}\rfloor}$
       pairwise commute, and have only integer eigenvalues.
    \end{Theorem}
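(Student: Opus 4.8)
The plan is to reduce the theorem to the eigenvalue integrality principle (Proposition~\ref{prop:integrality-principle}) together with the multiplicity-freeness of a Gelfand model for $\symm_n$, for instance that of Inglis--Richardson--Saxl. Abbreviate $\nu_k:=\nu_{(2^k,1^{n-2k})}$ and let $\OOO_k$ be the orbit of flats used to define it, so $\nu_k$ is right multiplication in $\RR\symm_n$ by $\sum_{w}\ninv_{\OOO_k}(w)\,w$. The first thing to record is the combinatorial dictionary: the flats $X\in\OOO_k$ biject with the partial matchings of $\{1,\dots,n\}$ having exactly $k$ edges, equivalently with the involutions of $\symm_n$ of cycle type $(2^k,1^{n-2k})$; the $\symm_n$-stabilizer of such an $X$ is the Young-type subgroup $Z_k\cong(\symm_2\wr\symm_k)\times\symm_{n-2k}$, so that $\OOO_k\cong\symm_n/Z_k$. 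Summing over $k$, the sets $\OOO_0,\dots,\OOO_{\lfloor n/2\rfloor}$ together biject with \emph{all} involutions of $\symm_n$, whose number $\sum_{\lambda\vdash n}\dim S^{\lambda}$ is exactly the dimension of a Gelfand model for $\symm_n$; this numerical coincidence is the shadow of the proof.

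Next I would use the ``linear ordering polytope'' factorization $\nu_k=\pi_k^{T}\pi_k$ from the introduction, in which $\pi_k\colon\RR\symm_n\to\bigoplus_{X\in\OOO_k}\RR[\CCC_X]$ records, for each $X\in\OOO_k$, the chamber of the rank-$k$ sub-arrangement $\CCC_X$ cut out by the $k$ reflecting hyperplanes through $X$ that contains $c_w$. In particular $\ker\nu_k=\ker\pi_k$, and $\nu_k$ is self-adjoint for the standard $\symm_n$-invariant inner product on $\RR\symm_n$, so $\RR\symm_n=\ker\nu_k\oplus\im\nu_k$ is an orthogonal, $\symm_n$-stable decomposition. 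Put $\mathcal M:=\sum_{k=0}^{\lfloor n/2\rfloor}\im\nu_k=\bigl(\bigcap_{k}\ker\nu_k\bigr)^{\!\perp}$. Since $\im\nu_k\subseteq\mathcal M$ and $\mathcal M^{\perp}=\bigcap_j\ker\nu_j\subseteq\ker\nu_k$ for each $k$, every $\nu_k$ maps $\mathcal M$ into itself and annihilates $\mathcal M^{\perp}$; thus the orthogonal decomposition $\RR\symm_n=\mathcal M\oplus\mathcal M^{\perp}$ is preserved by all of the $\nu_k$ simultaneously. The theorem now reduces to the claim that \emph{$\mathcal M=\sum_k\im\nu_k$ is a multiplicity-free $\symm_n$-module.} Granting this: each $\nu_k$ commutes with the left-regular $\symm_n$-action and stabilizes the multiplicity-free module $\mathcal M$, so by Schur's Lemma $\nu_k|_{\mathcal M}$ acts by a scalar on each isotypic component of $\mathcal M$; hence the operators $\nu_k|_{\mathcal M}$ are simultaneously diagonalized by the isotypic decomposition of $\mathcal M$ --- in particular they pairwise commute --- and Proposition~\ref{prop:integrality-principle}, with $\symm_n$ a Weyl group so that $\oo=\ZZ$, gives that their eigenvalues on $\mathcal M$ are integers. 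As every $\nu_k$ vanishes identically on $\mathcal M^{\perp}$, the two summands together prove that the $\nu_k$ pairwise commute and have integer spectrum.

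It remains to prove that $\mathcal M$ is multiplicity-free, and this is where the Gelfand model enters and where I expect the real difficulty. The plan is to identify $\mathcal M$ --- equivalently the image of the combined map $(\pi_0,\dots,\pi_{\lfloor n/2\rfloor})$, equivalently $\RR\symm_n/\bigcap_k\ker\pi_k$ --- $\symm_n$-equivariantly with the Inglis--Richardson--Saxl model $\bigoplus_{k}\Ind_{Z_k}^{\symm_n}\chi_k$, where $\chi_k$ is the appropriate linear character of $Z_k$ (trivial on the $\symm_{n-2k}$-factor, and the relevant $\pm1$-twist on $\symm_2\wr\symm_k$). Two ingredients are required. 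First, one matches the face-projection map $c_w\mapsto(\text{chamber of }\CCC_X)_{X\in\OOO_k}$ against induction from the matching-stabilizer $Z_k$, recognizing $\im\pi_k$ as a submodule of a \emph{single} copy of $\Ind_{Z_k}^{\symm_n}\chi_k$ and, crucially, checking that for different $k$ these copies sit inside one common Gelfand model, so that no irreducible $\symm_n$-module is repeated across the sum $\sum_k\im\nu_k$. Second, one invokes the Inglis--Richardson--Saxl theorem, which asserts that $\bigoplus_k\Ind_{Z_k}^{\symm_n}\chi_k$ is a Gelfand model: multiplicity-free, with the irreducible constituents of distinct summands pairwise disjoint. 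The second ingredient is classical; the first --- computing the $\symm_n$-character of $\im\pi_k$ (equivalently, of $\ker\pi_k$) from the arrangement description and reconciling it with the induced-character description, unavoidably via the combinatorics of matchings --- is the main obstacle. As a consistency check, the case $k=1$ is the rank-one instance already settled by Theorem~\ref{thm:rank-one}.
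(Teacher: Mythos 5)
Your reduction is sound and is in fact the same strategy the paper uses: the operators are self-adjoint with common kernel $\mathcal M^{\perp}=\bigcap_k\ker\nu_{(2^k,1^{n-2k})}$ (in the paper this intersection is just $\ker\nu_{(2^{k_{\max}},1^{n-2k_{\max}})}$, by the nesting of kernels from \ref{prop:nested-kernels}), they all preserve the rational subspace $\mathcal M$, and once $\mathcal M$ is known to be multiplicity-free, Schur's Lemma gives simultaneous diagonalization along isotypic components (hence commutativity) and \ref{prop:integrality-principle} gives integrality. But the statement you defer --- that $\mathcal M$ is a multiplicity-free $\symm_n$-module --- is the actual content of the theorem, and your sketch of how to get it is not just incomplete but contains a false intermediate claim. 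You propose to recognize $\im\pi_{(2^k,1^{n-2k})}$ as a submodule of a \emph{single} copy of $\Ind_{Z_k}^{\symm_n}\chi_k$ with $Z_k\cong(\symm_2\wr\symm_k)\times\symm_{n-2k}$, with distinct $k$ contributing disjoint constituents. This cannot be right: by \ref{prop:nested-kernels} the kernels are nested, so the images are nested increasingly in $k$; in particular every $\im\nu_{(2^k,1^{n-2k})}$ contains the trivial representation (the Perron--Frobenius eigenvector) and all constituents coming from smaller matchings, none of which occur in the $k$-th Inglis--Richardson--Saxl summand. What is true is that only the filtration quotient $\im\nu_{(2^k,1^{n-2k})}\cap\bigl(\im\nu_{(2^{k-1},1^{n-2k+2})}\bigr)^{\perp}$ matches that single induced module, and summing these quotients over $k$ is what produces the Gelfand model.

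The paper supplies exactly the machinery you are missing. First, \ref{cor:second-square-root-is-bhr} identifies $\ker\nu_{(2^k,1^{n-2k})}$ with the kernel of a \BHR operator $b_J$, and the $W$-equivariant \BHR theory (\ref{Equivariant-BHR-theorem}, \ref{cor:general-equivariant-kernel}, specialized in \ref{ex:two-type-A-examples}) computes the $\symm_n$-module structure of image and kernel: $\im\nu_{(2^k,1^{n-2k})}\cong\bigoplus_{j\le k}\WH_{\OOO_{(2^j,1^{n-2j})}}$. Second, \ref{prop:second-family-essence} proves multiplicity-freeness not by matching against Inglis--Richardson--Saxl directly, but by a plethystic generating-function computation, $\sum_{a,b}h_a[e_2]\,h_b\,t^b=\prod_{i<j}(1-x_ix_j)^{-1}\prod_i(1-tx_i)^{-1}=\sum_\lambda t^{\oddcols(\lambda)}s_\lambda$, which shows $\WH_{\OOO_{(2^a,1^b)}}=\bigoplus_{\oddcols(\lambda)=b}\chi^\lambda$ and hence that $\bigoplus_a\WH_{\OOO_{(2^a,1^{n-2a})}}$ is a Gelfand model. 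Your ``first ingredient'' is therefore equivalent in substance to this equivariant kernel computation, and until it (or the \BHR route) is actually carried out, the proof is not complete; the correct repair is also to phrase the matching with the induced modules at the level of filtration quotients rather than of the images themselves.
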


    Interestingly, the proof of this given in \ref{sec:second-family} 
    tells us that the non-kernel eigenspaces $V_\lambda$
    in the simultaneous eigenspace decomposition for $\{\nu_{(2^k,1^{n-2k})}\}$
    should be indexed by all number partitions $\lambda$ of $n$, and that $V_\lambda$ carries
    \index{number partition} 
    \index{partition!number} 
    the irreducible $\RR \symm_n$-module indexed by $\lambda$,  but it tells us
    \index{module!irreducible}%
    \index{irreducible!module}%
    very little about the integer eigenvalue for each $\nu_{(2^k,1^{n-2k})}$
    acting on $V_\lambda$.  

    More generally, we can define an operator $\nu_\lambda$ for each
    partition $\lambda = (\lambda_1, \lambda_2, \ldots, \lambda_\ell)$ of
    $n$ by considering the $\symm_n$-orbit of the subspace
    \begin{align*}
    \{x_{1} = x_{2} = \cdots = x_{\lambda_1} \}
    \cap
    \{x_{\lambda_1+1} & = x_{\lambda_1+2} = \cdots = x_{\lambda_1+\lambda_2}\}
    \\
    &
    \cap
    \{x_{\lambda_1+\lambda_2+1} = x_{\lambda_1+\lambda_2+2} = \cdots =
    x_{\lambda_1+\lambda_2+\lambda_3}\}
    \cap
    \cdots.
    \end{align*}
    In light of \ref{thm:original-family-commutativity}
    and \ref{thm:second-family}, it is natural to ask whether
    these operators commute and have integer eigenvalues.
    Our computer explorations led us to conjecture the following,
    which we verified for $1 \leq n \leq 6$.
    \begin{Conjecture}
    Let $\lambda$ and $\gamma$ be distinct partitions of $n$, both different 
    from $(1^n)$ and $(n)$.
    The operators $\nu_\lambda$ and $\nu_\gamma$ commute if and only if 
    they \emph{both} belong to
        $\left\{ \nu_{( k, 1^{n-k} )} : 1 < k < n \right\}$ or
        $\left\{ \nu_{(2^k,1^{n-2k})} : 0 < k \leq \lfloor\frac{n}{2}\rfloor \right\}$.
    Furthermore, $\nu_\lambda$ has integer eigenvalues if and only if
    $\nu_\lambda$ belongs to one of these two families.
    \end{Conjecture}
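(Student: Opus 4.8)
The plan is to treat the two biconditionals separately and, within each, to isolate the ``if'' directions, which are essentially already available, from the ``only if'' directions, which carry the real content. For commutativity, right multiplication by $\nu_\lambda$ and by $\nu_\gamma$ commute as operators on $\RR\symm_n$ if and only if the elements $\nu_\lambda,\nu_\gamma$ commute in $\ZZ\symm_n$, so the ``if'' direction is exactly \ref{thm:original-family-commutativity} for $\{\nu_{(k,1^{n-k})}\}$ and the commutativity half of \ref{thm:second-family} for $\{\nu_{(2^k,1^{n-2k})}\}$, together with the observation that the excluded partitions are trivial: $\nu_{(1^n)} = \sum_{w\in\symm_n}w$ and $\nu_{(n)} = \grid$, both central. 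For integrality, the ``if'' direction for $\{\nu_{(2^k,1^{n-2k})}\}$ is the integrality half of \ref{thm:second-family}, while for $\{\nu_{(k,1^{n-k})}\}$ it is exactly \ref{conj:original-family-integrality}; thus a complete proof of the present conjecture formally contains that one, and the realistic plan is either to assume \ref{conj:original-family-integrality} or to prove it en route.

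For the ``only if'' direction in commutativity, I would work representation-theoretically. Writing $\rho_\mu\colon \RR\symm_n \to \End(S^\mu)$ for the irreducible indexed by a partition $\mu$ of $n$, the decomposition $\RR\symm_n \cong \prod_\mu \End(S^\mu)$ shows that $\nu_\lambda$ and $\nu_\gamma$ commute if and only if $\rho_\mu(\nu_\lambda)$ and $\rho_\mu(\nu_\gamma)$ commute for every $\mu$; equivalently, the eigenvalues of $\nu_\lambda$ on the $S^\mu$-isotypic block of $\RR\symm_n$ are the eigenvalues of $\rho_\mu(\nu_\lambda)$. To prove non-commutativity for a pair $(\lambda,\gamma)$ not covered by the ``if'' direction, it suffices to exhibit a single $\mu$ --- hopefully a small one, such as a hook or a two-row shape --- for which $\rho_\mu(\nu_\lambda)$ and $\rho_\mu(\nu_\gamma)$ fail to commute. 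The entries of $\rho_\mu(\nu_\lambda)$, and of the product $\rho_\mu(\nu_\lambda)\rho_\mu(\nu_\gamma)$, are combinatorially weighted counts of pairs of intersection flats $(X,Y)$ in the $\lambda$- and $\gamma$-orbits subject to ``same side'' conditions relative to $c_\grid$, and the obstruction to commutativity should be an asymmetry in these counts already visible on low-dimensional $\mu$.

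For the ``only if'' direction in integrality, the plan is the contrapositive, running the eigenvalue integrality principle (\ref{prop:integrality-principle}) in reverse. If $\nu_\lambda$ lies outside the two families, one wants a partition $\mu$ of $n$ for which the characteristic polynomial of $\rho_\mu(\nu_\lambda)$ has a root that is not a rational integer; then $\nu_\lambda$ has a non-integer eigenvalue and cannot coincide with any member of the two families. Here too I would compute $\rho_\mu(\nu_\lambda)$ for small $\mu$ --- a hook, a two-row, or a two-column shape, where the matrix should be small and available in closed form --- anticipating that for every such $\lambda$ an irreducible quadratic factor over $\QQ$ already appears.

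Both ``only if'' directions must hold for all $n$, and there is no closed form for the structure constants, or for the matrices $\rho_\mu(\nu_\lambda)$, in general; so the crux is a uniformity argument, and this is where I expect the main obstacle. The route I would pursue is a \emph{representation-stability} (polynomiality-in-$n$) framework: on fixing the ``shapes'' of $\lambda$ and $\gamma$ (all parts other than the trailing $1$'s) and of $\mu$ (all rows after the first), the relevant matrix entries, commutator entries, and characteristic-polynomial coefficients should be eventually polynomial in $n$. The classification would then reduce to a finite list of shape-versus-shape interactions, to be analyzed symbolically, together with a finite computer verification extending the authors' check for $n \le 6$ up to an effective stability threshold. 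Making this rigorous --- identifying the correct combinatorial category, proving the finite generation that forces polynomiality, and extracting a threshold small enough to be checked by machine --- is the principal difficulty, and it is exactly what is needed to exclude a sporadic commuting pair, or a sporadic integral $\nu_\lambda$, emerging only for large $n$. A final caveat is that the ``if'' half of the integrality biconditional is itself the still-open \ref{conj:original-family-integrality}, so any complete proof of the present conjecture necessarily resolves that one as well.
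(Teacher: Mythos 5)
You should be aware at the outset that the statement you are attacking is stated in the paper only as a conjecture: the authors give no proof, just computer verification for $1\leq n\leq 6$, so there is no argument of theirs to compare yours against. Judged on its own terms, what you have written is a program rather than a proof, and the genuinely open content of the conjecture is exactly the part you leave unargued. The parts you do settle (or correctly cite) are the easy half: commutativity within each family is \ref{thm:original-family-commutativity} and \ref{thm:second-family}; integrality for $\{\nu_{(2^k,1^{n-2k})}\}$ is part of \ref{thm:second-family}; the Fourier-transform reduction to the blocks $\rho_\mu(\nu_\lambda)$ is sound; and $\nu_{(n)}=\grid$ and $\nu_{(1^n)}$ a scalar multiple of $\sum_{w\in\symm_n}w$ are central, so excluding those two partitions is harmless. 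But integrality for $\{\nu_{(k,1^{n-k})}\}$ is precisely the open \ref{conj:original-family-integrality}, which you acknowledge rather than prove, so even the ``if'' half of your integrality biconditional is not established; and both ``only if'' halves --- non-commutativity of every pair outside the two families, and a non-integral eigenvalue for every $\nu_\lambda$ outside them --- rest entirely on a representation-stability framework that is described but never constructed.

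Concretely, the missing steps are these. First, no finite-generation or polynomiality-in-$n$ statement is proved (or even precisely formulated) for the entries of $\rho_\mu(\nu_\lambda)$, of the commutator $\rho_\mu(\nu_\lambda)\rho_\mu(\nu_\gamma)-\rho_\mu(\nu_\gamma)\rho_\mu(\nu_\lambda)$, or of the characteristic polynomial $\det\bigl(t\cdot\matid-\rho_\mu(\nu_\lambda)\bigr)$, as trailing parts of size $1$ are appended to $\lambda$, $\gamma$, $\mu$; without that, the reduction to ``finitely many shape-versus-shape interactions'' is an expectation, not a theorem. Second, no effective stability threshold is extracted, so the proposed finite computer verification cannot even be specified, let alone carried out, and a sporadic commuting pair or integral $\nu_\lambda$ for large $n$ is not excluded. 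Third, you exhibit no worked witness: no closed form for $\rho_\mu(\nu_\lambda)$ on a hook or two-row $\mu$, no single pair $(\lambda,\gamma)$ outside the families with a visibly nonzero commutator, and no $\mu$ on which the characteristic polynomial acquires an irreducible quadratic factor over $\QQ$. On this last point your choice of ``small'' witnesses needs care: one-dimensional isotypic pieces commute trivially, and by \ref{cor:general-equivariant-kernel} together with \ref{ex:two-type-A-examples} the image of $\nu_\lambda$ is governed by the modules $\WH_{\OOO_\mu}$ for $\mu$ refining $\lambda$, while the paper's own eigenvalue tables show many low-dimensional isotypic components carrying zero or integer eigenvalues even where global non-integrality is expected. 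So identifying a witness $\mu$ that detects non-commutativity or irrationality \emph{uniformly in $n$} is itself part of the open problem, not a routine afterthought; as it stands, your proposal restates the conjecture's difficulty rather than resolving it.
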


  \subsection{Outline of the paper}

    We will define and study the operators $\nu_\OOO$ 
    at various levels of generality. 
    \begin{enumerate}
      \item[(H)] For hyperplane arrangements $\AAA$ (see \ref{subsec:arrangements}).
      \item[(L)] For hyperplane arrangements invariant under a (linear) action of
                 a finite group $W$ (see \ref{subsec:equivariant-setting}).
      \item[(R)] For reflection arrangements corresponding to a real reflection group $W$
                 (see \ref{subsec:reflection-group-setting}).
      \item[(W)] For crystallographic reflection groups or, equivalently, Weyl groups $W$.
      \item[(S)] For the symmetric group $\symm_n$ (see \ref{sec:second-family} and 
                 \ref{sec:original-family}). 
    \end{enumerate}
    Different properties of the operators $\nu_\OOO$ manifest themselves
    at different levels of generality.

    In \ref{sec:definitions} we define $\nu_\OOO$ as in (H) for all hyperplane 
    arrangements $\AAA$,
    and prove semidefiniteness by exhibiting a ``square root'' $\pi$
    for which  $\nu_\OOO = \pi^T \pi$. We also explain how $\nu_\OOO$ interacts with
    any finite group $W$ acting on $\AAA$ as in (L).
    We then particularize to case (R), and exhibit a second
    square root $\pi$ that will turn out to be the transition matrix
    for a certain \BHR random walk. 
    The rest of this chapter contains some general reductions
    and principles, such as the {\deffont Fourier transform} reduction
    \index{Fourier transform}%
    for the reflection group case, and an analysis of the Perron-Frobenius eigenspace.
    \index{Perron-Frobenius Theorem}%

    In \ref{sec:rank-one}, we discuss and prove \ref{thm:Gelfand-triple}
    and deduce from it \ref{thm:rank-one}.  We also discuss
    some interesting conjectures that it suggests, and a
    relation to {\deffont linear ordering polytopes}.
    \index{linear ordering polytope}%
    \index{polytope!linear ordering}%

    In \ref{sec:BHR} we review some of the theory of \BHR random walks, with
    features at different levels of generality.  In particular, some
    of the $W$-equivariant theory of the \BHR random walks presented here
    have neither been stated nor proven in the literature 
    in the generality required for the later results,
    so these are discussed in full detail here.  This equivariant theory
    extends to a commuting $\ZZ_2$-action coming from 
    the scalar multiplication operator $-1$.  Whenever $W$ does not already contain
    this scalar $-1$, the $W \times \ZZ_2$-equivariant picture provides extra structure
    in analyzing the eigenspaces of $\nu_\OOO$.  This chapter concludes
    with some useful reformulations of the representations that make up the
    eigenspaces, which are closely related to {\deffont Whitney cohomology}, 
    \index{Whitney cohomology}%
    {\deffont free Lie algebras} and {\deffont higher Lie characters}.
    \index{Lie!algebra}%
    \index{algebra!free Lie}%
    \index{Lie!algebra!free}%
    \index{Lie!character}%
    \index{character!Lie}%
 
    The remainder of the paper focuses on the case (S), that is, reflection
    arrangements of type $A_{n-1}$, where $W=\symm_n$.

    In \ref{sec:second-family} we discuss $\nu_{({2^k,1^{n-2k}})}$
    and prove \ref{thm:second-family}.  As mentioned earlier,
    although the proof predicts the $\RR W$-module structure on the
    simultaneous eigenspaces, it does {\emphfont not} predict the
    eigenvalues themselves.

    In \ref{sec:original-family} we discuss the original
    family of matrices $\{\nu_{(k,1^{n-k})} \}_{k=1,2,\ldots,n}$,
    starting with a proof of \ref{thm:original-family-commutativity}.
    We then proceed to examine their
    simultaneous eigenspaces.
    Here one can take advantage of a block-diagonalization that comes
    from a certain $W$-equivariant
    filtration respected by these operators.  One can also fully analyze the irreducible
    decomposition of the filtration factors using a close connection
    with {\deffont derangements}, {\deffont desarrangements} and the
    \index{derangement}%
    \index{desarrangement}%
    homology of the {\deffont complex of injective words}.  We review this material,
    \index{complex of injective words}%
    including some unpublished results \cite{ReinerWachs2002} of the first author 
    and M. Wachs, and extend this to the $W \times \ZZ_2$-equivariant
    picture mentioned earlier.  Some of this is used to
    piggyback on Uyemura-Reyes's construction of
    the eigenvectors of $\nu_{(n-1,1)}$ within a certain isotypic component;
    we show with no extra work that these are simultaneous eigenvectors 
    for {\emphfont all} of the $\{\nu_{(k,1^{n-k})} \}_{k=1,2,\ldots,n}$.

  \tableofcontents

\section{Defining the operators}
           \label{sec:definitions}

  \subsection{Hyperplane arrangements and definition of $\nu_\OOO$}
    \label{subsec:arrangements}

    We review here some standard notions for arrangements of hyperplanes;
    good references are \cite{OrlikTerao1992} and \cite{Stanley2007}.

    A ({\deffont central}) {\deffont hyperplane arrangement} $\AAA$ in a $d$-dimension real vector space $V$
    \index{hyperplane arrangement}%
    \index{central hyperplane arrangement}%
    \index{arrangement!central}%
    will here mean a finite collection $\{ H \}_{H \in \AAA}$ of codimension 
    one $\RR$-linear subspaces, that is, {\deffont hyperplanes} passing through
    \index{hyperplane}%
    the origin.  

    An intersection $X=H_{i_1} \cap \cdots \cap H_{i_m}$ of some subset of the
    hyperplanes will be called an {\deffont intersection subspace}. The collection of
    \index{intersection subspace}%
    all intersection subspaces, partially ordered by reverse inclusion, is called
    the {\deffont intersection lattice} $\LLL=\LLL(\AAA)$.
    \index{intersection lattice}%
    This turns out to be a {\deffont geometric lattice}
    \index{geometric lattice}%
    (= atomic, upper semimodular lattice), ranked by the rank function
    $r(x)=\dim{V/X}$ 
    \nomenclature[ar]{$r(x)$}{rank function on geometric lattice}%
    with bottom element $\hat{0}= V := \bigcap_{H \in \emptyset} H$, 
    \nomenclature[ar]{$\hat{0}$}{unique minimal element of poset}%
    and a top element $\hat{1}=\bigcap_{H \in \AAA} H$.
    \nomenclature[ar]{$\hat{1}$}{unique maximal element of a poset}%
    We will sometimes assume that $\AAA$ is {\deffont essential}, meaning
    \index{essential hyperplane arrangement}%
    \index{arrangement!essential}%
    that $\bigcap_{H \in \AAA} H =\{0\}$, so that $\LLL$ has rank $d=\dim(V)$.

     For each $X$ in $\LLL$, we will consider the
     {\deffont localized arrangement}
     \index{localized arrangement}%
     \index{arrangement!localized}%
     $$
       \AAA/X:=\{ H / X: H \in \AAA, H \supset X \}
     $$
     \nomenclature[ar]{$\AAA/X$}{localized arrangements of all $H/X$ for $H \in \AAA$}%
     inside the quotient space $V/X$, having intersection lattice 
     $\LLL(\AAA/X) \cong [V,X]$. Here for elements $U_1,U_2 \in \LLL$ we denote by
     $[U_1,U_2]$ the closed interval $\{ U \in \LLL~|~U_1 \leq U \leq U_2 \}$.  
     \nomenclature[ar]{$[X,Y]$}{closed interval between $X$ and $Y$ in a poset}%
     The {\deffont complement} $V \setminus \bigcup_{H \in \AAA} H$ decomposes
     \index{complement of an arrangement}%
     into connected components which are open polyhedral cones $c$, called {\deffont chambers};
     \index{chamber}%
     the set of all chambers will be denoted $\CCC=\CCC(\AAA)$.  

     Given any chamber $c$ in $\CCC$ and any intersection subspace $X$,
     there is a unique chamber $c/X$ in $V/X$ for the localized arrangement
     \nomenclature[ar]{$c/X$}{chamber in the localized arrangement $\AAA/X$ corresponding to the chamber $c$}%
     $\AAA/X$ for which the quotient map $q:V \twoheadrightarrow V/X$
     has $q^{-1}(c/X) \supseteq c$ (see \ref{fig:localized}).  

     \begin{figure}
       \setlength{\unitlength}{0.75cm}
       \begin{picture}(0,0)(5,2)
         \includegraphics[width=0.7\textwidth]{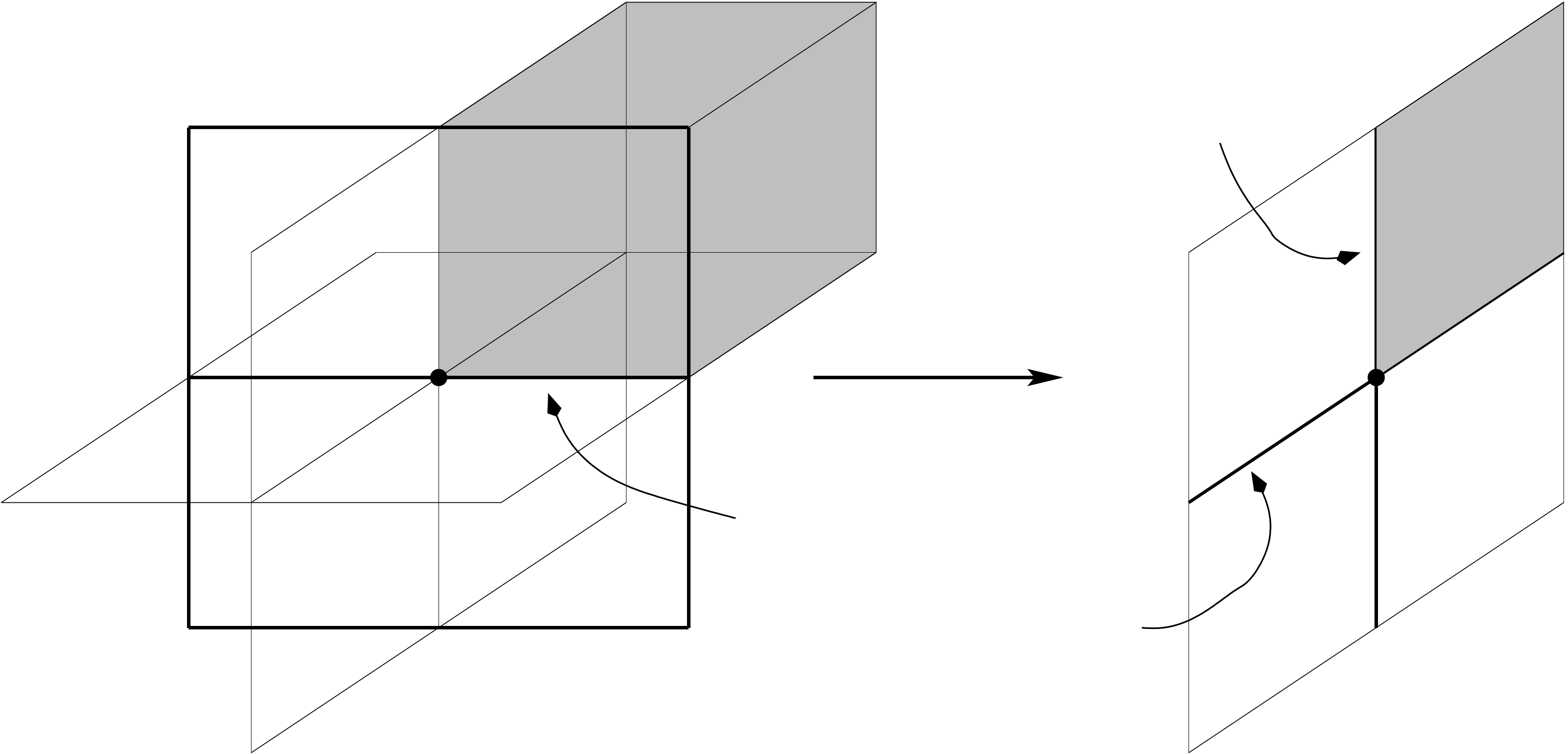}
       \end{picture}%
       \setlength{\unitlength}{4100sp}%
       \begin{picture}(2000,2000)(4000,0)
         \put(2950, 1150){\makebox(0,0)[lb]{\color[rgb]{0,0,0}$H_1$}}
         \put(3150,-0500){\makebox(0,0)[lb]{\color[rgb]{0,0,0}$H_2$}}
         \put(2600, 0180){\makebox(0,0)[lb]{\color[rgb]{0,0,0}$H_3$}}
         \put(4700, 0000){\makebox(0,0)[lb]{\color[rgb]{0,0,0}$X$}}
         \put(7000,-0400){\makebox(0,0)[lb]{\color[rgb]{0,0,0}$\RR^3/X$}}
         \put(6000, 1300){\makebox(0,0)[lb]{\color[rgb]{0,0,0}$H_1/X$}}
         \put(5530,-0350){\makebox(0,0)[lb]{\color[rgb]{0,0,0}$H_3/X$}}
         \put(4750, 1300){\makebox(0,0)[lb]{\color[rgb]{0,0,0}$c$}}
         \put(7000, 0950){\makebox(0,0)[lb]{\color[rgb]{0,0,0}$c/X$}}
         \put(5300, 0600){\makebox(0,0)[lb]{\color[rgb]{0,0,0}$q$}}
       \end{picture}%
       \bigskip
       \bigskip
       \bigskip
       \bigskip
       \caption{Arrangement and its localization}
         \label{fig:localized}
     \end{figure}

     We can now define our main object of study.

     \begin{Definition}
        \label{defn:nu-definition}
        Given two chambers $c, c'$ in $\CCC$, and an intersection subspace 
        $X$ in $\LLL$, say that $X$ is a {\deffont noninversion subspace} for $\{c,c'\}$
        \index{noninversion number}%
        if $c/X = c'/X$.

        Given any subset $\OOO \subseteq \LLL$, define a statistic on (unordered) 
        pairs $\{c,c'\}$ of chambers 
        $$
          \ninv_\OOO(c,c') := \ninv_\OOO(c',c) := \Big|\big\{ X \in \OOO: c/X=c'/X   \big\} \Big|.
          \nomenclature[co]{$\ninv_\OOO(c,c')$}{number of $X \in \OOO$ for which $c/X = c'/X$}%
        $$

        Define the matrix $\nu_\OOO$ in $\ZZ^{\CCC \times \CCC}$ whose 
        $(c,c')$-entry equals $\ninv_\OOO(c,c')$.
        \nomenclature[co]{$\nu_\OOO$}{matrix in $\ZZ^{\CCC \times \CCC}$ whose $(c,c')$-entry equals $\ninv_\OOO(c,c')$}%
        Alternatively, identify $\nu_\OOO$ with the following $\ZZ$-linear operator on the 
        free $\ZZ$-module $\ZZ\CCC$ that has basis indexed by the chambers
        \nomenclature[ar]{$\ZZ\CCC$}{free $\ZZ$-module with basis $\CCC$}%
        $\CCC$:
        \begin{equation}
          \label{eqn:nu-definition}
          \begin{array}{rcl}
            \ZZ\CCC &\overset{\nu_\OOO}\longrightarrow & \ZZ\CCC \\
            c'      &\longmapsto                       & \sum_{c \in \CCC} \ninv_\OOO(c,c')  \cdot c.\\
          \end{array}
        \end{equation}
      \end{Definition}

      Note that since by definition $\ninv_\OOO(c,c') = \ninv_\OOO(c',c)$ it follows that
      $\nu_\OOO$ is a symmetric matrix.

      \begin{Example}
        We consider the arrangement $\AAA = \{ H_1,H_2,H_3 \}$ of the coordinate hyperplanes in $\RR^3$ 
        from \ref{fig:localized}. Chambers $\CCC$ are in bijection with $\{ +1,-1\}^3$, where the 
        image of the chamber is the sign pattern $\epsilon = (\epsilon_1, \epsilon_2, \epsilon_3)$ 
        of any of its points.  

        If $X = H_1 \cap H_3$, then $\AAA/X = \{ H_1 / X, H_3/X \}$. For $c=(+1,+1,+1)$, the chamber $c / X$ in 
        $\RR^3/X \cong \RR^2$ can again be seen as the positive quadrant.
        The only other chamber $c' \in \CCC$ for which $c/ X = c'/X$ is the image $c'=(+1,-1,+1)$ of $c$ 
        reflection through $H_2$. 
      \end{Example}

      \begin{Example}
        \label{ex:first-type-A}
        Let $V=\RR^n$ and $\AAA$ the
        reflection arrangement of type $A_{n-1}$, whose
        hyperplanes are $H_{ij}=\{ x_i = x_j \}_{1 \leq i < j \leq n}$ 
        with the action of $W=\symm_n$ permuting coordinates.

        Intersection subspaces $X$, such as the subspace 
        $\{x_1 = x_3 = x_4, x_2 = x_7\}$ inside $V=\RR^7$,
        correspond to set partitions of the coordinates $[n]:=\{1,2,\ldots,n\}$
        \index{set partition}
        \index{partition!set}
        \nomenclature[co]{$[n]$}{set $\{1,\ldots, n\}$ of the first $n$ natural numbers}%
        into blocks $[n]=\bigsqcup_i B_i$ which indicate which coordinates are equal;
        \nomenclature[co]{$[n] = \bigsqcup_i B_i$}{set partition of $[n]$ with blocks $B_i$}%
        in this example, this set partition is
        $$
          [7]=\{1,3,4\} \sqcup \{2,7\} \sqcup \{5\} \sqcup\{6\}.
        $$
        The intersection lattice $\LLL$ is therefore isomorphic to
        the lattice of set-partitions
        of $[n]$, ordered by refinement, having the discrete (all singleton)
        partition as $\hat{0}$, and the trivial partition with one block
        as $\hat{1}$.

        Chambers in the reflection arrangement of type $A_{n-1}$ are the collections of vectors
        $(x_1, \ldots, x_n) \in \RR^n$ for which $x_{w_1} < x_{w_2} < \cdots < x_{w_n}$ 
        given a fixed $w \in \symm_n$, where $w_i = w(i)$ for $i \in [n]$. We will denote the chamber
        corresponding to a fixed $w$ by $c_w$.

        Given an intersection subspace $X$, corresponding
        to the partition $[n]=\bigsqcup_i B_i$, and a chamber $c_w$,
        the information contained in the chamber $c_w/X$ records for each $i$
        the linear ordering in which the letters of $B_i$ appear as a subsequence
        within $w=(w_1,w_2,\ldots,w_n)$.  Therefore, $c_u/X = c_v/X$ if and only if for each $i$ the
        letters of $B_i$ appear in the same order in both $u$ and $v$.
      \end{Example}

    \subsection{Semidefiniteness}
      \label{subsec:semidefiniteness}

      As explained after \ref{defn:nu-definition} the matrix $\nu_\OOO$ is symmetric and hence the
      corresponding linear operator is self-adjoint with respect to the usual pairing 
      $\langle -,- \rangle$ on $\ZZ\CCC$ that makes the basis vectors $c$
      orthonormal.  It is also positive semidefinite,
      as it has the following easily identified ``square root''.  

      \begin{Definition}
        \label{defn:chamber-localization-maps}
        Consider for each intersection
        subspace $X$ the $\ZZ$-linear map
        $$
          \begin{array}{rcl}
            \ZZ\CCC &\overset{\pi_X}\longrightarrow & \ZZ\CCC(\AAA/X) \\
             c      &\longmapsto                    & c/X \\
         \end{array}
       $$
       and having chosen a subset $\OOO \subseteq \LLL$, 
       consider the direct sum of maps $\pi_\OOO := \bigoplus_{X \in \OOO} \pi_X$
       \nomenclature[co]{$\pi_\OOO$}{rectangular ``square root'' of $\nu_\OOO$}%
       $$
         \ZZ\CCC \longrightarrow  \bigoplus_{X \in \OOO} \ZZ\CCC(\AAA/X)
       $$
    \end{Definition}

    \begin{Proposition}
       \label{prop:rectangular-square-root}
       One has the factorization
       $$
         \nu_\OOO = \pi_\OOO^T \circ \pi_\OOO.
       $$
       In particular, when scalars are extended from $\ZZ$ to $\RR$,
       one has
       $$
         \ker \nu_\OOO = \ker \pi_\OOO.
       $$
    \end{Proposition}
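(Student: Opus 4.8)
The plan is to check the factorization entrywise, and then to deduce the kernel identity from the standard fact that $\ker(\pi^{T}\pi)=\ker\pi$ for a matrix $\pi$ over $\RR$.

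First I would record the matrix of each $\pi_{X}$: with respect to the basis $\CCC$ of $\ZZ\CCC$ and the basis $\CCC(\AAA/X)$ of $\ZZ\CCC(\AAA/X)$, the map $\pi_{X}$ sends the basis vector $c$ to the basis vector $c/X$, so each column $c$ of $\pi_{X}$ has a single nonzero entry, a $1$ in row $c/X$. Declaring these bases orthonormal, the composite $\pi_{X}^{T}\circ\pi_{X}$ is then the operator on $\ZZ\CCC$ whose $(c,c')$-entry is $\langle c/X,\,c'/X\rangle$, which equals $1$ when $c/X=c'/X$ and $0$ otherwise. Since $\pi_{\OOO}=\bigoplus_{X\in\OOO}\pi_{X}$, we have $\pi_{\OOO}^{T}\circ\pi_{\OOO}=\sum_{X\in\OOO}\pi_{X}^{T}\circ\pi_{X}$, so its $(c,c')$-entry is $\bigl|\{X\in\OOO:c/X=c'/X\}\bigr|=\ninv_{\OOO}(c,c')$, which by \ref{defn:nu-definition} is exactly the $(c,c')$-entry of $\nu_{\OOO}$. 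This gives $\nu_{\OOO}=\pi_{\OOO}^{T}\circ\pi_{\OOO}$.

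For the kernel statement, extend scalars to $\RR$. The inclusion $\ker\pi_{\OOO}\subseteq\ker(\pi_{\OOO}^{T}\circ\pi_{\OOO})=\ker\nu_{\OOO}$ is immediate. Conversely, if $\nu_{\OOO}v=\pi_{\OOO}^{T}\pi_{\OOO}v=0$, then $0=\langle\pi_{\OOO}^{T}\pi_{\OOO}v,\,v\rangle=\langle\pi_{\OOO}v,\,\pi_{\OOO}v\rangle$, and positive-definiteness of the standard inner product on the real vector space $\bigoplus_{X\in\OOO}\RR\CCC(\AAA/X)$ forces $\pi_{\OOO}v=0$. Hence $\ker\nu_{\OOO}=\ker\pi_{\OOO}$.

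I do not expect a genuine obstacle here: the statement is essentially the formal fact that $\nu_\OOO$ is a Gram-type matrix built from the maps $\pi_X$. The only points needing care are the bookkeeping of which orthonormal bases and pairings are in play when identifying $\pi_{X}^{T}\circ\pi_{X}$, and the observation that the second inclusion genuinely uses the reals — over a field in which a sum of squares can vanish nontrivially it could fail, which is precisely why the proposition extends scalars from $\ZZ$ to $\RR$.
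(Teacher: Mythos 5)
Your proof is correct and follows essentially the same route as the paper: computing the $(c,c')$-entry of $\pi_\OOO^T\circ\pi_\OOO$ as a count of subspaces $X\in\OOO$ with $c/X=c'/X$. The only difference is that you spell out the standard kernel argument $\ker(\pi^T\pi)=\ker\pi$ over $\RR$, which the paper treats as implicit, and your remark about why real scalars are needed there is exactly right.
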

    \begin{proof}
       The $(c,c')$-entry of $\pi_\OOO^T \circ \pi_\OOO$ equals
     \begin{align*}
         &\sum_{X \in \OOO} \,\, \sum_{d \in \CCC(\AAA/X)} 
                 (\pi_{X})_{d,c} (\pi_{X})_{d,c'} \\
         & =\sum_{X \in \OOO} \Big|\{d \in \CCC(\AAA/X): c/X = d = c'/X\}\Big|  \\
         & = \Big|\{X \in \OOO: c/X = c'/X\}\Big|\\
         & = \ninv_\OOO(c,c').
     \qedhere
     \end{align*}
    \end{proof}

  \subsection{Equivariant setting}
    \label{subsec:equivariant-setting}

    Now assume that one has a finite subgroup $W$ of $\GL(V)$
    that preserves the arrangement $\AAA$ in the sense that for
    every $w$ in $W$ and every hyperplane $H$ of $\AAA$, the
    hyperplane $w(H)$ is also in $\AAA$.  Then $W$ permutes
    each of the sets $\AAA, \LLL, \CCC$, and hence acts
    $\ZZ$-linearly on $\ZZ\CCC$.  

    \begin{Proposition}
      If the subset $\OOO \subseteq \LLL$ 
      is also preserved by $W$, then the operator $\nu_\OOO$ on $\ZZ^\CCC$
      is $W$-equivariant.
    \end{Proposition}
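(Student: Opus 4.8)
The plan is to reduce the equivariance of the operator $\nu_\OOO$ to the $W$-invariance of the underlying statistic, namely the assertion that
$$
  \ninv_\OOO(wc,wc') = \ninv_\OOO(c,c')
  \qquad \text{for all } w \in W \text{ and all } c,c' \in \CCC.
$$
Granting this, the equivariance is a one-line computation from \eqref{eqn:nu-definition}: applying $w$ to $\nu_\OOO(c')$ gives $\sum_{c} \ninv_\OOO(c,c')\, w(c)$, and reindexing the sum by $c'' = w(c)$ turns this into $\sum_{c''} \ninv_\OOO(w^{-1}(c''),c')\, c'' = \sum_{c''} \ninv_\OOO(c'',w(c'))\, c'' = \nu_\OOO(w(c'))$, where the middle equality uses the invariance applied with the pair $(w^{-1}(c''),c')$.

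So the real work is the invariance of $\ninv_\OOO(-,-)$. First I would record the elementary fact that for each $w \in W$ and each $X \in \LLL$, the linear automorphism $w$ of $V$ descends to an isomorphism of quotient spaces $V/X \xrightarrow{\ \sim\ } V/w(X)$ carrying the localized arrangement $\AAA/X$ onto $\AAA/w(X)$ (here one uses that $W$ preserves $\AAA$), and hence a bijection $\CCC(\AAA/X) \to \CCC(\AAA/w(X))$. The key step is then to check that this bijection sends $c/X$ to $w(c)/w(X)$: this follows from the defining property of $c/X$ — that the quotient map $q \colon V \twoheadrightarrow V/X$ satisfies $q^{-1}(c/X) \supseteq c$ — together with the commuting square relating $q$ to the quotient map $V \twoheadrightarrow V/w(X)$ via $w$. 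Consequently $c/X = c'/X$ holds if and only if $w(c)/w(X) = w(c')/w(X)$, i.e.\ $X$ is a noninversion subspace for $\{c,c'\}$ exactly when $w(X)$ is a noninversion subspace for $\{w(c),w(c')\}$.

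Now I would invoke the hypothesis that $W$ preserves $\OOO$: the map $X \mapsto w(X)$ is then a bijection of $\OOO$ to itself, and by the previous paragraph it restricts to a bijection
$$
  \{ X \in \OOO : c/X = c'/X \} \ \xrightarrow{\ \sim\ } \ \{ X \in \OOO : w(c)/X = w(c')/X \}.
$$
Taking cardinalities yields $\ninv_\OOO(c,c') = \ninv_\OOO(w(c),w(c'))$, completing the argument. (Alternatively one could deduce this from \ref{prop:rectangular-square-root} by checking that $\pi_\OOO$ intertwines the $W$-action on $\ZZ\CCC$ with the permutation action of $W$ on $\bigoplus_{X \in \OOO} \ZZ\CCC(\AAA/X)$ that simultaneously permutes the summands by $X \mapsto w(X)$ and acts within them by the bijections above; but the direct route via the statistic is shorter.) The only genuinely non-formal point is the compatibility $w(c)/w(X) = w \cdot (c/X)$ of the chamber-localization maps with the group action, and even that is immediate once the quotient maps are set up correctly; I do not anticipate a serious obstacle here.
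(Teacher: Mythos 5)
Your proposal is correct and follows the same route as the paper: the paper's proof consists exactly of the observation that $W$ preserving $\OOO$ gives $\ninv_\OOO(c,c') = \ninv_\OOO(w(c),w(c'))$, declaring the rest straightforward. You simply fill in the details the paper omits — the reindexing computation and the compatibility $w(c)/w(X) = w\cdot(c/X)$ underlying the bijection of noninversion subspaces — all of which is sound.
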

    \begin{proof}
      This is straightforward from the observation that
      since $W$ preserves $\OOO$, one 
      has 
      \begin{gather*}
        \ninv_\OOO(c,c') = \ninv_\OOO(w(c),w(c')).
        \qedhere
      \end{gather*}
    \end{proof}

    \begin{Example}
      \label{ex:first-type-A-action}
      We resume \ref{ex:first-type-A} and let $V = \RR^n$ and $\AAA$ the
      reflection arrangement of the symmetric group $W = \symm_n$. Hence the intersection lattice
      $\LLL$ is the lattice of set partitions of $[n]$ ordered by refinement. 
      The group $\symm_n$ acts on $\RR^n$ be permuting coordinates. Thus $w \in \symm_n$ acts on 
      $\LLL$ by sending the set partition $[n]=\bigsqcup_i B_i$ to the set partition
      $[n] = \bigsqcup_i w(B_i)$, where $w(B_i) = \{ w(j)~|~j \in B_i\}$. 
      Therefore, the $\symm_n$-orbits on $\LLL$ are indexed by number partitions $\lambda \vdash n$.
      \nomenclature[co]{$\lambda \vdash n$}{number partition $\lambda$ of $n$}%
      The orbit $\OOO_\lambda$ consist of those intersection subspaces or equivalently set partitions
      of $[n]$ for which the block sizes ordered in decreasing order are the parts of $\lambda$. 
      We call such a set partition a set partition of {\deffont type} $\lambda$.
      \index{type of a partition}%
      \nomenclature[co]{$\OOO_\lambda$}{set partitions of type $\lambda$}%

      For example, for $\lambda = (k,1^{n-k})$ we obtain as $\OOO_\lambda$ the ${n \choose k}$ set 
      partitions of $[n]$ whose only non-singleton block is a block of size $k$.
    \end{Example}

   \subsection{$\ZZ_2$-action and inversions versus noninversions}
      \label{subsec:Z2-action}

      Let $\matid_V$ be matrix of the identity endomorphism of $V$. 
      \nomenclature[al]{$\matid_V$}{matrix of the identity endomorphism of $V$}%
      The scalar matrix $-\matid_V$ acting on $V$ preserves any
      arrangement $\AAA$,
      and hence gives rise to an action of $\ZZ_2=\{1,\tau\}$ in
      which $\tau$ acts by $-\matid_V$.  When one has a subgroup $W$ of
      $\GL(V)$ preserving $\AAA$, since $\tau$ acts by a scalar matrix,
      this $\ZZ_2$-action commutes with the action of $W$,
      giving rise to a $W \times \ZZ_2$-action.  Of course,
      if $W$ already contains the element $-\matid_V$, this provides
      no extra information beyond the $W$-action.  But when $-\matid_V$ is
      not an element of $W$ already, it is worthwhile to 
      consider this extra $\ZZ_2$-action.

      We wish to explain how carrying along this $\ZZ_2$-action
      naturally eliminates a certain choice we have made.  
      Instead of considering the matrix/operator $\nu_\OOO$,
      one might have considered the matrix/operator
      $\iota_\OOO = (\inv_\OOO(c,c'))_{c,c' \in \CCC}$ having entry $\inv_\OOO(c,c')$ 
      defined to be
      \nomenclature[co]{$\iota_\OOO$}{matrix in $\ZZ^{\CCC \times \CCC}$ whose $(c,c')$-entry equals $\inv_\OOO(c,c')$}%
      \nomenclature[co]{$\inv_\OOO(c,c')$}{number of subspace $X$ in $\OOO$ for which $c/X = - c'/X$}%
      the number of subspaces $X$ in $\OOO$ which are 
      {\deffont inversions} for $c,c'$ in the sense that
      \index{inversion}%
      $c/X=-c'/X$.  
      Taking into account the $\ZZ_2$-action
      eliminates the need to consider $\iota_\OOO$ separately:

      \begin{Proposition}
        The two operators $\nu_\OOO$ and $\iota_\OOO$ are
        sent to each other by the generator $\tau$ of the
        $\ZZ_2$-action:
        $$ 
          \iota_\OOO = \tau \circ \nu_\OOO = \nu_\OOO \circ \tau. \qed
        $$
      \end{Proposition}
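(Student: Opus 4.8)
The plan is to reduce the statement to a single elementary observation: that the generator $\tau$ of the $\ZZ_2$-action commutes with each chamber-localization map $\pi_X$ of \ref{defn:chamber-localization-maps}. First I would recall that $\tau$ acts on $V$ by $-\matid_V$, hence on $\CCC$ by sending a chamber $c$ to its antipode $-c := \{-v : v \in c\}$, and therefore acts on $\ZZ\CCC$ as the \emph{permutation} of basis vectors $c \mapsto -c$ --- not as multiplication by the scalar $-1$. The key point to establish is then
\[
  (-c)/X = -(c/X) \qquad \text{for all } c \in \CCC,\ X \in \LLL,
\]
which I would deduce from the fact that the quotient map $q : V \twoheadrightarrow V/X$ is linear and hence intertwines $-\matid_V$ with $-\matid_{V/X}$: since $c/X$ is by definition the unique chamber of $\AAA/X$ with $q^{-1}(c/X) \supseteq c$, applying $-\matid_V$ to both sides shows $q^{-1}\bigl(-(c/X)\bigr) \supseteq -c$, which identifies $(-c)/X$ with $-(c/X)$.

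With that identity in hand, the two asserted equalities become short matrix-entry computations. Since $\tau$ sends the basis vector $c'$ to the chamber $-c'$, the $(c,c')$-entry of $\nu_\OOO \circ \tau$ is the $(c,-c')$-entry of $\nu_\OOO$, namely
\[
  \ninv_\OOO(c,-c') = \bigl|\{X \in \OOO : c/X = (-c')/X\}\bigr| = \bigl|\{X \in \OOO : c/X = -(c'/X)\}\bigr| = \inv_\OOO(c,c'),
\]
using the displayed identity; hence $\nu_\OOO \circ \tau = \iota_\OOO$.

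For the remaining equality it suffices to observe that $\tau$ commutes with $\nu_\OOO$. Applying the displayed identity in \emph{both} arguments gives $\ninv_\OOO(-c,-c') = \bigl|\{X \in \OOO : -(c/X) = -(c'/X)\}\bigr| = \ninv_\OOO(c,c')$, so relabelling both the rows and the columns of the symmetric matrix $\nu_\OOO$ by $\tau$ leaves it unchanged, i.e. $\tau \circ \nu_\OOO \circ \tau = \nu_\OOO$. As $\tau$ is an involution, this yields $\tau \circ \nu_\OOO = \nu_\OOO \circ \tau$, and combining with the previous paragraph gives $\tau \circ \nu_\OOO = \nu_\OOO \circ \tau = \iota_\OOO$. (Alternatively, one can compute the $(c,c')$-entry of $\tau \circ \nu_\OOO$ directly by reindexing and arrive at $\ninv_\OOO(-c,c') = \inv_\OOO(c,c')$.)

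I do not anticipate a genuine obstacle here; the argument is essentially the two-line computation above. The only subtlety worth flagging is notational: one must consistently read $\tau$ as the permutation of the chamber basis of $\ZZ\CCC$ induced by $-\matid_V$ (and not as scalar multiplication by $-1$), and must read the symbol ``$-c'/X$'' in \ref{subsec:Z2-action} as the antipodal chamber $-(c'/X)$ inside the localized arrangement $\AAA/X$. Once these conventions are fixed, the proof is immediate.
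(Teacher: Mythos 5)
Your proof is correct, and it supplies exactly the verification that the paper treats as immediate (the proposition is stated with its proof omitted): the only substantive point is the identity $(-c)/X = -(c/X)$, which you justify properly via linearity of the quotient map, after which both equalities are the one-line matrix-entry computations you give. Your care about reading $\tau$ as the antipodal permutation of the chamber basis (rather than the scalar $-1$ on $\ZZ\CCC$) is the right convention and matches the paper's usage.
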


      Thus if we want to consider the eigenvalues 
      and eigenspaces, it is equivalent to consider
      either $\nu_\OOO$ or $\iota_\OOO$, as long
      as we also keep track of the $\ZZ_2$-action
      on the eigenspaces.  In what follows, we prefer to consider 
      the positive semidefinite operator $\nu_\OOO$
      rather than the indefinite operator $\iota_\OOO$.

      \begin{Example}
         \label{ex:first-type-A-iota}
         We return to the setting of \ref{ex:first-type-A} and \ref{ex:first-type-A-action}. 
         For $\lambda = (k,1^{n-k})$ we had seen that $\OOO_\lambda$ consists of all 
         set partitions of $[n]$ whose unique non-singleton block is of size $k$.
         Thus $X \in \OOO_\lambda$ is uniquely defined by specifying a $k$-subset $B$ of
         $[n]$. 
         Let $u,v \in \symm_n$ with corresponding chambers $c_u$ and $c_v$. 
         From \ref{ex:first-type-A} we know $c_u / X = c_v / X$ if and only if 
         the linear orders defined by $u$ and $v$ coincide on $B$. 
         Since there are ${n \choose k}$ choices for $k$-subsets $B$ we have
         $$\inv_{\OOO_{(k,1^{n-k})}} (c_u,c_v) = {n \choose k} - \ninv_{\OOO_{(k,1^{n-k})}} (c_u,c_v).$$
         In particular, $\inv_{\OOO_{(2,1^{n-2})}} (c_u,c_v)$ is the number of inversions of
         $v^{-1}u$.
      \end{Example}

  \subsection{Real reflection groups}
    \label{subsec:reflection-group-setting}

    We review here some standard facts about real, Euclidean
    finite reflection groups; a good reference is \cite{Humphreys1992}.

    Here we will adopt the convention that an {\deffont (orthogonal) reflection}
    \index{reflection}%
    \index{reflection!orthogonal}%
    in $\GL(V)$ for an $\RR$-vector space $V$ is an
    orthogonal involution $s$ whose fixed subspace $V^s$ is some hyperplane $H$.
    Necessarily, such an element $s$ has $s^2=\matid_V$ and acts by multiplication
    by $-1$ on the line $H^\perp$.  A {\deffont (real) reflection group} $W$
    \index{real reflection group}%
    \index{reflection!group}%
    \index{group!reflection}%
    is a finite subgroup of $\GL(V)$ generated by
    {\deffont reflections}.

    To any reflection group $W$ there is naturally associated its
    {\deffont arrangement of reflecting hyperplanes} $\AAA$, consisting
    \index{arrangement!reflecting hyperplane}%
    \index{reflecting!hyperplane!arrangement}%
    of all hyperplanes $H$ arising as $V^s$ for reflections $s$ in $W$.
    In this situation it is known that the set of chambers $\CCC$ carries
    a {\deffont simply transitive} action of $W$.  Therefore, after making
    \index{simply transitive}%
    a choice of {\deffont fundamental/identity/base chamber} $c_\grid$,
    \index{fundamental!chamber}%
    \index{identity chamber}%
    \index{base chamber}%
    \index{chamber!base}%
    \index{chamber!fundamental}%
    \index{chamber!identity}%
    one can identify the $W$-action on $\ZZ\CCC$
    with the left-regular $W$-action on the group algebra $\ZZ W$:
    \begin{equation}
      \label{eqn:group-algebra-is-chambers}
      \begin{array}{rcl}
         \ZZ W &\longrightarrow &\ZZ\CCC \\
             w &\longmapsto     & c_w:=w(c_\grid).
      \end{array}
    \end{equation}
    Now assume one is given a $W$-stable subset $\OOO \subseteq \LLL$,
    and define the statistic 
    $$
      \begin{aligned}
         \ninv_\OOO(w)&:=\ninv_\OOO(c_\grid,c_w) \\
             &= \ninv_\OOO(c_w,c_\grid) \\
             &= \ninv_\OOO(w(c_\grid),w(c_{w^{-1}})) \\
             &= \ninv_\OOO(c_\grid,c_{w^{-1}}) \\
             &= \ninv_\OOO(w^{-1}).
      \end{aligned}
    $$

    \begin{Proposition}
      For any $W$-stable subset $\OOO \subseteq \LLL$,
      under the identification \ref{eqn:group-algebra-is-chambers}, the
      operator $\nu_\OOO$ acts on $\ZZ W$ as right-multiplication by the element
      $$
        \sum_{w \in W} \ninv_\OOO(w) \cdot w.
      $$
    \end{Proposition}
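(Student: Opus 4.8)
The plan is to unwind both sides on the group-algebra basis $\{w\}_{w\in W}$ and compare coefficients, using nothing beyond the $W$-equivariance of $\ninv_\OOO$ established in \ref{subsec:equivariant-setting}, the symmetry $\ninv_\OOO(c,c')=\ninv_\OOO(c',c)$ noted after \ref{defn:nu-definition}, and the defining relation $c_w=w(c_\grid)$.

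First I would fix $v\in W$ and read off the image of the corresponding chamber $c_v$ under $\nu_\OOO$ straight from \ref{eqn:nu-definition}: since the chambers are exactly $\{c_u\}_{u\in W}$, one gets $\nu_\OOO(c_v)=\sum_{u\in W}\ninv_\OOO(c_u,c_v)\,c_u$, which under the identification \ref{eqn:group-algebra-is-chambers} corresponds to $\sum_{u\in W}\ninv_\OOO(c_u,c_v)\,u$. On the other hand, right-multiplication in $\ZZ W$ by $\eta:=\sum_{w\in W}\ninv_\OOO(w)\,w$ sends $v$ to $\sum_{w\in W}\ninv_\OOO(w)\,vw$, and re-indexing by $u=vw$ (so $w=v^{-1}u$, and $u$ ranges over $W$ as $w$ does) turns this into $\sum_{u\in W}\ninv_\OOO(v^{-1}u)\,u$. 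Comparing the coefficient of $u$ on the two sides, the Proposition reduces to the identity $\ninv_\OOO(c_u,c_v)=\ninv_\OOO(v^{-1}u)$ for all $u,v\in W$.

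Next I would prove that identity by transporting the pair $(c_u,c_v)$ by $v^{-1}$. Applying the $W$-equivariance $\ninv_\OOO(c,c')=\ninv_\OOO(x(c),x(c'))$ with $x=v^{-1}$, together with $x(c_w)=x(w(c_\grid))=(xw)(c_\grid)=c_{xw}$, gives $\ninv_\OOO(c_u,c_v)=\ninv_\OOO(c_{v^{-1}u},c_{v^{-1}v})=\ninv_\OOO(c_{v^{-1}u},c_\grid)$. By the symmetry of $\ninv_\OOO$ in its two chamber arguments this equals $\ninv_\OOO(c_\grid,c_{v^{-1}u})$, which is exactly the definition of $\ninv_\OOO(v^{-1}u)$ recorded in the chain of equalities just before the statement. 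This completes the proof.

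There is essentially no deep obstacle here; the only point requiring care is the left/right convention — the $W$-action on $\CCC$ (equivalently, on $\ZZ W$) is the \emph{left}-regular action, so an operator commuting with it must be a \emph{right} multiplication, and the re-indexing $u=vw$ must be performed in that order. As a consistency check it is worth noting that, via \ref{ex:first-type-A} and \ref{ex:first-type-A-action}, this Proposition specializes for $W=\symm_n$ and $\OOO=\OOO_{(k,1^{n-k})}$ to the description of $\nu_{(k,1^{n-k})}$ from the introduction as right-multiplication by $\sum_{w}\ninv_k(w)\,w$, since there $\ninv_\OOO(w)=\ninv_k(w)$ and the $(u,v)$-entry is $\ninv_k(v^{-1}u)$.
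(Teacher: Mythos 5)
Your proposal is correct and follows essentially the same route as the paper: expand right-multiplication by $\sum_{w}\ninv_\OOO(w)\,w$ on a basis element $v$, re-index via $u=vw$, and use the definition $\ninv_\OOO(w)=\ninv_\OOO(c_\grid,c_w)$ together with $W$-equivariance of $\ninv_\OOO$ to identify the coefficient of $u$ with $\ninv_\OOO(c_u,c_v)$, matching \ref{eqn:nu-definition}. The paper's proof is the same computation written as a single chain of equalities, so no further comment is needed.
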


    \begin{proof}
      Within the group algebra, for any basis element $v$ in $W$, one has
      \begin{align*}
         v \cdot \left( \sum_{w \in W} \ninv_\OOO(w) \cdot w \right) 
            &=\sum_{w \in W} \ninv_\OOO(w) \cdot vw \\
            &=\sum_{u \in W} \ninv_\OOO(v^{-1}u) \cdot u \\
            &=\sum_{u \in W} \ninv_\OOO(c_{v^{-1}u}, c_\grid) \cdot u \\
            &=\sum_{u \in W} \ninv_\OOO(c_u, c_v) \cdot u.
            \qedhere
      \end{align*}
    \end{proof}

    By abuse of notation, we will use $\nu_\OOO$ also to denote the 
    the element $\sum_{w \in W} \ninv_\OOO(w) \cdot w$ of $\CC W$.
    
    When $W$ is a real reflection group, the 
    $\ZZ_2$-action corresponds to the 
    action of the {\deffont longest element} $w_0$
    \index{longest element}%
    in $W$, defined uniquely by the property 
    that 
    \begin{equation}
      \label{eqn:longest-element-definition}
         c_{w_0} = -c_\grid,
    \end{equation}
    where $c_{w_0} = w_0(c_\grid)$.
    Note that this forces $w_0$ to always be an 
    {\deffont involution}: $w_0^2=\grid$.
    \index{involution}%

    \begin{Proposition}
       For any $W$-stable subset $\OOO \subseteq \LLL$,
       under the identification \ref{eqn:group-algebra-is-chambers}, the
       scalar matrix $-\matid_V$ or the generator $\tau$ of the $\ZZ_2$-action
       on $\ZZ\CCC$ acts on $\ZZ W$ as right-multiplication by $w_0$.
    \end{Proposition}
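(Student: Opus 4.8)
The plan is to unwind the definitions and simply track where a basis chamber $c_w$ is sent by $\tau$. By construction the generator $\tau$ of the $\ZZ_2$-action acts on $\ZZ\CCC$ through the scalar matrix $-\matid_V$, so it sends a chamber $c$ to $-c$. First I would use that $-\matid_V$, being a scalar, commutes with every element of $\GL(V)$, and in particular with the linear action of each $w \in W$; hence for every $w$ one has
$\tau(c_w) = -\,w(c_\grid) = w(-c_\grid).$

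Next I would bring in the defining property \ref{eqn:longest-element-definition} of the longest element, $c_{w_0} = -c_\grid$, together with $c_{w_0} = w_0(c_\grid)$, to rewrite $-c_\grid = w_0(c_\grid)$. Substituting into the previous display gives $\tau(c_w) = w\bigl(w_0(c_\grid)\bigr) = (w w_0)(c_\grid) = c_{w w_0}$.

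Finally, transporting this through the identification \ref{eqn:group-algebra-is-chambers}, which matches $w \in \ZZ W$ with $c_w \in \ZZ\CCC$, the equality $\tau(c_w) = c_{w w_0}$ says precisely that $\tau$ acts on $\ZZ W$ by $w \mapsto w w_0$, that is, by right-multiplication by $w_0$. There is no real obstacle here: the only points meriting care are the commutation of the scalar $-\matid_V$ with the $W$-action (immediate) and the bookkeeping that distinguishes the left-regular $W$-action, which is left-multiplication, from this $\tau$-action, which comes out as right-multiplication by $w_0$ — exactly mirroring the preceding proposition for $\nu_\OOO$.
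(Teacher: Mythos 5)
Your proof is correct and is essentially the paper's argument: the paper simply applies $w$ on the left of the defining equation $c_{w_0}=-c_\grid$ to get $c_{ww_0}=ww_0(c_\grid)=-w(c_\grid)=-c_w$, which is the same computation you carry out (in the reverse direction), using linearity of $w$ to commute the scalar $-1$ past the $W$-action. No gap to report.
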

    \begin{proof}
       Applying $w$ on the left of \ref{eqn:longest-element-definition} gives
       $$
         c_{ww_0} = ww_0(c_\grid) = -w(c_\grid) = -c_w
       $$
       for any $w$ in $W$.
    \end{proof}

    It is known that $-\matid_V$ is an element of a reflection group $W$
    acting on $V$ if and only if $W$ only has {\deffont even degrees} 
    \index{degree of reflection group}%
    $d_1,\ldots,d_n$ for any system of basic invariants $f_1,\ldots,f_n$ 
    that generate the $W$-invariant polynomials $\CC[V]^W=\CC[f_1,\ldots,f_n]$.
    If $-\matid_V$ is an element of $W$, then necessarily $-\matid_V=w_0$.

    For the {\deffont irreducible} real reflection groups, one
    \index{irreducible!real reflection group}%
    \index{real reflection group!irreducible}%
    has 
    \begin{enumerate}
      \item[$\bullet$] $-\matid_V=w_0$ in types $B_n=C_n$, 
         in type $D_n$ when $n$ is even, 
         in the dihedral types $I_2(m)$ for $m$ even, 
         as well as the exceptional types $F_4, E_7, E_8, H_3, H_4$.  
      \item[$\bullet$] $-\matid_V \not\in W$ in all other cases,
         that is, in type $A_{n-1}$, 
         in type $D_n$ for $n$ odd, 
         in dihedral types $I_2(m)$ for $m$ odd, 
         and in type $E_6$.
         Thus, in these cases there is an extra $\ZZ_2$-action
         to consider.
    \end{enumerate}

    \begin{Example}
      Again we return to \ref{ex:first-type-A}.
      The longest word $w_0 \in \symm_n$ corresponds to the permutation $n~n-1 \cdots 2~1$.
      Thus, multiplication by $w_0$ on the right sends a permutation $w(1) \cdots w(n) \in \symm_n$
      to the permutation $w(n) \cdots w(1)$.
    \end{Example}

    Given an intersection subspace $X$,
    denote by $\Norma_W(X)$ and $\Centr_W(X)$, respectively, its not-necessarily-pointwise
    stabilizer subgroup and pointwise stabilizer subgroup within $W$:
    \begin{align*}
    & \Norma_W(X) = \{w \in W : w(X) = X\}, \\
    & \Centr_W(X) = \{w \in W : w(x) = x \text{ for all } x \in X\}.
    \end{align*}
    \nomenclature[nwx]{$\Norma_W(X)$}{stabilizer subgroup (not necessarily pointwise) within $W$ of the subspace $X$}%
    \nomenclature[zwx]{$\Centr_W(X)$}{pointwise stabilizer subgroup within $W$ of the subspace $X$}%
    It is well-known (see for example \cite[Lemma 3.75]{AbramenkoBrown2008}) that $\Centr_W(X)$ is itself a finite real reflection
    group, called the {\deffont parabolic subgroup} associated to $X$,
    \index{parabolic subgroup}%
    which one can view as acting on the quotient space $V/X$,
    and having reflection arrangement equal to the
    localization $\AAA/X$.  Consequently, the chambers $\CCC(\AAA/X)$
    are in natural bijection with $\Centr_W(X)$.  This gives the following
    interpretation to the map $c \longmapsto c/X$ that we have been
    using.

    \begin{Proposition}
      \label{prop:parabolic-factorization}
      Let $W$ be a finite real reflection group $W$, and $X$ an intersection subspace 
      in $\LLL$.  Then every $w$ in $W$ factors uniquely as
      $w = z \cdot y$ where $z$ lies in $\Centr_W(X)$ and $y$ lies in 
      $$
        {}^XW:=\{ y \in W: c_y/X = c_\grid/X \}.
      $$
      In particular, the map $\pi_X: \CCC \rightarrow \CCC(\AAA/X)$ 
      sending $c \longmapsto c/X$ corresponds under \eqref{eqn:group-algebra-is-chambers}
      to the map sending $w \longmapsto z$.
    \end{Proposition}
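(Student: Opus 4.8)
The plan is to read off the factorization directly from the geometry of the quotient map $q\colon V \twoheadrightarrow V/X$, using two facts recalled just above: that $\Centr_W(X)$, acting on $V/X$, is a finite reflection group whose reflection arrangement is precisely $\AAA/X$, and hence acts simply transitively on $\CCC(\AAA/X)$; and that each $z \in \Centr_W(X)$ fixes $X$ pointwise, so it descends to a linear automorphism $\bar z$ of $V/X$ with $q \circ z = \bar z \circ q$. The first step will be to record the resulting partial equivariance of the chamber-localization map. For $z \in \Centr_W(X)$ and any chamber $c \in \CCC$, the set $\bar z(c/X)$ is a chamber of $\AAA/X$ (since $z$ fixes $X$ pointwise, $\bar z$ permutes $\AAA/X$ and hence $\CCC(\AAA/X)$), and from $q^{-1}(c/X) \supseteq c$ one gets $q^{-1}\big(\bar z(c/X)\big) = z\big(q^{-1}(c/X)\big) \supseteq z(c)$; by the defining property of the localization this forces
\[
  (z \cdot c)/X \;=\; \bar z\,(c/X) \qquad (z \in \Centr_W(X),\ c \in \CCC).
\]
Specializing to $c = c_\grid$ then shows that $z \longmapsto c_z/X = \bar z(c_\grid/X)$ is a bijection $\Centr_W(X) \overset{\sim}{\longrightarrow} \CCC(\AAA/X)$ --- this is exactly the ``natural bijection'' $\CCC(\AAA/X) \cong \Centr_W(X)$ mentioned before the statement, with $c_\grid/X$ serving as the base chamber of $\AAA/X$.

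With this in hand, existence of the factorization is immediate: given $w \in W$, I would let $z$ be the unique element of $\Centr_W(X)$ with $c_z/X = c_w/X$ and put $y := z^{-1}w$; since $c_y = z^{-1}(c_w)$, the displayed identity gives $c_y/X = \overline{z^{-1}}\,(c_w/X) = \overline{z^{-1}}\,(c_z/X) = c_\grid/X$, so $y \in {}^XW$ and $w = zy$. For uniqueness, suppose $w = zy = z'y'$ with $z,z' \in \Centr_W(X)$ and $y,y' \in {}^XW$; writing $c_w = z(c_y)$ and using the identity together with $c_y/X = c_\grid/X$ gives $c_w/X = \bar z(c_\grid/X) = c_z/X$, and likewise $c_w/X = c_{z'}/X$, so the bijection of the first step forces $z = z'$ and then $y = y'$. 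The final assertion then drops out: under the identification $\CCC(\AAA/X) \cong \Centr_W(X)$ of the first step, $\pi_X$ sends $c_w \mapsto c_w/X$, which corresponds to the unique $z \in \Centr_W(X)$ with $c_z/X = c_w/X$, i.e.\ the left factor in $w = zy$; transported back to group elements via \eqref{eqn:group-algebra-is-chambers} this is the map $w \mapsto z$. (One can also recognize ${}^XW$ as the set of minimal-length representatives of the right cosets $\Centr_W(X)\backslash W$, but that reformulation is not needed here.)

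The argument I have in mind is essentially a diagram chase, so its only genuine input is imported rather than produced: the cited fact that the pointwise stabilizer $\Centr_W(X)$, regarded as acting on $V/X$, is a reflection group with reflection arrangement $\AAA/X$, which is what supplies simple transitivity on $\CCC(\AAA/X)$. The one place I would be careful --- and which I expect to be the main (if modest) obstacle --- is to make sure that the geometric $\Centr_W(X)$-action on $\CCC(\AAA/X)$ entering ``simple transitivity'' is literally the $\bar z$-action appearing in the equivariance identity $(z\cdot c)/X = \bar z(c/X)$; that is, that the action of $\Centr_W(X)$ on $V/X$ induced from its action on $V$ coincides with its action on $V/X$ as an abstract reflection group on that space. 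Once this identification is pinned down, no further difficulty remains.
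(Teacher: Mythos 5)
Your proposal is correct and follows essentially the same route as the paper: identify $\Centr_W(X)$ with $\CCC(\AAA/X)$ via $z\mapsto c_z/X$ (using that $\Centr_W(X)$ acts on $V/X$ as a reflection group with arrangement $\AAA/X$), take $z$ to be the unique element with $c_z/X=c_w/X$, and verify $y=z^{-1}w\in{}^XW$ by the equivariance of localization under $\Centr_W(X)$. Your write-up merely makes explicit two points the paper leaves terse — the identity $(z\cdot c)/X=\bar z(c/X)$ and the uniqueness of the factorization — which is fine.
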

    \begin{proof}
      Given $w$ in $W$, consider the chamber $c_w/X$ in the localized
      arrangement $\AAA/X$.  Since this localized arrangement is the reflection
      arrangement for $\Centr_W(X)$, there is a unique element $z$ in $\Centr_W(X)$
      for which $c_z/X=c_w/X$.  In particular, the element $z \in \Centr_W(X)$ acts on the chambers of
      $\AAA$ and on the chambers of $\AAA/X$. Thus
      $y:=z^{-1} \cdot w$ satisfies
      $$
        \begin{aligned}
          c_y/X &= c_{z^{-1} w}/X \\
                &= (z^{-1} c_{w})/(z^{-1}X) \\
                &= z^{-1} (c_{w}/ X) \\
                &= z^{-1} (c_{z}/ X) \\
                &= c_{\grid}/z^{-1} X \\
                &= c_{\grid}/X 
        \end{aligned}      
      $$
      that is, $y$ lies in ${}^XW$. 
    \end{proof}

    In the sequel, for a finite real reflection group $W$ and an intersection subspace
    $X$ in $\LLL$, we denote by ${}^XW$ the set $\{ y \in W : c_y/X = c_\grid/X \}$, which by the preceding 
    proposition is a set of right coset representatives of $\Centr_W(X)$ in $W$.
    \nomenclature[al]{${}^XW$}{set of right coset representative for $\Centr_W(X)$ in $W$}%
    We write $W^X=\{w^{-1}: w \in {}^XW\}$ for the corresponding set of left coset representatives.
    \nomenclature[al]{$W^X$}{$\{w^{-1}~|~w \in {}^XW \}$}%
    If $X$ intersects the identity chamber $c_\grid$ then $\Centr_W(X) = W_J = \langle J \rangle$ 
    for some subsets $J \subseteq S$.  Here $W_J$ is a (standard) 
    parabolic subgroup for the Coxeter system $(W,S)$
    that generates $W$ using the set $S$ of reflections 
    through the walls of $c_\grid$.
    \index{Coxeter system}%
    In this case we also write ${}^JW$ for ${}^XW$ and
    $W^J$ for $W^X$ respectively. The ${}^JW$ and $W^J$ are sets of minimal length
    \nomenclature[al]{$W^J$}{Minimal length right coset representatives for the parabolic subgroup $W_J$}%
    \nomenclature[al]{${}^JW$}{Minimal length right coset representatives for the parabolic subgroup $W_J$}%
    left and right coset representatives for $W_J$.

    \begin{Example}
       Returning to \ref{ex:first-type-A} and \ref{ex:first-type-A-action}, 
       where $W=\symm_n$ acts on $V=\RR^n$ by
       permuting coordinates and $X$ is the intersection subspace corresponding
       to the partition $[n]=\bigsqcup_i B_i$, the centralizer $\Centr_W(X)$
       is the {\deffont Young subgroup} $\prod_i \symm_{B_i}$ that permutes
       \index{Young subgroup}%
       \nomenclature[al]{$\symm_B$}{symmetric group of all permutations of the set $B$}%
       each block $B_i$ of coordinates separately.  The map $W \longmapsto \Centr_W(X)$
       that sends $w \mapsto z$ corresponding to $c \longmapsto c/X$
       remembers only the ordering of the coordinates {\emphfont within each block} $B_i$.
    \end{Example}

  \subsection{The case where $\OOO$ is a single $W$-orbit}

    When $W$ is a real reflection group, and 
    $\OOO= X^W := \{ w\cdot X : w \in W\}$ is the $W$-orbit of some intersection subspace $X$,
    there are two extra features that will help us to analyze the
    eigenspaces of $\nu_\OOO$.

    \subsubsection{A second square root}
      \label{subsec:second-square-root}
      First, there is another ``square root'' for
      $\nu_\OOO$  when $W$ is the
      orbit $X_0^W$ of a single subspace $X_0$.  This will 
      connect $\nu_\OOO$ with the \BHR random walks in \ref{sec:BHR}.
      Given an intersection subspace $X$, with the
      associated subgroups $\Centr_W(X) \subseteq \Norma_W(X)$ we have introduced in
      \ref{prop:parabolic-factorization} and subsequent definitions
      the parabolic factorizations and coset representatives
      $$
        \begin{aligned}
          W &=\Centr_W(X) \cdot {}^XW \\
          W &=W^X \cdot \Centr_W(X).
        \end{aligned}
      $$
      Define
      $$
        \begin{aligned}
          n_{X} &:= [\Norma_W(X):\Centr_W(X)]\\
           {}^XR&:=\sum_{u \in {}^XW} u\\
             R^X&:=\sum_{u \in W^X} u.
        \end{aligned}
      $$
      \nomenclature[al]{$n_X$}{$[\Norma_W(X):\Centr_W(X)]$}%
      \nomenclature[al]{${}^XR$}{$\sum_{u \in {}^XW} u \in \CC W$}%
      \nomenclature[al]{$R^X$}{$\sum_{u \in W^X} u \in \CC W$}%

      For later use and analogous to our previous convention we write ${}^JR$ and
      $R^J$ in case $X$ lies in the boundary of the identity chamber
      $c_\grid$ and $\Centr_W(X) = W_J$ is a (standard) parabolic subgroup.

      \begin{Proposition}
         \label{prop:second-square-root}
         Let $W$ be a real reflection group and $\OOO= X_0^W \subset \LLL$
         the $W$-orbit of the intersection subspace $X_0$. Then
         $$
           \ninv_\OOO(w) = \frac{1}{n_{X_0}} \Big| {}^{X_0}W \cap {}^{X_0}Ww \Big|
         $$
         and
         $$
           \nu_\OOO = \frac{1}{n_{X_0}}  R^{X_0} \cdot  {}^{X_0}R.
         $$
      \end{Proposition}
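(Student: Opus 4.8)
The plan is to prove the combinatorial formula for $\ninv_\OOO(w)$ first and then read off the factorization of $\nu_\OOO$ by expanding the product $R^{X_0}\cdot {}^{X_0}R$ in $\CC W$ and matching the coefficient of each $w$ against the description of $\nu_\OOO$ as right multiplication by $\sum_{w\in W}\ninv_\OOO(w)\cdot w$ established in \ref{subsec:reflection-group-setting}.

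For the first formula, I would begin by parametrizing the orbit $\OOO=X_0^W$ by cosets: the map $g\mapsto gX_0$ descends to a bijection $W/\Norma_W(X_0)\to\OOO$, so every $X\in\OOO$ is the image of exactly $|\Norma_W(X_0)|$ elements $g\in W$, whence
\[
  \ninv_\OOO(w)=\frac{1}{|\Norma_W(X_0)|}\;\Bigl|\bigl\{\,g\in W : c_\grid/(gX_0)=c_w/(gX_0)\,\bigr\}\Bigr|.
\]
Next I would use equivariance of localization: the element $g$ induces an isomorphism $V/X_0\to V/(gX_0)$ carrying $\AAA/X_0$ onto $\AAA/(gX_0)$, under which $c_{g^{-1}a}/X_0$ is sent to $c_a/(gX_0)$; hence the condition $c_\grid/(gX_0)=c_w/(gX_0)$ is equivalent to $c_{g^{-1}}/X_0=c_{g^{-1}w}/X_0$. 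Substituting $h=g^{-1}$, which again ranges over all of $W$, turns the displayed count into $\bigl|\{h\in W : c_h/X_0=c_{hw}/X_0\}\bigr|$.

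The combinatorial heart of the argument is to rewrite $c_h/X_0=c_{hw}/X_0$ as a coset condition, and this is where I would lean on \ref{prop:parabolic-factorization}. Factor $h=zy$ uniquely with $z\in\Centr_W(X_0)$ and $y\in{}^{X_0}W$; under the identification $\CCC(\AAA/X_0)\cong\Centr_W(X_0)$ supplied by that proposition, the chamber $c_h/X_0$ corresponds to $z$. Factoring $yw=z'y'$ likewise with $z'\in\Centr_W(X_0)$, $y'\in{}^{X_0}W$ gives $hw=(zz')y'$, so $c_{hw}/X_0$ corresponds to $zz'$. Therefore $c_h/X_0=c_{hw}/X_0$ holds precisely when $z'=\grid$, i.e. when $yw\in{}^{X_0}W$ — a condition on $y$ alone. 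Since $(z,y)\mapsto zy$ is a bijection $\Centr_W(X_0)\times{}^{X_0}W\to W$, the set of valid $h$ has size $|\Centr_W(X_0)|\cdot\bigl|\{y\in{}^{X_0}W : yw\in{}^{X_0}W\}\bigr|$, and $\{y\in{}^{X_0}W:yw\in{}^{X_0}W\}={}^{X_0}W\cap{}^{X_0}Ww^{-1}$. Combining with $n_{X_0}=|\Norma_W(X_0)|/|\Centr_W(X_0)|$ gives $\ninv_\OOO(w)=\tfrac{1}{n_{X_0}}\bigl|{}^{X_0}W\cap{}^{X_0}Ww^{-1}\bigr|$, and the symmetry $\ninv_\OOO(w)=\ninv_\OOO(w^{-1})$ recorded in \ref{subsec:reflection-group-setting} converts this into the stated form $\tfrac1{n_{X_0}}\bigl|{}^{X_0}W\cap{}^{X_0}Ww\bigr|$.

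For the operator identity I would expand $R^{X_0}\cdot{}^{X_0}R=\sum_{u\in W^{X_0},\,v\in{}^{X_0}W}uv$ in $\CC W$: the coefficient of $w$ is $\bigl|\{(u,v):uv=w\}\bigr|$, and writing $u=y^{-1}$ with $y\in{}^{X_0}W$ (using $W^{X_0}=\{y^{-1}:y\in{}^{X_0}W\}$) the relation $uv=w$ forces $v=yw$, which lies in ${}^{X_0}W$ exactly when $y\in{}^{X_0}W\cap{}^{X_0}Ww^{-1}$. Hence this coefficient equals $\bigl|{}^{X_0}W\cap{}^{X_0}Ww^{-1}\bigr|=n_{X_0}\,\ninv_\OOO(w)$ by the previous paragraph, so $\tfrac1{n_{X_0}}R^{X_0}\cdot{}^{X_0}R=\sum_{w}\ninv_\OOO(w)\cdot w$, which is exactly the element by which $\nu_\OOO$ acts on the right. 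I expect the main obstacle to be organizational rather than conceptual: one must pin down the localization equivariance $c_a/(gX_0)=g\cdot\bigl(c_{g^{-1}a}/X_0\bigr)$, keep careful track of the identification of $\CCC(\AAA/X_0)$ with $\Centr_W(X_0)$ from \ref{prop:parabolic-factorization} (so that ``$c_h/X_0=c_{hw}/X_0$'' really is equality of $\Centr_W(X_0)$-parts, which is what makes the factorization trick work), and watch the left/right coset sides and the $w\leftrightarrow w^{-1}$ switches so the final formula matches the asserted one. A reassuring check is the running type-$A$ orbit $\OOO=\OOO_{(k,1^{n-k})}$, where $\Centr_W(X_0)\cong\symm_k$ while $\Norma_W(X_0)\cong\symm_k\times\symm_{n-k}$, so $n_{X_0}=(n-k)!$ and, e.g., at $w=\grid$ both sides reduce to $|\OOO|=\binom{n}{k}$.
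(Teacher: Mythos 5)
Your proof is correct and follows essentially the same route as the paper's: reduce the count of subspaces in the orbit to a count of group elements via orbit--stabilizer considerations (introducing $n_{X_0}=[\Norma_W(X_0):\Centr_W(X_0)]$), translate the chamber condition into membership in ${}^{X_0}W$ (the paper does this by the same chamber manipulations that underlie \ref{prop:parabolic-factorization}, which you invoke directly), and expand $R^{X_0}\cdot{}^{X_0}R$ coefficientwise. The only cosmetic difference is that you land on $\bigl|{}^{X_0}W\cap{}^{X_0}Ww^{-1}\bigr|$ and pass to the stated form via $\ninv_\OOO(w)=\ninv_\OOO(w^{-1})$ (equivalently, right translation by $w$), whereas the paper's substitution $v=u^{-1}w$ yields $\bigl|{}^{X_0}W\cap{}^{X_0}Ww\bigr|$ directly.
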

      \begin{proof}
         Since $\OOO$ is the $W$-orbit of $X_0$, and
         $\Norma_W(X_0)$ the $W$-stabilizer of $X_0$, the elements
         $u \cdot X_0$ as $u$ runs over coset representatives for $W/\Norma_W(X_0)$
         give each $X$ in $\OOO$ exactly once.  Therefore, 
         the elements $uX_0$ as $u$ runs over the coset representatives $W^{X_0}$
         of $W/\Centr_W(X_0)$ give each $X$ in $\OOO$ exactly 
         $n_{X_0}=[\Norma_W(X_0):Z_W(X_0)]$ times.
         Since 
         $$
           \ninv_\OOO(w) = \left| \{X \in \OOO: w \in {}^XW\} \right|
         $$
         this implies that 
         $$
           n_{X_0} \cdot \ninv_\OOO(w) = \left| \{u \in W^{X_0}: w \in {}^{uX_0}W \} \right|.
         $$
         We wish to rewrite the set appearing on the right side of
         this equation. Note that $u$ lies in $W^{X_0}$ if and only if 
         $u^{-1}$ lies in ${}^{X_0}W$ if and only if $c_{u^{-1}}/X_0 = c_\grid/X_0$.
         Similarly, $w$ lies in ${}^{uX_0}W$ if and only if
         $c_{w}/uX_0 = c_\grid/uX_0$ if and only if
         $c_{u^{-1}w}/X_0 = c_{u^{-1}}/X_0 = c_\grid/X_0$ if and only if
         $u^{-1}w$ lies in ${}^{X_0}W$.  Letting $v=u^{-1} w$, one
         concludes that $v$ lies in both ${}^{X_0}W$ and in ${}^{X_0}Ww$,
         so that 
         $$
           n_{X_0} \cdot \ninv_\OOO(w) = \Big| {}^{X_0}W \cap {}^{X_0}Ww \Big|.
         $$
         This proves the first assertion.  For the second assertion,
         compare with the calculation
           \begin{align*}
              R^{X_0} \cdot  {}^{X_0}R 
                 &= \left( \sum_{u \in {}^{X_0}W} u^{-1} \right)
                    \left( \sum_{v \in {}^{X_0}W} v \right) \\
                 &= \sum_{w \in W} w \cdot 
               \Big|\left\{ (u,v) \in {}^{X_0}W \times {}^{X_0}W : u^{-1}v=w \right\}\Big| \\
                 &= \sum_{w \in W} w \cdot 
               \Big|\left\{v \in {}^{X_0}W \cap {}^{X_0}Ww \right\}\Big| \\
                 &= \sum_{w \in W} w \cdot  \Big| {}^{X_0}W \cap {}^{X_0}Ww \Big|.
               \qedhere
           \end{align*}
      \end{proof}

    \subsubsection{Nested kernels}
      Second, there is an inclusion of kernels
      $\ker \nu_{\OOO} \subseteq \ker \nu_{\OOO'}$ 
      whenever $\OOO, \OOO'$ are $W$-orbits represented by 
      two nested subspaces $X \subseteq X'$.
      To see this, define in the general setting of hyperplane arrangements a map
      $$
        \pi^{\OOO}_{\OOO'}:
            \bigoplus_{X \in \OOO} \CCC(\AAA/X) 
               \rightarrow \bigoplus_{X' \in \OOO} \CCC(\AAA/X')
      $$
      as a direct sum of the natural maps 
      $$
        \begin{aligned}
           \pi^X_{X'}  : \CCC(\AAA/X) &  \longrightarrow & \CCC(\AAA/X') \\
                            c/X       &  \longmapsto     & c/X'
        \end{aligned}
      $$
      indexed by pairs of subspaces $(X,X') \in \OOO \times \OOO'$ for
      which $X \subseteq X'$.  Given $X' \in \OOO'$, define an integer
      $c_{\OOO,X'}$ to be the number of $X \in \OOO$ for which $X \subseteq X'$.

      \begin{Proposition}
        \label{prop:nested-kernels}
        Let $\AAA$ be an arrangement with some linear symmetries $W$,
        and let $\OOO, \OOO'$ be two $W$-orbits within $\LLL$ represented by 
        two nested subspaces.

        Then the integers $c_{\OOO,X'}$ do not depend upon the choice
        of $X'$ within $\OOO'$, and denoting this common integer $c_{\OOO,\OOO'}$ 
        one has
        \begin{equation}
          \label{eqn:nested-kernel-equation}
          c_{\OOO,\OOO'} \cdot \pi_{\OOO'} = \pi^{\OOO}_{\OOO'} \circ \pi_{\OOO}.
       \end{equation}
       Consequently,
       $$
         \begin{array}{ccc}
            \ker \pi_\OOO & \subseteq & \ker \pi_{\OOO'} \\
            \Vert         &           & \Vert \\
            \ker \nu_\OOO &           & \ker \nu_{\OOO'} \\
         \end{array}
       $$
     \end{Proposition}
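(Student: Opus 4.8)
The plan is to prove \eqref{eqn:nested-kernel-equation} first and then deduce the kernel inclusions formally. I will work in the general hyperplane-arrangement setting, making no use of the group $W$ beyond the observation that, once I know $c_{\OOO,X'}$ is independent of $X' \in \OOO'$, the equivariance is automatic. The key combinatorial fact to establish is that for each fixed $X' \in \OOO'$ the number of $X \in \OOO$ with $X \subseteq X'$ is the same; this is because $W$ acts transitively on $\OOO'$, and if $w \in W$ sends $X'_1$ to $X'_2$ then $w$ restricts to a bijection $\{X \in \OOO : X \subseteq X'_1\} \to \{X \in \OOO : X \subseteq X'_2\}$ (since $W$ preserves $\OOO$ and preserves containment of subspaces). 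Hence $c_{\OOO,X'} = c_{\OOO,\OOO'}$ is well-defined.

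The heart of the matter is the identity $c_{\OOO,\OOO'} \cdot \pi_{\OOO'} = \pi^{\OOO}_{\OOO'} \circ \pi_{\OOO}$ as maps $\ZZ\CCC \to \bigoplus_{X' \in \OOO'} \ZZ\CCC(\AAA/X')$. I would check this componentwise: fix $X' \in \OOO'$ and a chamber $c \in \CCC$, and compare the $X'$-components of both sides applied to $c$. The left side gives $c_{\OOO,\OOO'} \cdot (c/X')$. The right side, unwinding the definition of $\pi^{\OOO}_{\OOO'}$ as the direct sum of the maps $\pi^X_{X'}$ over pairs $X \subseteq X'$, gives $\sum_{X \in \OOO,\, X \subseteq X'} \pi^X_{X'}(c/X) = \sum_{X \in \OOO,\, X \subseteq X'} (c/X')$; here I use the transitivity of the quotient maps, namely that for $X \subseteq X'$ the composite $\CCC \to \CCC(\AAA/X) \to \CCC(\AAA/X')$ sending $c \mapsto c/X \mapsto (c/X)/X'$ agrees with $c \mapsto c/X'$ (this follows from the characterization $q^{-1}(c/X) \supseteq c$ of the chamber $c/X$, applied twice and composed). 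Since that sum has exactly $c_{\OOO,X'} = c_{\OOO,\OOO'}$ terms, each equal to $c/X'$, the two sides agree. The only subtle point is verifying the transitivity of localization for chambers, $(c/X)/X' = c/X'$ when $X \subseteq X'$; I expect this to be the main (though minor) obstacle, and it is handled by noting that the quotient map $V \to V/X'$ factors as $V \to V/X \to V/X'$ and pulling back the defining containment property of $c/X'$ along this factorization.

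Finally, the kernel inclusion $\ker \pi_\OOO \subseteq \ker \pi_{\OOO'}$ is immediate from \eqref{eqn:nested-kernel-equation}: if $\pi_\OOO(v) = 0$ then $c_{\OOO,\OOO'} \cdot \pi_{\OOO'}(v) = \pi^{\OOO}_{\OOO'}(\pi_\OOO(v)) = 0$, and since $c_{\OOO,\OOO'}$ is a positive integer (indeed $X$ itself, or rather at least one $X \in \OOO$, lies below $X'$ because $\OOO, \OOO'$ are represented by nested subspaces, so $c_{\OOO,\OOO'} \geq 1$) we may divide over $\QQ$ or $\RR$ to get $\pi_{\OOO'}(v) = 0$. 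The equalities $\ker \pi_\OOO = \ker \nu_\OOO$ and $\ker \pi_{\OOO'} = \ker \nu_{\OOO'}$ are exactly the content of \ref{prop:rectangular-square-root} (with scalars extended to $\RR$), so the displayed diagram of kernels follows at once.
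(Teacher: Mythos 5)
Your proposal is correct and follows essentially the same route as the paper: the identity $\pi_{X'} = \pi^{X}_{X'}\circ\pi_X$ (transitivity of localization) gives $\pi^{\OOO}_{\OOO'}\circ\pi_{\OOO} = \sum_{X'\in\OOO'} c_{\OOO,X'}\,\pi_{X'}$, the transitive $W$-action on $\OOO'$ supplies the bijection showing $c_{\OOO,X'}$ is independent of $X'$, and the kernel inclusions then follow by dividing by $c_{\OOO,\OOO'}\geq 1$ and invoking \ref{prop:rectangular-square-root}. Your componentwise verification and the explicit remark that $c_{\OOO,\OOO'}$ is positive are just slight elaborations of steps the paper leaves implicit.
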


     \begin{proof}
       Because $\pi_{X'} = \pi^X_{X'} \circ \pi_X$, one
       has generally that
       $$
         \pi^{\OOO}_{\OOO'} \circ \pi_{\OOO}
           = \sum_{X' \in \OOO'} c_{\OOO,X'} \,\, \pi_{X'}.
       $$
       However, whenever $\OOO, \OOO'$ are $W$-orbits,
       if $X',X''$ are subspaces in the same $W$-orbit $\OOO'$, say with 
       $w \cdot X' = X''$, then the element $w$ gives a bijection
       between the two sets counted by $c_{\OOO,X'}, c_{\OOO,X''}$.
       Thus $c_{\OOO,\OOO'} :=c_{\OOO,X'}$ satisfies
       \begin{gather*}
         \pi^{\OOO}_{\OOO'} \circ \pi_{\OOO}
         = c_{\OOO,\OOO'} \sum_{X' \in \OOO'} \pi_{X'} 
         = c_{\OOO,\OOO'} \cdot \pi_{\OOO'}. 
         \qedhere
       \end{gather*}
     \end{proof}

     \begin{Example}
      \label{ex:kernel-nesting}
      We again consider the setting of \ref{ex:first-type-A}, \ref{ex:first-type-A-action}
      and the partitions $\lambda = (k,1^{n-k})$, $1 \leq k \leq n$. Then for each
      $1 \leq k < k' \leq n$ and each subspace $X \in \OOO_{(k,1^{n-k})}$ there is
      a subspace $X' \in \OOO_{(k',1^{n-k'})}$ for which $X' \subseteq X$.
      Thus \ref{prop:nested-kernels} applies, and we will take advantage of
      the nesting $\ker \nu_{(k',(1^{n-k'})} \subset \ker \nu_{(k,1^{n-k})}$
      in \ref{subsec:kernelfiltration}.
     \end{Example}

  \subsection{The ``Fourier transform'' reduction}
    \label{subsec:Fourier-transform}

    When $W$ is a real reflection group, the fact that 
    we are considering operators which are right-multiplication
    on the group algebra $\ZZ W$ by elements of $\ZZ W$ 
    allows us to take advantage of a standard trick
    for partially block-diagonalizing $\nu_\OOO$.  This trick sometimes
    goes by the name of the ``Fourier transform''.

    For each irreducible complex $W$-character $\chi$, choose
    a representation $\rho_\chi: W \rightarrow \GL_\CC(U^\chi)$
    \nomenclature[al]{$\rho_\chi$}{representation with character $\chi$}%
    affording the character $\chi$, in some complex vector space 
    $U^\chi$ of dimension $d_\chi:=\chi(1)$.
    \nomenclature[al]{$d_\chi$}{degree of character $\chi$}%
    Then the ring map 
    $
     \CC W \longrightarrow \bigoplus_\chi \End_\CC(U^\chi)
    $
    defined $\CC$-linearly by sending $w \longmapsto \bigoplus_\chi \rho_\chi(w)$,
    is well-known to be an algebra isomorphism. Furthermore,
    the direct summand $\End_\CC(U^\chi)$ is isomorphic to
    the algebra of $d_\chi \times d_\chi$ matrices.
    Thus one can view this as a change-of-basis in $\CC W$
    that simultaneously block-diagonalizes the commuting actions of
    $\CC W$ on the left and on the right.
    Also, as a (left-) $\CC W$-module the summand 
    $\End_\CC(U^\chi)$ is $\chi$-isotypic, 
    carrying $d_\chi$ copies of the irreducible $\chi$.

    Restricting the action of $\CC W$ on the right of
    the summand $\End_\CC(U^\chi)$ to the elements $\nu_\OOO$ and $w_0$, one has 
    the commuting left-action of $\CC W$ and the right-action of
    the elements $\rho_\chi(\nu_\OOO)$ and $\rho_\chi(w_0)$
    inside $\End_\CC(U^\chi)$.  

    Now identify $\End_\CC(U^\chi)$
    with $d_\chi \times d_\chi$ matrices by choosing 
    for $U^\chi$ a basis of simultaneous 
    eigenvectors $\{v_i\}_{i=1,2,\ldots,d_\chi}$
    for the action of $\rho_\chi(\nu_\OOO)$ and
    the commuting involution $\rho_\chi(w_0)$.
    One then finds that the subspace of matrices supported only in column $i$ 
    form an irreducible $W$-module affording the character $\chi$.

    This proves the following.

    \begin{Proposition} \label{Fourier-transform-prop}
      Let $W$ be a real reflection group and assume one has a $W$-stable
      subset $\OOO \subseteq \LLL$.  Let $\chi$ be a
      complex irreducible $W$-character, $\lambda \in \RR$, and $\epsilon \in \{\pm 1\}$.

      The number of copies of $\chi$ occurring in  
      $$\ker (\nu_\OOO - \lambda \matid_V) \cap  \ker (\tau - \epsilon \matid_V)$$
      equals the dimension of 
      $$\ker (\rho_\chi(\nu_\OOO) - \lambda \matid_{U^\chi}) \cap \ker (\rho_\chi(w_0) - \epsilon \matid_{U^\chi}).$$

      In particular, if $\lambda$ is an eigenvalue of $\nu_\OOO$ and $\epsilon$
      an eigenvalue of $\tau$, then the number of copies of $\chi$
      occurring in the $\lambda$-eigenspace for $\nu_\OOO$ intersected
      with the $\epsilon$-eigenspace for $\tau$ is the same
      as the dimension of the $\lambda$-eigenspace for
      $\rho_\chi(\nu_\OOO)$ intersected
      with the $\epsilon$-eigenspace for $\rho_\chi(w_0)$.
    \end{Proposition}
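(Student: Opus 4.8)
The plan is to deduce the Proposition from the Wedderburn decomposition of $\CC W$, using the facts established above that $\nu_\OOO$ acts on $\ZZ W$ as right multiplication by $\sum_{w\in W}\ninv_\OOO(w)\,w$ and that $\tau$ acts as right multiplication by $w_0$. First I would record that the algebra isomorphism $\CC W\to\bigoplus_\chi\End_\CC(U^\chi)$, $w\mapsto\bigoplus_\chi\rho_\chi(w)$, is simultaneously an isomorphism of left and of right $\CC W$-modules (a bimodule isomorphism), so that right multiplication by an element $a\in\CC W$ is transported to the block operator $M\mapsto M\rho_\chi(a)$ on the summand $\End_\CC(U^\chi)$. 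Applying this with $a=\nu_\OOO$ and with $a=w_0$, and recalling that each summand $\End_\CC(U^\chi)$, viewed as a left $\CC W$-module, is $\chi$-isotypic and accounts for all copies of $\chi$ in $\CC W$, I would reduce the computation of the multiplicity of $\chi$ in the subspace $\ker(\nu_\OOO-\lambda\matid)\cap\ker(\tau-\epsilon\matid)$ (which is stable under the left $\CC W$-action, being the kernel of operators that commute with it) to a computation inside the single block $\End_\CC(U^\chi)\cong\Mat_{d_\chi}(\CC)$.

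On that block the next step is to diagonalize. Since $\nu_\OOO$ is self-adjoint, right multiplication by it on $\CC W$ is diagonalizable, hence so is its restriction to the two-sided ideal $\End_\CC(U^\chi)$; as the operator $M\mapsto M\rho_\chi(\nu_\OOO)$ on $\Mat_{d_\chi}(\CC)$ has the same minimal polynomial as $\rho_\chi(\nu_\OOO)$, the matrix $\rho_\chi(\nu_\OOO)$ is itself diagonalizable, and it commutes with the involution $\rho_\chi(w_0)$. Choosing for $U^\chi$ a common eigenbasis $v_1,\dots,v_{d_\chi}$, as in the discussion preceding the Proposition, so that $\rho_\chi(\nu_\OOO)=\operatorname{diag}(\mu_1,\dots,\mu_{d_\chi})$ and $\rho_\chi(w_0)=\operatorname{diag}(\eta_1,\dots,\eta_{d_\chi})$ with $\eta_j\in\{\pm1\}$, and writing elements of $\End_\CC(U^\chi)$ as matrices in this basis, one checks that $M\mapsto M\rho_\chi(\nu_\OOO)$ scales the $j$-th column by $\mu_j$, that $M\mapsto M\rho_\chi(w_0)$ scales the $j$-th column by $\eta_j$, and that the span of the $j$-th column is an irreducible left $\CC W$-submodule affording $\chi$.

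It then follows that, inside this block, $\ker(\nu_\OOO-\lambda\matid)\cap\ker(\tau-\epsilon\matid)$ is the span of those columns $j$ with $\mu_j=\lambda$ and $\eta_j=\epsilon$, a direct sum of that many copies of $\chi$; and the number of such $j$ is precisely $\dim\big(\ker(\rho_\chi(\nu_\OOO)-\lambda\matid_{U^\chi})\cap\ker(\rho_\chi(w_0)-\epsilon\matid_{U^\chi})\big)$, namely the dimension of the span of the corresponding $v_j$. Summing over $\chi$, and using once more that copies of $\chi$ occur only in the $\chi$-block, gives the first assertion, and the ``in particular'' clause is its specialization to $\lambda$ an eigenvalue of $\nu_\OOO$ and $\epsilon$ an eigenvalue of $\tau$. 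The only point I expect to require a little care is the identification in the second step of the $j$-th column as an irreducible left $\CC W$-module on which right multiplication by $\nu_\OOO$ and by $w_0$ act by the scalars $\mu_j$ and $\eta_j$ respectively; everything else is a routine transport of structure along the Wedderburn isomorphism, and in fact the Proposition is little more than an organized restatement of the discussion that precedes it, so I do not anticipate a genuine obstacle.
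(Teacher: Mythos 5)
Your argument is correct and is essentially the paper's own: the paper proves this Proposition exactly by the preceding "Fourier transform" discussion — transporting right multiplication by $\nu_\OOO$ and $w_0$ across the Wedderburn isomorphism $\CC W\cong\bigoplus_\chi\End_\CC(U^\chi)$, choosing a simultaneous eigenbasis of $\rho_\chi(\nu_\OOO)$ and $\rho_\chi(w_0)$, and observing that the matrices supported in a single column form a copy of $\chi$ on which these operators act by the corresponding scalars. Your write-up merely makes explicit a couple of points the paper leaves implicit (the bimodule nature of the isomorphism and the simultaneous diagonalizability, which you correctly justify via self-adjointness of $\nu_\OOO$ and the commutation $\nu_\OOO w_0=w_0\nu_\OOO$), so there is no gap.
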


    As a very special case of this, when $\chi$ is a 
    {\deffont degree one} or {\deffont linear} character of $W$,
    \index{degree of a character}%
    \index{linear character}%
    \index{character}%
    one can be much more precise.  

    \begin{Proposition} \label{degree-one-character-prop}
      For any degree one character $\chi$ of $W$  and any $W$-stable
      subset $\OOO \subseteq \LLL$, multiples of the $\chi$-idempotent 
      $$
        \idem_\chi : = \frac{1}{|W|} \sum_{w \in W} \chi(w) \cdot w
                 = \frac{1}{|W|} \sum_{w \in W} \chi(w^{-1}) \cdot w
     $$
     \nomenclature[al]{$\idem_\chi$}{idempotent for character $\chi$ in $\QQ W$}%
     in $\QQ W$ are eigenvectors for $\nu_\OOO$, with integer eigenvalue
     $$
       \begin{aligned}
          \lambda_\OOO(\chi) & := & \sum_{w \in W} \ninv_\OOO(w) \chi(w) \\
                    & =  & \sum_{X \in \OOO} 
                              \sum_{\substack{w \in W:\\ c_w/X=c_\grid/X}} \chi(w).
       \end{aligned}
     \nomenclature[al]{$\lambda_\OOO(\chi)$}{eigenvalue  Idempotent for character $\chi$ in $\QQ W$}%
     $$
     In particular, the trivial character $\trivial$ gives
     \nomenclature[al]{$\trivial$}{trivial character}%
     rise to an all positive eigenvector $\idem_\trivial= \frac{1}{|W|} \sum_{w \in W} w$, having
     eigenvalue 
     $$
       \lambda_\OOO(\trivial)= \sum_i \Big( [W:\Norma_W(X_i)] \cdot [W: \Centr_W(X_i)] \Big) 
     $$
     where $\{X_i\}$ is any set of representatives 
     for the $W$-orbits within $\OOO$.
   \end{Proposition}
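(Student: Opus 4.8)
The plan is to verify the three claims in \ref{degree-one-character-prop} in sequence: first that $\idem_\chi$ is an eigenvector, second the formula for its eigenvalue $\lambda_\OOO(\chi)$ and its integrality, and third the specialization to $\chi = \trivial$.

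First I would recall that for a degree one character $\chi$, the element $\idem_\chi$ is a primitive central idempotent in $\QQ W$ (or at least satisfies $w \cdot \idem_\chi = \chi(w)\idem_\chi$ for all $w \in W$, using $\chi(w)^{-1} = \chi(w^{-1})$ since $\chi$ is a homomorphism into roots of unity, and that $\chi(w)$ is real, in fact $\pm 1$, for a real reflection group — though only $\chi(w)\chi(w') = \chi(ww')$ is needed here). Since $\nu_\OOO$ acts by right-multiplication by $\sum_{w} \ninv_\OOO(w)\, w$, I compute
\[
  \idem_\chi \cdot \nu_\OOO
  = \frac{1}{|W|}\sum_{v \in W} \chi(v)\, v \cdot \sum_{w \in W} \ninv_\OOO(w)\, w
  = \frac{1}{|W|}\sum_{v,w} \chi(v)\,\ninv_\OOO(w)\, vw.
\]
Substituting $u = vw$, so $v = uw^{-1}$ and $\chi(v) = \chi(u)\chi(w^{-1}) = \chi(u)\chi(w)$ (again using that $\chi$ has values in $\{\pm1\}$, or more generally that $\chi(w^{-1})=\chi(w)^{-1}$), this becomes
\[
  \frac{1}{|W|}\sum_{u,w}\chi(u)\chi(w)\,\ninv_\OOO(w)\, u
  = \left(\sum_{w \in W}\chi(w)\,\ninv_\OOO(w)\right)\idem_\chi,
\]
which shows $\idem_\chi$ is an eigenvector with eigenvalue $\lambda_\OOO(\chi) = \sum_{w}\ninv_\OOO(w)\chi(w)$. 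The second displayed formula for $\lambda_\OOO(\chi)$ then follows by unpacking the definition $\ninv_\OOO(w) = |\{X \in \OOO : c_w/X = c_\grid/X\}|$ and interchanging the two sums. Integrality of $\lambda_\OOO(\chi)$ is immediate: $\ninv_\OOO(w) \in \ZZ$ and $\chi(w) \in \{\pm 1\}$ (for a real reflection group every linear character takes values in $\{\pm1\}$), so the sum is a rational integer. (Alternatively, one may invoke \ref{prop:integrality-principle} with $U = \CC W \cdot \idem_\chi$, but the direct computation is cleaner.)

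For the final assertion, I specialize $\chi = \trivial$, so $\chi(w) = 1$ for all $w$ and $\idem_\trivial = \frac{1}{|W|}\sum_w w$, which is manifestly an all-positive vector. Its eigenvalue is
\[
  \lambda_\OOO(\trivial) = \sum_{w \in W}\ninv_\OOO(w)
  = \sum_{w \in W}\sum_{X \in \OOO}\big[\,c_w/X = c_\grid/X\,\big]
  = \sum_{X \in \OOO}\big|{}^XW\big|,
\]
using \ref{prop:parabolic-factorization} to identify $\{w : c_w/X = c_\grid/X\}$ with ${}^XW$, the set of right coset representatives of $\Centr_W(X)$, so $|{}^XW| = [W : \Centr_W(X)]$. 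Finally I group the subspaces $X \in \OOO$ by $W$-orbit: if $\OOO = \bigsqcup_i X_i^W$, each orbit $X_i^W$ has size $[W : \Norma_W(X_i)]$ and each $X$ in it contributes the same value $[W : \Centr_W(X_i)]$ (conjugate parabolics have equal index), yielding
\[
  \lambda_\OOO(\trivial) = \sum_i [W : \Norma_W(X_i)] \cdot [W : \Centr_W(X_i)].
\]
There is no real obstacle here; the only point requiring a little care is the bookkeeping with $\chi(w^{-1})$ versus $\chi(w)$ in the eigenvector computation, which is harmless once one notes that a linear character of a real reflection group is $\{\pm1\}$-valued (hence equals its own inverse), and the identification of the counting set $\{w : c_w/X = c_\grid/X\}$ with ${}^XW$ via \ref{prop:parabolic-factorization}.
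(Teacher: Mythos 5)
Your proof is correct and follows essentially the same route as the paper's: a direct group-algebra computation of $\idem_\chi \cdot \nu_\OOO$ using that a linear character of a reflection group is $\{\pm 1\}$-valued (so $\chi(w^{-1})=\chi(w)$), followed by interchanging the sums over $w$ and $X$, and for $\chi=\trivial$ the same orbit-counting via \ref{prop:parabolic-factorization} with $|{}^XW|=[W:\Centr_W(X)]$ and orbit size $[W:\Norma_W(X_i)]$. The only cosmetic difference is that you reindex by the substitution $u=vw$ while the paper collects coefficients of $w$ directly; the content is identical.
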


   \begin{proof}
      First note that since a reflection group $W$ is generated by involutions,
      any degree one character $\chi$ takes values in $\{\pm 1\}$ and satisfies
      $\chi(w^{-1})=\chi(w)$.
      Now check the eigenvalue equation:
      \begin{eqnarray*} 
        |W| \idem_\chi \cdot \nu_\OOO & = & \left(\sum_{u \in W} \chi(u) \cdot u \right) 
                                      \left( \sum_{v \in W} \ninv_\OOO(v)\cdot v \right) \\
                                  & = & \sum_{u \in W} \sum_{v \in W} 
                                           \chi(u) \ninv_\OOO(v) \cdot u v \\
                                  & = & \sum_{w \in W} w \left( \sum_{v \in W} 
                                           \chi(w v^{-1}) \ninv_\OOO(v) \right)\\
                                  & = & \left( \sum_{w \in W} \chi(w) w \right) \left( \sum_{v \in W} 
                                           \chi(v^{-1}) \ninv_\OOO(v) \right) \\
                                  & = & \lambda_\OOO(\chi) \left( |W| \idem_\chi \right)
      \end{eqnarray*}
      One can also rewrite
      $$
        \begin{aligned}
          \lambda_\OOO(\chi) & =\sum_{w \in W} \ninv_\OOO(w) \chi(w) \\
                             & =\sum_{w \in W} 
                                  \sum_{\substack{X \in \OOO:\\ c_w/X = c_\grid/X}} \chi(w) \\
                             & =\sum_{X \in \OOO} 
                                  \sum_{\substack{w \in W:\\ c_w/X = c_\grid/X}} \chi(w) \\
        \end{aligned}
      $$
      Lastly, when $\chi=\trivial$ one has
      $$
        \begin{aligned}
          \lambda_\OOO(\chi) & =\sum_{X \in \OOO} \left|\left\{w \in W: c_w/X = c_\grid/X\right\}\right| \\
                      & =\sum_i \sum_{X'_i \in W \cdot X_i} \left|\left\{w \in W: c_w/X'_i = c_\grid/X'_i\right\}\right| \\
                      & =\sum_i [W:\Norma_W(X_i)] [W:\Centr_W(X_i)]
        \end{aligned}
      $$
     where the last equality uses both the fact that $|W \cdot X_i|=[W:\Norma_W(X_i)]$
     and that \ref{prop:parabolic-factorization} tells us that the elements 
     from ${}^XW = \{w \in W: c_w/X = c_\grid/X\}$ form a set of coset representatives
     for $W/\Centr_W(X)$. 
   \end{proof}

   \begin{Example}
     We return to the setting of \ref{ex:first-type-A} with
     $W = \symm_n$ acting on $V=\RR^n$, 
     and $\OOO=\OOO_{(k,1^n-k)}$.
     There are two
     degree one characters of $W$, namely the {\deffont trivial} character $\trivial$,
     \index{trivial character}%
     and the {\deffont sign} character $\sgn$.  Since a representative
     \index{sign character}%
     \nomenclature[al]{$\sgn$}{sign character of symmetric group}%
     subspace $x_1 = x_2 = \cdots =x_k$ in $\OOO$ has 
     $\Norma_W(X)= \symm_k \times \symm_{n-k}$
     and  $\Centr_W(X)=\symm_k$, for the trivial character $\trivial$ one finds that 
     $$
       \lambda_\OOO(\trivial) = [W:\Norma_W(X)] [W:\Centr_W(X)] 
                   = \frac{n!}{k! (n-k)!} \cdot \frac{n!}{k!}
                   = \binom{n}{k}^2 (n-k)!.
     $$
     For the sign character $\sgn$ one finds that
     $$
       \begin{aligned}
          \lambda_{\OOO}(\sgn) 
          &=
           \sum_{X \in \OOO} \quad \sum_{w \in W: c_w/X = c_\grid/X} \sgn(w)\\
          &=\sum_{1 \leq i_1 < \cdots < i_k \leq n}  \qquad 
           \sum_{\substack{w \in W:\\ \{i_1,\ldots,i_k\} \text{ appear } \\
                              \text{ left-to-right in }w}} \sgn(w)\\
          &=\begin{cases}
              1 & \text{ if }k=n \\
              1 & \text{ if }k=n-1 \text{ and }n\text{ is odd,} \\
              0 & \text{ if }k=n-1 \text{ and }n\text{ is even,} \\
              0 & \text{ if }1 \leq k\leq n-2,
            \end{cases}
       \end{aligned}
     $$
     for the following reasons.  

     When $k=n$ this is because there is
     only one term in the outer sum, and the inner sum contains only $w=\grid$.

     When $1 \leq k \leq n-2$, picking any pair $\{i,j\}$ in the complement 
     $[n]\setminus \{i_1,\ldots,i_k\}$ gives rise to a sign-reversing
     involution $w \leftrightarrow (i,j) \cdot w$, which shows that the
     inner sum vanishes.   

     When $k = n-1$, this calculation 
     appears as \cite[Proposition~5.3]{UyemuraReyes2002}.  Each term
     in the outer sum is determined by the index $i$ in the complement
     $[n]\setminus \{i_1,\ldots,i_k\}$, and each $w$ in the inner 
     sum determined by the position $j$ where $i$ appears in $w$,
     that is, $j=w^{-1}(i)$.  Hence the result is 
     $$
       \sum_{i=1}^n \sum_{j=1}^n (-1)^{i-j} = \left( \sum_{i=1}^n (-1)^i \right)^2
     $$
     which is $1$ for $n$ odd and $0$ for $n$ even.
   \end{Example}

 \subsection{Perron-Frobenius and primitivity}
   \label{subsec:Perron-Frobenius}

   Since the matrices representing the $\nu_\OOO$ have nonnegative
   entries, and since the trivial idempotent $\idem_\trivial$ gives
   an eigenvector with all positive entries, one might wish to 
   apply Perron-Frobenius theory (see e.g. \cite[Theorem 8.4.4]{HornJohnson1985})
   \index{Perron-Frobenius Theorem}%
   to conclude that the eigenspace spanned by $\idem_\trivial$ is simple.   
   This is true in the cases of most interest to us, but we must
   first deal with a degenerate case that can occur when the reflection
   group $W$ does not act irreducibly.

   Recall that for any finite reflection group $W$ acting on the real
   vector space $V$, one can always
   decompose $W=\prod_{i=1}^t W^{(i)}$ and find an orthogonal decomposition
    $V =\bigoplus_{i=1}^t V^{(i)}$
    such that each $W^{(i)}$ acts as a reflection group {\deffont irreducibly}
    \index{irreducible!real reflection group}%
    \index{real reflection group!irreducible}%
    on $V^{(i)}$.  In this situation, one has a disjoint decomposition
    of the arrangement of reflecting hyperplanes $\AAA = \bigsqcup_{i=1}^t \AAA^{(i)}$.

    \begin{Example}
       \label{ex:imprimitive-example}
       Let $W$ be of type $A_1 \times A_1$, that is, 
       the reflection group isomorphic to $\ZZ_2 \times \ZZ_2$
       acting on $V=\RR^2$ generated by two commuting reflections 
       $s_1, s_2$ through perpendicular hyperplanes $H_1, H_2$
       (lines, in this case). Thus $W = W^{(1)} \times W^{(2)}$ 
       where $W^{(i)} = \{\grid,s_i\}$. Choose $\OOO=\{H_1\}$.  Then one finds that
       $$
         \begin{tabular}{cc}
            \toprule
            $w$                 & $\ninv_{\OOO}(w)$ \\ \midrule
            $\grid$             & $1$ \\ \midrule
            $s_1$               & $0$ \\ \midrule
            $s_2$               & $1$ \\ \midrule
            $s_1s_2=s_2s_1=w_0$ & $0$ \\ \bottomrule
         \end{tabular}
       $$
       so that as an element of $\ZZ W$, one has 
       $
        \nu_\OOO = \grid + s_2
       $
       whose action on $\ZZ W$ on the right can be expressed 
       in matrix form with respect to the ordered basis $(\grid,s_1,s_2,w_0)$ as
       $$
         \left[
           \begin{matrix}
             1 & 0 & 1 & 0 \\
             0 & 1 & 0 & 1 \\
             1 & 0 & 1 & 0 \\
             0 & 1 & 0 & 1 \\
           \end{matrix}
         \right].
       $$
       Even though this matrix is nonnegative, it is imprimitive 
       in the sense that no power of it will have all strictly positive
       entries.  Thus one cannot apply the simplest version of
       the Perron-Frobenius theorem.   
       However, under the identification $\ZZ W \cong \ZZ W_1 \otimes_\ZZ \ZZ W_2$ 
       one has
       $$
         \nu_\OOO = (1\cdot \grid + 0 \cdot s_1) \otimes (1 \cdot \grid + 1 \cdot s_2).
       $$
       and correspondingly the above matrix can be rewritten as
       $$
         \left[
           \begin{matrix}
              1 & 0 \\
              0 & 1 
           \end{matrix}
         \right]
         \otimes
         \left[
           \begin{matrix}
             1 & 1 \\
             1 & 1 
           \end{matrix}
         \right].
       $$
       Note that this second tensor factor {\emphfont is} a primitive matrix, to which
       Perron-Frobenius does apply.
     \end{Example}

     The following proposition can be proven in a completely straightforward
     fashion.

     \begin{Proposition}
       \label{prop:reducibility-reduction1}
       Let $W$ be a finite real reflection group and $W = \prod_{i=1}^t W^{(i)}$ for
       irreducible reflection groups $W^{(i)}$. Let $\AAA^{(i)}$
       be the arrangements of reflecting hyperplanes of the reflections from $W^{(i)}$, 
       \index{reflecting!hyperplane}%
       $1 \leq i \leq t$. Suppose there is an $1 \leq i \leq t$ such that the $W$-stable 
       subset $\OOO \subseteq \LLL$ has every $X$ in $\OOO$ a subspace of 
       $X^{(i)}:=\bigcap_{H \in \AAA^{(i)}} H$,
       so that one can write uniquely $X=X^{(i)} \cap Y$ where
       $Y$ is an intersection of the hyperplanes in  $\bigsqcup_{j \neq i}\AAA^{(j)}$.
       Then letting $W' := \prod_{j \neq i} W_j$ and identifying 
       $\ZZ W \cong \ZZ W^{(i)} \otimes \ZZ W'$, one has
       $$
         \nu_\OOO = \grid_{\ZZ W^{(i)}} \otimes \nu_{\OOO'}
       $$
       where $\OOO':=\{ Y: X^{(i)} \cap Y \in \OOO\}$.
     \end{Proposition}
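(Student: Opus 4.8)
The plan is to use the product structure throughout. Write $V=\bigoplus_{k=1}^{t}V^{(k)}$. Since $W^{(i)}$ is irreducible it acts essentially on $V^{(i)}$, so $X^{(i)}=\bigcap_{H\in\AAA^{(i)}}H=\bigoplus_{j\neq i}V^{(j)}=:V'$, and in general each hyperplane of $\AAA^{(k)}$ has the form $\bar H\oplus\bigoplus_{\ell\neq k}V^{(\ell)}$ for a hyperplane $\bar H$ of the (essential) arrangement $\AAA^{(k)}$ inside $V^{(k)}$. Consequently the complement of $\AAA$ is the product of the complements of the $\AAA^{(k)}$ inside the $V^{(k)}$, so $\CCC(\AAA)=\prod_{k}\CCC(\AAA^{(k)})$ compatibly with $W=\prod_k W^{(k)}$ and with $\ZZ W\cong\bigotimes_k\ZZ W^{(k)}$: writing $w=(w_1,\dots,w_t)$, one has $c_w=(c^{(1)}_{w_1},\dots,c^{(t)}_{w_t})$. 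The intersection lattice factors likewise, and I would first record the (routine) reformulation of the hypothesis: $X\subseteq X^{(i)}$ for every $X\in\OOO$ says exactly that each such $X$ has trivial $V^{(i)}$-component, i.e.\ $X=\bigoplus_{j\neq i}X_j$ with $X_j\in\LLL(\AAA^{(j)})$; the subspace $Y:=V^{(i)}\oplus\bigoplus_{j\neq i}X_j$ then lies in $\LLL(\AAA')$, where $\AAA':=\bigsqcup_{j\neq i}\AAA^{(j)}$, satisfies $X=X^{(i)}\cap Y$, and $Y\mapsto X^{(i)}\cap Y$ identifies $\OOO'$ with $\OOO$. I would also note that $W^{(i)}$ acts trivially on $V'$ and hence fixes every subspace in $\OOO$ pointwise, so the $W$-action on $\OOO$ factors through $W':=\prod_{j\neq i}W^{(j)}$, the set $\OOO'$ is $W'$-stable, and $\nu_{\OOO'}$ is defined.

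The key step is to localize at such an $X$. Since $X$ imposes no condition inside $V^{(i)}$, every hyperplane of $\AAA^{(i)}$ contains $X$, whereas a hyperplane of $\AAA^{(j)}$ with $j\neq i$ contains $X$ iff its $V^{(j)}$-part contains $X_j$; thus $\AAA/X\cong\AAA^{(i)}\sqcup\bigsqcup_{j\neq i}(\AAA^{(j)}/X_j)$, giving $\CCC(\AAA/X)=\CCC(\AAA^{(i)})\times\CCC(\AAA'/X)$, and the localization map decomposes as
\[
  \pi_X\colon\ c=(c_i,(c_j)_{j\neq i})\ \longmapsto\ \bigl(c_i,\ (c_j/X_j)_{j\neq i}\bigr),
\]
i.e.\ it acts as the identity on the $\CCC(\AAA^{(i)})$-factor. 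This is the crux of the argument, and it is precisely where the hypothesis $X\subseteq X^{(i)}$ is used.

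Everything then follows by bookkeeping. For $w=(w_1,\dots,w_t)\in W$ and $X=\bigoplus_{j\neq i}X_j\in\OOO$, the equality $c_\grid/X=c_w/X$ holds iff $c^{(i)}_\grid=c^{(i)}_{w_i}$ — equivalently $w_i=\grid$, since $W^{(i)}$ acts simply transitively on $\CCC(\AAA^{(i)})$ — and simultaneously $X$ is a noninversion subspace of $\AAA'$ for the pair of chambers indexed by $\grid$ and $w':=(w_j)_{j\neq i}$ in $W'$. Hence $\ninv_\OOO(w)=0$ when $w_i\neq\grid$, and $\ninv_\OOO(w)=\ninv_{\OOO'}(w')$ when $w_i=\grid$ (using the identification $\OOO\leftrightarrow\OOO'$ above). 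Summing over $W$,
\[
  \nu_\OOO=\sum_{w\in W}\ninv_\OOO(w)\,w=\sum_{w'\in W'}\ninv_{\OOO'}(w')\,(\grid,w')=\grid_{\ZZ W^{(i)}}\otimes\nu_{\OOO'}
\]
under $\ZZ W\cong\ZZ W^{(i)}\otimes\ZZ W'$. There is no real obstacle here beyond the care needed to set up the product decompositions, which is why the statement can be proved in a completely straightforward fashion.
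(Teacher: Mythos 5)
Your proof is correct, and since the paper deliberately omits the argument (stating only that the proposition "can be proven in a completely straightforward fashion"), what you have written is precisely the routine verification being alluded to: decompose chambers and the intersection lattice according to $V=\bigoplus_k V^{(k)}$, observe that $X\subseteq X^{(i)}$ forces every hyperplane of $\AAA^{(i)}$ to contain $X$, so that $\pi_X$ is the identity on the $\CCC(\AAA^{(i)})$-factor, whence $c_\grid/X=c_w/X$ forces $w_i=\grid$ and $\ninv_\OOO(w)=\ninv_{\OOO'}(w')$, giving $\nu_\OOO=\grid_{\ZZ W^{(i)}}\otimes\nu_{\OOO'}$. No gaps; the only points needing the care you gave them are the essentiality of $\AAA^{(i)}$ inside $V^{(i)}$ (so $X^{(i)}=\bigoplus_{j\neq i}V^{(j)}$) and the compatible choice of base chamber as the product of base chambers underlying the identification $\ZZ W\cong\ZZ W^{(i)}\otimes\ZZ W'$.
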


     \begin{Example}
       \ref{ex:imprimitive-example} illustrates the scenario
       of \ref{prop:reducibility-reduction1} 
       with $V=\RR^2 = V^{(1)} \oplus V^{(2)}=\RR^1 \oplus \RR^1$.
       Here $i=1$ with $X=X^{(1)}=H_1$ and $Y=V^{(2)}$ is the second copy of $\RR^1$
       considered as the empty intersection of hyperplanes from $\AAA^{(2)}$.
       In the tensor decomposition of $\nu_\OOO$,
       the first tensor factor is $\grid_{\ZZ W^{(1)}}$ and the second tensor factor is
       $\nu_{\OOO'}$.
     \end{Example}

     Let $W$ be a finite real reflection group and $\OOO \subseteq \LLL$ a $W$-invariant
     subset of $\LLL$. Assume that $W = \prod_{i=1}^t W^{(i)}$ for
     irreducible reflection groups $W^{(i)}$ and $\AAA^{(i)}$ the arrangement
     of reflecting hyperplanes of $W^{(i)}$. We call $\OOO$ 
     {\it irreducible} if there is no $1 \leq i \leq t$ such that all $X \in \OOO$
     satisfy $X \subseteq \bigcap_{H \in \AAA^{(i)}} H$.  
     As a consequence of this proposition, in analyzing the eigenvalues and
     eigenspaces of $\nu_\OOO$, it suffices for us to assume that $\OOO$ is
     irreducible. 

     \begin{Proposition}
       Let $W$ be a finite real reflection group and $\OOO \subseteq \LLL$ an
       irreducible $W$-invariant subset of $\LLL$.
       Then the nonnegative $|W| \times |W|$ matrix $\nu_\OOO$ is primitive
       in the sense that it has some positive power $\nu_\OOO^m$ with all
       strictly positive entries. In particular,
       the $\lambda(\trivial)$-eigenspace is simple, spanned by
       the trivial idempotent $\idem_\trivial$.
     \end{Proposition}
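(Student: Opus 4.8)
The plan is to establish primitivity by a connectivity argument on chambers, and then read off the assertion about $\idem_\trivial$ from Perron--Frobenius. First I would note that an irreducible $\OOO$ is in particular nonempty, so every diagonal entry of the matrix $\nu_\OOO$ equals $\ninv_\OOO(\grid) = |\OOO| > 0$ (trivially $c_\grid/X = c_\grid/X$). Since a nonnegative square matrix with strictly positive diagonal is primitive as soon as it is Perron--Frobenius-irreducible, and since $\nu_\OOO$ is symmetric, it suffices to show that the undirected graph $G$ on the vertex set $\CCC$, having an edge $\{c,c'\}$ whenever $\ninv_\OOO(c,c') > 0$, is connected.

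The crux is to show that $G$ contains the ordinary chamber-adjacency (gallery) graph of $\AAA$ as a spanning subgraph; since that graph is connected, this finishes the reduction. So let $c,c'$ be chambers sharing a wall lying on a hyperplane $H \in \AAA$, so that $c$ and $c'$ lie on the same side of every hyperplane of $\AAA$ other than $H$. If some $X \in \OOO$ has $X \not\subseteq H$, then every hyperplane containing $X$ is distinct from $H$, hence $c/X = c'/X$, hence $\ninv_\OOO(c,c') \geq 1$ and $\{c,c'\}$ is an edge of $G$. So the whole matter comes down to the claim that \emph{for every hyperplane $H \in \AAA$ there is some $X \in \OOO$ not contained in $H$}, and I expect this to be the only real obstacle; it is precisely where the hypothesis that $\OOO$ is irreducible is needed. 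I would prove it by contradiction: if every $X \in \OOO$ lay in a fixed $H \in \AAA^{(i_0)}$, then $W$-stability of $\OOO$ forces every $X \in \OOO$ into $\bigcap_{w \in W} w(H)$. The factors $W^{(j)}$ with $j \neq i_0$ fix $H$ setwise, so this intersection equals $\bigcap_{H' \in W^{(i_0)}\cdot H} H'$, whose intersection with $V^{(i_0)}$ is a $W^{(i_0)}$-stable proper subspace of $V^{(i_0)}$, hence $0$ by irreducibility of the $W^{(i_0)}$-action; that is, $\bigcap_{w \in W} w(H) = X^{(i_0)} := \bigcap_{H' \in \AAA^{(i_0)}} H'$. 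Thus every $X \in \OOO$ satisfies $X \subseteq X^{(i_0)}$, contradicting irreducibility of $\OOO$ (compare \ref{prop:reducibility-reduction1} and the definition of an irreducible $\OOO$ given just above).

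Finally, with $\nu_\OOO$ shown to be primitive, the Perron--Frobenius theorem (\cite[Theorem 8.4.4]{HornJohnson1985}) gives that its spectral radius is a simple eigenvalue admitting a strictly positive eigenvector, unique up to scaling. By \ref{degree-one-character-prop} the trivial idempotent $\idem_\trivial = \frac{1}{|W|}\sum_{w \in W} w$ is a strictly positive eigenvector with eigenvalue $\lambda_\OOO(\trivial)$; hence $\lambda_\OOO(\trivial)$ is that spectral radius, and its eigenspace is exactly the line spanned by $\idem_\trivial$.
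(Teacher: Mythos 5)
Your proof is correct, and it takes a genuinely different route from the paper's. The paper argues inside the group algebra: since $\ninv_\OOO(\grid)=|\OOO|>0$, primitivity follows once the support $\{w\in W:\ninv_\OOO(w)>0\}$ generates $W$, and the paper certifies this by producing the explicit elements $s^{(i)}_1,\ s^{(i)}_1 s^{(i)}_2,\ \ldots$ (numbering the connected Coxeter diagram of each $W^{(i)}$ so that the first node avoids $J^{(i)}$ and each initial segment stays connected), and then checking via \ref{prop:parabolic-factorization} and Tits' solution to the word problem that each such prefix is a minimal length coset representative for $W_{J^{(i)}}\backslash W^{(i)}$, hence has the single representative subspace $X^{(i)}$ as a noninversion. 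You instead work on the chamber side: positive diagonal plus connectivity of the positivity graph gives primitivity, and your key lemma --- for every $H\in\AAA$ some $X\in\OOO$ is not contained in $H$ --- is proved directly from the $W$-invariance of $\OOO$, irreducibility of the $W^{(i)}$-actions, and irreducibility of $\OOO$, so that the positivity graph contains the (connected) gallery graph. Translated back into the group algebra, your lemma says $\ninv_\OOO(s)>0$ for every simple reflection $s$: the only hyperplane separating $c_\grid$ from $c_s$ is $H_s$, so any $X\in\OOO$ with $X\not\subseteq H_s$ is a noninversion for $s$, and hence the support already contains $S\cup\{\grid\}$. This is a cleaner generating set than the paper's prefix products, and it is available to you precisely because you let $X$ range over the whole $W$-stable set $\OOO$ rather than testing only the chosen representative $X^{(i)}$ (a simple reflection in $J^{(i)}$ fails the paper's test for that particular $X^{(i)}$ but passes for a suitable conjugate), which is why you can dispense with Tits' theorem and the Coxeter-diagram bookkeeping. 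Both arguments conclude identically, invoking the Perron--Frobenius theorem \cite[Theorem 8.4.4]{HornJohnson1985} together with the positive eigenvector $\idem_\trivial$ from \ref{degree-one-character-prop}.
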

     \begin{proof}
       Recall that $\nu_\OOO = \sum_{w \in W} \ninv_\OOO(w) \cdot w$
       as an element of $\ZZ W$, and that it has nonnegative coefficients. 
       Consequently, it suffices to show that the set of $w$ in $W$ having
       positive coefficient $\ninv_\OOO(w)>0$ is a generating set for
       $W$.  We will exhibit an explicit generating set for $W$ with all
       having positive coefficients.

       Recall that for finite real reflection groups $W$, 
       the set $S$ of reflections through the hyperplanes which bound the chosen
       fundamental chamber $c_\grid$ gives rise to a {\deffont Coxeter presentation} for
       \index{Coxeter!presentation}%
       $W$, or a {\deffont Coxeter system} $(W,S)$.
       \nomenclature[al]{$(W,S)$}{Coxeter system}%
       \index{Coxeter!system}%
       In the above situation, for each
       $i=1,2,\ldots,t$, we can choose
       the fundamental chambers for each group $W^{(i)}$ independently.
       Make this choice so that for each $i=1,2,\ldots,t$, the subspace 
       $X^{(i)}$ lies in the intersection of some subset of the walls of the 
       fundamental chamber for $W^{(i)}$, say the walls indexed by
       the proper subset $J^{(i)}$ of $S^{(i)}$.  

       Because each $W^{(i)}$ acts irreducibly, the Coxeter system
       $(W^{(i)},S^{(i)})$ has connected Coxeter diagram, and one
       \index{Coxeter!diagram}%
       can number its nodes $s^{(i)}_1,s^{(i)}_2,\ldots$ in such a way
       that $s^{(i)}_1$ is not in $J^{(i)}$, and 
       each initial segment of the nodes induces a connected
       subdiagram.  

       We claim that the union over $i=1,2,\ldots,t$
       of the sets
       $$ 
         s^{(i)}_1, s^{(i)}_1 s^{(i)}_2, \ldots, s^{(i)}_1 s^{(i)}_2 \cdots s^{(i)}_{|S^{(i)}|}
       $$
       is a generating set for $W$, and that each of these elements
       has positive value of $\ninv_\OOO$.  The reason they generate $W$
       is that $S^{(i)}= \{ s^{(i)}_1,s^{(i)}_2,\ldots, s^{(i)}_{|S^{(i)}|} \}$
       generates $W^{(i)}$.  We want to show that any of the elements
       $w=s^{(i)}_1 s^{(i)}_2 \cdots s^{(i)}_j$ inside $W^{(i)}$ will have a positive value of
       $\ninv_\OOO$. For that consider the subspace $X^{(i)}$ of $\OOO$.  We claim that
       $X^{(i)}$ forms a noninversion for
       $w$.  To see this, by \ref{prop:parabolic-factorization} 
       and subsequent comments one needs to check that $w$ is one of the 
       minimal length coset representatives
       for $W_{J^{(i)}} \backslash W^{(i)}$, that is, it has no reduced
       expressions that start with an element of $J^{(i)}$ on the left.
       But by our construction of the word $w=s^{(i)}_1 s^{(i)}_2 \cdots s^{(i)}_j$,
       and by Tits' solution to the word problem for $W$ (see \cite[Theorem 2.33]{AbramenkoBrown2008}), 
       this would be impossible because
       no element of $J^{(i)}$ can be commuted
       past the $s^{(i)}_1$ on the left.

       The fact that the $\lambda(\trivial)$-eigenspace is simple and is spanned by
       the trivial idempotent $\idem_\trivial$ now follows from the Perron-Frobenius
       theorem \cite[Theorem 8.4.4]{HornJohnson1985}. 
     \end{proof}

     For future use (in \ref{subsec:rank-one-proof}), we mention another trivial  
     reduction, similar to \ref{prop:reducibility-reduction1}, that can occur when 
     the finite real reflection group $W$ acting on $V$
     does not act irreducibly.  Its proof is similarly straightforward.

     \begin{Proposition}
       \label{prop:reducibility-reduction2}
       Let $W$ be a finite real reflection group and $W = \prod_{i=1}^t W^{(i)}$ for
       irreducible reflection groups $W^{(i)}$. Let $\AAA^{(i)}$
       be the arrangements of reflecting hyperplanes of the reflections from $W^{(i)}$, 
       \index{reflecting!hyperplane}%
       $1 \leq i \leq t$. Let $\OOO \subseteq \LLL$ be a $W$-invariant subset of $\LLL$. 

       Assume that there is an $1 \leq i \leq t$ such that 
       $\OOO$ contains no subspaces $X$ lying below any 
       hyperplanes from $\AAA^{(i)}$.  
       Then we can consider $\OOO$ as a subset of the
       intersection lattice for the arrangement
       $\AAA' := \AAA \setminus \AAA^{(i)}$ of the reflection 
       group $W' := \prod_{j \neq i} W_j$. We have 
       $\ZZ W \cong \ZZ W^{(i)} \otimes \ZZ W'$
       and
       $$
         \nu_\OOO = \matone_{\ZZ W^{(i)}} \otimes \nu_{\OOO'}
       $$
       where $\matone_{\ZZ W^{(i)}}$ is represented by the $|W^{(i)}| \times |W^{(i)}|$ matrix
       having all ones as entries.
     \end{Proposition}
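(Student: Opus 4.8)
The plan is to run the same kind of argument sketched for \ref{prop:reducibility-reduction1}, exploiting the product structure of the data and the fact that, under the hypothesis, the localization map $\pi_X$ of \ref{defn:chamber-localization-maps} is blind to the $W^{(i)}$-coordinates for every $X \in \OOO$. First I would record the relevant product decompositions. Writing $V = \bigoplus_{j=1}^t V^{(j)}$ with $W^{(j)}$ acting irreducibly on $V^{(j)}$, every reflecting hyperplane of $\AAA$ has the form $H^{(j)} \oplus \bigoplus_{k \neq j} V^{(k)}$ for a unique $j$ and a reflecting hyperplane $H^{(j)}$ of $W^{(j)}$; hence $\LLL(\AAA) \cong \prod_j \LLL(\AAA^{(j)})$, under which an intersection subspace corresponds to a tuple $(X_{(j)})_j$ with $X = \bigoplus_j X_{(j)}$, and likewise $\CCC(\AAA) \cong \prod_j \CCC(\AAA^{(j)})$ compatibly with the simply transitive $\prod_j W^{(j)}$-actions. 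Choosing base chambers componentwise then identifies $\ZZ\CCC(\AAA) \cong \bigotimes_j \ZZ\CCC(\AAA^{(j)})$, which is the asserted $\ZZ W \cong \ZZ W^{(i)} \otimes \ZZ W'$ with $W' := \prod_{j \neq i} W^{(j)}$ and $\AAA' := \AAA \setminus \AAA^{(i)}$.

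Next I would translate the hypothesis into this language. Since a proper element $X_{(i)}$ of $\LLL(\AAA^{(i)})$ is an intersection of, hence contained in, at least one hyperplane of $\AAA^{(i)}$, the assumption that no $X \in \OOO$ is contained in a hyperplane of $\AAA^{(i)}$ is equivalent to $X_{(i)} = V^{(i)}$ for every $X \in \OOO$. Thus each such $X$ has the form $V^{(i)} \oplus Y$ for a uniquely determined intersection subspace $Y$ of $\AAA'$, and $X \mapsto Y$ is a bijection from $\OOO$ onto $\OOO' := \{ Y : V^{(i)} \oplus Y \in \OOO \}$. Because $W^{(i)}$ fixes $V^{(i)}$ setwise and acts trivially on $\bigoplus_{k \neq i} V^{(k)}$, it fixes every such $X$, so the $W$-invariance of $\OOO$ forces $\OOO'$ to be $W'$-invariant, with the $W'$-action on $\OOO'$ matching the residual $W$-action on $\OOO$.

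Then comes the localization bookkeeping. For $X = V^{(i)} \oplus Y$ one has $V/X \cong V'/Y$ with $V' = \bigoplus_{k \neq i} V^{(k)}$, and the hyperplanes of $\AAA$ containing $X$ are exactly those of $\AAA'$ containing $Y$ — none can come from $\AAA^{(i)}$, since such a hyperplane cannot contain $V^{(i)}$ — so $\AAA/X \cong \AAA'/Y$ and $\CCC(\AAA/X) \cong \CCC(\AAA'/Y)$. Under $\CCC(\AAA) \cong \CCC(\AAA^{(i)}) \times \CCC(\AAA')$ this makes $\pi_X$ send $(c^{(i)}, c') \mapsto c'/Y$, independently of $c^{(i)}$. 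Hence for chambers $c = (c^{(i)},c')$ and $d = (d^{(i)},d')$,
\[
  \ninv_\OOO(c,d) = \bigl| \{ X = V^{(i)} \oplus Y \in \OOO : c/X = d/X \} \bigr| = \bigl| \{ Y \in \OOO' : c'/Y = d'/Y \} \bigr| = \ninv_{\OOO'}(c',d'),
\]
which does not depend on $c^{(i)}$ or $d^{(i)}$. Reading this off the matrix entries in $\ZZ^{\CCC \times \CCC} \cong \ZZ^{\CCC(\AAA^{(i)}) \times \CCC(\AAA^{(i)})} \otimes \ZZ^{\CCC(\AAA') \times \CCC(\AAA')}$ gives exactly $\nu_\OOO = \matone_{\ZZ W^{(i)}} \otimes \nu_{\OOO'}$, where the second tensor factor is, via the base-chamber identification $\CCC(\AAA') \cong W'$, the operator $\nu_{\OOO'}$ attached to the reflection group $W'$ as in \ref{defn:nu-definition}.

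I do not expect a genuine obstacle: the content is entirely bookkeeping. The one point that needs care is making the various product decompositions — of $\LLL$, of $\CCC$, of $\ZZ W$, and of the localizations $\AAA/X$ — mutually compatible and compatible with the $W$-action, and in particular choosing the base chambers componentwise so that the resulting tensor factorization of $\nu_\OOO$ is literally the displayed one rather than merely an operator conjugate to it. This is precisely the "completely straightforward" verification alluded to for \ref{prop:reducibility-reduction1}.
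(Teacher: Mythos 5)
Your argument is correct, and it is precisely the ``completely straightforward'' componentwise bookkeeping that the paper alludes to but does not write out: the paper gives no proof of \ref{prop:reducibility-reduction2} beyond asserting it is analogous to \ref{prop:reducibility-reduction1}. Your key reductions --- that the hypothesis forces $X = V^{(i)} \oplus Y$ with $\AAA/X \cong \AAA'/Y$, so that $\pi_X$ ignores the $\CCC(\AAA^{(i)})$-coordinate and hence $\ninv_\OOO(c,d)=\ninv_{\OOO'}(c',d')$ --- are exactly the intended verification, including the care about componentwise base chambers needed to get the literal tensor factorization $\nu_\OOO = \matone_{\ZZ W^{(i)}} \otimes \nu_{\OOO'}$.
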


     Since the eigenvalues and eigenvectors of $\matone_{\ZZ W}$
     are easy to write down, by \ref{prop:reducibility-reduction2} 
     one is reduced to studying $\nu_{\OOO'}$ in this situation.

     \begin{Example}
       \ref{ex:imprimitive-example} also illustrates the scenario
       of \ref{prop:reducibility-reduction2} except now $i=2$, 
       and one should interpret the first tensor factor as 
       $\nu_{\OOO'}$ and the second tensor factor as
       $\matone_{\ZZ W^{(2)}}$.
     \end{Example}

\section{The case where $\OOO$ contains only hyperplanes}
  \label{sec:rank-one}

  \subsection{Review of twisted Gelfand pairs}
    \label{subsec:twisted-Gelfand-review}

   We review here some of the theory of (twisted) Gelfand pairs;  a good introduction is 
   Stembridge \cite{Stembridge1992}.

   \begin{Definition}
     Given a finite group $G$, a subgroup $U$, and a linear character $\chi: U \rightarrow \CC^\times$,
     say that $(G,U,\chi)$ forms a {\deffont twisted Gelfand pair} (or {\deffont triple}) if the induced representation
     \index{twisted!Gelfand pair}%
     \nomenclature[al]{$(G,U,\chi)$}{twisted Gelfand pair (or triple)}%
     $\Ind_U^G \chi$ is a multiplicity-free $\CC G$-module.  
   \end{Definition}

   One can fruitfully rephrase this is in terms of the algebra structure of $A:=\CC G$ and
   the $\chi$-idempotent for $U$
   \begin{equation}
     \label{eqn:H-chi-idempotent}
     \idem :=\frac{1}{|U|} \sum_{u \in U} \chi(u^{-1}) u.
   \end{equation}
   It is well-known and
   easy to see that the left-ideal $A \idem$ carries a left $A$-module structure isomorphic
   to $M=\Ind_U^G \chi$.  As with any finite dimensional $A$-module, $M$ can be expressed
   as $M=\bigoplus_{i} (S_i)^{\oplus m_i}$ for distinct
   simple $A$-modules $S_i$ and uniquely defined multiplicities $m_i$.
   One can detect these multiplicities by looking at
   the commutant algebra \index{commutant algebra} \index{algebra!commutant} $\End_A M$, which is isomorphic to
   the direct sum of matrix algebras $\oplus_i \Mat_{m_i \times m_i}(\CC)$.
   Thus the commutant algebra is itself a {\deffont commutative} algebra if and only if each $m_i=1$,
   \index{commutative algebra}%
   \index{algebra!commutative}%
   that is, if and only if $M$ is multiplicity-free as an $A$-module.
   Therefore, the condition for $(G,U,\chi)$ to be a twisted Gelfand pair is equivalent
   to $\End_A M$ being commutative.

   On the other hand, for any algebra with unit $A$ and idempotent $\idem$,
   taking $M=A \idem$, the map defined by
   $$
     \begin{aligned}
       \End_A M =\End_A (A \idem) &\longrightarrow & \idem A \idem  \\
       \varphi                    & \longmapsto    & \varphi( \idem )
     \end{aligned}
   $$
   is easily seen to be an algebra isomorphism.  In the case $A=\CC G$ and 
   $\idem$ is the idempotent in \ref{eqn:H-chi-idempotent}, the algebra
   $\idem A \idem$ is sometimes called the {\deffont (twisted) Hecke algebra}.
   \index{twisted!Hecke algebra}%
   \index{algebra!twisted Hecke}%
   \index{Hecke algebra!twisted Hecke}%
   \index{Hecke algebra}%
   \index{algebra!Hecke}%
   If one chooses double coset representatives $\{g_1,\ldots,g_t\}$ for $U\backslash G/U$,
   then it is easy to see that the nonzero elements in the set
   $\{\idem g_i\idem \}_{i=1,2,\ldots,t}$ form a $\CC$-basis for this Hecke algebra $\idem A \idem$.  
   This leads to the following commonly used trick for verifying that one has a twisted Gelfand pair.

   \begin{Proposition}[Twisted version of ``Gelfand's trick'']
     \label{prop:Gelfand-trick}
     Let $G$ be a finite group, $U$ a subgroup of $G$ and $\chi: U \rightarrow \CC^\times$ a
     linear character with $\chi(u^{-1})=\chi(u)$ for all $u$ in $U$, that is, $\chi$ takes values
     in $\{\pm1\}$.

     If every double coset $UgU$ within $G$ for which $\idem g \idem \neq 0$ 
     contains an involution, then 
     $(G,U,\chi)$ forms a twisted Gelfand pair.
   \end{Proposition}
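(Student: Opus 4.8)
The plan is to prove that the twisted Hecke algebra $H := \idem A \idem$, with $A = \CC G$ and $\idem$ the $\chi$-idempotent of \eqref{eqn:H-chi-idempotent}, is commutative; as recalled in the paragraphs above, $(G,U,\chi)$ is a twisted Gelfand pair if and only if $H$ is commutative. The tool is the twisted version of the classical Gelfand argument via an anti-automorphism of the group algebra.

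First I would introduce the $\CC$-linear map $\alpha \colon A \to A$ determined on the basis $G$ by $\alpha(g) = g^{-1}$. It is an involutive anti-automorphism: $\alpha^2 = \operatorname{id}$ and $\alpha(ab) = \alpha(b)\alpha(a)$ for all $a,b \in A$. The single place where the hypothesis $\chi(u^{-1}) = \chi(u)$ enters is the computation
$$
  \alpha(\idem) = \frac{1}{|U|}\sum_{u \in U}\chi(u^{-1})\,u^{-1}
               = \frac{1}{|U|}\sum_{v \in U}\chi(v)\,v
               = \frac{1}{|U|}\sum_{v \in U}\chi(v^{-1})\,v
               = \idem,
$$
which shows that $\alpha$ carries $H = \idem A \idem$ into itself and hence restricts to an anti-automorphism of $H$. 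If that restriction turns out to be the identity on $H$, then for $x,y \in H$ one gets $xy = \alpha(xy) = \alpha(y)\alpha(x) = yx$, so $H$ is commutative and the proposition follows.

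It thus remains to check $\alpha|_H = \operatorname{id}$. Here I would use the $\CC$-basis of $H$ provided, as recalled above, by the nonzero elements of $\{\idem g_i \idem\}_{i=1,\dots,t}$, where $g_1,\dots,g_t$ are double-coset representatives for $U\backslash G/U$. Since $\chi$ is multiplicative, $u\,\idem = \idem\,u = \chi(u)\,\idem$ for every $u \in U$, so if $g' = u g v$ with $u,v \in U$ then $\idem g' \idem = \chi(u)\chi(v)\,\idem g \idem$; in particular, within a fixed double coset all the elements $\idem g \idem$ are nonzero scalar multiples of one another, and the condition $\idem g\idem \neq 0$ depends only on the double coset. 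Now fix a double coset $UgU$ with $\idem g\idem \neq 0$: by hypothesis it contains an involution $t = t^{-1}$, so $\idem g\idem = c\,\idem t\idem$ for some $c \in \CC^\times$, while $\alpha(\idem t\idem) = \idem\,\alpha(t)\,\idem = \idem\,t^{-1}\,\idem = \idem t\idem$. By $\CC$-linearity, $\alpha(\idem g\idem) = c\,\alpha(\idem t\idem) = c\,\idem t\idem = \idem g\idem$. Thus $\alpha$ fixes every basis vector of $H$, hence fixes $H$ pointwise, as needed.

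I do not anticipate a serious obstacle; the two points that want care are (i) that $\alpha$ genuinely preserves the Hecke algebra — this is exactly where the requirement that $\chi$ be $\{\pm 1\}$-valued is used, via $\alpha(\idem) = \idem$ — and (ii) transferring ``the double coset contains an involution'' into ``$\alpha$ fixes the corresponding basis vector of $H$'', which the scalar-multiple remark above handles. One could instead phrase everything with the conjugate-linear $*$-operation $\sum c_g g \mapsto \sum \overline{c_g}\,g^{-1}$, for which $\idem^{*} = \idem$ is automatic for any unitary $\chi$, but then reality of $\chi$ re-enters when comparing $\idem g\idem$ with $\idem t\idem$, so the $\CC$-linear formulation above seems the most economical under the stated hypotheses.
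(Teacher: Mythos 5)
Your proposal is correct and follows essentially the same route as the paper: the anti-automorphism $g \mapsto g^{-1}$ fixes $\idem$ because $\chi$ is $\{\pm1\}$-valued, fixes the spanning elements $\idem g \idem$ coming from involutions (hence, by your scalar-multiple remark, the whole Hecke algebra), and commutativity of $\idem\,\CC G\,\idem$ then follows exactly as in the paper. The only difference is that you spell out explicitly the minor point that $\idem g'\idem$ is a nonzero scalar multiple of $\idem g\idem$ within a double coset, which the paper leaves implicit.
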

   \begin{proof}
     As above let $\idem :=\frac{1}{|U|} \sum_{u \in U} \chi(u^{-1}) u$.
     Consider the algebra {\emphfont anti-}automorphism $\psi$ of $A=\CC G$ that sends
     $g \mapsto g^{-1}$.  The assumption that $\chi(u^{-1})=\chi(u)$
     implies $\psi(\idem )=\idem$.  Thus for any involution $g=g^{-1}$ in $G$, one
     has that $\psi$ also fixes the element $\idem g \idem $ in $\CC G$:
     $$
       \psi(\idem g \idem ) = \psi(\idem ) \psi(g) \psi(\idem ) = \idem g^{-1} \idem = \idem g \idem.
     $$
     The assumption that every double coset $UgU$ for which 
     $\idem g \idem \neq 0$ contains an involution
     therefore implies that $\psi$ fixes every element in a spanning set
     for the subalgebra $\idem A \idem $ within the
     group algebra $A=\CC G$.  Since $\psi$ is an anti-automorphism on
     all of $A$, this subalgebra $\idem A \idem $ must be commutative:  for any $x,y$ in $\idem A \idem $,
     one has
     $$
       x \cdot y = \psi(x) \psi(y) = \psi( y \cdot x ) = y \cdot x.
     $$  
     Thus $\End_A (A\idem )=\idem A \idem $
     is commutative. Hence $A \idem$ is a multiplicity-free left $A$-module,
     i.e. $(G,U,\chi)$ is a twisted Gelfand pair.
   \end{proof}

   \subsection{A new twisted Gelfand pair}
     \label{subsec:new-twisted-Gelfand-pair}

  \newcommand{\aaa}{\mf{a}}
  \newcommand{\bbb}{\mf{b}}

      Recall the statement of \ref{thm:Gelfand-triple} from the introduction.

      \theoremstyle{plain}
      \newtheorem*{TheoremGelfandTriple}{\ref{thm:Gelfand-triple}}
      \begin{TheoremGelfandTriple}
        Let $W \leq \GL(V)$ be any finite irreducible real reflection group and
        $H$ any of its reflecting hyperplanes with associated reflection $s$.

        Then the linear character $\chi$ of the $W$-centralizer $\Centr_W(s)$ given by
        the determinant on $V/H$ or $H^\perp$ has a multiplicity-free
        induced $W$-representation $\Ind_{\Centr_W(s)}^W \chi$.

        In other words, $(W,\Centr_W(s),\chi)$ forms a twisted Gelfand pair.
      \end{TheoremGelfandTriple}

      \noindent
      As preparation for proving this, we begin with some well-known
      general observations about group actions on cosets, and double cosets. 
      Let $Z:=\Centr_W(s)$ and $\OOO$ the orbit of $H$ under the action of $W$.
      Then $Z$ is the stabilizer of the element $H$ in the transitive action of $W$ on $\OOO$.
      In other words, $\OOO$ carries the same $W$-action as the coset action of $W$ 
      left-translating $W/Z$.  One then has inverse bijections
      between the double cosets $Z\backslash W\slash Z$ and the $W$-orbits for the 
      {\it diagonal} action of $W$ on $\OOO \times \OOO$:
      $$
        \begin{array}{rcl}
           Z\backslash W /Z & \longrightarrow      & W\backslash \left( \OOO \times \OOO \right) \\
           ZwZ              & \longmapsto          & W \cdot (H, w(H)) \\
                            &                      & \\
           W\backslash \left( \OOO \times \OOO \right) &\longrightarrow & Z\backslash W /Z \\
           W \cdot (w_1(H), w_2(H)) & \longmapsto          &Z w_1^{-1} w_2 Z              
       \end{array}
      $$

      \begin{Proposition}
        \label{Coxeter-subgraph-proposition}
        Let $(W,S)$ be a Coxeter system with $W$ finite,
        and $J \subset S$ such that the Coxeter graph
        for $(W_J, J)$ is a connected subgraph of
        the Coxeter graph for $(W,S)$.
        Then for two reflecting hyperplanes $H, H'$ whose reflections $s_H, s_{H'}$ lie 
        in $W_J$ we have:
        $s_H,s_{H'}$ lie in the same $W$-orbit if and only they lie in the same $W_J$-orbit.
      \end{Proposition}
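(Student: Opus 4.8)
The plan is to restate the proposition in terms of conjugacy classes of reflections, and then invoke the classical combinatorial description of those classes together with the fact that the Coxeter diagram of a finite Coxeter group is a forest. One direction is trivial: since $W_J \leq W$, two reflections lying in the same $W_J$-conjugacy class are $W$-conjugate; and $W$-orbits of reflecting hyperplanes correspond bijectively to $W$-conjugacy classes of reflections via $w s_H w^{-1} = s_{w(H)}$, and likewise for $W_J$. So only the converse needs proof, and we phrase it entirely in the language of reflections.

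I will use two standard facts. First (see \cite{Humphreys1992} and the literature on conjugacy in Coxeter groups): in any Coxeter system $(W,S)$ every reflection is $W$-conjugate to a simple reflection, and two simple reflections $a,b \in S$ are $W$-conjugate if and only if $a$ and $b$ lie in the same connected component of the \emph{odd graph} $\Gamma^{\mathrm{odd}}(W,S)$ --- the graph on vertex set $S$ with an edge $\{s,t\}$ precisely when $m(s,t)$ is finite and odd. Second, since $W$ is finite its Coxeter diagram is a forest. Apply the first fact inside the parabolic Coxeter system $(W_J,J)$, whose Coxeter diagram is exactly the subgraph of the diagram of $(W,S)$ induced on the vertex set $J$ (\cite{Humphreys1992}, \cite{AbramenkoBrown2008}): the reflection $s_H \in W_J$ is $W_J$-conjugate to some $a \in J$, and $s_{H'} \in W_J$ is $W_J$-conjugate to some $b \in J$. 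If $s_H$ and $s_{H'}$ are $W$-conjugate, then so are $a$ and $b$, hence $a$ and $b$ lie in one component of $\Gamma^{\mathrm{odd}}(W,S)$.

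The one substantive step is to push this connection down into $J$. Every edge of $\Gamma^{\mathrm{odd}}(W,S)$ is an edge of the Coxeter diagram (an odd label is at least $3$), and in a forest any walk between two vertices traverses every edge of the unique path joining them; hence the unique path $P$ from $a$ to $b$ in the Coxeter diagram of $(W,S)$ has all labels odd. The hypothesis that $J$ induces a connected subgraph makes $J$ a subtree of the forest, hence \emph{convex}: the path $P$ lies entirely inside $J$. Therefore $P$ is also the $a$--$b$ path in the Coxeter diagram of $(W_J,J)$, with all labels odd, so $a$ and $b$ lie in one component of $\Gamma^{\mathrm{odd}}(W_J,J)$, i.e.\ are $W_J$-conjugate; chaining $s_H \sim_{W_J} a \sim_{W_J} b \sim_{W_J} s_{H'}$ finishes the proof. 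I expect the convexity step to be the main point to get right, and it genuinely needs the connectedness hypothesis: for $W$ of type $A_3$ with $J$ consisting of the two endpoints of the diagram, $W_J \cong \ZZ_2\times\ZZ_2$ has two distinct conjugacy classes of reflections, even though all three reflections of $A_3$ are conjugate in $W$.
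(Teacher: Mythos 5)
Your proof is correct and follows essentially the same route as the paper: reduce to simple reflections in $J$, invoke the classical fact that simple reflections are $W$-conjugate exactly when joined by a path of odd-labeled edges, and use that the diagram of a finite Coxeter group is a forest so that connectedness of $J$ forces the (unique) odd path to lie inside $J$. Your version just spells out the convexity/forest step that the paper states tersely, and adds the easy converse direction explicitly.
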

      \begin{proof}
        Since every reflection in $W_J$ is $W_J$-conjugate to a simple 
        reflection in $J$,
        one may assume without loss of generality that $s_H, s_{H'}$ are
        simple reflections lying in the subset $J$.
        It is well-known 
        (see e.g. \cite[Chapter 1, Exercise 16, p. 23]{BjornerBrenti2007})
        that two simple reflections $s,s'$ in $S$ are $W$-conjugate if and only
        if there is a path in the Coxeter graph for $(W,S)$ having all edges
        with {\it odd} labels.  Since $W$ is finite, the Coxeter graph for
        $(W,S)$ is a tree.  Hence such a path with odd labels exists if and
        only if it exists within the Coxeter subgraph for $(W_J,J)$, that is,
        if and only if $s_H, s_{H'}$ are $W_J$-conjugate.
      \end{proof}

      \begin{proof}[Proof of \ref{thm:Gelfand-triple}]

            We will show that the twisted version of Gelfand's trick 
            (\ref{prop:Gelfand-trick}) applies.  Let $w \in W$ and 
            $H':=w(H)$. Let $s_H$ and $s_{H'}$ be the reflections
            corresponding to $H$ and $H'$.

            \noindent {\sf Case 1:} $H, H'$ are orthogonal.

            In this case we claim that $\idem w \idem=0$.
            To see this, note that in this situation, both $s_H, s_{H'}$ lie in
            $Z$, with 
            $$
              \begin{aligned}
                 \chi(s_H)&=-1\\
                 \chi(s_{H'})&=+1.
              \end{aligned}
            $$  
            Thus factoring the subgroup $Z = \Centr_W(s)$ according to cosets 
            $Z/\langle s_H \rangle$
            and cosets $\langle s_{H'} \rangle \backslash Z$ gives rise to factorizations
            $$
              \begin{aligned}
                \idem &= \aaa (e - s_H)\\
                \idem &= (e + s_{H'}) \bbb
              \end{aligned}
            $$
            for some elements $\aaa, \bbb$ in $\RR W$.  One then calculates
            $$
              \begin{aligned}
                \idem w \idem &= \aaa (e + s_{H'}) w (e - s_{H}) \bbb \\
                              &= \aaa (w - w s_H + s_{H'} w - s_{H'} w s_{H}) \bbb \\
                              &= \aaa \cdot 0 \cdot \bbb \\
                              &= 0
              \end{aligned}
            $$
            where the third line uses the following equalities:
            $$
              \begin{aligned}
                 w(H)         &= H', \text{ implying }\\
                 w s_H w^{-1} &= s_{H'}\\
                 ws_H         &= s_{H'} w\\
                 w            &=s_{H'} w s_H. 
              \end{aligned}
            $$

            \noindent {\sf Case 2:} $H, H'$ are {\it not} orthogonal.

            A trivial subcase occurs when
            $H=H'$ and then the double coset $ZwZ= Z$ contains 
            the involution $s_H$. Hence we are done by \ref{prop:Gelfand-trick}.

            Otherwise, the parabolic subgroup $W_{H \cap H'}$ is dihedral, and 
            $W$-conjugate to some standard parabolic $W_J$ for some pair 
            $J=\{s,s'\} \subset S$; without loss of generality (by conjugation), 
            $s_H, s_{H'}$ lie in $W_J$.  Since $H, H'$ are not orthogonal, one must
            have $s, s'$ non-commuting, and hence the Coxeter graph for $(W_J,J)$ is
            an edge with label $m \geq 3$, forming a connected subgraph of the 
            Coxeter graph of $(W,S)$.  Since $H, H'$ were assumed to lie in the same
            $W$-orbit, \ref{Coxeter-subgraph-proposition} implies they lie in the
            same $W_J$-orbit.  However, when $s_H, s_{H'}$ lie within a dihedral
            group $W_J$, it is easy to check that if $w$ in $W_J$ sends 
            $H$ to $H'$, then either $w$ or $ws_H$ is a reflection,
            and hence an involution, sending $H$ to $H'$.
            Again the assertion follows from \ref{prop:Gelfand-trick}.
      \end{proof}

      \begin{Remark}
         The preceding proof is perhaps more subtle than it first appears.
         When $H$ and $H'$ are {\it orthogonal} hyperplanes lying in the same
         $W$-orbit, so that $H'=w(H)$ for some $w$ in $W$, it {\it can} happen that
         $H$, $H'$ do {\it not} lie in the same $W_{H \cap H'}$-orbit,
         and that the double coset $ZwZ$ for $Z=\Centr_W(s_H)$
         contains {\it no} involutions.  

         As an example, this occurs within
         the Coxeter system $(W,S)$ of type $H_3$ with 
         Coxeter generators $S=\{s_1,s_2,s_3\}$, satisfying $s_i^2=1$ and
         $(s_1 s_2)^5=(s_1 s_3)^2=(s_2 s_3)^3 = e$.
         The hyperplanes $H, H'$ fixed by $s_1, s_3$, respectively,
         are orthogonal.  They lie in the same $W$-orbit, and
         in fact $w(H)=H'$ for $w=s_2 s_1 s_2 s_3 s_1 s_2$.
         However, $H, H'$ do not lie in the same orbit for the
         rank $2$ parabolic $W_{H \cap H'}=W_{\{s_1,s_3\}}$,
         and one finds that the double
         coset $ZwZ$ for the subgroup $Z=\Centr_W(s_1)=\langle s_1, s_3, w_0 \rangle$ 
         contains elements of orders $3$ and $6$, but contains no involutions.
      \end{Remark}

\subsection{Two proofs of \ref{thm:rank-one}}
      \label{subsec:rank-one-proof}

      We recall the statement of the theorem.

      \theoremstyle{plain}
      \newtheorem*{TheoremRankOne}{\ref{thm:rank-one}}
      \begin{TheoremRankOne}
        For any finite real reflection group $W$, and
        any $W$-orbit $\OOO$ of hyperplanes,
        the matrix $\nu_\OOO$ has all its eigenvalues within
        the ring of integers of the unique minimal splitting field for $W$.
        In particular, when $W$ is crystallographic, these
        eigenvalues lie in $\ZZ$.
      \end{TheoremRankOne}

      We will offer two proofs.  In both proofs, one first notes that
      one can immediately use \ref{prop:reducibility-reduction1}
      to reduce to the case where $W$ acts irreducibly
      on $V$. Also note that if $W = \prod_{i=1}^t W^{(i)}$ for
      irreducible reflection groups $W^{(i)}$ and $\AAA^{(i)}$
      the arrangements consisting of the reflecting hyperplanes of the reflections from $W^{(i)}$,
      \index{reflecting!hyperplane}%
      then a $W$-orbit $\OOO$ of hyperplanes in $\AAA$ contains only hyperplanes 
      from a single subarrangement $\AAA^{(i)}$ for some $1 \leq i \leq t$.

      Thus one can assume $W$ acts irreducibly on $V$, and
      both proofs will rely on \ref{thm:Gelfand-triple}.

\subsubsection{First proof of \ref{thm:rank-one}.}
        The first proof is shorter, but makes forward reference
        to the equivariant theory of \BHR random walks in \ref{sec:BHR}.
        This \BHR theory will show that when $\nu_\OOO$ acts on $\RR W$, its image
        subspace $U:=\ker(\nu_\OOO)^\perp$ affords the
        $W$-representation $\trivial_W \oplus \Ind_{\Centr_W(s)}^W \chi$,
        where $\chi=\det|_{V/H}$.  Note that this image can 
        have no multiplicity on the trivial representation $\trivial_W$,
        since the ambient space $\RR W$ contains only one copy of $\trivial_W$.
        Hence \ref{thm:Gelfand-triple}
        tell us that the $\nu_\OOO$-stable subspace $U$
        is multiplicity-free as a $W$-representation.
        Since $U=\ker(\nu_\OOO)^\perp$ is a $\QQ$-subspace (as $\nu_\OOO$ has
        $\ZZ$ entries) an application of \ref{prop:integrality-principle}
        finishes the proof.

\subsubsection{Second proof  of \ref{thm:rank-one}.}

        This proof, although longer, does not rely on results to be proven
        later, and also introduces an important idea, useful both in 
        understanding the eigenspaces of $\nu_\OOO$, and with potential
        applications to the analysis of linear ordering polytopes
        (see \ref{subsec:linear-ordering-polytopes}).
        We start by developing this idea here.

        For the moment, return to the situation where $\AAA$ is a
        central arrangement of hyperplanes in $V=\RR^d$ having
        some finite subgroup of $\GL(V)$ acting as symmetries, with
        chambers $\CCC$, intersection lattice $\LLL$, and 
        $\OOO$ any $W$-stable subset of $\LLL$.
        Recall that $\nu_\OOO = \pi_\OOO^T \circ \pi_\OOO$ where
          \begin{align*}
            \pi_\OOO:  \ZZ\CCC & \longrightarrow \bigoplus_{X \in \OOO} \ZZ\CCC(\AAA/X) \\
                             c & \longmapsto     (c/X)_{X \in \OOO}
          \end{align*}
        Note that $\pi_\OOO$ is $W$-equivariant for the
        obvious $W$-actions on the source and targets.
        It is also equivariant for the commuting $\ZZ_2$-action
        that sends $c \mapsto -c$ in the source, and sends
        $c/X \mapsto -c/X$ in the target.

        This gives us the freedom to consider instead of $\nu_\OOO= \pi_\OOO^T \circ \pi_\OOO$,
        the eigenvectors and eigenvalues of the closely related map 
        $$
          \mu_\OOO = \pi_\OOO \circ \pi_\OOO^T:
             \bigoplus_{H \in \OOO}  \ZZ^{\CCC(\AAA/H)} 
             \longrightarrow  \bigoplus_{H \in \OOO}  \ZZ^{\CCC(\AAA/H)}
        $$
        having matrix entries given by
        $$
          \left( \mu_\OOO \right)_{c_\grid/X_1, c_2/X_2}
          = \Big|\big\{ c \in \CCC : c/X_1=c_\grid/X_1 \text{ and } c/X_2 = c_2/X_2 \big\}\Big|.
       $$

       \begin{Proposition}
         For each nonzero eigenvalue $\lambda$ in $\RR$, the
         maps $\pi_\OOO$ and $\pi_\OOO^T$ give $W \times \ZZ_2$-equivariant 
         isomorphisms between the $\lambda$-eigenspaces of
         $\nu_\OOO$ and $\mu_\OOO$.
       \end{Proposition}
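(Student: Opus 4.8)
The plan is to run the standard linear-algebra argument comparing the nonzero spectra of $\pi^{T}\pi$ and $\pi\pi^{T}$, and then to verify that every map involved respects the $W\times\ZZ_2$-action. Abbreviate $\pi:=\pi_\OOO$, so that $\nu_\OOO=\pi^{T}\circ\pi$ acts on $\RR\CCC$ while $\mu_\OOO=\pi\circ\pi^{T}$ acts on $\bigoplus_{X\in\OOO}\RR\CCC(\AAA/X)$. Fix $\lambda\in\RR$ with $\lambda\neq 0$, and write $E_\lambda(\nu_\OOO)$ and $E_\lambda(\mu_\OOO)$ for the two $\lambda$-eigenspaces.

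First I would show that $\pi$ maps $E_\lambda(\nu_\OOO)$ into $E_\lambda(\mu_\OOO)$: if $\pi^{T}\pi v=\lambda v$, then $\mu_\OOO(\pi v)=\pi(\pi^{T}\pi v)=\lambda(\pi v)$. Moreover $\pi$ is injective on $E_\lambda(\nu_\OOO)$, since $\pi v=0$ forces $\lambda v=\pi^{T}\pi v=0$ and hence $v=0$ because $\lambda\neq 0$. By the identical argument with the roles of $\pi$ and $\pi^{T}$ interchanged, $\pi^{T}$ maps $E_\lambda(\mu_\OOO)$ injectively into $E_\lambda(\nu_\OOO)$. Finally, $\pi^{T}\circ\pi$ restricted to $E_\lambda(\nu_\OOO)$ is $\nu_\OOO$, which acts there as multiplication by $\lambda$, and likewise $\pi\circ\pi^{T}$ acts as multiplication by $\lambda$ on $E_\lambda(\mu_\OOO)$; thus $\tfrac{1}{\lambda}\pi^{T}$ is a two-sided inverse to $\pi$ between these two spaces, so each of $\pi$ and $\pi^{T}$ restricts to a linear isomorphism between them.

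It remains to check that these isomorphisms are $W\times\ZZ_2$-equivariant. The map $\pi=\pi_\OOO$ is $W\times\ZZ_2$-equivariant by the observations recorded just before the statement. The chamber bases of $\RR\CCC$ and of each $\RR\CCC(\AAA/X)$ are orthonormal, and both the $W$-action (permuting chambers) and the $\ZZ_2$-action (sending $c\mapsto-c$, respectively $c/X\mapsto-c/X$) merely permute these basis vectors, hence preserve the pairings; therefore the adjoint $\pi^{T}$ is $W\times\ZZ_2$-equivariant as well. Since $\nu_\OOO$ and $\mu_\OOO$ commute with the $W\times\ZZ_2$-action, their eigenspaces $E_\lambda(\nu_\OOO)$ and $E_\lambda(\mu_\OOO)$ are $W\times\ZZ_2$-stable subspaces, so the mutually inverse maps $\pi$ and $\tfrac{1}{\lambda}\pi^{T}$ between them are maps of $W\times\ZZ_2$-modules. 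There is no genuine obstacle in this argument; the only point deserving a word of justification is the equivariance of $\pi^{T}$, which is immediate from the invariance of the chosen inner products under both group actions. (The statement is phrased over $\RR$, but since $\lambda\neq 0$ the same reasoning applies verbatim after extension of scalars to $\CC$, which is all that the later module-theoretic applications require.)
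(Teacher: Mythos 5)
Your proof is correct and follows essentially the same route as the paper: the standard intertwining argument comparing the nonzero spectra of $\pi_\OOO^T\pi_\OOO$ and $\pi_\OOO\pi_\OOO^T$, with injectivity on the $\lambda$-eigenspaces forced by $\lambda\neq 0$. The only cosmetic difference is that the paper proves a slightly more general lemma for generalized eigenspaces of arbitrary maps $A,B$ and then invokes self-adjointness to reduce to honest eigenspaces, whereas you work directly with eigenspaces, exhibit $\tfrac{1}{\lambda}\pi_\OOO^T$ as an explicit two-sided inverse, and spell out the $W\times\ZZ_2$-equivariance of $\pi_\OOO^T$ (via invariance of the chamber inner products) that the paper leaves implicit.
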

       \begin{proof}
         This is a general linear algebra fact.  Assume
         $A:U \rightarrow U'$ and $B:U' \rightarrow U$ are 
         $\KK$-linear maps of finite-dimensional $\KK$-vector spaces $U$ and $U'$ such that
         all eigenvalues of $AB$ and $BA$ lie in $\KK$.  We claim that
         for each potential nonzero eigenvalue $\lambda$ in $\KK$, the maps $A, B$ give isomorphisms
         between the {\deffont generalized $\lambda$-eigenspaces} defined
         \index{generalized eigenspace}%
         \index{eigenspace!generalized}%
         to be the subsets of $U$ and $U'$ on which $\lambda - BA$ and $\lambda-AB$
         act nilpotently.  To see that $A, B$ map between these generalized
         eigenspaces, note that given a vector $v$ in $V$
         with $(\lambda \matid_U -BA)^N v=0$, the fact that
         $$
           (\lambda \matid_{U'} -AB)A = A(\lambda \matid_U -BA)
         $$ 
         implies 
         $$
           (\lambda \matid_{U'} -AB)^N Av=A^N (\lambda \matid_U -BA)v=0.
         $$  
         To see that $A, B$ are injective, note that if $Av=0$ then $(\lambda-BA)v=\lambda v$
         and hence
         $$
           0=(\lambda \matid_U -BA)^N v = \lambda^N v
         $$
         would imply that $v=0$.

         When applying this with $A=\pi_\OOO$ and $B=\pi^T_\OOO$ and $\KK=\RR$,
         self-adjointness implies not only that all the eigenvalues $\lambda$
         all lie in $\RR$, but also semisimplicity, so that generalized
         $\lambda$-eigenspaces are just $\lambda$-eigenspaces.
       \end{proof}

       Now we specialize to the situation where $\AAA$ is the reflection
       arrangement for a finite real reflection group $W$, and the
       $W$-stable subset $\OOO$ contains only hyperplanes $H$ (but we
       do not assume yet that $\OOO$ is a single $W$-orbit).

       In this case, each of the localized subarrangements 
       $\AAA/H$ has only one hyperplane $H$, and only two chambers/half-spaces
       in $\CCC(\AAA/H)$, which one can identify with the two 
       unit normals $\pm \alpha$ (or {\deffont roots}) to the hyperplane $H$.
       \index{root}%
       Letting $\Phi_\OOO$ denote the union of all such pairs of roots
       \nomenclature[al]{$\Phi_\OOO$}{roots corresponding to hyperplanes in $\OOO$}%
       \nomenclature[al]{$\Phi$}{root system}%
       $\pm \alpha$ normal to the hyperplanes $H$ in $\OOO$,
       one can identify $\bigoplus_{H \in \OOO} \ZZ^{\CCC(\AAA/H)}$ with $\ZZ^{\Phi_\OOO}$,
       having a basis element $\unitbase_\alpha$ 
       \nomenclature{$\unitbase_i$}{unit basis vector indexed by $i$}%
       for each $\alpha$ in the orbit of roots  $\Phi_\OOO$.
       Under this identification, the map $\ZZ W  \overset{\pi_\OOO}{\rightarrow} \ZZ^{\Phi_\OOO}$
       has 
       $$
         \left( \pi_\OOO \right)_{w,\unitbase_\alpha} =
           \begin{cases}
             1 & \text{ if }w(\alpha) \in \Phi_+\\
             0 & \text{ otherwise.}
           \end{cases}
       $$
       That is, $\pi_\OOO$ sends a basis element $w$ in $\ZZ W$
       to the sum of basis elements $\unitbase_\alpha$ for which $w^{-1}(\alpha)$ is an
       element of the {\deffont positive roots} $\Phi_+$,
       \index{positive root}%
       \nomenclature[al]{$\Phi_{\pm}$}{positive/negative roots in root system $\Phi$}%
       i.e. $c_\grid$ and $c_w$ lie on the same side of the hyperplane $H_\alpha$.
       Therefore the map $\ZZ^{\Phi_\OOO} \overset{\mu_\OOO}{\longrightarrow} \ZZ^{\Phi_\OOO}$
       has entry 
       \begin{equation}
       \label{angle-measure-formula-for-mu}
        (\mu_\OOO)_{\unitbase_\alpha,\unitbase_\beta} 
          =\# \{ w \in W: w^{-1}(\alpha), 
                   w^{-1}(\beta) \text{ both lie in }\Phi_+ \} 
             =|W| \cdot \frac{\theta_{\alpha,\beta}}{2\pi}
       \end{equation}
       where $\theta_{\alpha,\beta}$ is the angular measure in radians of
       the sector which is the intersection of the half-spaces $H_\alpha^+ \cap H_\beta^+$.

       Note that the $\ZZ_2$-action now sends $\unitbase_\alpha$ to $\unitbase_{-\alpha}$.
       We use this $\ZZ_2$-action to decompose
       \nomenclature[al]{$\Phi_{\OOO,\pm}$}{positive/negative roots in $\Phi_\OOO$}%
       $$
         \RR^{\Phi_\OOO} = \RR^{\Phi_\OOO,+} \oplus  \RR^{\Phi_\OOO,-}
       $$
       in which 
       $$
         \begin{aligned}
            \RR^{\Phi_\OOO,+}&\text{ has }\RR\text{-basis }
            \{ f^+_\alpha:= 
            \unitbase_\alpha + \unitbase_{-\alpha} 
              \}_{\alpha \in \Phi_\OOO \cap \Phi_+},\\
            \RR^{\Phi_\OOO,-}&\text{ has }\RR\text{-basis }
            \{ f^-_\alpha:= 
           \unitbase_\alpha - \unitbase_{-\alpha} 
              \}_{\alpha \in \Phi_\OOO \cap \Phi_+}.
         \end{aligned}
       $$
       For the formulation of the following proposition, recall $\lambda_\OOO(\chi)$ 
       defined in \ref{degree-one-character-prop}.

       \begin{Proposition}
         \label{prop:rank-one-plus-minus}
         Acting on $\RR^{\Phi_\OOO,+}$, the map $\mu_\OOO$
         has a one-dimensional eigenspace with eigenvalue
         $\lambda_\OOO(\trivial_W)$ carrying the trivial
         $W$-representation $\trivial_W$, and whose orthogonal
         complement within $\RR^{\Phi_\OOO,+}$ lies in the kernel.

         If $\OOO$ decomposes into $W$-orbits as
         $\OOO =\bigsqcup_{i=1}^t \OOO_i$ in which
         $\OOO_i$ is the orbit of a hyperplane $H_i$ having
         associated reflection $s_i$, then 
         $\RR^{\Phi_\OOO,-}$ carries the $W$-representation
         $
           \bigoplus_{i=1}^t \Ind_{\Centr_W(s_i)}^W \chi_i,
         $
         where $\chi_i$ is the one-dimensional character
         $\det{}_{V/H_i}$.
       \end{Proposition}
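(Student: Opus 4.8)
The plan is to work directly on the two $\tau$-eigenspaces $\RR^{\Phi_\OOO,+}$ and $\RR^{\Phi_\OOO,-}$, using that $\mu_\OOO$ is a symmetric matrix (since $\theta_{\alpha,\beta}=\theta_{\beta,\alpha}$) that commutes with $\tau$, hence preserves each of them and has kernel orthogonal to image.

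For $\RR^{\Phi_\OOO,+}$, the key identity is
$$(\mu_\OOO)_{\unitbase_\gamma,\unitbase_\alpha}+(\mu_\OOO)_{\unitbase_\gamma,\unitbase_{-\alpha}}=\tfrac{|W|}{2}\qquad(\gamma,\alpha\in\Phi_\OOO),$$
which follows from \eqref{angle-measure-formula-for-mu}: the first term counts $w\in W$ with $w^{-1}(\gamma)\in\Phi_+$ and $w^{-1}(\alpha)\in\Phi_+$, the second counts $w$ with $w^{-1}(\gamma)\in\Phi_+$ and $w^{-1}(\alpha)\in\Phi_-$, so their sum counts all $w$ with $w^{-1}(\gamma)\in\Phi_+$, and exactly half of $W$ sends $\gamma$ to a positive root, because $w^{-1}(\gamma)\in\Phi_+$ records which side of $H_\gamma$ the chamber $c_w$ lies on, $\CCC=\{c_w\}_{w\in W}$, and $-\matid_V$ interchanges the two sides bijectively. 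Summing over $\gamma\in\Phi_\OOO\cap\Phi_+$ then gives $\mu_\OOO(f^+_\alpha)=\tfrac{|W|}{2}F$ with $F:=\sum_{\gamma\in\Phi_\OOO}\unitbase_\gamma=\sum_{\gamma\in\Phi_\OOO\cap\Phi_+}f^+_\gamma$, independent of $\alpha$. Hence $\mu_\OOO$ has rank one on $\RR^{\Phi_\OOO,+}$ with image $\RR F$; since $W$ permutes $\Phi_\OOO$, the vector $F$ is $W$-fixed, so $\RR F\cong\trivial_W$; its eigenvalue is $\tfrac{|W|}{2}|\OOO|$, which equals $\lambda_\OOO(\trivial_W)$ by \ref{degree-one-character-prop}, because each $H_i\in\OOO$ has $\Centr_W(H_i)=\langle s_i\rangle$ of index $\tfrac{|W|}{2}$ and $\Norma_W(H_i)=\Centr_W(s_i)$ of index $|\OOO_i|$; and the orthogonal complement of $\RR F$ inside $\RR^{\Phi_\OOO,+}$ is the kernel of $\mu_\OOO$.

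For $\RR^{\Phi_\OOO,-}$ only the $W$-module structure is needed. Adopting the convention $f^-_{-\alpha}:=-f^-_\alpha$ for every $\alpha\in\Phi_\OOO$, one checks $w\cdot f^-_\alpha=f^-_{w(\alpha)}$ for all $w\in W$, and $\RR^{\Phi_\OOO,-}=\bigoplus_{i=1}^t\RR^{\Phi_{\OOO_i},-}$ splits $W$-equivariantly along the orbits. Fix the positive root $\alpha_i$ of $H_i$; then $\RR f^-_{\alpha_i}$ is stable under $\Norma_W(H_i)=\Centr_W(s_i)$ and affords the character $\chi_i$, since $z$ acts on the line $H_i^\perp\ni\alpha_i$ by $z(\alpha_i)=\det(z|_{H_i^\perp})\,\alpha_i=\chi_i(z)\,\alpha_i$. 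The universal property of induction then produces a $W$-map $\Ind_{\Centr_W(s_i)}^W\chi_i\to\RR^{\Phi_{\OOO_i},-}$; it is surjective because $W$ acts transitively on $\OOO_i$, so every $f^-_\beta$ with $\beta\in\Phi_{\OOO_i}$ equals $\pm\,w\cdot f^-_{\alpha_i}$ for some $w$, and it is an isomorphism since both sides have dimension $[W:\Centr_W(s_i)]=|\OOO_i|$. Summing over $i$ finishes the proof.

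I do not expect a real obstacle here; the only points to handle with care are the sign convention $f^-_{-\alpha}=-f^-_\alpha$ that makes the $W$-action in the last paragraph genuinely a permutation action, and the fact that the ``half of $W$'' count in the key identity is \emph{exactly} $|W|/2$, which relies on the reflection arrangement having $|\CCC|=|W|$ chambers split evenly by each hyperplane.
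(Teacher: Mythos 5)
Your proof is correct and follows essentially the same route as the paper: the same computation showing $\mu_\OOO$ acts with rank one on $\RR^{\Phi_\OOO,+}$ with the $W$-fixed vector $\sum_\alpha f^+_\alpha$ as eigenvector (your version even records the scalar $\tfrac{|W|}{2}$ explicitly and matches it to $\lambda_\OOO(\trivial_W)$ via \ref{degree-one-character-prop}), plus self-adjointness for the kernel statement. For $\RR^{\Phi_\OOO,-}$ the paper likewise identifies it as the monomial representation induced from the line $\RR f^-_{\alpha_i}$, whose stabilizer $\Centr_W(s_i)$ acts by $\det_{V/H_i}$; you merely spell out the induction isomorphism (sign convention, Frobenius reciprocity, dimension count) in more detail.
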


       \begin{proof}
         Using the fact that for any $w$ in $W$, exactly one out 
         of $w^{-1}(\alpha)$ and $w^{-1}(-\alpha)$ will be a positive
         root, one checks using \ref{angle-measure-formula-for-mu} that  
         $$
           \mu_\OOO(f^+_\beta) = \sum_{\alpha \in  \Phi_\OOO \cap \Phi_+} f^+_\alpha
         $$
         for {\emphfont any} $\beta$ in $\Phi_\OOO \cap \Phi_+$.
         This implies that $\mu_\OOO$ acts on $\RR^{\Phi_\OOO,+}$ as an operator
         of rank one, whose only nonzero eigenspace is the line spanned by  
         $\sum_{\Phi_\OOO \cap \Phi_+} f^+_\alpha$, affording the trivial $W$-representation
         $\trivial_W$, and with eigenvalue $\lambda_\OOO(\trivial_W)=\frac{|W|}{2} |\OOO|$.
         Because $\mu_\OOO$ is self-adjoint, the subspace of $\RR^{\Phi_\OOO,+}$ 
         perpendicular to this eigenspace will be preserved, and must lie
         entirely in the kernel.  

         The assertion about the $W$-representation carried by
         $\RR^{\Phi_\OOO,-}$ follows because one has
         $$
           s_i(f^-_{\alpha_i})=-f^-_{\alpha_i}=\det{}_{V/H_i}(s_i) f^-_{\alpha_i}
         $$ 
         and $\Centr_W(s_i)$ is the stabilizer of the line spanned by $f^-_\alpha$.  
       \end{proof}

       \vskip.1in
       \noindent
       \begin{proof}[Second proof of \ref{thm:rank-one}]
         Assuming $\OOO$ is a transitive $W$-orbit of some hyperplane $H$ with
         associated reflection $s$, \ref{prop:rank-one-plus-minus} says that 
         the $\RR$-subspace $U:=\RR^{\Phi_\OOO,-}$, which is a rational subspace
         in the sense that $\RR^{\Phi_\OOO,-}=\RR \otimes_\QQ \QQ^{\Phi_\OOO,-}$, 
         affords the $W$-representation
         $\Ind_{\Centr_W(s)}^W \chi$.  Then \ref{thm:Gelfand-triple} 
         and \ref{prop:integrality-principle}
         imply that the operator $\mu_\OOO$ has all eigenvalues on $U$
         lying within the algebraic integers of any splitting field for $W$.
         Its remaining eigenvalues on the complementary subspace $\RR^{\Phi_\OOO,+}$
         are either zero or $\lambda_\OOO(\trivial_W)=\frac{|W|}{2} |\OOO|$ 
         by \ref{prop:rank-one-plus-minus}.
       \end{proof}

       \begin{Remark}
         \label{first-Renteln-remark}
         After posting this work on the arXiv, the authors discovered that,
         independently, P. Renteln \cite[\S 4]{Renteln2011} recently studied the
         spectrum of the operator $\nu_\OOO$ for a real finite reflection group $W$,
         taking $\OOO$ to be the set of {\it all} reflecting hyperplanes for $W$.
         Note that irreducible finite reflection groups can have at most two
         $W$-orbits of hyperplanes, and whenever $W$ has only one orbit of 
         hyperplanes (that is, outside of types $B_n (=C_n), F_4$ and the 
         dihedral types $I_2(m)$ with $m$ even), Renteln's object of study 
         is the same as our operator $\nu_\OOO$.

         In particular, he also uses the technique from our
         second proof of \ref{thm:rank-one}, introducing the
         maps $\pi_\OOO$ and $\mu_\OOO$ in his context.  We will point
         out in \ref{second-Renteln-remark} and \ref{third-Renteln-remark} 
         below the places where we borrow from and/or extend his work.
       \end{Remark}

    \subsection{The eigenvalues and eigenspace representations}

      We return again to the situation where $\OOO$ is a single $W$-orbit of hyperplanes.
      Having proven \ref{thm:rank-one} on the integrality of eigenvalues of $\nu_{\OOO}$
      or $\mu_\OOO$, one can still ask for the eigenvalues of $\mu_\OOO$ and the $W$-irreducible
      decomposition of its eigenspaces.  It turns out that one can be surprisingly explicit here.

      Note that \ref{prop:rank-one-plus-minus} reduces this to the analysis of $\mu_\OOO$
      acting on $U:=\RR^{\Phi_\OOO,-}$, which affords the $W$-representation
      $\Ind_{Z}^W \chi$, where $Z=Z_W(s)$ for a reflection $s$ whose hyperplane $H$ represents
      the orbit $\OOO$, and $\chi: Z \rightarrow \{\pm 1\}$ is the character of $Z$ acting
      on the line $H^\perp$.  
      We analyze this representation more fully.

      We know the $W$-irreducible decomposition
      of $\Ind^W_{Z} \chi$ is multiplicity-free
      from \ref{thm:Gelfand-triple}.  Recall this
      is controlled by the double cosets $ZwZ$, or diagonal $W$-orbits
      $W \cdot (H,H')$ in $\OOO \times \OOO$, 
      giving rise to nonzero elements $\idem w \idem$ in
      the twisted Hecke algebra $\idem \RR W \idem$
      (see \ref{subsec:twisted-Gelfand-review}).
      We next explain how dihedral angles between hyperplanes
      play a crucial role here.

      \begin{Definition}
        Given two hyperplanes $H, H'$ within $V$, define their {\it dihedral angle}
        $\angle\{H,H'\}$ to be the unique angle in the interval $[0,\frac{\pi}{2}]$ 
        separating them.
      \end{Definition}

      \begin{Proposition}
        \label{angles-equals-orbits-proposition}
        Let $W$ be a finite real reflection group, and $H$,$H'$,$H''$ 
        hyperplanes in the same $W$-orbit $\OOO$, but with neither
        $H'$ nor $H''$ orthogonal to $H$.
        Then $(H,H'), (H,H'')$ lie in the same diagonal $W$-orbit
        on $\OOO \times \OOO$ if and only if 
        $
         \angle \{H,H'\} = \angle \{H,H''\}.
        $
      \end{Proposition}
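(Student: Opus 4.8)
This is immediate, since every $w \in W$ acts on $V$ by an isometry: to say that $(H,H')$ and $(H,H'')$ lie in the same diagonal $W$-orbit is precisely to say that some $w \in W$ satisfies $w(H) = H$ and $w(H') = H''$, and then $\angle\{H,H''\} = \angle\{w(H),w(H')\} = \angle\{H,H'\}$.

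\textbf{Reduction for the converse.} Fix normals $\alpha, \beta, \gamma$ to $H, H', H''$, scaled to the common length of the roots in the $W$-orbit $\Phi_\OOO$ attached to $\OOO$, so that $\beta, \gamma \in \Phi_\OOO$. Set $\theta := \angle\{H,H'\} = \angle\{H,H''\}$; if $\theta = 0$ then $H' = H = H''$ and there is nothing to prove, so assume $\theta \in (0, \tfrac{\pi}{2})$. Replacing $\beta$ by $-\beta$ and $\gamma$ by $-\gamma$ if necessary --- which changes neither $H'$ nor $H''$ --- we may assume $\langle\alpha,\beta\rangle = \langle\alpha,\gamma\rangle$, a common nonzero value; since $\|\beta\| = \|\gamma\|$, the orthogonal projections $\bar\beta, \bar\gamma$ of $\beta, \gamma$ onto $H = \alpha^\perp$ then have the same length. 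We must produce $w \in \Norma_W(H)$ with $w(H') = H''$. Since $\Norma_W(H) = \Centr_W(s_H)$, and since by Steinberg's theorem the pointwise stabilizer $\Stab_W(\alpha)$ is generated by the reflections in the roots of $W$ orthogonal to $\alpha$, we have $\Centr_W(s_H) = \langle s_H\rangle \times \Stab_W(\alpha)$, where $s_H$ fixes $H$ pointwise and negates $\alpha$. Writing $\beta$ and $\gamma$ in the decomposition $V = \RR\alpha \oplus H$ and tracking the $\RR\alpha$-component, one checks that some $w \in \Centr_W(s_H)$ sends $H'$ to $H''$ if and only if some $w' \in \Stab_W(\alpha)$ sends $\bar\beta$ to $\bar\gamma$ or to $-\bar\gamma$ (according to whether $w$ lies in $\Stab_W(\alpha)$ or in $s_H\,\Stab_W(\alpha)$). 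Thus the theorem reduces to the claim that $\bar\beta$ and $\pm\bar\gamma$ lie in a single $\Stab_W(\alpha)$-orbit inside $H$.

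\textbf{Proving the reduced claim.} The elementary case is when $\beta - \gamma$ --- which is automatically orthogonal to $\alpha$ --- is a root of $W$: then $s_{\beta-\gamma} \in \Stab_W(\alpha)$, and the identity $\langle\beta-\gamma,\beta-\gamma\rangle = 2(\|\beta\|^2 - \langle\beta,\gamma\rangle) = 2\langle\beta-\gamma,\beta\rangle$ gives $s_{\beta-\gamma}(\beta) = \gamma$, so $s_{\beta-\gamma}(H') = H''$. In general one wants the set of hyperplanes of $\OOO$ at angle $\theta$ from $H$ to form a single $\Norma_W(H)$-orbit, and I would establish this transitivity by a reduction to rank two: by the non-orthogonality hypothesis the parabolic $W_{H \cap H'}$ is dihedral, of type $I_2(m)$ with $m \ge 3$, hence $W$-conjugate to a standard parabolic whose Coxeter diagram is a single connected edge, so \ref{Coxeter-subgraph-proposition} places $H$ and $H'$ in one $W_{H\cap H'}$-orbit; inside a dihedral group an explicit computation shows that hyperplanes at a fixed angle from $H$ are permuted by $s_H$ together with $\Stab_W(\alpha)$, and chaining such local statements (using $W_{H\cap H''}$ as well) connects $H'$ to $H''$.

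\textbf{Main obstacle and payoff.} The crux is precisely this global transitivity step, and it genuinely requires the involution $s_H$: already for $W$ of type $A_2$, with $H, H', H''$ the three reflecting lines, the hyperplanes $H', H''$ are at equal angle from $H$ but the difference of their normals is not a root, and $s_H$ itself is what carries $H'$ to $H''$. I expect the rank-two reduction above to push this through, perhaps aided by a short case-by-case check over the irreducible types, in which the dihedral parabolics --- and hence the angles $\theta$ that can occur --- are tightly constrained. Granting this, and combining with \ref{thm:Gelfand-triple}, the distinct $W$-orbits on $\OOO \times \OOO$ --- equivalently the irreducible constituents of $\Ind_{\Centr_W(s_H)}^W \chi$ and the nonzero eigenvalues of $\mu_\OOO$ acting on $\RR^{\Phi_\OOO,-}$ --- become indexed by the distinct dihedral angles occurring among the hyperplanes of $\OOO$.
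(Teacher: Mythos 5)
There is a genuine gap: the entire burden of the converse is carried by your ``global transitivity step'' (that the hyperplanes of $\OOO$ at a fixed angle $\theta \neq \frac{\pi}{2}$ from $H$ form a single $\Norma_W(H)$-orbit), and you never prove it --- you only say you ``expect'' a rank-two reduction plus chaining to push it through. Worse, the chaining as described does not work. Your local moves use only the dihedral parabolics $W_{H\cap H'}$ and $W_{H\cap H''}$, i.e.\ only pairwise data, but the hard configuration is precisely when $X=H\cap H'\cap H''$ has codimension $3$, so that no rank-two parabolic containing $H$ contains both $H'$ and $H''$. Already in type $A_3$ with $H=\{x_1=x_2\}$, $H'=\{x_2=x_3\}$, $H''=\{x_1=x_4\}$ (both angles $\frac{\pi}{3}$), the only element of $W_{H\cap H'}$ fixing $H$ is $s_H=(1\,2)$, which sends $H'$ to $H_{13}$, never to $H''$; iterating such moves keeps you inside $\{H_{23},H_{13}\}$ and never reaches $\{H_{14},H_{24}\}$. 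The element that actually works is $(1\,2)(3\,4)=s_H\cdot s_{34}$, where $s_{34}\in\Stab_W(\alpha)$ has reflecting hyperplane \emph{orthogonal} to $H$, so it is invisible to your non-orthogonality-based dihedral analysis; likewise your ``elementary case'' fails there, since $\beta-\gamma$ is not a root for any admissible choice of signs. So the reduction to ``$\bar\beta$ and $\pm\bar\gamma$ lie in one $\Stab_W(\alpha)$-orbit'' is fine, but the claim itself is exactly the proposition restated, and your sketch gives no mechanism to establish it when the triple intersection has codimension $3$.

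For comparison, the paper's proof cases on the codimension of $X=H\cap H'\cap H''$ rather than on pairwise intersections. Codimension $1$ and $2$ are easy (in the dihedral $W_X$ either $H'=H''$ or $s_H$ does the job). In codimension $3$ one passes to the rank-three parabolic $W_X$ containing all three reflections: the equal-angle and non-orthogonality hypotheses force its Coxeter diagram to be a connected subgraph of that of $(W,S)$, so \ref{Coxeter-subgraph-proposition} puts $H,H',H''$ in a single $W_X$-orbit, finiteness forces $W_X$ to be of type $A_3$, $B_3$ or $H_3$, and a brute-force check in those three groups finishes the argument. That passage to the rank-three parabolic generated by all three reflections is the missing idea in your proposal; the ensuing check is over three small groups, not ``over the irreducible types'' of $W$. (Note also that the paper's $D_n$ example after the proposition shows why some control on $W_X$ --- here supplied by non-orthogonality --- is unavoidable.)
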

      \begin{proof}
        The forward implication is clear.  For the reverse,
        assume $\angle \{H,H'\} = \angle \{H,H''\}$, and consider three
        cases based on the codimension of $X := H \cap H' \cap H''$.

        \noindent {\sf Case 1:} $X$ has codimension $1$.

          This case is trivial, since then $H=H'=H''$.  

        \noindent {\sf Case 2:} $X$ has codimension $2$.

          This case is also straightforward.  One checks inside the
          dihedral reflection subgroup  $W_X$ containing $s_H, s_{H'}, s_{H''}$
          that whenever $\angle \{H,H'\} = \angle \{H,H''\}$, either
          one is in the trivial case $H'=H''$, or else
          $s_H$ sends $(H, H')$ to $(H,H'')$.

        \noindent {\sf Case 3:} $X$ has codimension $3$.
          Then by conjugation, one may assume that the rank $3$ reflection subgroup $W_X$ 
          containing $s_H, s_{H'}, s_{H''}$ is a standard parabolic subgroup $W_J$
          for some triple $J=\{s_1,s_2,s_3\} \subset S$ among
          the Coxeter generators $S$ of $W$.  In fact,
          $(W_J,J)$ must be a connected subgraph of the
          Coxeter graph of $(W,S)$, else $W_J$ contains no 
          three reflections $s_H, s_{H'}, s_{H''}$ 
          with $H \cap H' \cap H''$ of
          codimension $3$ having 
          $\angle \{H,H'\}=\angle\{H,H''\} \neq \frac{\pi}{2}$.
          Thus \ref{Coxeter-subgraph-proposition} implies
          that $H, H', H''$ lie in the same $W_X$-orbit, since
          they lie in the same $W$-orbit.  Finiteness of $W$
          further forces $W_X$ to be one of
          the rank three irreducible types 
          $A_3 (\cong D_3)$ or $B_3 (\cong C_3)$ or $H_3$.
          Now it is not hard to check by brute force in any
          of these three types that a triple $H, H', H''$ in the
          same $W_X$-orbit having $\angle \{H,H'\} = \angle \{H,H''\} \neq \frac{\pi}{2}$
          will have $(H,H')$ and $(H,H'')$ in the same diagonal $W_X$-orbit.  This then implies that $(H,H')$ and $(H,H'')$ lie 
          in the same diagonal $W$-orbit.
      \end{proof}

      The following example shows that the non-orthogonality assumption in
      \ref{angles-equals-orbits-proposition} is perhaps more subtle than
      it first appears.
      Indeed, if $\angle\{H, H'\}=\angle\{H,H''\}=\frac{\pi}{2}$,
      it is possible that $(H,H'), (H,H'')$ lie in different
      $W$-orbits of $\OOO \times \OOO$.

      \begin{Example}
        Let $W$ be of type $D_n$ for $n \geq 4$, and 
        $$
          \begin{aligned}
            H & = \{x_1=x_2\} \\ 
            H'& = \{x_1=-x_2\} \\
            H''& = \{x_3 = x_4\}.
          \end{aligned}
        $$
        Then it is easily checked that $(H,H'), (H,H'')$ lie in different
        $W$-orbits of $\OOO \times \OOO$.
        The problem here is that $X=H \cap H' \cap H''$ has 
        $W_X$ of the reducible type $A_1 \times A_1 \times A_1$,
        so that \ref{Coxeter-subgraph-proposition} does not apply.
      \end{Example}

      \ref{angles-equals-orbits-proposition}
      has very strong consequences in the {\it crystallographic} case,
      that is, where $W$ is a finite {\it Weyl} group.
      For this we distinguish two cases for a given reflecting hyperplane $H$ 
      for a finite reflection group $W$ and its $W$-orbit $\OOO$:
      \begin{itemize}
        \item[($\spi$)] There is a hyperplane $H' \in \OOO$ for which
            for which $\angle\{H,H'\} = \frac{\pi}{3}$.
        \item[($\npi$)] There is no hyperplane $H' \in \OOO$ for which
            for which $\angle\{H,H'\} = \frac{\pi}{3}$.
      \end{itemize}
      
      Note that ($\npi$) occurs only in the situation when $W$ is of 
      type $B_n(\cong C_n)$, and the reflection $s_H$ along $H$ is the special
      ``non-simply-laced'' node, corresponding to a sign change
      in a coordinate of $V=\RR^n$.

      \begin{Corollary}
        \label{at-most-two-constituents-corollary}
        Let $W$ be a finite Weyl group,
        $\OOO$ the $W$-orbit of a reflecting
        hyperplane $H$ with reflection $s$, and $Z=\Centr_W(s)$.
        Let $\chi: Z \rightarrow \{\pm 1\}$ be the character
        of $Z$ on $H^\perp$.

        Then:
        \begin{enumerate}
          \item[(i)]
            In situation {\rm ($\spi$)} we have 
            $$
              \Ind_Z^W \chi = V \oplus V'
            $$
            for a unique $W$-irreducible $V'$ of dimension $|\OOO|-|V|$.
          \item[(ii)]
            In situation {\rm ($\npi$)} we have 
            $$
              \Ind_Z^W \chi = V.
            $$
        \end{enumerate}

        Moreover, in \emph{(i)}, one can realize
        the $W$-irreducible $V'$ as the subspace $\RR^{\Phi_\OOO,-}$ of $\RR^{\Phi_\OOO}$ that is
        $\RR$-linearly spanned by the vectors
        $$
          \psi_{\alpha,\beta,\gamma}:= \unitbase_\alpha + \unitbase_\beta + \unitbase_\gamma
                              - (\unitbase_{-\alpha} + \unitbase_{-\beta} + \unitbase_{-\gamma})
        $$
        as $\{\alpha,\beta,\gamma\}$ run through all triples of
        roots in the $W$-orbit $\OOO$ having $\alpha+\beta+\gamma=0$
        and having normal hyperplanes $H_\alpha, H_\beta, H_\gamma$
        with pairwise dihedral angles of $\frac{\pi}{3}$.
      \end{Corollary}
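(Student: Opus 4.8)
The plan is to combine the multiplicity-freeness of $\Ind_{\Centr_W(s)}^W\chi$ from \ref{thm:Gelfand-triple} with a classification of the dihedral angles that can occur between two hyperplanes of a single orbit in a \emph{crystallographic} group, and then to identify the (at most two) irreducible constituents explicitly. First I would record the angle restriction: if $H_\alpha,H_\beta$ are distinct hyperplanes in the same $W$-orbit $\OOO$ of a Weyl group, the corresponding roots $\alpha,\beta$ have equal length, and for a crystallographic root system $2\langle\alpha,\beta\rangle/\langle\alpha,\alpha\rangle\in\{-2,-1,0,1,2\}$ with the values $\pm2$ forcing $\beta=\pm\alpha$; hence $\cos\angle\{H_\alpha,H_\beta\}\in\{0,\tfrac12\}$, so $\angle\{H_\alpha,H_\beta\}\in\{\tfrac\pi3,\tfrac\pi2\}$. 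Thus, apart from the diagonal pairs (angle $0$), every pair in $\OOO\times\OOO$ is either orthogonal or at angle $\tfrac\pi3$.

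\emph{Counting constituents.} Using the bijection between double cosets $\Centr_W(s)\,w\,\Centr_W(s)$ and diagonal $W$-orbits on $\OOO\times\OOO$ set up in \ref{subsec:new-twisted-Gelfand-pair}, the number of $W$-irreducible constituents of $\Ind_{\Centr_W(s)}^W\chi$ equals $\dim(\idem\,\RR W\,\idem)$, i.e.\ the number of double cosets $\Centr_W(s)\,w\,\Centr_W(s)$ with $\idem\,w\,\idem\neq0$, where $\idem$ is the $\chi$-idempotent of $\Centr_W(s)$. By Case~1 of the proof of \ref{thm:Gelfand-triple}, $\idem\,w\,\idem=0$ whenever $w(H)\perp H$, so no orthogonal pair contributes; by \ref{angles-equals-orbits-proposition} all pairs $(H,H')$ with $\angle\{H,H'\}=\tfrac\pi3$ lie in one diagonal $W$-orbit. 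Together with the diagonal orbit of $(H,H)$ this leaves at most one surviving double coset in situation ($\npi$) and at most two in situation ($\spi$). Also, $\Centr_W(s)$ stabilizes the line $H^\perp\subseteq V$ and acts on it by $\chi$, so $\langle\Res_{\Centr_W(s)}V,\chi\rangle\ge1$, whence $\langle\Ind_{\Centr_W(s)}^W\chi,V\rangle\ge1$ by Frobenius reciprocity; since $W$ is irreducible, $V$ is irreducible, so $V$ is a multiplicity-one constituent. In situation ($\npi$) there is only one constituent, giving $\Ind_{\Centr_W(s)}^W\chi=V$, which is (ii).

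\emph{The second constituent and its model.} In situation ($\spi$) I would work inside $\RR^{\Phi_\OOO,-}\cong\Ind_{\Centr_W(s)}^W\chi$ (\ref{prop:rank-one-plus-minus}) via the $W$-equivariant map $p\colon\RR^{\Phi_\OOO}\to V$, $\unitbase_\alpha\mapsto\alpha$, which sends $f^-_\alpha\mapsto 2\alpha$ and is therefore onto $V$; since $V$ has multiplicity one in $\RR^{\Phi_\OOO,-}$ and $p\neq0$, Schur's lemma makes $p$ injective on the copy of $V$, so $K:=\ker(p|_{\RR^{\Phi_\OOO,-}})$ is a $W$-submodule with $K\cap V=0$. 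Let $U'$ be the span of the vectors $\psi_{\alpha,\beta,\gamma}$ over triples of roots in $\Phi_\OOO$ with $\alpha+\beta+\gamma=0$ and pairwise hyperplane angle $\tfrac\pi3$; it is $W$-stable because $\psi_{-\alpha,-\beta,-\gamma}=-\psi_{\alpha,\beta,\gamma}$, and $p(\psi_{\alpha,\beta,\gamma})=2(\alpha+\beta+\gamma)=0$, so $U'\subseteq K$. Such triples exist in situation ($\spi$): picking $H'\in\OOO$ with $\angle\{H,H'\}=\tfrac\pi3$ and roots $\alpha\perp H$, $\beta\perp H'$ with $\langle\alpha,\beta\rangle=-\tfrac12\langle\alpha,\alpha\rangle$, the rank-two subsystem spanned by $\alpha,\beta$ is of type $A_2$, so $\gamma:=-(\alpha+\beta)$ is a root of the same length, hence $\gamma\in\Phi_\OOO$ (in an irreducible Weyl group equal-length roots form a single $W$-orbit), and $H_\alpha,H_\beta,H_\gamma$ are pairwise at $\tfrac\pi3$; thus $U'\neq0$. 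So $\Ind_{\Centr_W(s)}^W\chi$ contains a nonzero submodule inside $K$ while $V\cap K=0$, hence it strictly contains $V$; with the bound above it has exactly two constituents, $\Ind_{\Centr_W(s)}^W\chi=V\oplus V'$ with $V'$ irreducible of dimension $|\OOO|-\dim V$ and uniquely determined. Finally $K$ is a submodule of $V\oplus V'$ with $K\cap V=0$ and $\dim K=\dim V'>0$, so $K=V'$; therefore $0\neq U'\subseteq V'$ with $V'$ irreducible forces $U'=V'$, as claimed.

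\emph{Main obstacle.} The delicate point is the counting: one must be certain that \ref{angles-equals-orbits-proposition} collapses \emph{all} angle-$\tfrac\pi3$ pairs into one diagonal $W$-orbit (its non-orthogonality hypothesis is exactly what applies), that orthogonal pairs genuinely give $\idem\,w\,\idem=0$ (Case~1 of \ref{thm:Gelfand-triple}), and that the identification ``number of irreducible constituents $=$ number of surviving double cosets'' is used correctly for the twisted idempotent $\idem$. The root-system inputs (equal-length roots forming one $W$-orbit in an irreducible Weyl group, and the angle restriction) are standard but are precisely where crystallographicity enters.
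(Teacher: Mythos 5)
Your proposal is correct and follows essentially the same route as the paper: multiplicity-freeness from \ref{thm:Gelfand-triple}, the count of surviving double cosets via \ref{angles-equals-orbits-proposition} together with the vanishing of $\idem w \idem$ for orthogonal pairs, and the identification of $V'$ inside $\RR^{\Phi_\OOO,-}$ through the $W$-equivariant map $\unitbase_\alpha \mapsto \alpha$ annihilating the $\psi_{\alpha,\beta,\gamma}$. The only (harmless) deviations are that you obtain the restriction to dihedral angles in $\{\frac{\pi}{3},\frac{\pi}{2}\}$ from Cartan integers for equal-length roots rather than by reducing to a rank-two standard parabolic of type $A_2$, $B_2$ or $G_2$ as the paper does, and that you explicitly verify that the span of the $\psi_{\alpha,\beta,\gamma}$ is nonzero (via an $A_2$ subsystem and the conjugacy of equal-length roots), a point the paper's proof leaves implicit.
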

      \begin{proof}
        By \ref{angles-equals-orbits-proposition}, the number of $W$-irreducible constituents in 
        $\Ind_Z^W \chi$
        is the number of dihedral angles $\angle\{H,H'\}$ other than $\frac{\pi}{2}$
        which occur among pairs $\{H, H'\}$ in the $W$-orbit $\OOO$.
        By conjugation, one may assume $W_{H \cap H'}$ is a standard
        parabolic subgroup $W_J$, of some dihedral type $I_2(m)$
        with $m \geq 3$.  Since $W$ is a Weyl group, this limits
        $m$ to be $3,4,6$, and then one can check that $\{H, H'\}$ lying
        in the same $W$-orbit $\OOO$ forces either $H=H'$ or
        $\angle\{H,H'\} = \frac{\pi}{3}$.  The irreducible decompositions
        in situations {\rm ($\spi$)} and {\rm ($\npi$)} then follow.

        To prove the last assertion, let $Y \subset \RR^{\Phi_\OOO,-} \subset \RR^{\Phi_\OOO}$
        be the subspace spanned by the vectors $\psi_{\alpha,\beta,\gamma}$ described above.
        Consider the $\RR$-linear map
        $
         \RR^{\Phi_\OOO}  \overset{g}{\longrightarrow}  V
        $
        that sends $\unitbase_\alpha \longmapsto  \alpha.$
        It is easy to see that $g$ is $W$-equivariant, and also $\ZZ_2$-equivariant
        for the $\ZZ_2$-action on $\RR^{\Phi_\OOO}$ that swaps
        $\unitbase_\alpha \leftrightarrow \unitbase_{-\alpha}$ and the $\ZZ_2$-action
        on $V$ by the scalar $-1$.
        The calculations
        $$
          \begin{array}{rccl}
            \unitbase_\alpha - \unitbase_{-\alpha} & \overset{g}{\longmapsto} & \alpha - (-\alpha) &= 2\alpha\\
            \unitbase_\alpha + \unitbase_{-\alpha} & \overset{g}{\longmapsto} &\alpha + (-\alpha) &= 0\\
            \psi_{\alpha,\beta,\gamma} & \overset{g}{\longmapsto} & 2( \alpha + \beta + \gamma) &=  0\\
          \end{array}
        $$
        then show that 
        \begin{enumerate}
          \item[$\bullet$] the kernel $\ker(g)$ contains $\RR^{\Phi_\OOO,+}$, and hence $g$
            induces a map $\RR^{\Phi_\OOO,-} \overset{\bar{g}}{\rightarrow} V$, 
          \item[$\bullet$] the map $g$, and hence also $\bar{g}$, surjects onto $V$,
            since $V$ is irreducible, and
          \item[$\bullet$] the subspace $Y$ lies in the kernel of $g$, so also
            $Y \subset \ker\left( \RR^{\Phi_\OOO,-} \overset{\bar{g}}{\twoheadrightarrow} V \right)$.
        \end{enumerate}
        Since in situation {\rm ($\spi$)}, one has the
        $W$-irreducible decomposition
        $\RR^{\Phi_\OOO,-}\cong \Ind_Z^W \cong V \oplus V'$, 
        the $W$-equivariance of $g$ then implies $Y \cong V'$.
      \end{proof}

      \begin{Remark}
        \label{second-Renteln-remark}
        Here we have borrowed from Renteln's paper \cite[\S 4.8.1]{Renteln2011} the
        explicit realization of $V'$ by the vectors $\psi_{\alpha,\beta,\gamma}$,
        and its proof via the map $g$, although we substitute
        our argument via irreducibility for his dimension-counting argument.
      \end{Remark}

      \begin{Example}
        \label{ex:rank-one-constituents}
        In type $A_{n-1}$, when $W=\symm_n$ and 
        $\OOO$ is the unique $W$-orbit of hyperplanes, one can
        check that 
        $$
        \begin{aligned}
          \Ind^W_{Z} \chi 
          &= \Ind^{\symm_n}_{\symm_2 \times \symm_{n-2}} \sgn \otimes \trivial\\
          &=\chi^{(n-1,1)} + \chi^{(n-2,1,1)}\\
          &= V \oplus \wedge^2 V
        \end{aligned}
        $$
        using standard calculations with the $\symm_n$-irreducible characters 
        $\chi^{\lambda}$ indexed by integer partitions $\lambda$ of $n$.
        \nomenclature[al]{$\chi^{\lambda}$}{irreducible character of the symmetric group corresponding to the number partition $\lambda$}%
        Thus the irreducible $V' \cong \wedge^2 V \cong  \chi^{(n-2,1,1)}$ 
        in this case.
      \end{Example}

      \noindent
      Based on the $W$-irreducible description for $\RR^{\Phi_\OOO,-}\cong \Ind_Z^W \chi$
      given in \ref{at-most-two-constituents-corollary},
      one can now be more precise about the
      eigenspaces of $\nu_\OOO$ or $\mu_\OOO$.

      \begin{Theorem}
        \label{Weyl-group-rank-one-eigenspaces}
        Let $W$ be a finite Weyl group,
        $\OOO$ the $W$-orbit of a reflecting
        hyperplane $H$ with reflection $s$, and $Z=\Centr_W(s)$.
        Let $\chi: Z \rightarrow \{\pm 1\}$ be the character
        of $Z$ on $H^\perp$.

        Then either of $\nu_\OOO$ or $\mu_\OOO$ have
        nonzero eigenvalues and accompanying $W$-irreducible eigenspaces described
        as follows:
        \begin{enumerate}
          \item[(i)]
            There is a $1$-dimensional eigenspace carrying the trivial $W$-representation
            with eigenvalue $\lambda_\OOO = \frac{|\OOO| |W|}{2}$.
          \item[(ii)]
            In the case of situation {\rm ($\spi$)}
            there is an $|\OOO|-\ell$-dimensional eigenspace carrying the $W$-representation $V'$
            with eigenvalue $\frac{|W|}{6}$.
          \item[(iii)]
            In either situation {\rm ($\spi$)} and {\rm ($\npi$)}, there is an $\ell$-dimensional eigenspace 
            carrying the $W$-representation $V$
            with eigenvalue 
            $$
              \begin{cases}
                \frac{(2|\OOO|+\ell)|W|}{6\ell} &\text{ in situation {\rm ($\spi$)}},\\
                \frac{2^{n-1}}{n!} &\text{ in situation {\rm ($\npi$)}.}
              \end{cases}
            $$
            Furthermore, in the subcase of situation {\rm ($\spi$)} where $W$ is simply-laced (type $A_\ell, D_\ell,$ or $E_6, E_7, E_8$), 
            one can rewrite this eigenvalue as $\frac{(h+1)|W|}{6}$, where $h$ is the Coxeter number.
        \end{enumerate}
      \end{Theorem}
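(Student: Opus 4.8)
The plan is to transport the entire computation to the companion operator $\mu_\OOO$ on $\RR^{\Phi_\OOO}$, whose nonzero eigenvalues and eigenspace $W$-representations agree with those of $\nu_\OOO$ (via the $W\times\ZZ_2$-equivariant isomorphisms induced by $\pi_\OOO$ and $\pi_\OOO^T$), and to read the three eigenvalues off the decomposition already established in \ref{prop:rank-one-plus-minus} and \ref{at-most-two-constituents-corollary}. After reducing to $W$ acting irreducibly (as in the proof of \ref{thm:rank-one}, via \ref{prop:reducibility-reduction1} and \ref{prop:reducibility-reduction2}), \ref{prop:rank-one-plus-minus} supplies the $\mu_\OOO$- and $W$-stable splitting $\RR^{\Phi_\OOO}=\RR^{\Phi_\OOO,+}\oplus\RR^{\Phi_\OOO,-}$: on $\RR^{\Phi_\OOO,+}$ the operator $\mu_\OOO$ has rank one, with single nonzero eigenvalue $\lambda_\OOO(\trivial_W)=\tfrac{|\OOO|\,|W|}{2}$ on the trivial line, which is (i); and $\RR^{\Phi_\OOO,-}$ affords $\Ind_{\Centr_W(s)}^W\chi$, which is multiplicity-free by \ref{thm:Gelfand-triple}, hence is $V\oplus V'$ in situation $(\spi)$ and $V$ in situation $(\npi)$ by \ref{at-most-two-constituents-corollary}. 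Since $\mu_\OOO$ is $W$-equivariant, Schur's lemma forces it to act on the (reflection-representation) summand $V$ by a scalar $\lambda_V$ and on $V'$ by a scalar $\lambda_{V'}$, so parts (ii) and (iii) reduce to evaluating these two scalars.

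First I would extract one linear relation between $\lambda_V$ and $\lambda_{V'}$ from the trace. The diagonal entries of $\mu_\OOO$ are $(\mu_\OOO)_{\unitbase_\alpha,\unitbase_\alpha}=\#\{w\in W:w^{-1}\alpha\in\Phi_+\}=|W|/2$, so $\Trace(\mu_\OOO)=|\OOO|\,|W|$; subtracting the contribution $\tfrac{|\OOO|\,|W|}{2}$ of $\RR^{\Phi_\OOO,+}$ leaves $\tfrac{|\OOO|\,|W|}{2}$ on $\RR^{\Phi_\OOO,-}$, i.e.
\[
  \ell\,\lambda_V+(|\OOO|-\ell)\,\lambda_{V'}=\frac{|\OOO|\,|W|}{2},
\]
where $\ell=\dim V$. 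In situation $(\npi)$ there is no $V'$ and $|\OOO|=\ell=n$, so this already gives $\lambda_V=\tfrac{|W|}{2}=2^{n-1}\,n!$ ($W$ being then of type $B_n$), which is (iii) in that case. The remaining relation in situation $(\spi)$ comes from evaluating $\lambda_{V'}$ on one of the explicit eigenvectors furnished by \ref{at-most-two-constituents-corollary}: fixing roots $\alpha,\beta$ in $\OOO$ whose reflecting hyperplanes meet at dihedral angle $\tfrac{\pi}{3}$ and setting $\gamma=-(\alpha+\beta)$, the vector $\psi_{\alpha,\beta,\gamma}=\unitbase_\alpha+\unitbase_\beta+\unitbase_{-(\alpha+\beta)}-\unitbase_{-\alpha}-\unitbase_{-\beta}-\unitbase_{\alpha+\beta}$ lies in $V'$, hence $\mu_\OOO\psi_{\alpha,\beta,\gamma}=\lambda_{V'}\,\psi_{\alpha,\beta,\gamma}$. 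Comparing coefficients of $\unitbase_\alpha$ on the two sides and invoking the angle-measure formula \ref{angle-measure-formula-for-mu}, the left-hand coefficient is $\tfrac{|W|}{2\pi}$ times a signed sum of the six angular measures of $H_\alpha^+\cap H_\delta^+$ for $\delta\in\{\pm\alpha,\pm\beta,\pm(\alpha+\beta)\}$; since $\alpha,\beta,\alpha+\beta$ span an $A_2$ root subsystem these measures lie in $\{0,\tfrac{\pi}{3},\tfrac{2\pi}{3},\pi\}$ and the signed sum collapses to $\tfrac{\pi}{3}$, while the right-hand coefficient is simply $\lambda_{V'}$; hence $\lambda_{V'}=\tfrac{|W|}{6}$, which is (ii).

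Substituting $\lambda_{V'}=\tfrac{|W|}{6}$ back into the trace relation gives $\lambda_V=\tfrac{(2|\OOO|+\ell)\,|W|}{6\ell}$ in situation $(\spi)$, completing (iii); and when $W$ is simply-laced there is a unique orbit of reflecting hyperplanes, so $|\OOO|$ equals the number of positive roots, which is $\tfrac{\ell h}{2}$ for the Coxeter number $h$, and the eigenvalue rewrites as $\tfrac{(h+1)\,|W|}{6}$. The main obstacle I foresee is the angle bookkeeping behind the value of $\lambda_{V'}$ — listing the six angular measures among $\pm\alpha,\pm\beta,\pm(\alpha+\beta)$ correctly and verifying that their signed combination is exactly $\tfrac{\pi}{3}$ — together with the small geometric observation that in situation $(\npi)$ the hyperplanes of $\OOO$ (the coordinate hyperplanes of type $B_n$) are pairwise orthogonal, so no such $A_2$-triple exists and $V'$ genuinely does not occur.
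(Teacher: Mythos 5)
Your proposal is correct and follows essentially the same route as the paper's own proof: reduce to $\mu_\OOO$ on $\RR^{\Phi_\OOO}=\RR^{\Phi_\OOO,+}\oplus\RR^{\Phi_\OOO,-}$ via \ref{prop:rank-one-plus-minus}, use the trace computation $\Trace(\mu_\OOO)=|\OOO||W|$ with the $\frac{|W|}{2}$ diagonal entries, evaluate $\lambda_{V'}=\frac{|W|}{6}$ by extracting the coefficient of $\unitbase_\alpha$ in $\mu_\OOO(\psi_{\alpha,\beta,\gamma})$ from the angle formula (the paper's explicit signed sum is $\pi+\frac{\pi}{3}+\frac{\pi}{3}-0-\frac{2\pi}{3}-\frac{2\pi}{3}=\frac{\pi}{3}$, confirming the collapse you anticipated), and recover $\lambda_V$ from the trace relation, with $|\OOO|=\frac{\ell h}{2}$ in the simply-laced case. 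The only cosmetic differences are your explicit preliminary reduction to irreducible $W$ and your handling of ($\npi$) directly from the trace relation, neither of which changes the argument.
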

      \begin{proof}
        \ref{prop:rank-one-plus-minus} already shows assertion (i),
        and the fact that $\RR^{\Phi_\OOO,-}$ gives the remaining non-kernel 
        eigenspaces of $\mu_\OOO$.  Calculating traces, one sees 
        from \ref{angle-measure-formula-for-mu} that
        the diagonal entry $\left( \mu_\OOO \right)_{\alpha, \alpha} = \frac{|W|}{2}$
        for each root $\alpha$ in $\Phi_\OOO$, so that $\mu_\OOO$ has trace 
        $
         \frac{|W||\Phi_\OOO|}{2} = |W| |\OOO|
        $
        when acting on $\RR^{\Phi_\OOO}$.  Since the eigenvalues of $\mu_\OOO$ on
        $\RR^{\Phi_\OOO,+}$ are all zero except for the 
        eigenvalue $\lambda_\OOO = \frac{|W||\OOO|}{2}$
        with multiplicity one, one concludes that 
        $\mu_\OOO$ has trace 
        $
         |W| |\OOO| - \frac{|W||\OOO|}{2} = \frac{|W||\OOO|}{2}
        $
        when restricted to $\RR^{\Phi_\OOO,-}$.

        Thus in situation {\rm ($\npi$)}, where $\RR^{\Phi_\OOO,-} \cong V \cong \RR^\ell = \RR^n$,
        it acts with eigenvalue $\frac{|W||\OOO|}{2\ell} = 2^{n-1} n!$.

        In situation {\rm ($\spi$)}, Schur's Lemma implies that the $W$-irreducible constituent
        $V'$ of $\RR^{\Phi_\OOO,-}$ will lie in a single eigenspace 
        for $\mu_\OOO$.  Since this copy of $V'$ is realized as 
        the span of the elements $\{\psi_{\alpha,\beta,\gamma}\}$, one can, for example,
        determine this eigenvalue by using 
        \ref{angle-measure-formula-for-mu} to compute
        that the coefficient of $\unitbase_\alpha$ in 
        $\mu_\OOO( \psi_{\alpha,\beta,\gamma} )$
        is 
        $$
          \frac{|W|}{2\pi}\left( \pi + \frac{\pi}{3} + \frac{\pi}{3} 
          - 0 - \frac{2\pi}{3} -\frac{2\pi}{3} \right) =\frac{|W|}{6}.
        $$
        Thus $V'$ is an eigenspace for $\mu_\OOO$ with
        eigenvalue $\frac{|W|}{6}$, having dimension $|\OOO|-\ell$.
        Since the only other constituent $V$ of $\RR^{\Phi_\OOO,-}$ has 
        dimension $\ell$, it must lie in a single eigenspace, whose eigenvalue
        $\lambda$ satisfies
        $
         \lambda \cdot \ell = \frac{|W||\OOO|}{2} - \frac{|W|}{6} \left( |\OOO|-\ell \right)
         = \frac{(2|\OOO|+\ell)|W|}{6}
        $,
        and hence $\lambda = \frac{(2 |\OOO| + \ell) |W|}{6\ell}$.

        For the last assertion, in the simply-laced case, one has that $\OOO$ is
        the set of all hyperplanes, whose cardinality is well-known 
        \cite[\S 3.18]{Humphreys1992} to be $\frac{\ell h}{2}$.
        The formula for the eigenvalue follows.
      \end{proof}

      \begin{Remark}
        \label{third-Renteln-remark}
        The above assertion about the structure of the eigenspaces of $\mu_\OOO$
        in the simply-laced subcase of situation {\rm ($\spi$)} was a conjecture in 
        the previous version of our paper,
        and turned out to be Renteln's \cite[Theorem 39]{Renteln2011}.
        We have adapted his method of proof to give the more
        general statement above.
       \end{Remark}

       We have implemented in {\tt Mathematica} \cite{Wolfram2008} the calculation of 
       this matrix for $\mu_\OOO$ acting on $\RR^{\Phi_\OOO,-}$, and produced the 
       characteristic polynomials shown in \ref{table:rank-one-table}.  
       \ref{Weyl-group-rank-one-eigenspaces} predicts the answers 
       for all rows of the figure corresponding to Weyl groups, 
       but makes no prediction for the 
       non-crystallographic groups $H_3, H_4$.  Note that we have
       omitted any data on the dihedral types $I_2(m)$, as here
       the matrices for $\mu_\OOO$ are easily-analyzed circulant matrices, 
       discussed thoroughly in \cite[\S4.1 and \S4.6]{Renteln2011}.

       \begin{figure}
         \renewcommand{\arraystretch}{1.25}
         \begin{tabular}{crl}\toprule
           type              & \multicolumn{2}{c}{factored characteristic polynomial} \\ \midrule
           $A_{n-1}=\symm_n$ & $\left(x - \frac{(n+1)!}{6}\right)^{n-1}$ &
                    $\left(x - \frac{n!}{6}\right)^{\binom{n-1}{2}}$  \\ \midrule
           $B_n, s=\text{sign change}$ & $(x - 2^{n-1} n!)^{n}$ & \\ \midrule[0.5\lightrulewidth]
           $B_n, s=\text{transposition}$ & $\left(x - \frac{2^{n-1} 
                                   \cdot n! \cdot (2n-1)}{3}\right)^{n}$& 
                   $\left( x - \frac{2^{n-1} n!}{3} \right)^{n(n-2)}$\\\midrule
           $D_n$ & $\left( x - \frac{2^{n-2} \cdot n! \cdot (2n-1)}{3} \right)^{n}$ 
                              &$\left( x - \frac{2^{n-2} n!}{3} \right)^{n(n-2)}$\\\midrule
           $E_6$ & $(x-112320)^6$ & $(x-8640)^{30}$ \\ \midrule[0.5\lightrulewidth]
           $E_7$ & $(x-9192960)^7$ & $(x-483840)^{56}$ \\ \midrule[0.5\lightrulewidth]
           $E_8$ & $(x-3599769600)^8$ & $(x-116121600)^{112}$ \\ \midrule
           $F_4$ & $(x - 1344)^{4}$ & $(x - 192)^{8}$\\\midrule
           \multirow{2}{*}{$H_3$} & $(x^{2} - 248x + 3856)^{3}$ & $(x - 24)^{4} \cdot (x - 12)^{5} $\\
              & $= (x - 124 \pm 48 \sqrt{5})^{3}$ & $(x - 24)^{4} \cdot (x - 12)^{5}$ \\\midrule[0.5\lightrulewidth]
           \multirow{2}{*}{$H_4$} & $ (x^2 - 79680x + 94233600)^4$ &$(x-3840)^{16} \cdot (x-1440)^5$\\
	      & = $(x - 39840 \pm 17280\sqrt5)^4$ & $(x-3840)^{16} \cdot (x-1440)^5$\\
           \bottomrule
         \end{tabular}
         \caption{{Factored characteristic polynomials for $\nu_\OOO$ or $\mu_\OOO$ on their
           eigenspaces affording $\Ind_{\Centr_W(s)}^W \chi$, where $\chi=\det|_{V/H}$
           if $s=s_H$.}} 
           \label{table:rank-one-table}
       \end{figure}

%

     \begin{Remark}
       \ref{thm:rank-one} can fail without the
       hypothesis that $\OOO$ is a single $W$-orbit of hyperplanes.
       For example, when $W=B_2=I_2(4)$ and $\OOO$ is the set of
       {\emphfont all four} hyperplanes, one finds that
       $$
         \det(t\matid_{\RR^{8}} -\nu_\OOO) = t^3 (t-16)(t^2-8t+8)^2,
       $$
       which contains quadratic factors irreducible over $\QQ$,
       the unique minimal splitting field of $W$ in characteristic $0$.  
       The issue here is that $\OOO$ contains
       two different $W$-orbits of hyperplanes, so that \ref{thm:Gelfand-triple}
       does not apply.  It turns out that the irreducible
       quadratic factors $(t^2-8t+8)^2$ are the characteristic polynomial
       for $\nu_\OOO$ acting on two eigenspaces that both afford 
       the reflection representation $V$ for $W$.
     \end{Remark}

   \subsection{Relation to linear ordering polytopes} 
     \label{subsec:linear-ordering-polytopes}

     We pause here to discuss a topic from discrete geometry and polytopes
     that motivated some of these explorations.
     We refer to Ziegler's book \cite{Ziegler1995}
     for basic facts and unexplained terminology from  polytope theory.

     Given the hyperplane arrangement $\AAA$, with some possible subset
     of linear symmetries $W$, and ($W$-stable) subset $\OOO$ of $\LLL$,
     note that the map from \ref{defn:chamber-localization-maps}
     $$
       \begin{array}{rcl}
         \pi_\OOO: \ZZ \CCC & \longrightarrow &\oplus_{X \in \OOO} \ZZ\CCC(\AAA/X) \\
                          c & \longmapsto & \bigoplus_{X \in \OOO} c/X
       \end{array}
     $$
     is $W$-equivariant for the natural $W$-permutation actions in the source
     and target. Clearly, $\pi_\OOO$ extends to a mapping from 
     $\RR \CCC$ to $\oplus_{X \in \OOO} \RR\CCC(\AAA/X)$, which allows a 
     definition of a new class of polytopes. Recall for the definition that the
     image of a convex polytope under a linear map is again a convex polytope. 

     \begin{Definition}
       Let $\AAA$ be an arrangement of hyperplanes and $\CCC$ its set of
       chambers.  
       Denote by $\Delta_{|\CCC|-1}$ the standard $(|\CCC|-1)$-dimensional
       simplex $\Delta_{|\CCC|-1}$ which is the convex hull of the 
       standard basis vectors  within $\RR\CCC$. The convex polytope $\Lin_\OOO$ 
       is defined to be 
       $$
         \Lin_\OOO = \pi_\OOO(\Delta_{|\CCC|-1}),
       $$
       the image of the polytope $\Delta_{|\CCC|-1}$ under the linear map $\pi_\OOO$.
       \index{standard simplex}%
       \index{simplex!standard}%
       \index{linear ordering polytope}%
       \index{polytope!linear ordering}%
       \nomenclature{$\Delta_n$}{$n$-dimensional standard simplex}%
       \nomenclature{$\Lin_\OOO$}{analog of the linear ordering polytope for $\OOO \subseteq \LLL$}%
     \end{Definition}

     Since the map $\pi_\OOO$ has all entries in $\{0,1\}$ when expressed
     with respect to the standard basis, $\Lin_\OOO$ is a {\deffont $0/1$-polytope},
     \index{$0/1$-polytope}%
     \index{polytope!$0/1$}%
     and its vertex set will simply be the distinct images (after eliminating
     duplicates) $\pi_\OOO(c)$ of the chambers $c$ in $\CCC$.  Letting
     $\AAA(\OOO)$ denote the subset of hyperplanes $H$ in $\AAA$ that contain
     at least one subspace $X$ in $\OOO$, it is easy to see that two chambers
     in $\CCC$ have distinct images under $\pi_\OOO$ if and only if they
     lie in the same chamber of the arrangement $\AAA(\OOO)$.  Thus $\Lin_\OOO$ has vertex set
     in bijection with the chambers $\CCC(\AAA(\OOO))$.

     \begin{Proposition}
       The polytope $\Lin_\OOO$ has dimension $r-1$ where
       $$
         r:=\rank \pi_\OOO = \rank \nu_\OOO = \rank \mu_\OOO.
       $$

       In particular, when $\AAA$ is a reflection arrangement and 
       $\OOO$ is a $W$-stable subset of hyperplanes $H$,
       the dimension of $\Lin_\OOO$ is the cardinality $|\OOO|$.
     \end{Proposition}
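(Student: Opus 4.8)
The plan is to prove the dimension formula $\dim\Lin_\OOO=r-1$ in full generality first, and only afterwards to pin down the number $r$ in the reflection-arrangement case. For the first part I would start from the observation that the affine hull of $\Delta_{|\CCC|-1}$ is the affine hyperplane $c_0+L\subseteq\RR\CCC$, where $c_0$ is any fixed chamber and $L:=\{\sum_{c\in\CCC}x_c\,c:\sum_c x_c=0\}$ has dimension $|\CCC|-1$. Since the image of an affine subspace under a linear map is an affine subspace whose direction space is the image of the direction space, $\Lin_\OOO=\pi_\OOO(\Delta_{|\CCC|-1})$ has affine hull $\pi_\OOO(c_0)+\pi_\OOO(L)$, so $\dim\Lin_\OOO=\dim\pi_\OOO(L)$. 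The key point is then that $\ker\pi_\OOO\subseteq L$ (assuming $\OOO\neq\emptyset$, the other case being trivial): if $v=\sum_c v_c\,c$ lies in $\ker\pi_\OOO$, then for each $X\in\OOO$ and each chamber $d$ of $\AAA/X$ one has $\sum_{c:\,c/X=d}v_c=0$, and summing these equations over all $d\in\CCC(\AAA/X)$ gives $\sum_c v_c=0$. Hence $\ker\pi_\OOO\cap L=\ker\pi_\OOO$, and rank--nullity applied to $\pi_\OOO|_L$ gives $\dim\pi_\OOO(L)=(|\CCC|-1)-\dim\ker\pi_\OOO=(|\CCC|-1)-(|\CCC|-r)=r-1$. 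The equalities $\rank\pi_\OOO=\rank\nu_\OOO=\rank\mu_\OOO$ implicit in the definition of $r$ follow from the factorizations $\nu_\OOO=\pi_\OOO^T\pi_\OOO$ and $\mu_\OOO=\pi_\OOO\pi_\OOO^T$ together with the standard real fact $\ker(A^TA)=\ker A$ already noted in \ref{prop:rectangular-square-root}.

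For the ``in particular'' I would compute $r$ directly, using the explicit description of $\pi_\OOO$ recalled in the second proof of \ref{thm:rank-one}: under the identification $\bigoplus_{H\in\OOO}\ZZ\CCC(\AAA/H)=\ZZ^{\Phi_\OOO}$ (so $|\Phi_\OOO|=2|\OOO|$), the coefficient of a chamber $c$ in $\pi_\OOO^T(\unitbase_\beta)$ is $1$ or $0$ according to which side of $H_\beta$ the chamber $c$ lies on; in particular the coefficient of $c$ in $\pi_\OOO^T(\unitbase_\beta+\unitbase_{-\beta})$ is identically $1$. I would then show $\ker\pi_\OOO^T=\{\sum_{H\in\OOO}b_H(\unitbase_{\alpha_H}+\unitbase_{-\alpha_H}):\sum_{H}b_H=0\}$, which is a subspace of dimension $|\OOO|-1$, whence $r=\rank\pi_\OOO^T=2|\OOO|-(|\OOO|-1)=|\OOO|+1$ and $\dim\Lin_\OOO=|\OOO|$ as claimed. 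The inclusion ``$\supseteq$'' and the dimension count are immediate from the previous sentence. For ``$\subseteq$'', given $v=\sum_{\beta\in\Phi_\OOO}b_\beta\,\unitbase_\beta\in\ker\pi_\OOO^T$ and any hyperplane $H_0\in\OOO$, I would pick a point of $H_0$ lying on no other hyperplane of $\AAA$ and perturb it to the two adjacent chambers $c,c'$ sharing the resulting facet in $H_0$; these chambers lie on the same side of every hyperplane $H_\beta$ with $\beta\neq\pm\alpha_{H_0}$ and on opposite sides of $H_0$, so subtracting the $c$- and $c'$-coordinates of the identity $\pi_\OOO^T(v)=0$ forces $b_{\alpha_{H_0}}=b_{-\alpha_{H_0}}$. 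As every hyperplane of a central arrangement supports such a facet, this holds for all $H_0\in\OOO$, so $v=\sum_H b_{\alpha_H}(\unitbase_{\alpha_H}+\unitbase_{-\alpha_H})$, and then the vanishing of the coefficient of any chamber in $\pi_\OOO^T(v)=0$ forces $\sum_H b_{\alpha_H}=0$.

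The step I expect to be the main obstacle is this last kernel computation, specifically ruling out that $\ker\pi_\OOO^T$ is any larger than the obvious codimension-one subspace of the ``symmetric'' part of $\RR^{\Phi_\OOO}$; the crucial input is the elementary fact that every hyperplane of a central arrangement genuinely supports a facet separating two adjacent chambers, which is what decouples the kernel conditions one hyperplane at a time. One could instead deduce $\rank\mu_\OOO=|\OOO|+1$ from \ref{prop:rank-one-plus-minus} by separately proving that $\mu_\OOO$ is nonsingular on $\RR^{\Phi_\OOO,-}$, but the direct argument above treats the two summands $\RR^{\Phi_\OOO,+}$ and $\RR^{\Phi_\OOO,-}$ uniformly and does not even require $\OOO$ to be a single $W$-orbit (or to be $W$-stable at all), so it is the route I would take.
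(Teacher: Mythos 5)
Your proof is correct, and the interesting part is that its second half takes a genuinely different route from the paper. For the dimension formula $\dim\Lin_\OOO=r-1$ your argument is essentially the paper's: the paper observes that $\pi_\OOO(v_\trivial)$ has nonzero coordinate sum while $\pi_\OOO(v_\trivial^\perp)$ lands in the coordinate-sum-zero hyperplane, whereas you phrase the same bookkeeping dually via $\ker\pi_\OOO\subseteq v_\trivial^\perp$ and rank--nullity; these are interchangeable (and both tacitly assume $\OOO\neq\emptyset$). For the computation $r=|\OOO|+1$, however, the paper invokes the equivariant \BHR theory (\ref{cor:general-equivariant-kernel} together with \ref{ex:rank-one-kernel-image}) to identify $(\ker\pi_\OOO)^\perp$ with the $W$-representation $\trivial_W\oplus\bigoplus_i\Ind_{\Centr_W(s_i)}^W\chi_i$ of dimension $1+|\OOO|$, while you compute $\ker\pi_\OOO^T$ directly: the facet argument (a generic point of $H_0$ off the other hyperplanes, giving two adjacent chambers separated only by $H_0$) forces $b_{\alpha_{H_0}}=b_{-\alpha_{H_0}}$, and then any single chamber coordinate forces $\sum_H b_{\alpha_H}=0$, so $\ker\pi_\OOO^T$ is exactly the $(|\OOO|-1)$-dimensional subspace of $\RR^{\Phi_\OOO,+}$ orthogonal to $\sum_H(\unitbase_{\alpha_H}+\unitbase_{-\alpha_H})$. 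This is sound, and it buys you more than the paper's proof in two respects: it is self-contained (no forward reference to the \BHR machinery of the next chapter) and it needs neither $W$-stability of $\OOO$ nor the reflection structure, so it shows $\rank\pi_\OOO=|\OOO|+1$ for any nonempty set of hyperplanes in any central arrangement; it also proves, in elementary fashion, that $\mu_\OOO$ is nonsingular on $\RR^{\Phi_\OOO,-}$, which sharpens \ref{prop:rank-one-plus-minus}. What the paper's route buys instead is the explicit $W$-module structure of $\im\pi_\OOO$, which is what is actually used elsewhere (e.g.\ in the first proof of \ref{thm:rank-one} and in \ref{prob:linear-ordering}), not just its dimension.
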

     \begin{proof}
       Consider the vector $v_\trivial:=\sum_{c \in \CCC} c$ inside $\RR\CCC$
       that has all coordinates equal to $1$, and note that its 
       image $\pi_\OOO(v_\trivial)$ within 
       $\oplus_{X \in \OOO} \RR\CCC(\AAA/X)$ is nonzero.
       On the other hand, the perpendicular space $v_\trivial^\perp$, which is
       spanned by the elements $c-c'$ for $c, c' \in \CCC$, is sent by
       $\pi_\OOO$ into the codimension one subspace of $\oplus_{X \in \OOO} \RR\CCC(\AAA/X)$
       where the sum of the coordinates is zero. This is easily checked on the 
       above spanning set for $v_\trivial^\perp$.

       This shows that $\pi_\OOO$ restricts to a linear map out of $v_\trivial^\perp$
       that has rank $r-1$, where $r$ is the rank of $\pi_\OOO$.  Since the simplex
       $\Delta_{|\CCC|-1}$ contains an open neighborhood within the affine translate of 
       $v_\trivial^\perp$
       where the sum of coordinates is $1$, the image of the simplex
       under $\pi_\OOO$ will also have dimension
       $r-1$.

       When $\AAA$ is a reflection arrangement and $\OOO$ is a $W$-stable subset of 
       hyperplanes $H$, the \BHR theory (see \ref{cor:general-equivariant-kernel} and
       \ref{ex:rank-one-kernel-image}) shows that the space perpendicular to the kernel of 
       $\pi_\OOO$ carries the $W$-representation 
       $$
         \trivial_W \oplus \left( \bigoplus_{i=1}^t \Ind_{\Centr_W(s_i)}^W \chi_i \right).
       $$
       Since the dimension of the representation $ \Ind_{\Centr_W(s_i)}^W \chi_i$
       is $[W : \Centr_W(s_i)]=|\OOO_i|$, this shows that the rank of $\pi_\OOO$ is 
       $1+\sum_{i=1}^t |\OOO_i|=1+|\OOO|$.
     \end{proof}

     \begin{Example} 
       Let $W=\symm_n$ and $\AAA$ its reflection arrangement.

       \begin{figure}[ht]
         \setlength{\unitlength}{4100sp}%
         \begin{picture}(1000,0)%
           \includegraphics{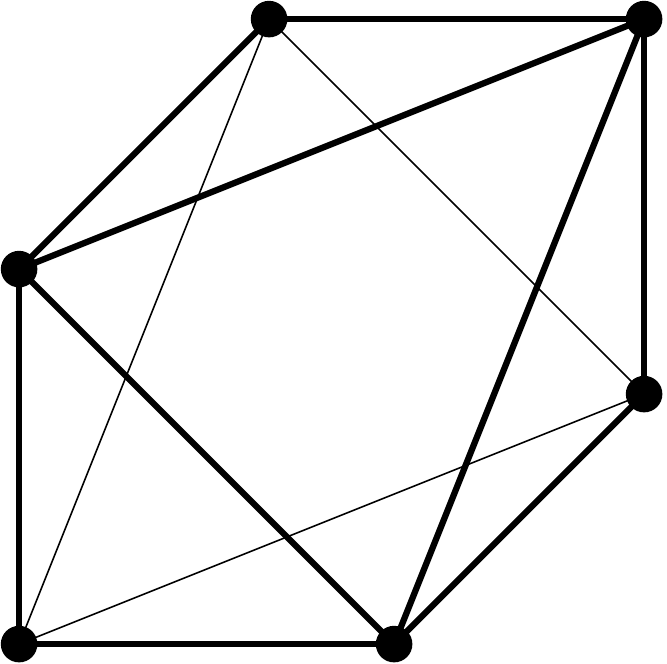}%
         \end{picture}%
         \begin{picture}(2000,3000)(5000,-2000)
           \put(7100, 0900){\makebox(0,0)[lb]{\color[rgb]{0,0,0}$123 \leftrightarrow (1,1,1,0,0,0)$}}
           \put(7100,-0829){\makebox(0,0)[lb]{\color[rgb]{0,0,0}$132 \leftrightarrow (1,1,0,0,0,1)$}}
           \put(3400, 0900){\makebox(0,0)[lb]{\color[rgb]{0,0,0}$(0,1,1,1,0,0) \leftrightarrow 213$}}
           \put(6000,-2050){\makebox(0,0)[lb]{\color[rgb]{0,0,0}$231 \leftrightarrow (1,0,0,0,1,1)$}}
           \put(2200,-0250){\makebox(0,0)[lb]{\color[rgb]{0,0,0}$(0,0,1,1,1,0) \leftrightarrow 312$}}
           \put(2200,-2050){\makebox(0,0)[lb]{\color[rgb]{0,0,0}$(0,0,0,1,1,1) \leftrightarrow 321$}}
         \end{picture}%
         \caption{Linear ordering polytope for $\symm_3$}
           \label{fig:linearorderingpolytope}
       \end{figure}
       Consider the case when $\OOO$ is the set of all hyperplanes 
       $\AAA$.  The polytope $\Lin_\OOO$ lives in a space
       isomorphic to $\RR^{n(n-1)}$ whose coordinates are indexed by
       ordered pairs $(i,j)$ with $1 \leq i \neq j \leq n$.  The vertices of $\Lin_\OOO$ 
       are labelled by the $n!$ elements of $\symm_n$ or, equivalently, the 
       different linear orders $\preceq$ on $[n]$. If we consider the vertex
       labelled by $w \in \symm_n$, then its coordinate indexed by $(i,j)$ is $1$ if 
       $w(i) < w(j)$ and $0$ otherwise. If we choose the labeling by
       linear orders, then the vertex labelled by $\preceq$ has a $1$ in 
       coordinate $(i,j)$ whenever $i \preceq j$, and $0$ otherwise.
       \ref{fig:linearorderingpolytope} shows the linear ordering polytope for $S_n$ with
       coordinates indexed by $(1,2)$, $(1,3)$, $(2,3)$, $(2,1)$, $(3,1)$, $(3,2)$.        

       Note that $\Lin_\OOO$ lies in an affine
       subspace where the sum of the $(i,j)$ and $(j,i)$ 
       coordinates is $1$.  Therefore $\Lin_\OOO$ is affinely 
       isomorphic to its projection onto the space 
       $\RR^{\binom{n}{2}}$ via the map $p$ preserving the coordinates $(i,j)$ with $i < j$, and forgetting the rest of the coordinates.

       This projection of $\Lin_\OOO$ 
       onto  $\RR^{\binom{n}{2}}$ is called the {\deffont linear ordering polytope},
       \index{linear ordering polytope}%
       and has a rich history, having appeared in several 
       guises (see \cite{Fishburn1992}), with great importance in 
       combinatorial optimization;  see e.g. \cite{GroetschelJuengerReinelt1985},
       \cite{Fiorini2006}.  Its possible first appearance was in
       mathematical psychology, where the question---phrased in our terms---was the 
       following. Consider $\Delta_{n!-1}$ as the set of all probability distributions on
       $\symm_n$ or equivalently on the set of linear orders on $[n]$. 

       \begin{Question}
         \label{qu:HuellermeierQuestion}
         Describe the set of vectors $(u_{ij})_{1 \leq i < j \leq n}$ in
         $\RR^{n \choose 2}$ for which 
         $$u_{ij} = \sum_{{{\pi \in \symm_n} \atop {\pi(i) < \pi(j)}}} \Prob (\pi)$$
         as $\Prob$ ranges over all probability distributions $\Prob \in \Delta_{n!-1}$.
       \end{Question}
       Note that in the terminology used above the set described in 
       \ref{qu:HuellermeierQuestion} is given as $p \circ \pi_\OOO (\Delta_{n!-1})$ and
       hence is the linear ordering polytope. 
       The description asked for in mathematical psychology is the same crucial question
       asked in optimization:  find a list of facet inequalities. Since it is
       known (see \cite{GroetschelJuengerReinelt1985}) that optimization of a general linear cost function 
       over the linear ordering polytope is
       NP-hard, providing a polynomial size description of its facets 
       would prove P=NP. 
       However, this suggests the following problem.

       \begin{Problem}
         \label{prob:linear-ordering}
         Let $W=\symm_n$ and $\OOO = \AAA$ the set of all
         reflecting hyperplanes so that $\Phi_\OOO=\Phi$ is
         \index{reflecting!hyperplane}%
         the set of all roots.
         Can one make use of the explicit
         $\RR W$-module orthogonal decomposition of 
         $\RR^{\Phi} = V \oplus \wedge^2 V$,
         coming from \ref{at-most-two-constituents-corollary} worked out in this special case in 
         \ref{ex:rank-one-constituents},
         as a good coordinate system in which to study the 
         polytope $\Lin_\OOO$, which is isomorphic to the linear ordering polytope?
       \end{Problem}
     \end{Example}

     \begin{Example} 
       Let $W$ be the hyperoctahedral group of all signed permutations, that is,
       the Weyl group of type $B_n$, and let $\OOO$ be the set of all reflecting
       hyperplanes. Then $\OOO$ is a union of two $W$-orbits, namely the coordinate hyperplanes
       $x_i=0$, and the hyperplanes of the form $x_i \pm x_j=0$ for $1 \leq i < j \leq n$.
       Then the polytope $\Lin_\OOO$ is affinely isomorphic to one considered by
       Fiorini and Fishburn \cite{FioriniFishburn2003}, having the linear ordering polytope 
       as one of its faces.
     \end{Example}

\section{Equivariant theory of \BHR random walks}
   \label{sec:BHR}

  It will turn out to be useful to exploit a relation
  between the operators $\nu_\OOO$ which we have been
  considering and certain operators studied by
  Bidigare, Hanlon and Rockmore \cite{BidigareHanlonRockmore1999}.  
  We begin by defining these operators, and then exhibit the special case which
  is relevant for us when considering $\nu_\OOO$ for a
  reflection arrangement $\AAA$ and $\OOO$ a single $W$-orbit.

  After this we review the (non-equivariant) aspects of the theory,
  followed by the equivariant versions that we will need,
  which are in some cases stronger than what we find in the literature,
  that is, 
  \cite{BidigareHanlonRockmore1999, BrownDiaconis1998, Brown2000, Saliola2008}.
  However, we generally borrow some of the proofs from the literature 
  directly, or in other cases, simply beef-up the techniques.
  One new feature here is the consideration of the
  extra $\ZZ_2$-action that comes from the antipodal action
  on chambers and faces of a central arrangement.

  \subsection{The face semigroup}

    Given a real, central arrangement of hyperplanes $\AAA$ in a $d$-dimensional
    real vector space $V$, we have already discussed the dissection of the
    complement $V \setminus \bigcup_{H \in \AAA} H$ into the chambers $\CCC$.
    More generally, $\AAA$ dissects $V$ into relatively open polyhedral cones
    which we will call the {\deffont faces} $\FFF$, that are the equivalence classes
    \index{faces of an arrangement}%
    \nomenclature[ar]{$\FFF$}{faces of an arrangement $\AAA$}%
    for the relation $\equiv$ having $v \equiv v'$ whenever $v$ and $v'$ lie within exactly the
    same subset of the closed half-spaces defined by all the hyperplanes $H$ in $\AAA$.

    There is a natural semigroup structure on $\FFF$ defined as follows.
    Given two faces $x, y$, define a new face $x \circ y$ 
    \nomenclature[ar]{$x \circ y$}{$x$ pulled by $y$}%
    ({\deffont $x$ pulled by $y$}) to be the unique face that one enters first
    \index{pulled face}%
    (possibly $x$ itself) when following a straight line from a point in the
    relative interior of the cone $x$ toward a point in the relative interior 
    of the cone $y$.  More formally, the face $x \circ y$ is uniquely defined
    by the properties that for each hyperplane $H$ of $\AAA$ 
    the points of $x \circ y$ lie 
    \begin{itemize}
      \item on the same side of $H$ as $x$ if $x \not\subset H$,
      \item on the same side of $H$ as $y$ if $x \subset H$, but
        $y \not\subseteq H$, and
      \item inside $H$ if $x,y \subset H$.
    \end{itemize}

    It is not hard to see that if $c$ is a chamber then $x \circ c$ is
    always a chamber, and hence $\KK \CCC$ becomes a left-ideal within
    \nomenclature[ar]{$\KK \FFF$}{left-ideal generated by $\CCC$ within $\KK \FFF$}%
    the semigroup algebra $\KK \FFF$ of the semigroup $\FFF$ with coefficients in $\KK$.
    \nomenclature[ar]{$\KK \FFF$}{semigroupalgebra of $\FFF$ with coefficients in $\KK$}%

    \begin{Definition}
      \label{defn:bhr-operators}
      We will define a {\deffont \BHR random walk} or {\deffont \BHR operator}
      \index{random!walk}%
      \index{BHR operator}%
      to be any of the family of $\KK$-linear operators on the left-ideal
      $\KK \CCC$ within $\KK \FFF$ that comes from multiplication 
      on the left by an element 
      \begin{equation}
        \label{eqn:bhr-operator}
        \sum_{x \in \FFF} p_x x
      \end{equation}
      for some $p_x$ in $\KK$.
    \end{Definition}

    Note that we are not assuming that the $p_x$ are real,
    nor even nonnegative, nor that they sum to $1$ as in the case
    of a probability distribution on the faces $\FFF$;
    for the moment, they lie in an arbitrary field $\KK$.

  \subsection{The case relevant for $\nu_\OOO$}

    When $W$ is a finite subgroup of $\GL(V)$ acting as symmetries of $\AAA$,
    it permutes the faces in $\FFF$, and it is easily seen that the 
    $W$-action respects the semigroup structure,
    that is, $w(x) \circ w(y) = w(x \circ y)$.  Thus $\KK\FFF$ becomes
    a $W \times \KK\FFF$-bimodule, as does the left-ideal $\KK\CCC$ within
    $\KK\FFF$.


    For the remainder of this subsection, assume that $\AAA$ is the
    reflection arrangement for a finite real reflection group $W$ acting on
    $V$.  As discussed in   
    \ref{subsec:reflection-group-setting}, having picked a fundamental base 
    chamber $c_\grid$, the simply transitive $W$-action on the chambers $\CCC$ leads 
    to a $W$-equivariant identification $\KK W \rightarrow \KK \CCC$ that sends 
    $w \mapsto c_w:=w(c_\grid)$.  

    Let $S$ denote the set of Coxeter generators for $W$ that
    \index{Coxeter!generators}%
    come from the reflections through the walls of $c_\grid$.
    It is well-known that every face $x$ in $\FFF$ lies
    in the $W$-orbit of a unique subface $x(J)$ of $c_\grid$, stabilized
    by the parabolic subgroup $W_J := \langle J \rangle$ for some unique
    subset $J \subseteq S$.

    Consequently, the $W$-invariant subalgebra $(\KK\FFF)^W$ of $\KK\FFF$
    will have $\KK$-basis given by the $2^{|S|}$ elements
    $$
      \left\{ \sum_{y \in x(J)^W} y \right\}_{J \subseteq S}
    $$
    where as usual $x(J)^W$ denotes the $W$-orbit of the face $x(J)$.

    The following observation, due originally to Bidigare \cite[\S3.8.3]{Bidigare1997} (see also \cite[Theorem 8]{Brown2000}), is crucial.
    For the formulation, we use the notation ${}^JW$, $W^J$ and ${}^JR$, $R^J$ from \ref{prop:parabolic-factorization}, subsequent comments and 
    \ref{subsec:second-square-root}.

    \begin{Proposition}
      \label{prop:Bidigare}
     Under the $W$-equivariant isomorphism $\KK W \rightarrow \KK\CCC$,
     multiplication on the right of $\KK W$ by
     the element $R^J:=\sum_{ u \in W^J} u$ of $\KK W$ 
     corresponds to the action on $\KK \CCC$ coming from
     multiplication on the left by the element $\sum_{y \in x(J)^W} y$ of $(\KK \FFF)^W$.
    \end{Proposition}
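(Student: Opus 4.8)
The plan is to verify the correspondence by computing directly on a basis, using the factorization $W = W^J \cdot W_J$ (minimal length left coset representatives times the parabolic) together with the structure theory of faces relative to the base chamber. First I would fix the setup: for $J \subseteq S$, let $x(J)$ be the face of $c_{\grid}$ whose stabilizer is $W_J$, so that every face in the orbit $x(J)^W$ is $u(x(J))$ for some $u$, and $u(x(J)) = u'(x(J))$ iff $u^{-1}u' \in W_J$. Thus the orbit $x(J)^W$ is in bijection with $W^J$ (or with $W/W_J$), and we may write $\sum_{y \in x(J)^W} y = \sum_{u \in W^J} u(x(J))$.

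Next I would compute the left action of this element on a basis chamber $c_v = v(c_{\grid})$. Using the semigroup structure and $W$-equivariance of $\circ$, for each $u \in W^J$ we have $u(x(J)) \circ v(c_{\grid}) = u\bigl( x(J) \circ u^{-1}v(c_{\grid}) \bigr) = u\bigl( x(J) \circ c_{u^{-1}v} \bigr)$. The key lemma — essentially Bidigare's observation — is that for the face $x(J)$ of the base chamber, pulling any chamber $c_w$ by $x(J)$ produces the chamber $c_{z}$ where $z$ is the unique element of $W_J$ with $c_z/X = c_w/X$ for the flat $X$ fixed by $W_J$; equivalently, writing $w = z \cdot y$ with $z \in W_J$ and $y \in {}^J W$ (the parabolic factorization of \ref{prop:parabolic-factorization}), one has $x(J) \circ c_w = c_z$. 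I would prove this lemma by checking the three defining conditions of $x \circ y$ hyperplane-by-hyperplane: for $H \not\supseteq X$ the points of $x(J)\circ c_w$ lie on the side determined by $x(J)$ (i.e. by $c_{\grid}$), while for $H \supseteq X$ they lie on the side determined by $c_w$; this is exactly the statement that $x(J)\circ c_w$ and $c_{\grid}$ agree away from $\AAA/X$ and $x(J)\circ c_w$ and $c_w$ agree within $\AAA/X$, which characterizes $c_z$.

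Combining these: $\sum_{u \in W^J} u(x(J)) \circ c_v = \sum_{u \in W^J} u\bigl( x(J) \circ c_{u^{-1}v} \bigr)$; writing $u^{-1}v = z_u \cdot y_u$ with $z_u \in W_J$, $y_u \in {}^J W$, the lemma gives $x(J)\circ c_{u^{-1}v} = c_{z_u}$, so the sum becomes $\sum_{u \in W^J} c_{u z_u} = \sum_{u \in W^J} c_{v y_u^{-1}}$. As $u$ ranges over $W^J$ and we write $v = u z_u y_u$, the elements $u y_u^{\,} $ range (once each) over all of $W$ — more precisely $v \cdot (\text{right mult by } R^J)$ sends $v \mapsto \sum_{u\in W^J} v z_u^{-1} \cdot$ — so I would instead organize the bookkeeping as follows: on the group-algebra side, right multiplication by $R^J = \sum_{u \in W^J} u$ sends $v \mapsto \sum_{u \in W^J} v u$, and I claim the chamber $c_{vu}$ equals $u'(x(J)) \circ c_v$ for the appropriate $u' \in W^J$, matching term by term via the bijection $u \leftrightarrow u'$ coming from $W = W^J W_J$. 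The main obstacle is precisely this matching of indexing sets — making sure the parabolic factorization is applied on the correct side (left cosets versus right cosets, $W^J$ versus ${}^J W$) so that "left multiplication by the orbit sum of faces" lines up with "right multiplication by $R^J$" rather than with ${}^J R$. Once the lemma on $x(J) \circ c_w = c_z$ is established and the left/right coset conventions are pinned down against \ref{prop:parabolic-factorization}, the identity follows by linear extension over the basis of chambers.
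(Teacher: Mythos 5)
Your overall strategy is sound, and your key lemma is correct: for $w=z\cdot y$ with $z\in W_J$ and $y\in{}^JW$ one indeed has $x(J)\circ c_w=c_z$, since $x(J)$ is a face of $c_z$ (as $W_J$ fixes $x(J)$ pointwise), so $c_z$ agrees with $c_\grid$ on every hyperplane not containing $X$ and agrees with $c_w$ on every hyperplane containing $X$, which is exactly the hyperplane-by-hyperplane characterization of $x(J)\circ c_w$. This is the same geometric fact the paper exploits, except that the paper first uses the $W$-invariance of $\sum_{y\in x(J)^W}y$ to reduce the whole identity to the single chamber $c_\grid$, after which only the special case $z=\grid$ of your lemma is needed and the matching of terms is literally term-by-term: $u\cdot x(J)\circ c_\grid=c_u$ for $u\in W^J$, with no re-indexing. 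Your route computes directly on a general chamber $c_v$, which is fine but forces the bookkeeping you then leave hanging.

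That bookkeeping is the one genuine loose end. After your (correct) computation $\sum_{u\in W^J}u(x(J))\circ c_v=\sum_{u\in W^J}c_{vy_u^{-1}}$, the assertion that ``the elements $uy_u$ range once each over all of $W$'' is false -- there are only $[W:W_J]$ of them -- and the reorganized claim ($c_{vu}=u'(x(J))\circ c_v$ for an appropriate $u'$) is stated but not proved; you explicitly defer it as ``the main obstacle.'' It closes in two lines: for fixed $v$, the map $W^J\to W^J$ sending $u\mapsto y_u^{-1}$ is a bijection, because for a given $y\in{}^JW$ the condition $y_u=y$ means $u^{-1}vy^{-1}\in W_J$, i.e. $u$ is the unique element of $W^J$ representing the left coset $vy^{-1}W_J$ (uniqueness of the factorization in \ref{prop:parabolic-factorization}). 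With that bijection, $\sum_{u\in W^J}c_{vy_u^{-1}}=\sum_{u'\in W^J}c_{vu'}$, which is the image of $c_v$ under right multiplication by $R^J$, and the proposition follows by linearity. So your proof is completable as written, but as submitted it stops short of the step you yourself identify as the crux; alternatively, adopting the paper's reduction to $c_\grid$ removes the need for the re-indexing argument altogether.
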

    \begin{proof}
      We wish to show that for each $w$ in $W$,
      $$
        \left( \sum_{y \in x(J)^W} y \right) \circ c_w= \sum_{ u \in W^J} c_{w u}.
      $$
      Acting by $w^{-1}$ on the left, and using the 
      $W$-equivariance, this is equivalent to showing
      $$
        w^{-1}\left( \sum_{y \in x(J)^W} y \right) \circ c_\grid = \sum_{ u \in W^J} c_{u}.
      $$
      Since $\sum_{y \in x(J)^W} y$ lies in $\KK \FFF^W$, this means showing
      $$
        \sum_{y \in x(J)^W} y \circ c_\grid = \sum_{ u \in W^J} c_{u}.
      $$
      On the other hand, as $W_J$ is the $W$-stabilizer subgroup for
      the face $x(J)$, and $W^J$ are coset representatives for $W/W_J$, one has
      $x(J)^W = \{ u \cdot x(J) : u \in W^J \}$.  Thus it suffices to show that
      for $u \in W^J$ one has $u \cdot x(J) \circ c_\grid = c_u$.
      This follows because 
      $$
        \begin{aligned}
           u  \in W^J & \Leftrightarrow u^{-1} \in {}^JW\\
              & \Leftrightarrow c_{u^{-1}}/X = c_\grid/X\\
              & \Leftrightarrow x(J) \circ c_{u^{-1}} = c_\grid\\
              & \Leftrightarrow u \cdot x(J) \circ c_\grid = c_u\\
        \end{aligned}
      $$
      where $X$ is the subspace fixed pointwise by $W_J$ and
      where the last step comes from applying the left-action of $u$.
    \end{proof}

    This has the following consequence.  Denote by $b_J$ 
    the linear operator on $\KK \CCC$ given by multiplication
    on the left by $\sum_{y \in x(J)^W} y$.  Let $b_J^T$
    denote its adjoint operator with respect to
    the standard inner product on $\KK \CCC$ in which the elements
    of $\CCC$ form an orthonormal basis.

    \begin{Corollary}
      \label{cor:second-square-root-is-bhr}
      Let $W$ be a finite real reflection group and $\OOO \subseteq \LLL$ a single
      $W$-orbit of intersection subspaces.  Choose a representative
      subspace $X_0$ for the $W$-orbit $\OOO$ that contains a face
      $x(J)$ of the fundamental chamber $c_\grid$, for some $J \subseteq S$.

     Then the action of $\nu_\OOO$ multiplying $\KK W$ on the right
     corresponds under the $W$-equivariant isomorphism $\KK W \rightarrow \KK \CCC$
     to the operator $\frac{1}{n_{X_0}}b_J^T b_J$.
     In particular, $\nu_\OOO$ and $b_J$ share the same kernel.
   \end{Corollary}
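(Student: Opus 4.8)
The plan is to combine the two factorizations of $\nu_\OOO$ that are already available in the group algebra: the ``second square root'' of \ref{prop:second-square-root}, which writes $\nu_\OOO$ inside $\KK W$ as a product of two coset sums, together with Bidigare's identification (\ref{prop:Bidigare}) of right-multiplication by such a coset sum with a \BHR operator. First I would set up the dictionary. Since $X_0$ contains a face $x(J)$ of the fundamental chamber $c_\grid$, the discussion in \ref{subsec:reflection-group-setting} gives $\Centr_W(X_0)=W_J$, and hence ${}^{X_0}W={}^JW$, $W^{X_0}=W^J$, ${}^{X_0}R={}^JR$, $R^{X_0}=R^J$, and $n_{X_0}=[\Norma_W(X_0):W_J]$. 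With these identifications \ref{prop:second-square-root} reads
$$
  \nu_\OOO = \frac{1}{n_{X_0}}\, R^J \cdot {}^JR
$$
as an element of $\KK W$; so right-multiplication by $\nu_\OOO$ on $\KK W$ is right-multiplication by $R^J$ followed by right-multiplication by ${}^JR$, the latter acting second because right-multiplication by a product $ab$ equals (right-multiplication by $b$) composed after (right-multiplication by $a$).

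Next I would record two elementary facts about right-multiplication operators on $\KK W$, with $W$ taken as an orthonormal basis. First, for $a=\sum_g a_g\,g$ with (say, integer) coefficients, the adjoint of right-multiplication by $a$ is right-multiplication by $a^{\dagger}:=\sum_g a_g\,g^{-1}$; this is a one-line computation comparing $\langle xa,y\rangle$ with $\langle x,y a^{\dagger}\rangle$. Second, since $R^J=\sum_{u\in W^J}u=\sum_{w\in{}^JW}w^{-1}$, one gets $(R^J)^{\dagger}=\sum_{w\in{}^JW}w={}^JR$. Now \ref{prop:Bidigare} states that, under the $W$-equivariant isomorphism $\KK W\to\KK\CCC$ sending $w\mapsto c_w$, right-multiplication by $R^J$ is carried to the \BHR operator $b_J$ (left-multiplication on $\KK\CCC$ by $\sum_{y\in x(J)^W}y$). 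Because this isomorphism is an isometry for the standard inner products, applying it to the previous paragraph shows that right-multiplication by ${}^JR=(R^J)^{\dagger}$ is carried to $b_J^T$. Composing, right-multiplication by $R^J\cdot{}^JR$ corresponds to $b_J^T\circ b_J$, and dividing by $n_{X_0}$ identifies right-multiplication by $\nu_\OOO$ with $\frac{1}{n_{X_0}}b_J^T b_J$, which is the first assertion.

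For the kernel statement I would extend scalars to $\RR$, exactly as in \ref{prop:rectangular-square-root}, and invoke the standard fact $\ker(b_J^T b_J)=\ker b_J$: if $b_J^T b_J v=0$ then $0=\langle b_J^T b_J v,v\rangle=\langle b_J v,b_J v\rangle$, forcing $b_J v=0$, while the reverse inclusion is immediate. Hence $\ker\nu_\OOO=\ker\bigl(\tfrac{1}{n_{X_0}}b_J^T b_J\bigr)=\ker b_J$.

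The only point demanding real care — more bookkeeping than genuine obstacle — is making the three order-reversals consistent: the anti-automorphism $g\mapsto g^{-1}$, the swap between the right-coset sum $R^J$ and the left-coset sum ${}^JR$, and the fact that $ab\mapsto(\text{right mult.\ by }b)\circ(\text{right mult.\ by }a)$. One must check that these combine so that it is precisely the product $R^J\cdot{}^JR$ (and not ${}^JR\cdot R^J$) that becomes $b_J^T b_J$ — and this is exactly the order in which \ref{prop:second-square-root} delivers the factorization, so no discrepancy arises.
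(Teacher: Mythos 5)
Your proposal is correct and follows essentially the same route as the paper: it combines the factorization $\nu_\OOO=\tfrac{1}{n_{X_0}}R^{X_0}\cdot{}^{X_0}R$ from \ref{prop:second-square-root} with \ref{prop:Bidigare}, using that right-multiplications by $R^J$ and ${}^JR$ are mutually adjoint since right-multiplication by $w$ and by $w^{-1}$ are adjoint. Your extra details (the explicit computation $(R^J)^{\dagger}={}^JR$, the order-of-composition bookkeeping, and the positive-semidefiniteness argument for $\ker(b_J^Tb_J)=\ker b_J$) simply spell out steps the paper leaves implicit.
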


   \begin{proof}
     \ref{prop:second-square-root}
      asserts that as an element of $\KK W$ one has
      $\nu_\OOO = \frac{1}{n_{X_0}} R^{X_0} \cdot {}^{X_0}R$. Here
      we choose minimal length coset representatives $W^{X_0}$ and ${}^{X_0}W$ 
      in the definition of $R^{X_0}$ (see \ref{subsec:second-square-root}). 
      Note that multiplication on the right by $R^{X_0}$ and ${}^{X_0}R$
      are adjoint with respect to the standard inner product on 
      $\KK W$ (since multiplying on the right by $w$
      and by $w^{-1}$ are adjoint).  Thus one must show that
      multiplication on the right by $R^{X_0}=R^J$ in $kW$ corresponds
      to multiplication on the left by $b_J$.  
      This is exactly \ref{prop:Bidigare}. 
    \end{proof}

  \subsection{Some nonequivariant \BHR theory}

    It is an interesting and nontrivial fact that,
    when working over $\KK =\RR$ and assuming that the
    coefficients $p_x$ are nonnegative, the 
    \BHR operators as in \ref{eqn:bhr-operator} act
    semisimply.
    We recapitulate in this section a beautiful argument for this due to
    Brown \cite{Brown2000}.  

    Brown begins with an interesting way to capture
    the {\deffont minimal polynomial} of an element $a$ in a finite-dimensional
    \index{minimal polynomial}%
    {\deffont $\KK$-algebra} $A$, via generating functions.  Recall that this minimal polynomial
    \index{algebra!$\KK$-}%
    is the unique monic polynomial $m_a(T)$ in the univariate polynomial
    ring $\KK [T]$ that generates the principal ideal which is the kernel of the
    map defined by 
    $$
      \begin{aligned}
        \KK[T] &\longrightarrow A \\
        T &\longmapsto a.
      \end{aligned}
    $$
    For the sake of factoring
    $m_a(T)$, extend coefficients to the algebraic closure $\overline{\KK}$ of $\KK$,
    that is, replace $\KK$ with $\overline{\KK}$, and replace $A$ with $\overline{\KK} \otimes_\KK A$.
    Then one can uniquely express
    $$
      m_a(T) = \prod_{i} (T-\lambda_i)^{m_i}
    $$
    for some distinct $\lambda_i$ in $\KK$ and positive integers $m_i$.

    It turns out that the roots $\lambda_i$ and multiplicities $m_i$ can be read off from a certain
    generating function
    $$
      f_a(z) := \sum_{\ell \geq 0} a^\ell z^\ell = \frac{1}{1-a \cdot z}.
    $$
    We claim that $f_a(z)$ makes sense an element of $A \otimes_\KK \KK\llbracket z \rrbracket$:
    if we choose for $A$ some $\KK$-basis $\{a_j\}_{j=1,2,\ldots,t}$, then expressing each
    power $a^{\ell}$ uniquely as $a^{\ell} = \sum_{j=1}^t c_{\ell,j} a_j$
    one has
    $$
      f_a(z) = \sum_{j=1}^t a_j \otimes f_{a,j}(z)
    $$
    where $f_{a,j}(z)=\sum_{\ell} c_{\ell,j} z^\ell$ lies in $\KK \llbracket z\rrbracket$.

    \begin{Proposition}
      \label{Browns-criterion}
      In the above setting, each coefficient 
      $f_{a,i}(z)$ in $\KK\llbracket z\rrbracket$ is a rational function in $z$,
      that is, it lies in $\KK(z)$.  Furthermore,
      one can recover the roots $\lambda_i$ and multiplicities $m_i$ 
      in the minimal polynomial of $a$ from the location and orders
      of poles in the partial fraction expansion of the related function
      $$
        \begin{aligned}
          g_a(z)& := \frac{1}{z} f_a\left(\frac{1}{z}\right)= \frac{1}{z-a} \\
                &=\sum_{i} \left( \frac{b_i^0}{z-\lambda_i}
                    + \frac{b_i^1}{(z-\lambda_i)^2}
                    + \cdots
                    + \frac{b_i^{m_i-1}}{(z-\lambda_i)^{m_i}}
                      \right)
        \end{aligned}
      $$
      where the $b_i$ are some elements of $A$ satisfying
      $b_i^{m_i} = 0$ but $b_i^{m_i-1} \neq 0$.
    \end{Proposition}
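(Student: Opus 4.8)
The plan is to read everything off the minimal polynomial $m_a(T)\in\KK[T]$, which exists because $A$ is finite-dimensional, and to manipulate formal (Laurent) power series with coefficients in $A$. First I would record the only consequence of $m_a(a)=0$ that is needed. Write $m_a(T)=\sum_{k=0}^{d}\alpha_k T^{k}$ with $d=\deg m_a$ and $\alpha_d=1$, and introduce the \emph{reciprocal polynomial} $\tilde m_a(z):=z^{d}m_a(1/z)=\sum_{k=0}^{d}\alpha_k z^{\,d-k}\in\KK[z]$. A direct computation in $A\otimes_\KK\KK\llbracket z\rrbracket$ gives
\[
\tilde m_a(z)\,f_a(z)=\sum_{N\ge 0}z^{N}\!\!\sum_{j=0}^{\min(d,N)}\!\!\alpha_{d-j}\,a^{\,N-j},
\]
and for every $N\ge d$ the inner sum equals $a^{\,N-d}\,m_a(a)=0$. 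Hence $\tilde m_a(z)\,f_a(z)=P(z)$ is a polynomial in $z$ of degree $<d$ with coefficients in $A$. Since $\tilde m_a(0)=\alpha_d=1\neq 0$, the series $\tilde m_a(z)$ is a unit of $\KK\llbracket z\rrbracket$, so $f_a(z)=P(z)/\tilde m_a(z)$; expanding $P(z)=\sum_j P_j(z)\,a_j$ in the chosen $\KK$-basis $\{a_j\}$ of $A$ shows $f_{a,j}(z)=P_j(z)/\tilde m_a(z)\in\KK(z)$, which is the first assertion.

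Next I would pass to $g_a(z)$. Using $\tilde m_a(1/z)=z^{-d}m_a(z)$ one gets $f_a(1/z)=z^{d}P(1/z)/m_a(z)$, and since $\deg P<d$ the numerator $z^{d}P(1/z)$ is divisible by $z$, say $z^{d}P(1/z)=z\,R(z)$ with $\deg R<d$; therefore $g_a(z)=\tfrac1z f_a(1/z)=R(z)/m_a(z)$ is a \emph{proper} rational function with coefficients in $A$. (Equivalently, one checks termwise that $g_a(z)=\sum_{\ell\ge0}a^{\ell}z^{-\ell-1}$ satisfies $(z-a)\,g_a(z)=1$ in $A\otimes_\KK\KK((1/z))$, i.e.\ $g_a(z)=(z-a)^{-1}$.)

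It then remains to extract the poles of $R(z)/m_a(z)$, working over the algebraically closed field, where $m_a(T)=\prod_i (T-\lambda_i)^{m_i}$ with pairwise coprime factors. Here I would pass to the commutative subalgebra $\KK[a]\cong\KK[T]/(m_a(T))$ and apply the Chinese Remainder Theorem to this factorization, producing orthogonal idempotents $e_i\in\KK[a]$ with $\sum_i e_i=1$; setting $n_i:=(a-\lambda_i)e_i\in e_iA$ one has $a e_i=\lambda_i e_i+n_i$ with $n_i$ nilpotent satisfying $n_i^{\,m_i}=0$ but $n_i^{\,m_i-1}\neq 0$. A formal geometric-series expansion then yields
\[
g_a(z)=\frac{1}{z-a}=\sum_i e_i\cdot\frac{1}{(z-\lambda_i)-n_i}=\sum_i\ \sum_{k=0}^{m_i-1}\frac{e_i n_i^{\,k}}{(z-\lambda_i)^{k+1}},
\]
using $e_i n_i^{\,k}=n_i^{\,k}$ for $k\ge 1$ and $e_i n_i^{0}=e_i$. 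This is exactly the claimed partial fraction decomposition, with $b_i:=n_i$ and $b_i^{0}:=e_i$; since $n_i^{\,m_i-1}\neq0$ the pole of $g_a$ at $\lambda_i$ has order \emph{exactly} $m_i$, and no other point is a pole, so the pole locations of $g_a$ recover the roots $\lambda_i$ and the pole orders recover the multiplicities $m_i$. I expect the main obstacle to be precisely the bookkeeping around \emph{exactness} of the nilpotency index — establishing $n_i^{\,m_i-1}\neq 0$, which is the one place minimality of $m_a$ (as opposed to merely its being an annihilating polynomial) is essential — together with checking that the several series manipulations, in particular $(z-a)g_a(z)=1$ and the geometric expansion $\bigl((z-\lambda_i)-n_i\bigr)^{-1}=\sum_{k\ge0}n_i^{\,k}(z-\lambda_i)^{-k-1}$, are legitimate identities of formal Laurent series over $A$ rather than analytic statements.
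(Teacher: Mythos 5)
Your proposal is correct and follows essentially the same route as the paper: the Chinese Remainder Theorem applied to the (commutative) subalgebra generated by $a$, the decomposition of $a$ into scalar plus nilpotent part of exact nilpotency index $m_i$, and a finite geometric-series expansion of $(z-a)^{-1}$. The differences are presentational: the paper reduces to a single factor $(T-\lambda)^m$ and computes there, whereas you carry the idempotents $e_i$ along globally (correctly noting that the coefficient of $\frac{1}{z-\lambda_i}$ is $e_i$ rather than the literal zeroth power of $b_i$), and you additionally make explicit the rationality of the coefficients $f_{a,j}(z)$ via the reciprocal polynomial $\tilde m_a(z)$ and the formal-Laurent-series identity $(z-a)g_a(z)=1$, points the paper leaves implicit.
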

    \begin{proof}
      The Chinese Remainder Theorem
      says that the subalgebra $R$ of $A$ generated by $a$ is isomorphic 
      as an algebra to the product $\prod_{i} \KK[T]/(T-\lambda_i)^{m_i}$.
      From this one can immediately reduce to the case where the minimal polynomial
      has only a single factor $(T-\lambda)^m$.
      Then one can express $a=\lambda +b$ where $b^m=0$ but $b^{m-1} \neq 0$, and
      compute directly that
        \begin{align*}
          g_\phi(z) &= \frac{1}{z-a} = \frac{1}{(z-\lambda)- b} 
              = \frac{1}{(z-\lambda)} \cdot \frac{1}{1- \frac{b}{z-\lambda}} \\
          &= \frac{1}{(z-\lambda)} \sum_{\ell \geq 0} \frac{b^\ell}{(z-\lambda)^\ell} \\
          &= \frac{b^0}{z-\lambda}
               + \frac{b^1}{(z-\lambda)^2}
               + \cdots +
               + \frac{b^{m-1}}{(z-\lambda)^{m}}.
          \qedhere
        \end{align*}
    \end{proof}

    Brown then applies this criterion to elements $a$ of the face semigroup algebra $A=\RR \FFF$, and
    more generally for semigroup algebras $\KK \FFF$ of semigroups $\FFF$ that 
    satisfy the {\deffont left-regular band} axioms:
    \index{left-regular band}%
    $$
      x^2=x  \qquad \text{ and } \qquad xyx=xy.
    $$
    He shows that for any left-regular band $\FFF$, one recovers a
    semilattice\footnote{Brown orders the semilattice $\LLL$ using the opposite
    order that we have chosen here. In particular, he orders intersection
    subspaces of a hyperplane arrangement by inclusion rather than
    reverse-inclusion.} $\LLL$,
    playing the role of the intersection lattice for the arrangement
    when $\FFF$ is the face semigroup, in the following fashion.
    Consider the preposet (reflexive, transitive, but not
    antisymmetric) structure on $\FFF$ defined by $y \preceq x$ if 
    $xy=x$, and then let $\LLL$ be the associated poset structure on the equivalence
    classes.  One obtains in this way a semilattice $\LLL$, endowed naturally 
    with a surjection $\supp: \FFF \twoheadrightarrow \LLL$, and having the
    \nomenclature[ar]{$\supp(x)$}{support of $x \in \FFF$}%
    following properties:
    $$
      \begin{aligned}
        \supp(xy) &=\supp(x) \vee \supp(y), \\
        xy &=x \text{ if }\supp(y) \leq \supp(x).
      \end{aligned}
    $$
    If the semigroup $\FFF$ has an identity element $\semigrid$, {\emphfont which we assume
    from now on}, then $\supp(\semigrid)=\hat{0}$ is a minimum element of $\LLL$.

    This leads to the following considerations for factorizations
    of elements of $\FFF$, which will help expand the generating function $f_a(z)$.

    \begin{Definition}
      Given a word $\xx :=(x_1,\ldots,x_\ell)$ in $\FFF^\ell$,
      let $\ell(\xx):=\ell$ denote its length, and let $\prod \xx :=x_1 \cdots x_\ell$
      \nomenclature[ar]{$\ell(\xx)$}{length of $\xx \in \FFF^\ell$}%
      denote its product as an element of the semigroup $\FFF$.
      Define for $i=1,2,\ldots,\ell$ the 
      elements $X_i(\xx):=\supp(x_1 x_2 \cdots x_i)$ in $\LLL$, 
      with convention $X_0(\xx)=\hat{0}$, so that
      $$
        \hat{0}=X_0(\xx) \leq X_1(\xx) \leq \cdots\leq  X_\ell(\xx)
      $$
      is a multichain in the semilattice $\LLL$.  Say that $\xx$ is {\deffont reduced} if
      \index{reduced word}%
      \index{word!reduced}%
      this multichain is actually a chain, that is, the $\{ X_i(\xx) \}_{i=0}^{\ell}$ 
      are distinct.

      Given the word $\xx$, uniquely define a reduced subword $\redword(\xx)$ of $\xx$ by repeatedly
      \nomenclature[ar]{$\redword(\xx)$}{reduced subword of $\xx$}%
      removing any letter $x_i$ for which $\supp(x_i) \leq \supp(x_1 x_2 \cdots x_{i-1})$.
      Note that $\prod \redword(\xx) = \prod \xx$ in $\FFF$.
    \end{Definition}

    From this we can now calculate the generating function $f_a(z)$ that
    determines the minimal polynomial of any element $a=\sum_{x \in \FFF} p_x x$ in
    the semigroup algebra $\KK\FFF$.
    Having fixed $a$, define
    $$
      \lambda_X:=\sum_{\substack{x \in \FFF:\\ \supp(x) \subseteq X}} p_x
    $$
    for $X$ in $\LLL$, and define
    $\pp_\xx:=p_{x_1} \cdots p_{x_\ell}$  for words $\xx=(x_1,\ldots,x_\ell)$.

    \begin{Proposition}
      Given a left-regular band $\FFF$ with identity, 
      and $a=\sum_{x \in \FFF} p_x x$ in $\KK  \FFF$, as above, one has the rational expansion
      \begin{equation}
        \label{expansion-for-g}
        g_a(z) = \sum_{\text{ reduced words }\yy} \left( \prod \yy \right) \cdot
        \frac{\pp_{\yy}}{(z - \lambda_{X_0(\yy)})(z- \lambda_{X_1(\yy)}) \cdots
               (z - \lambda_{X_{\ell(\yy)}(\yy)})}.
      \end{equation}
    \end{Proposition}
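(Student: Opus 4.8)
The plan is to expand $f_a(z)=\sum_{\ell\geq 0}a^\ell z^\ell$ directly from $a^\ell=\sum_{\xx\in\FFF^\ell}\pp_\xx\cdot(\prod\xx)$ and then to organize the words $\xx$ of each fixed length according to their reduced subword $\redword(\xx)$. The one nontrivial ingredient is a description of the fibers of the map $\xx\mapsto\redword(\xx)$: for a fixed reduced word $\yy=(y_1,\dots,y_m)$ with $m=\ell(\yy)$, a word $\xx$ satisfies $\redword(\xx)=\yy$ if and only if $\xx$ is the concatenation $\mathbf{z}^{(0)}\,y_1\,\mathbf{z}^{(1)}\,y_2\cdots y_m\,\mathbf{z}^{(m)}$ for some (possibly empty) words $\mathbf{z}^{(0)},\dots,\mathbf{z}^{(m)}$ over $\FFF$ in which every letter $x$ of $\mathbf{z}^{(j)}$ satisfies $\supp(x)\leq X_j(\yy)$, using the conventions $X_0(\yy)=\hat{0}$ and $X_m(\yy)=\supp(\prod\yy)$ at the two ends. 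Moreover such an $\xx$ has $\prod\xx=\prod\yy$ and $\pp_\xx=\pp_\yy\cdot\prod_{j=0}^{m}\pp_{\mathbf{z}^{(j)}}$, where the factorization of $\pp_\xx$ just records which letter of $\xx$ sits in which block.

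First I would prove this fiber description using only the left-regular band identities, in the form $\supp(xy)=\supp(x)\vee\supp(y)$ and $xy=x$ whenever $\supp(y)\leq\supp(x)$. The key remark is that if $P$ is any running product then appending a letter $x$ with $\supp(x)\leq\supp(P)$ changes neither the running product ($Px=P$) nor its support ($\supp(Px)=\supp(P)\vee\supp(x)=\supp(P)$); such a letter is exactly the kind that the greedy reduction deletes. Scanning a word $\xx$ of the shape above from left to right, an easy induction then shows that after the prefix ending in the kept letter $y_j$ the running product is $y_1\cdots y_j$, of support $X_j(\yy)$, and that every letter of $\mathbf{z}^{(j)}$ is deleted; hence $\redword(\xx)=\yy$ and $\prod\xx=\prod\redword(\xx)=\prod\yy$. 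Conversely, given any $\xx$ with $\redword(\xx)=\yy$, its kept letters are $y_1,\dots,y_m$ in order, and a deleted letter $x_i$ lying between the kept letters $y_j$ and $y_{j+1}$ satisfies $\supp(x_i)\leq\supp(x_1\cdots x_{i-1})=X_j(\yy)$, since the deleted letters preceding it do not move the running support; so $\xx$ arises in precisely the claimed way, with $\mathbf{z}^{(j)}$ the (sub)word of deleted letters in the $j$-th gap.

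Next I would assemble the generating function. Since $\sum_{x\in\FFF:\ \supp(x)\leq X}p_x=\lambda_X$, the $\pp$-weighted count of words of length $k$ all of whose letters have support $\leq X$ equals $\lambda_X^{\,k}$, so the contribution of a single gap at level $X$ is $\sum_{k\geq 0}\lambda_X^{\,k}z^k=(1-\lambda_X z)^{-1}$. Summing $\pp_\xx\,z^{\ell(\xx)}(\prod\xx)$ over all $\xx$ by first fixing $\redword(\xx)=\yy$ and then freely choosing the $m+1$ inserted blocks gives
$$
f_a(z)=\sum_{\yy\ \text{reduced}}\left(\prod\yy\right)\pp_\yy\,z^{\ell(\yy)}\prod_{j=0}^{\ell(\yy)}\frac{1}{1-\lambda_{X_j(\yy)}z}.
$$
Finally, substituting into $g_a(z)=z^{-1}f_a(z^{-1})$: each of the $\ell(\yy)+1$ factors becomes $(1-\lambda_{X_j(\yy)}/z)^{-1}=z/(z-\lambda_{X_j(\yy)})$, so their product contributes $z^{\ell(\yy)+1}$, which cancels the prefactor $z^{-1}z^{-\ell(\yy)}$ and leaves exactly the asserted expansion \eqref{expansion-for-g}. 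The main obstacle — and the only step requiring genuine care — is the fiber description in the second paragraph, in particular the bookkeeping of running supports and the boundary conventions $X_0(\yy)=\hat{0}$, $X_{\ell(\yy)}(\yy)=\supp(\prod\yy)$; once that is in hand, the rest is a routine resummation of geometric series.
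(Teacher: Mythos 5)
Your proof is correct and follows essentially the same route as the paper's: expand $f_a(z)$ term by term, group words by their reduced subword, observe that the fiber over a reduced word $\yy$ consists of arbitrary insertions of letters whose support lies below the running supports $X_j(\yy)$, sum the resulting geometric series with ratio $\lambda_{X_j(\yy)}z$, and substitute $g_a(z)=z^{-1}f_a(z^{-1})$. The only differences are that you spell out the fiber description in detail (the paper asserts it without proof) and that you carry the factor $z^{\ell(\yy)}$ through the intermediate expansion of $f_a(z)$ — a factor the paper's displayed intermediate formula omits but which is exactly what is needed to land on \eqref{expansion-for-g} after the substitution.
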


    \begin{proof}
      $$
        \begin{aligned}
          f_a(z) &= \sum_{\ell \geq 0} a^\ell z^\ell 
          = \sum_{\text{ words }\xx} z^{\ell(\xx)} \left( \prod \xx \right) \pp_{\xx} \\
          &= \sum_{\text{ reduced words }\yy} \qquad 
          \sum_{\substack{ \text{ words }\xx: \\ \redword(\xx) = \yy}} 
          z^{\ell(\xx)} \left( \prod \xx \right) \pp_{\xx} 
        \end{aligned}
      $$
      For a given reduced word $\yy=(\yy_1,\ldots,\yy_\ell)$, the
      set of all words $\xx$ having $\redword(\xx)=\yy$ is obtained by
      inserting between $y_i$ and $y_{i+1}$ an arbitrary collection of
      elements of $\FFF$ having support contained in $X_i(\yy)$;  this means
      elements of support $\hat{0}=X_0(\yy)$ can be inserted before
      $y_1$, and elements of support $X_\ell(\yy)$ after $\yy_\ell$.
      From this one concludes that
      $$
        \begin{aligned}
          f_a(z) &= \sum_{\text{ reduced words }\yy} 
            \left( \prod \yy \right) \pp_{\yy} 
              \frac{1}{1-z \cdot \lambda_{X_0(\yy)}}
              \frac{1}{1-z \cdot \lambda_{X_1(\yy)}} \cdots
              \frac{1}{1-z \cdot \lambda_{X_{\ell(\yy)}(\yy)}} \\
          &= \sum_{\text{ reduced words }\yy} \left( \prod \yy \right) \cdot
            \frac{\pp_{\yy}}{(1-z \cdot \lambda_{X_0(\yy)})(1-z \cdot \lambda_{X_1(\yy)}) \cdots
               (1-z \cdot \lambda_{X_{\ell(\yy)}(\yy)})}.
        \end{aligned}
      $$
      The formula claimed for $g_a(z):=\frac{1}{z} f\left( \frac{1}{z} \right)$ then follows.
    \end{proof}

    \begin{Corollary}
      \label{cor:bhr-semisimplicity}
      Assume $a=\sum_{x\in\FFF} p_x x$ lies in $\RR \FFF$ for a left-regular band $\FFF$,
      and that the $p_x$ are nonnegative.  Then the minimal polynomial of
      $a$ has only simple roots, contained in the set
      $\{\lambda_X\}_{X \in \LLL}$.

      In particular, $a$ generates a semisimple subalgebra of $A$, and
      $a$ acts semisimply on any finite-dimensional $A$-module $U$,
      with eigenvalue support contained in $\{\lambda_X\}_{X \in \LLL}$.
    \end{Corollary}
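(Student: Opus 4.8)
The plan is to extract the minimal polynomial of $a$ from the rational function $g_a(z)=1/(z-a)$ via \ref{Browns-criterion}, feeding in the explicit expansion \ref{expansion-for-g}. First observe that every denominator occurring in \ref{expansion-for-g} is a product of factors $z-\lambda_X$ with $X\in\LLL$; hence $g_a(z)$ is a rational function whose poles all lie in the finite set $\{\lambda_X:X\in\LLL\}$, a set of \emph{real} numbers since the $p_x$ are real. By \ref{Browns-criterion} this already forces the roots of the minimal polynomial of $a$ — and therefore all eigenvalues of $a$ acting on any finite-dimensional $A$-module — into $\{\lambda_X:X\in\LLL\}$. What remains is to prove that these poles are \emph{simple}, equivalently that the minimal polynomial of $a$ is squarefree.

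The crux, and the only place where nonnegativity of the $p_x$ is used, is a monotonicity/telescoping observation. Since $p_x\ge 0$, for $X\le Y$ in $\LLL$ one has $\lambda_Y-\lambda_X=\sum_{x:\ \supp(x)\le Y,\ \supp(x)\not\le X}p_x\ge 0$, so $X\mapsto\lambda_X$ is weakly order-preserving. Now let $\yy=(y_1,\dots,y_\ell)$ be a reduced word contributing to \ref{expansion-for-g}, with its strict support chain $\hat{0}=X_0(\yy)<X_1(\yy)<\cdots<X_{\ell}(\yy)$. If $\lambda_{X_{i-1}(\yy)}=\lambda_{X_i(\yy)}$ for some $i$, then writing the vanishing difference $\lambda_{X_i(\yy)}-\lambda_{X_{i-1}(\yy)}$ as a sum of nonnegative terms $p_x$ over the set $\{x:\ \supp(x)\le X_i(\yy),\ \supp(x)\not\le X_{i-1}(\yy)\}$ shows that each such $p_x$ vanishes; since $\supp(y_i)\le X_i(\yy)$ but $\supp(y_i)\not\le X_{i-1}(\yy)$ (because $X_i(\yy)=X_{i-1}(\yy)\vee\supp(y_i)\neq X_{i-1}(\yy)$), this gives $p_{y_i}=0$, hence $\pp_{\yy}=0$. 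By weak monotonicity any repetition $\lambda_{X_j(\yy)}=\lambda_{X_k(\yy)}$ with $j<k$ forces a consecutive repetition inside $[j,k]$, so whenever $\pp_{\yy}\neq 0$ the scalars $\lambda_{X_0(\yy)},\dots,\lambda_{X_{\ell}(\yy)}$ are pairwise distinct. Thus only reduced words whose denominator data are pairwise distinct contribute a nonzero summand to \ref{expansion-for-g}, each such summand has only simple poles in $z$, and therefore so does $g_a(z)$.

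Finally, by \ref{Browns-criterion} this says the minimal polynomial of $a$ is a product of distinct linear factors $T-\mu$ with each $\mu\in\{\lambda_X:X\in\LLL\}$; in particular it is squarefree and splits over $\RR$. Hence the subalgebra $\RR[a]$ is isomorphic to a finite product of copies of $\RR$, so it is semisimple, and on any finite-dimensional $A$-module the action of $a$ is annihilated by a squarefree polynomial that splits over $\RR$, so $a$ acts diagonalizably with eigenvalue support contained in $\{\lambda_X:X\in\LLL\}$. The genuinely load-bearing step is the monotonicity argument of the second paragraph: once one sees that nonnegativity of the coefficients makes the $\lambda$-values strictly increase along every reduced word of nonzero weight, the rest is formal manipulation with \ref{Browns-criterion} and elementary algebra of squarefree polynomials.
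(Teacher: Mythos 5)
Your proof is correct and follows essentially the same route as the paper: expand $g_a(z)$ via \ref{expansion-for-g}, observe that nonnegativity of the $p_x$ forces the $\lambda$-values to increase strictly along the support chain of any reduced word with $\pp_{\yy}\neq 0$ (you phrase this contrapositively, the paper phrases it directly via $p_{y_i}>0$), so $g_a(z)$ has only simple poles in $\{\lambda_X\}_{X\in\LLL}$, and then apply \ref{Browns-criterion}. The concluding step — squarefree real-rooted minimal polynomial implies the semisimplicity statements — is the same content the paper dismisses as immediate, so there is no substantive difference.
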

    \begin{proof}
      Under the above hypotheses, the only terms in the sum \eqref{expansion-for-g} for
      $g_a(z)$ that contribute with $\pp_{\yy} \neq 0$ will be
      indexed by reduced words $\yy=(y_1,\ldots,y_\ell)$ for which
      $$
        \lambda_{X_0(\yy)} < \lambda_{X_1(\yy)} < \cdots < \lambda_{X_{\ell}(\yy)}
      $$
      since $y_i$ is an element of $X_i(\yy) \setminus X_{i-1}(\yy)$ with
      $p_{y_i} > 0$ for each $i=1,2,\ldots,\ell(\yy)$.  Hence the corresponding
      product term in the summation
      $$
        \frac{1}{(z- \lambda_{X_0(\yy)})(z-\lambda_{X_1(\yy)})\cdots (z-\lambda_{X_{\ell(\yy)}(\yy)})}
      $$
      for $g_z(z)$ has only simple poles at each of these
      $\lambda_{X_i(\yy)}$.  Thus $g_a(z)$ itself has only simple
      poles, all of which are contained in the set $\{\lambda_X\}_{X \in \LLL}$.
      Now apply \ref{Browns-criterion} to conclude the asserted
      form for the minimal polynomial of $a$.  The remaining assertions are
      immediate from this.
    \end{proof}

  \subsection{Equivariant structure of eigenspaces}

    We now return to the setting of a central, essential hyperplane
    arrangement $\AAA$ in $V=\RR^d$, having $\FFF$ as its face semigroup (with
    identity, since $\AAA$ is central).  Recall that the \BHR operator may
    be thought of as the action by left multiplication of an element 
    $a=\sum_{x \in \FFF} p_x x$
    inside $\RR \FFF$ on the left-ideal $\KK \CCC$ spanned 
    $\KK$-linearly by the chambers of $\AAA$.

    Bidigare, Hanlon, and Rockmore computed the eigenvalue multiplicities.
    In their re-proof of this result, Brown and Diaconis 
    \cite{BrownDiaconis1998}
    introduced an important exact sequence\footnote{Later observed
    in \cite{Saliola2009}
    to be a projective resolution of $\KK$ as $\KK \FFF$-module.} 
    of $\KK \FFF$-modules, allowing them to compute the eigenvalue
    multiplicities for any \BHR operator inductively, using the recurrence for
    the M\"obius function of the intersection lattice $\LLL$.

    In this section, we will recall their exact sequence, and then use
    it in the {\deffont equivariant setting}, where $W$ is some 
    finite subgroup
    of $\GL(V) \cong \GL_n(\RR)$ that preserves the arrangement $\AAA$,
    to identify the $\RR W$-module structure on the \BHR-eigenspaces.

    To this end, recall that in \ref{subsec:arrangements} we defined 
    for each subspace $X$ in $\LLL$ the {\deffont localized arrangement}
    \index{localized arrangement}%
    \index{arrangement!localized}%
    $$
      \AAA/X:=\{ H / X: H \in \AAA, H \supset X \}
    $$
    inside the quotient space $V/X$, 
    having intersection lattice $\LLL(\AAA/X) \cong [V,X]_{\LLL}.$  
    Accompanying this is the 
    {\deffont restriction arrangement} of hyperplanes
    \index{restriction arrangement}%
    \index{arrangement!restriction}%
    $$
      \AAA|_X:=\{ H \cap X: H \in \AAA, H \not\supset X\}
    $$ 
    inside the subspace $X$, having intersection lattice 
    $\LLL(\AAA|_X) \cong [X,\{0\}]_{\LLL}$.
    We will use $\CCC_X$ to denote the subset of faces in $\FFF$ that 
    represent chambers of $\AAA|_X$.

    The exact sequence used by Brown and Diaconis then takes the form
    \begin{equation}
      \label{BrownDiaconisSequence}
      0 \longrightarrow \KK \FFF_d \overset{\partial_d}{\longrightarrow} \cdots 
        \longrightarrow \KK\FFF_i \overset{\partial_i}{\longrightarrow} \cdots 
        \longrightarrow \KK\FFF_1 \overset{\partial_1}{\longrightarrow} 
        \KK \FFF_0 \overset{\partial_0}{\longrightarrow} \KK \longrightarrow 0
    \end{equation}
    in which $\FFF_i$ is the set of faces $x$ in $\FFF$ for which
    $\supp(x)$ has codimension $i$.  Thus
    $$
      \KK \FFF_i =\bigoplus_{\substack{X \in \LLL:\\ \dim V/X=i}} \KK \CCC_X 
    $$
    so, for example, 
    $\KK \FFF_0 =\KK \CCC$
    and
    $\KK \FFF_1 =\bigoplus_{ H \in \AAA} \KK \CCC_H$.
    The boundary map $\partial_0$ sends each chamber $c$ of $\AAA$ to the
    same element $\grid$ in $\KK$.
    The boundary map $\partial_i$ for $i\geq 1$ sends a face $x$ to the sum 
    $$
      \sum_y[x:y] y
    $$
    where $y$ ranges over all faces containing $x$ as a codimension one subface,
    and where  $[x:y]$ are certain {\deffont incidence coefficients} 
    \index{incidence coefficients}%
    taking values $\pm 1$ defined in the following way.  First choose an
    arbitrary orientation on each subspace $X$ in $\LLL$, and then
    decree $[x:y]$ to be the sign with respect to the orientation in $\supp(y)$
    of any basis for $y$ that is obtained by appending to a positively 
    oriented basis for $\supp(x)$ any vector that points from $x$ into $y$.  
    Exactness of \eqref{BrownDiaconisSequence} follows because it is 
    essentially the complex of cellular chains
    for the regular CW-decomposition into faces of the {\deffont zonotope}
    \index{zonotope}%
    \index{polytope!zonotope}%
    having $\AAA$ as its normal fan. 

    Each $\KK  \CCC_X$ carries the structure of a (left-)$\KK \FFF$-module
    by deforming the product in $\KK \FFF$ as follows:
    for $x \in \FFF$ and $y \in \CCC_X$, set
    $$
      x \mathbin{\Diamond} y:=\begin{cases}
             xy, & \text{ if }x \subseteq X \text{ (so that }xy \in \CCC_X),\\
             0, & \text{ otherwise.}
           \end{cases}
    $$
    One can check that this makes the exact sequence \eqref{BrownDiaconisSequence}
    a complex of (left-)$\KK \FFF$-modules.  Consequently for any choice of $a = \sum_{x \in \FFF} p_x x$
    in $\KK \FFF$, it becomes an exact sequence of $\KK [T]$-modules by letting $T$ act as
    the element $a$.

    An important feature to note about this structure is that for each subspace $X$ in $\LLL$
    having $\dim V/X=i$, the subspace $\KK  \CCC_X$ inside $\KK \FFF_i$ is again a $\KK \FFF$-module
    and $\KK[T]$-module of the same type as $\KK \CCC$.  Combined with the semisimplicity
    of the $\KK [T]$-structure when $\KK =\RR$ and $p_x \geq 0$, this will allow for arguments
    about the $T$-eigenspaces by induction on $\dim V$.  For example,
    Brown and Diaconis use such an argument, along with the defining recurrence for
    the M\"obius function of $\LLL$, to show in this setting 
    that the \BHR eigenvalue $\lambda_X$ occurrs in $\KK \CCC$ with multiplicity
    $|\mu(V,X)|$, where $\mu$ denotes the M\"obius function of $\LLL$.  
    \nomenclature[ar]{$\mu(\cdot, \cdot)$}{M\"obius function}%
    \index{M\"obius function}%

    There are two preliminary observations we need before proving
    the $W$-equivariant version of this assertion.  First, note that
    in order to place the desired $\KK W$-module structure on $\KK  \CCC$,
    and to have \eqref{BrownDiaconisSequence} be a complex of $\KK W$-modules,
    each summand $\KK  \CCC_X$ inside the term $\KK \FFF_i$ with $i=\dim_\RR (V/X)$
    has to be twisted by $\det{}_{V/X}$.  This means that, as a $\KK W$-module,
    $\KK \FFF_i$ has the following description:
    $$
      \KK \FFF_i = \bigoplus_{
             \substack{ X^W \in \LLL/W:\\ 
             \dim_{\RR}(V/X)=i} }  \Ind_{W_X}^W \left( \RR \CCC_X \otimes \det{}_{V/X} \right).
    $$

    Secondly, we will need a $W$-equivariant version 
    of the M\"obius function recurrence. It can be deduced from 
    \cite[Lemma 1.1]{Sundaram1994} and \cite[Proposition 2.2]{SundaramWelker1997}. 
    However, since the proof of Proposition 2.2 in  \cite{SundaramWelker1997} 
    only invites the reader to verify that the non-equivariant proof generalizes,
    we give an explicit proof here for completeness.
    The proof proceeds via the equivariant
    generalization of a standard sign-reversing-involution proof
    for P. Hall's M\"obius function formula.

    As preliminary notation, when a group $W$ acts on a set $M$, let
    $M/W$ denote the set of $W$-orbits, with the $W$-orbit containing some element
    \nomenclature[al]{$M/W$}{set of $W$-orbits of $W$ acting on $M$}%
    $m$ of $M$ denoted by $m^W$.  Let $W_m:=\{w \in W: w(m)=m\}$ denote the
    \nomenclature[al]{$m^W$}{orbit of the element $m \in M$ under the group $W$ acting on $M$}%
    \nomenclature[al]{$\Stab_W(m)$}{stabilizer of $m \in M$ within the group $W$ acting on $M$}%
    $W$-stabilizer of $m$, so that one can identify the permutation $W$-action on 
    $m^W$ with the action of $W$ on the left cosets $W/\Stab_W(m)$ by left multiplication.  
    Let $\Gamma(\KK W)$ denote the Grothendieck
    \nomenclature[al]{$\Gamma(\KK W)$}{Grothendieck group of all virtual $\KK W$-modules}%
    group of virtual $\KK W$-modules (see \cite[\S 5.1]{Benson1998}). 
    Finally, for a poset $P$ and $X \leq Y$ in $P$ we denote by $(X,Y)$ the {\deffont open interval}
    \index{open interval}%
    \index{interval!open}%
    \nomenclature[co]{$(X,Y)$}{open interval between $X$ and $Y$ in a poset}%
    $\{ Z \in P : X < Z < Y \}$. By $\ReducedHomology^i(P;\KK)$, respectively $\ReducedHomology^i((X,Y);\KK)$, we denote the
    $i$th {\deffont reduced cohomology group} of the {\deffont order complex} of $P$, respectively $(X,Y)$.
    \nomenclature[co]{$\ReducedHomology^i( \bullet ;\KK)$}{$i$th reduced cohomology group with $\KK$ coefficients}%
    \index{cohomology!group}%
    \index{order complex}%
    \index{simplicial complex}%
    Recall that the order complex of a poset is the {\deffont simplicial complex} of all
    chains in the poset.

    \begin{Proposition}
      \label{equivariant-Mobius-recurrence}
      Let $P$ be a finite poset with bottom and top elements $\hat{0}$ and $\hat{1}$, respectively,
      and $W$ a finite group acting as a group of poset automorphisms on $P$.  Then in $\Gamma(\KK W)$
      one has
      $$
        \sum_{ \substack{ X^W \in P/W \\ i \geq -1}} (-1)^i \Ind_{\Stab_W(X)}^W \ReducedHomology^i((X,\hat{1});\KK) = 0.
      $$
      In particular, if $P$ is a poset which is Cohen-Macaulay over $\KK$, with
      \index{Cohen-Macaulay poset}%
      \index{poset!Cohen-Macaulay}%
      rank function $\rank_P(-)$, then one has
      $$
        \sum_{ X^W \in P/W } (-1)^{t(X)} \Ind_{\Stab_W(X)}^W \ReducedHomology^{t(X)}((X,\hat{1});\KK) = 0.
      $$
      where $t(X):=\rank_P(\hat{1})-\rank_P(X)-2$.
    \end{Proposition}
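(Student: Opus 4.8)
The plan is to reduce the asserted identity, via P.~Hall's theorem on M\"obius functions, to a statement about alternating sums of permutation $\KK W$-modules on chains, and then to prove that statement by the $W$-equivariant version of the classical sign-reversing involution. One may assume $\hat 0 \neq \hat 1$: when $\hat 0 = \hat 1$ the sum in question equals $[\trivial_W]\neq 0$, so the statement tacitly excludes this degenerate poset. For the empty interval $(\hat 1,\hat 1)$ we adopt the convention $\sum_{i\geq -1}(-1)^i[\ReducedHomology^i((\hat 1,\hat 1);\KK)] = [\trivial_W]$, in keeping with $\mu(\hat 1,\hat 1)=1$ and with $t(\hat 1)=-2$ in the Cohen--Macaulay case.

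First I would record the equivariant form of P.~Hall's theorem. Fix $X\in P$ with $X<\hat 1$ and consider the reduced, augmented simplicial cochain complex of the order complex $\Delta((X,\hat 1))$: it is a bounded complex of $\KK\,\Stab_W(X)$-modules whose term in codimension $k$ is the permutation module on the set of length-$k$ chains $Y_0<\cdots<Y_k$ with $X<Y_0$ and $Y_k<\hat 1$ (the empty chain contributing the trivial module in degree $-1$), and no orientation character intervenes, since the stabilizer of a chain in a poset fixes each element of that chain and hence acts trivially on the associated simplex. Because in $\Gamma(\KK W)$ the Euler characteristic of a bounded complex of finite-dimensional modules agrees with that of its (co)homology (and permutation modules are self-dual, so chains or cochains may be used interchangeably), this gives, in $\Gamma(\KK\,\Stab_W(X))$,
\[
\sum_{i\geq -1}(-1)^i\big[\ReducedHomology^i((X,\hat 1);\KK)\big] \;=\; \sum_{k\geq -1}(-1)^k\big[\KK\{\text{length-}k\text{ chains in }(X,\hat 1)\}\big].
\]

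Next I would induce and assemble. Applying $\Ind_{\Stab_W(X)}^W$, summing over $X^W\in P/W$, and adjoining the term $[\trivial_W]$ for $X=\hat 1$, one uses that a length-$k$ chain in $(X,\hat 1)$ together with $X$ is exactly a chain $\tau$ of $k+2$ elements in the subposet $P_{<\hat 1}:=\{Y\in P: Y<\hat 1\}$ with $X=\min\tau$; a poset automorphism fixing $\tau$ fixes $\min\tau$, so the relevant stabilizer is $\Stab_W(\tau)$, and induction in stages collapses $\Ind_{\Stab_W(X)}^W\Ind_{\Stab_W(\tau)}^{\Stab_W(X)}\trivial$ to $\Ind_{\Stab_W(\tau)}^W\trivial$. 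As $X^W$ ranges over $P/W$, the minima $\min\tau$ range over all $W$-orbits, so (using $(-1)^k=(-1)^{m-2}=(-1)^m$ with $m=|\tau|$, and that the permutation module class of $\mathcal F_m$ is $\sum_{\tau^W,\,|\tau|=m}\Ind_{\Stab_W(\tau)}^W\trivial$) the left-hand side of the Proposition becomes
\[
[\trivial_W] \;+\; \sum_{m\geq 1}(-1)^m\big[\KK\,\mathcal F_m\big],
\]
where $\mathcal F_m$ is the $W$-set of $m$-element chains of $P_{<\hat 1}$; note that $P_{<\hat 1}$ has $\hat 0$ as a $W$-fixed minimum since $\hat 0<\hat 1$.

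Finally I would run the sign-reversing involution on $\bigsqcup_{m\geq 1}\mathcal F_m$ that toggles $\hat 0$: send $\tau$ to $\tau\setminus\{\hat 0\}$ if $\hat 0\in\tau$ and $|\tau|\geq 2$, to $\tau\cup\{\hat 0\}$ if $\hat 0\notin\tau$, and fix the singleton chain $(\hat 0)$. This is well defined ($\hat 0<\hat 1$), $W$-equivariant ($W$ fixes $\hat 0$), and reverses the parity of $|\tau|$ away from its unique fixed chain $(\hat 0)$, which is itself $W$-fixed; hence paired $W$-orbits cancel in $\Gamma(\KK W)$ and $\sum_{m\geq 1}(-1)^m[\KK\,\mathcal F_m] = (-1)^1[\KK\{(\hat 0)\}] = -[\trivial_W]$, so the whole sum vanishes. (Equivalently one could note that $\Delta(P_{<\hat 1})$ is a cone on the $W$-fixed vertex $\hat 0$, hence $W$-equivariantly acyclic; the involution is just that contraction made explicit.) The Cohen--Macaulay conclusion then follows at once: each interval $(X,\hat 1)$ is Cohen--Macaulay of dimension $t(X)=\rank_P(\hat 1)-\rank_P(X)-2$, so $\ReducedHomology^i((X,\hat 1);\KK)=0$ for $i\neq t(X)$, and the inner sum collapses to $(-1)^{t(X)}[\ReducedHomology^{t(X)}((X,\hat 1);\KK)]$. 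The step demanding the most care is the assembly (the induction in stages and the reindexing by chains of $P_{<\hat 1}$) together with the boundary conventions: one must check that these and the involution are identities of $\KK W$-modules in $\Gamma(\KK W)$, not merely of dimensions. This is routine here precisely because $\hat 0$ and $\hat 1$ are $W$-fixed and order-complex chain groups carry no orientation twist, so I expect no deeper obstacle.
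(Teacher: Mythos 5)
Your proof is correct and follows essentially the same route as the paper's: reduce via the Hopf trace formula to alternating sums of permutation modules on chains (no orientation twist, since a poset automorphism stabilizing a chain fixes it pointwise), and then cancel by the $W$-equivariant sign-reversing involution that adds or removes the $W$-fixed element $\hat{0}$ — the paper's pairing of $(X,\chain)$ with $(\hat{0},\{X\}\cup\chain)$ is exactly your toggle on chains of $P_{<\hat{1}}$ keyed to the minimum. The only difference is that you make explicit the boundary bookkeeping (the convention for the $X=\hat{1}$ term, the excluded case $\hat{0}=\hat{1}$, and the unpaired singleton chain $\{\hat{0}\}$ cancelling against it) that the paper's terse cancellation leaves implicit.
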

    \begin{proof}
      By the Hopf trace formula, it is equivalent to show that
      \index{Hopf trace formula}%
      $$
        \sum_{ \substack{ X^W \in P/W \\ i \geq -1}} (-1)^i \Ind_{\Stab_W(X)}^W \ReducedChainGroup^i((X,\hat{1});\KK) = 0,
      $$
      where $\ReducedChainGroup^i((X,\hat{1});\KK)$ is the $i$th reduced cochain group of the order complex of $(X,\hat{1})$ with
      coefficients in $\KK$%
      \nomenclature[al]{$\ReducedChainGroup^i(\bullet;\KK)$}{$i$th reduced cochain group with coefficients in $\KK$}.
      \index{cochain group}%
      \index{group!cochain}%
       Because $W$ acts by poset automorphisms, it acts on the cochain group $\ReducedChainGroup^i((X,\hat{1});\KK)$
      as a permutation representation:  the usual $\KK$-bases dual to oriented simplicial chains
      $[X_1,\ldots,X_{i+1}]$, listed in their $P$-order $X_1 < \cdots < X_{i+1}$,
      will be {\emphfont permuted without any $\pm$ sign}.

      Consequently, if we let $M$ be the set of all pairs $(X,\chain)$ where $X$ is an element
      of $P$ and $\chain=\{X_1,\ldots,X_{i+1}\}$ satisfies $X < X_1 < \ldots < X_{i+1} < \hat{1}$ 
      in $P$, then it is enough to show 
      $$
        \sum_{ {(X,\chain)^W} \in M/W } (-1)^{|\chain|} \Ind_{\Stab_W(X,\chain)}^W \trivial = 0.
      $$
      To show this, note that every $X \neq \hat{0}$ in $P$ has 
      $
       \Stab_W(X,\chain) = \Stab_W(\hat{0},\{X\} \cup \chain).
      $
      Hence the two terms 
      $\Ind_{\Stab_W(X,\chain)}^W \trivial$ and $\Ind_{\Stab_W(\hat{0},\{X\} \cup \chain)}^W \trivial$
      cancel in the sum.
    \end{proof}

    We can now state and prove our $W$-equvariant description
    of the \BHR eigenspaces when $\KK=\RR$ and the coefficients $p_x$
    in $a=\sum_{x \in \FFF} p_x x$ are chosen not only {\emphfont nonnegative},
    but also {\emphfont $W$-invariant}: 
    $$
      p_{gx} = p_x \text{ for all }g \in W, x \in \FFF.
    $$
    Of course, when $W$ is the trivial group, one recovers the
    usual theory.  One further bit of notation:  for an $\RR[T]$-module $U$,
    and an eigenvalue $\lambda$, let $U_\lambda$ denote the $\lambda$-eigenspace
    of $T$ on $U$, that is, $U_\lambda:=\ker(T-\lambda \matid_U)$.
    \nomenclature{$U_\lambda$}{$\lambda$-eigenspace of $R[T]$-module $U$}%

    \begin{Theorem}
      \label{Equivariant-BHR-theorem}
      For any choice of coefficients $\{p_x\}_{x \in \FFF}$
      which are nonnegative and $W$-invariant,
      the $\RR[T][W]$-module structure
      on $\RR \CCC$ is semisimple.  The $T$-eigenvalues are
      contained in the set $\{\lambda_X\}_{X \in \LLL}$, and 
      the $\lambda$-eigenspace has the following description
      as an element of the Grothendieck group $\Gamma(\RR W)$:
      $$
        \left( \RR \CCC \right)_\lambda
        = \sum_{\substack{X^W \in \LLL/W: \\ \lambda_X=\lambda}}
         \Ind_{\Stab_W(X)}^W \left( \ReducedHomology^*((V,X);\RR) \otimes \det{}_{V/X} \right).
      $$
    \end{Theorem}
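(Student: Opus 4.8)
The plan is to prove the theorem by induction on $d=\dim_\RR V$, combining the Brown--Diaconis complex \eqref{BrownDiaconisSequence} with the equivariant M\"obius recurrence \ref{equivariant-Mobius-recurrence}; the base case $d=0$ is immediate, since then $\RR\CCC=\RR$ with trivial $W$-action, $T=a$ acts by the scalar $\lambda_V=\sum_{x\in\FFF}p_x$, and the asserted formula reads $(\RR\CCC)_{\lambda_V}=\Ind_W^W(\ReducedHomology^{-1}(\emptyset;\RR)\otimes\det{}_{V/V})=\trivial$. For the semisimplicity statement: \ref{cor:bhr-semisimplicity} applied to the band $\FFF$ shows $a$ acts on $\RR\CCC$ with squarefree minimal polynomial and all roots in $\{\lambda_X\}_{X\in\LLL}$, so the image of $\RR[T]$ in $\End_\RR(\RR\CCC)$ is isomorphic to $\RR^k$; since $W$ commutes with $T$, the algebra generated by $W$ and $T$ is a quotient of $\RR^k\otimes_\RR\RR W$, hence semisimple, and $\RR\CCC=\bigoplus_\lambda(\RR\CCC)_\lambda$ $\RR W$-equivariantly with $\lambda$ running over a subset of $\{\lambda_X\}$.

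Next I would exploit the $\det$-twisted $\RR W$-structure on \eqref{BrownDiaconisSequence}. Each summand $\RR\CCC_X$ of $\RR\FFF_i$, with the $\mathbin{\Diamond}$-action, is the chamber module of the restriction arrangement $\AAA|_X$ in $X$ (still central and essential) under the nonnegative coefficients $\{p_x\}_{x\subseteq X}$, so \ref{cor:bhr-semisimplicity} again shows $T$ acts semisimply on it; the $\det$-twist changes only the $W$-action. Thus $T$ is semisimple on every term of \eqref{BrownDiaconisSequence}, so the exact functor $M\mapsto M_\lambda:=\ker(T-\lambda\,\matid_M)$ produces an exact sequence of $\RR W$-modules
$$0\longrightarrow(\RR\FFF_d)_\lambda\longrightarrow\cdots\longrightarrow(\RR\FFF_1)_\lambda\longrightarrow(\RR\CCC)_\lambda\longrightarrow\RR_\lambda\longrightarrow0,$$
where $\RR_\lambda$ is the trivial $W$-module if $\lambda=\lambda_V$ and $0$ otherwise (the map $\partial_0$ forces $T$ to act on the final $\RR$ by $\sum_{x\in\FFF}p_x=\lambda_V$). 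Since the $W$-invariant element $a$ preserves each summand $\RR\CCC_X$, one has $(\RR\FFF_i)_\lambda=\bigoplus_{X^W:\,\operatorname{codim}X=i}\Ind_{\Stab_W(X)}^W\big((\RR\CCC_X)_\lambda\otimes\det{}_{V/X}\big)$ as $\RR W$-modules, so taking the alternating sum in $\Gamma(\RR W)$ and solving for the homological-degree-$0$ term gives
$$(\RR\CCC)_\lambda=\RR_\lambda+\sum_{\substack{X^W\in\LLL/W\\ X\neq V}}(-1)^{\operatorname{codim}X+1}\,\Ind_{\Stab_W(X)}^W\big((\RR\CCC_X)_\lambda\otimes\det{}_{V/X}\big).$$

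Now I would substitute the inductive hypothesis for $(\RR\CCC_X)_\lambda$, applied to $\AAA|_X$ (whose intersection lattice is the interval $[X,\hat 1]_\LLL$, with $\lambda_Y$ unchanged), the group $\Stab_W(X)$, and the coefficients $\{p_x\}_{x\subseteq X}$, then reorganize the resulting double sum over flags $V<X\le Y$ with $\lambda_Y=\lambda$. This uses transitivity of induction together with $\Stab_{\Stab_W(X)}(Y)=\Stab_W(X)\cap\Stab_W(Y)=\Stab_{\Stab_W(Y)}(X)$; the identity $\det{}_{V/X}\otimes\det{}_{X/Y}=\det{}_{V/Y}$ on $\Stab_W(X)\cap\Stab_W(Y)$, coming from the equivariant filtration $0\to X/Y\to V/Y\to V/X\to0$; and the bijection between $W$-orbits of such flags obtained either by choosing $X^W$ and then a $\Stab_W(X)$-orbit of $Y\ge X$, or by choosing $Y^W$ and then a $\Stab_W(Y)$-orbit of $X\in(V,Y]$. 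For each fixed $Y^W$ with $\lambda_Y=\lambda$, the remaining inner sum
$$\sum_{X^{\Stab_W(Y)}\in(V,Y]}(-1)^{\operatorname{codim}X+1}\,\Ind_{\Stab_W(X)\cap\Stab_W(Y)}^{\Stab_W(Y)}\ReducedHomology^*((X,Y);\RR)$$
equals $\ReducedHomology^*((V,Y);\RR)$ by the Cohen--Macaulay form of \ref{equivariant-Mobius-recurrence} applied to the geometric interval $[V,Y]$ with group $\Stab_W(Y)$ — all intervals of the geometric lattice $\LLL$ are Cohen--Macaulay over $\RR$, so each $\ReducedHomology^*((X,Y);\RR)$ sits in degree $\operatorname{codim}Y-\operatorname{codim}X-2$ and the signs match directly; the leftover $\RR_\lambda$ is exactly the $Y=V$ contribution. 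Assembling everything yields $(\RR\CCC)_\lambda=\sum_{Y^W:\,\lambda_Y=\lambda}\Ind_{\Stab_W(Y)}^W\big(\ReducedHomology^*((V,Y);\RR)\otimes\det{}_{V/Y}\big)$.

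The part I expect to be most delicate is this last collapse: correctly reindexing the double sum over flags as a sum over $W$-orbits (a Mackey-type bookkeeping), composing the determinant twists along the filtration of $V/Y$, and lining up the homological degrees and the $(-1)^{\operatorname{codim}}$ signs against the Cohen--Macaulay version of the equivariant M\"obius recurrence (which is why it is convenient to invoke that version rather than the general one). Everything else — the semisimplicity argument, the exactness of the eigenspace functor on the Brown--Diaconis complex, and the observation that each $\RR\CCC_X$ is a genuinely smaller instance of the same setup — is routine given the tools already assembled in this section.
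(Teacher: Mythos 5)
Your proposal is correct, and its skeleton is the same as the paper's: semisimplicity and the eigenvalue bound from \ref{cor:bhr-semisimplicity}, induction on $\dim_\RR V$ using the eigenspace-restricted Brown--Diaconis sequence \eqref{BrownDiaconisSequence}, the identification of each $\RR\CCC_X$ (with the $\Diamond$-action) as a smaller BHR chamber module for $\AAA|_X$, the reindexing of flag orbits with transitivity of induction and $\det_{V/X}\otimes\det_{X/Y}=\det_{V/Y}$, and the final collapse via \ref{equivariant-Mobius-recurrence} applied to the interval below a fixed subspace. The one genuine deviation is at the start: the paper first reduces, by a density/polynomiality argument in the coefficients $\{p_x\}$, to the generic situation $\lambda_V > \lambda_X$ for all $X \subsetneq V$, which lets it treat $\lambda=\lambda_V$ and $\lambda<\lambda_V$ as two clean cases (in the first, all higher terms of the sequence die; in the second, the augmentation term drops out). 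You skip that reduction and argue uniformly: termwise $T$-semisimplicity (which \ref{cor:bhr-semisimplicity} gives for \emph{any} finite-dimensional $\RR\FFF$-module, so no genericity is needed for exactness of the eigenspace functor) yields the Euler-characteristic identity with the extra summand $\RR_\lambda$, and that summand is absorbed as the $X=V$ term of the theorem's formula, since $\ReducedHomology^*((V,V);\RR)\otimes\det_{V/V}$ is the trivial module and it occurs exactly when $\lambda=\lambda_V$. What the paper's route buys is that eigenvalue coincidences $\lambda_X=\lambda_V$ never arise inside the induction; what your route buys is that you never have to invoke the (slightly delicate) claim that the multiplicities are polynomial identities in the $p_x$ valid on a dense set, at the cost of the bookkeeping you flagged — making sure the trivial summand is counted once and that the $(-1)^{\operatorname{codim}}$ signs line up with the Cohen--Macaulay form of the recurrence — which you handle correctly.
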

    \begin{proof}
      \ref{cor:bhr-semisimplicity} tells us that $\RR \CCC$ is a 
      semisimple $\RR[T][W]$-module and that its $T$-eigenvalues are
      contained in the set $\{\lambda_X\}_{X \in \LLL}$.
      We claim that it suffices to show the assertion of the theorem only for
      those choices of $p_x$ which make $\lambda_V > \lambda_X$ for $X \subsetneq V$.
      First we explain why this is a valid reduction.
      Note that such choices of $p_x$ form a {\emphfont dense subset} of 
      all the relevant choices of $p_x$ in the theorem.  Also note that the theorem can
      be viewed as asserting for each $W$-irreducible $\chi$, 
      that the operator $T_\chi(p_x)$ acting on the
      $\chi$-isotypic component $\RR \CCC^\chi$ of $\RR \CCC$ has a certain
      factorization for its characteristic polynomial
      $$
        \det(t\matid_{\RR \CCC^\chi} -T_{\chi}(p_x)) = \prod_{X \in \LLL} (t - \lambda_X(p_x))^{m_X}
      $$
      with $m_X$ independent of $\{p_x\}$, but with the operators
      $T_{\chi}(p_x)$ and the eigenvalues $\lambda_X(p_x)$ depending
      {\emphfont polynomially} on the $\{p_x\}$.  If this identity holds on
      a dense set of $\{p_x\}$, it holds for all of them.

      So assume  $\lambda_V > \lambda_X$ for $X \subsetneq V$, and
      we will prove the assertion of the theorem by induction on 
      $d:=\dim_\RR(V)$.  The base case $d=0$ is easily verified.

      In the inductive step, one obtains an exact sequence of $\RR W$-modules
      by restricting the terms $\FFF_i$ in \eqref{BrownDiaconisSequence}
      to their eigenspaces $(\FFF_i)_\lambda=\bigoplus_{X} (\RR \CCC_X)_\lambda$.
      By induction, and because $\lambda_V > \lambda_X$ for $X \subsetneq V$, only the
      last two terms $\RR \FFF_0 = \RR \CCC$ and $\RR$ have a nonzero
      $\lambda_V$-eigenspace, and so they are isomorphic, proving the assertion
      for $\lambda=\lambda_V$.  For $\lambda < \lambda_V$, one finds that
      $$
        \begin{aligned}
          (\RR \CCC)_\lambda 
             &= - \sum_{i \geq 1} (-1)^i (\RR \FFF_i)_\lambda \\
             &= - \sum_{Y^W \neq \{V\} \in \LLL/W} (-1)^{\dim_\RR V/Y} 
               (\RR \CCC_Y)_\lambda \otimes \det{}_{V/Y} \\
             &= - \sum_{( Y^W , X^{\Stab_W(Y)})} 
               (-1)^{\dim_\RR V/Y}
               \left( \Ind_{\Stab_W(X,Y)}^W \ReducedHomology^*((Y,X);\RR) \otimes \det{}_{Y/X} \right)
                \otimes \det{}_{V/Y}          
        \end{aligned}
      $$
      where the last step applies the induction hypothesis to $\RR \CCC_Y$,
      and where the sum runs over pairs $( Y^W , X^{\Stab_W(Y)} )$
      in which $Y^W$ is a $W$-orbit in $\LLL$ not equal to $\{V\}$,
      and  $X^{\Stab_W(Y)}$ is a $\Stab_W(Y)$-orbit on the set
      $\{ X \in \LLL: X \subseteq Y, \lambda_X = \lambda\}$.
      Note that a set of representatives $(Y,X)$ for such pairs
      is the same as for the pairs $( X^W, Y^{\Stab_W(X)} )$
      in which $X^W$ is a $W$-orbit not equal to $\{V\}$
      on the set $\{ X \in \LLL:  \lambda_X = \lambda\}$
      and  $Y^{\Stab_W(X)}$ is a $\Stab_W(X)$-orbit not equal to $\{V\}$
      on the set $\{ Y \in \LLL: X \subseteq Y \}$.
      Consequently, using the fact that 
      $\det{}_{Y/X} \det{}_{V/X} = \det{}_{V/Y}$
      and transitivity of induction, one obtains
      $$
        \begin{aligned}
          (\RR \CCC)_\lambda 
            &= - \sum_{( X^W , Y^{\Stab_W(X)} )} 
           (-1)^{\dim_\RR V/Y} 
             \Ind_{\Stab_W(X)}^W 
                \left( \Ind_{\Stab_W(X,Y)}^{\Stab_W(X)} \ReducedHomology^*((Y,X);\RR) \otimes
                  \det{}_{V/X} \right) \\
           &= \sum_{ \substack{ X^W \in \LLL/W: \\ \lambda_X=\lambda} }
           \Ind_{\Stab_W(X)}^W \left( \ReducedHomology^*((V,X);\RR) \otimes \det{}_{V/X} \right)
        \end{aligned}
      $$
      where the last step applies \ref{equivariant-Mobius-recurrence}
      to the localized arrangement $\AAA/X$.
    \end{proof}

    Note for future use the following consequence
    of \ref{Equivariant-BHR-theorem}
    which simply ignores the $\RR[T]$-structure.

    \begin{Corollary}
      \label{cor:chamber-representation}
      For any finite subgroup $W \subset \GL(V)$ that preserves $\AAA$, one has
      in the Grothendieck group $\Gamma(\RR W)$
      $$
        \RR \CCC 
          = \sum_{X^W \in \LLL/W}
         \Ind_{\Stab_W(X)}^W \left( \ReducedHomology^*((V,X);\RR) \otimes \det{}_{V/X} \right).
      $$
    \end{Corollary}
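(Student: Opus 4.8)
The plan is to obtain this corollary by simply forgetting the $\RR[T]$-module structure carried by the statement of \ref{Equivariant-BHR-theorem} and summing the resulting identity over all eigenvalues. First I would fix an admissible family of coefficients to which \ref{Equivariant-BHR-theorem} applies: for instance, the all-ones choice $p_x=1$ for every face $x \in \FFF$. These coefficients are plainly nonnegative, and they are $W$-invariant since $W$ merely permutes the faces in $\FFF$. (Any other nonnegative $W$-invariant choice would serve equally well; all that matters is that such a choice exists, and this is where $p_x = 1$ is invoked.) With this choice in hand, \ref{cor:bhr-semisimplicity} — equivalently, the first sentence of \ref{Equivariant-BHR-theorem} itself — tells us that $\RR\CCC$ is semisimple as an $\RR[T][W]$-module, so that
\[
  \RR\CCC = \bigoplus_{\lambda} \left( \RR\CCC \right)_\lambda
\]
as $\RR W$-modules, the sum ranging over the finitely many $T$-eigenvalues $\lambda$, each of which lies in $\{\lambda_X\}_{X \in \LLL}$.

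Next I would substitute the $\RR W$-module description of each eigenspace $(\RR\CCC)_\lambda$ supplied by \ref{Equivariant-BHR-theorem} and reorganize the double sum. The key bookkeeping point is that, because the coefficients $p_x$ were chosen $W$-invariant, the scalar $\lambda_X = \sum_{x:\ \supp(x)\subseteq X} p_x$ depends only on the $W$-orbit of $X$, i.e.\ $\lambda_{w(X)} = \lambda_X$ for all $w \in W$; hence each orbit $X^W \in \LLL/W$ contributes to exactly one value of $\lambda$, namely $\lambda = \lambda_X$. Consequently, working in $\Gamma(\RR W)$,
\begin{align*}
  \RR\CCC
    = \sum_{\lambda} \left( \RR\CCC \right)_\lambda
    &= \sum_{\lambda}\ \sum_{\substack{X^W \in \LLL/W:\\ \lambda_X = \lambda}}
       \Ind_{\Stab_W(X)}^W \left( \ReducedHomology^*((V,X);\RR) \otimes \det{}_{V/X} \right)\\
    &= \sum_{X^W \in \LLL/W}
       \Ind_{\Stab_W(X)}^W \left( \ReducedHomology^*((V,X);\RR) \otimes \det{}_{V/X} \right),
\end{align*}
which is the claimed identity.

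I do not expect any real obstacle here: the corollary is a direct specialization of \ref{Equivariant-BHR-theorem}, obtained by discarding the grading by $T$-eigenvalue. The only mild point worth a remark is that the right-hand side of the corollary visibly does not depend on the auxiliary choice of $\{p_x\}$ — and indeed it cannot, since once one sums over all eigenvalues the index set collapses to the whole of $\LLL/W$ regardless of which of the scalars $\lambda_X$ happen to coincide. (For $X = V$ one recovers $\ReducedHomology^{-1}(\varnothing;\RR)\otimes\det{}_{V/V}$, i.e.\ a single copy of the trivial representation, consistent with $\RR\CCC \cong \RR W$ containing exactly one trivial constituent.)
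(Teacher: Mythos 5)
Your proposal is correct and is exactly the argument the paper intends: the paper introduces this corollary with the remark that it ``simply ignores the $\RR[T]$-structure'' of \ref{Equivariant-BHR-theorem}, i.e.\ one picks any nonnegative $W$-invariant coefficients, uses semisimplicity to write $\RR\CCC$ as the sum of its $T$-eigenspaces, and sums the theorem's orbit-wise description over all eigenvalues, just as you do. Your observation that $W$-invariance of the $p_x$ makes $\lambda_X$ constant on orbits, so each orbit is counted exactly once irrespective of coincidences among the $\lambda_X$, is the only bookkeeping needed and you have it right.
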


    \ref{cor:chamber-representation} can also be seen as a special case of the equivariant version
    \cite[Theorem 2.5 (ii)]{SundaramWelker1997} of the Goresky-MacPherson formula 
    for the cohomology of the complement of a {\deffont subspace arrangement}.
    \index{subspace arrangement}%
    \index{arrangement!subspace}%
    For a hyperplane arrangement $\AAA$ invariant under $W$, the complement is the union of the
    (open) chambers in $\CCC$. Its non-reduced
    cohomology with coefficients in $\RR$ is $\RR^{|\CCC|}$ carrying 
    the representation induced by the action of $W$ on $\CCC$. 
    This reduces \cite[Theorem 2.5 (ii)]{SundaramWelker1997} to \ref{cor:chamber-representation}
    once one observes that the representation of non-reduced cohomology differs from reduced 
    cohomology by a copy of the trivial representation and the fact that the representation of 
    $W$ on the unique non-vanishing homology of $X^\perp$ intersected with a $W$-invariant sphere  
    is $\det{}_{V/X}$.

  \subsection{$(W \times \ZZ_2)$-equivariant eigenvalue filtration}

    Because we have been working with a {\deffont central} hyperplane arrangement $\AAA$,
    \index{central hyperplane arrangement}%
    \index{hyperplane arrangement!central}%
    the map on the set $\FFF$ of faces that sends $x \mapsto -x$ gives a $\ZZ_2$-action
    on $\FFF$, on $\KK\FFF$, and on the complex \eqref{BrownDiaconisSequence}.
    Furthermore, it commutes with the action of any group of symmetries $W \subset GL(V)$ of $\AAA$.

    If we only assume that $p_{gx}=p_x$ for $g \in W$ and $x \in X$, but
    make no assumption that $p_{-x}=p_x$, then in general this $\ZZ_2$-action does {\emphfont not}
    commute with the $T$-action coming from the element $a = \sum_{x \in \FFF} p_x x$.
    However, the $\ZZ_2$-action will preserve a certain natural filtration
    that comes from the $T$-eigenspaces, as we now show.  Given a semisimple
    $\KK[T]$-module $U$ having only real $T$-eigenvalues, and a real number $\lambda$,
    let 
    $$
      U_{\leq\lambda}:=\bigoplus_{\mu \leq \lambda} U_\mu.
    $$
    Note that since there are only finitely many different eigenvalues $\mu$,
    these subspaces $U_{\leq\lambda}$ as $\lambda$ increases through all
    real numbers form a finite filtration of $U$.
    For the formulation of the following result we denote by $\chi^{+} := \trivial_{\ZZ_2}$ the trivial character of $\ZZ_2$ and by $\chi^{-}$ the 
    \nomenclature[al]{$\chi^{-}$}{nontrivial character of $\ZZ_2$}%
    \nomenclature[al]{$\chi^{+}$}{trivial character of $\ZZ_2$}%
    unique nontrivial character of $\ZZ_2$.

    \begin{Theorem}
      \label{thm:Z2-equivariant-BHR-theorem}
      Let $\{p_x\}_{x \in \FFF}$ be real numbers such that
      $p_{wx} = p_x$ for all $w \in W$ and $x \in \FFF$.
      Then the $\ZZ_2$-action on $\RR \CCC$ preserves the filtration 
      of $\RR \CCC$ by $\{ (\RR \CCC)_{\leq \lambda} \}$. 
      Furthermore, in the Grothendieck group
      $\Gamma(\RR[W \times \ZZ_2])$, one has
      $$
        (\RR \CCC )_{\leq \lambda}
        = \sum_{\substack{X^W \in \LLL/X:\\\lambda_X \leq \lambda}}
        \left( \Ind_{\Stab_W(X)}^W \ReducedHomology^*((V,X);\RR) \otimes \det{}_{V/X} \right)
        \otimes \left(\chi^{-} \right)^{\otimes\dim_\RR V/X}.
      $$
    \end{Theorem}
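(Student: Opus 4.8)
The plan is to prove both assertions at once --- that $\tau$ preserves the filtration $\{(\RR\CCC)_{\le\lambda}\}$, and the stated expression for $(\RR\CCC)_{\le\lambda}$ in $\Gamma(\RR[W\times\ZZ_2])$ --- by induction on $d=\dim_\RR V$, re-running the argument of \ref{Equivariant-BHR-theorem} while carrying the $\ZZ_2=\langle\tau\rangle$-action along. Write $T$ for left multiplication by $a=\sum_{x\in\FFF}p_x x$. By \ref{cor:bhr-semisimplicity}, $T$ acts semisimply on $\RR\CCC$, and likewise on each summand $\RR\CCC_X$ of each term $\KK\FFF_i$ of the Brown--Diaconis resolution \eqref{BrownDiaconisSequence}, with eigenvalues among $\{\lambda_X\}_{X\in\LLL}$, so the filtration is well defined. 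The base case $d=0$ is immediate.

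For the $\ZZ_2$-invariance I would argue as follows. Conjugation by $\tau$ (an algebra automorphism of $\KK\FFF$ preserving $\KK\CCC$) sends $a$ to $\sum_x p_x(-x)=\sum_y p_{-y}\,y$, so $\tau T\tau$ is the \BHR operator $T'$ attached to the coefficients $p'_y:=p_{-y}$, which are again nonnegative and $W$-invariant. Since $\supp(-y)=\supp(y)$ for every face, $\lambda'_X=\sum_{y:\,\supp(y)\le X}p_{-y}=\lambda_X$ for all $X$; hence $T$ and $T'$ have the same eigenvalues and both act semisimply. Because \eqref{BrownDiaconisSequence} is a complex of $\KK\FFF$-modules with semisimple $T$- (resp.\ $T'$-) action on each term and $T$- (resp.\ $T'$-) linear differentials, restricting each term to its $(\,\cdot\,)_{\le\lambda}$ summand yields an exact subcomplex, once for $T$ and once for $T'$. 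For $i\ge1$ one has $\KK\FFF_i=\bigoplus_{\dim_\RR V/X=i}\RR\CCC_X$ with $\dim X<d$, and on $\RR\CCC_X$ the operators $T$ and $T'$ restrict (through the deformed product) to \BHR operators for the restriction arrangement $\AAA|_X$ whose coefficient systems $\{p_f\}_{f\subseteq X}$ and $\{p_{-f}\}_{f\subseteq X}$ are $\Stab_W(X)$-invariant and antipodal to one another. By the inductive hypothesis for $\AAA|_X$, the antipodal involution on $\CCC_X$ carries the $T$-filtration to the $T'$-filtration, so $(\KK\FFF_i)_{\le\lambda}$ is the \emph{same} $\tau$-stable subspace of $\KK\FFF_i$ whether computed with $T$ or $T'$. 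As $\partial_1\colon\KK\FFF_1\to\KK\FFF_0=\RR\CCC$ is a fixed geometric map, $\partial_1\bigl((\KK\FFF_1)_{\le\lambda}\bigr)=:B_\lambda$ is then one subspace of $\RR\CCC$ in both cases. Exactness of the truncated complex at $\RR\CCC$ and at $\KK$ finishes this step: if $\lambda<\lambda_{\hat{0}}$ then $\KK_{\le\lambda}=0$ and $(\RR\CCC)_{\le\lambda}=\ker\partial_0=\im\partial_1=B_\lambda$, while if $\lambda\ge\lambda_{\hat{0}}$ then $(\RR\CCC)_{\le\lambda}=\RR\CCC$; either way $(\RR\CCC)_{\le\lambda}$ coincides for $T$ and $T'=\tau T\tau$, and since $\tau$ maps the $T$-filtration onto the $T'$-filtration, $\tau$ preserves the filtration.

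For the module-theoretic formula I would then observe that, the filtration now being $\ZZ_2$-stable, the truncation $(\KK\FFF_\bullet)_{\le\lambda}\to\KK_{\le\lambda}$ is an exact complex of $\RR[W\times\ZZ_2]$-modules, so its alternating sum vanishes in $\Gamma(\RR[W\times\ZZ_2])$ and expresses $[(\RR\CCC)_{\le\lambda}]$ through the $[(\KK\FFF_i)_{\le\lambda}]$ and $[\KK_{\le\lambda}]$. The one new input beyond the proof of \ref{Equivariant-BHR-theorem} is the $W\times\ZZ_2$-structure of $\KK\FFF_i$: the summand indexed by $X$ must be twisted not only by $\det_{V/X}$ (so that $\partial_i$ respects $W$, as there) but also by $(\chi^{-})^{\otimes i}$ with $i=\dim_\RR V/X$, since $\tau=-\matid_V$ acts on the orientation line $\bigwedge^{\mathrm{top}}(V/X)$ by $(-1)^i$, and this is precisely the sign making the incidence coefficients $[x:y]$ commute with $\tau$ as well; thus in $\Gamma(\RR[W\times\ZZ_2])$
$$
  \KK\FFF_i=\bigoplus_{\substack{X^W\in\LLL/W:\\ \dim_\RR V/X=i}}
    \Bigl(\Ind_{\Stab_W(X)}^W\bigl(\RR\CCC_X\otimes\det{}_{V/X}\bigr)\Bigr)\otimes(\chi^{-})^{\otimes i}.
$$
Feeding in the inductive hypothesis to describe $(\RR\CCC_X)_{\le\lambda}$ as a $\Stab_W(X)\times\ZZ_2$-module, then using transitivity of induction together with the evident compatibilities $\det_{Y/X}\otimes\det_{V/Y}=\det_{V/X}$ and $(\chi^{-})^{\otimes\dim_\RR Y/X}\otimes(\chi^{-})^{\otimes\dim_\RR V/Y}=(\chi^{-})^{\otimes\dim_\RR V/X}$, and finally the equivariant M\"obius recurrence \ref{equivariant-Mobius-recurrence} for $W\times\ZZ_2$ acting on the interval $[V,X]$ in $\LLL$ --- on which $\ZZ_2$ acts trivially, since $\tau$ fixes every intersection subspace --- collapses the inner sum over $Y$ and leaves exactly the asserted identity. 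This is verbatim the chain of manipulations in the proof of \ref{Equivariant-BHR-theorem}, tensored uniformly with the appropriate power of $\chi^{-}$.

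The main obstacle I anticipate is the sign bookkeeping underlying the displayed description of $\KK\FFF_i$: one must check carefully that the antipodal map $\tau$ on the cellular chain complex of the zonotope dual to $\AAA$ introduces precisely the factor $(-1)^{\operatorname{codim}X}$ on the $X$-summand, so that the $\bigl(\det_{V/X}\otimes(\chi^{-})^{\otimes\operatorname{codim}X}\bigr)$-twisted complex really is a complex of $W\times\ZZ_2$-modules. A secondary point requiring care is that restricting \eqref{BrownDiaconisSequence} to the $(\,\cdot\,)_{\le\lambda}$-part preserves exactness, which rests on the $T$-semisimplicity (via \ref{cor:bhr-semisimplicity}) of every term $\RR\CCC_X$, including those attached to the sub-arrangements $\AAA|_X$. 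Beyond these, the argument is a routine adaptation of the $W$-equivariant one already carried out.
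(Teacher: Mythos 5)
Your proposal is correct and follows essentially the same route as the paper: induction on $\dim_\RR V$ via the Brown--Diaconis resolution \eqref{BrownDiaconisSequence} restricted to the $(\,\cdot\,)_{\leq\lambda}$ pieces, with \ref{cor:bhr-semisimplicity} guaranteeing exactness of the truncated complex and \ref{equivariant-Mobius-recurrence} collapsing the resulting alternating sum. The only cosmetic differences are that you obtain $\ZZ_2$-stability by comparing $T$ with its conjugate $\tau T \tau$ (the paper simply notes that $\tau$ commutes with $\partial_1$ and invokes the inductive stability of each $(\RR\CCC_H)_{\leq\lambda}$), and that you verify the twist $(\chi^{-})^{\otimes\dim_\RR V/X}$ by a direct sign computation on the zonotopal chain complex, whereas the paper gets the same bookkeeping for free by applying \ref{Equivariant-BHR-theorem} to the enlarged group $W \times \langle -\matid_V \rangle$.
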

    \begin{proof}
      We first show $(\RR \CCC )_{\leq \lambda}$ is $\ZZ_2$-stable.
      Since all eigenvalues of $T$ on $\RR \CCC$ lie in
      $\{\lambda_X\}_{X \in \LLL}$ by \ref{cor:bhr-semisimplicity}, 
      when  $\lambda \geq \lambda_V$ one has $(\RR \CCC)_{\leq \lambda}=\RR \CCC$
      and the stability is trivial.
      When $\lambda < \lambda_V$, one can induct on the dimension $d$ of the ambient space 
      (with the base case $d=0$
      trivial as before) using the exact sequence \eqref{BrownDiaconisSequence}
      restricted to the spaces $(\RR \FFF_i)_{\leq \lambda}$.
      From this restricted exact sequence one concludes that
      $$
        (\RR \CCC)_{\leq \lambda} =
        (\im \partial_1)_{\leq \lambda} =
        \partial_1 \left( \bigoplus_{H \in \AAA} (\RR \CCC_H)_{\leq \lambda} \right).
      $$
      Since $(\RR \CCC_H)_{\leq \lambda}$ is $\ZZ_2$-stable by induction,
      and since the $\ZZ_2$-action does commute with the $\partial_i$,
      this shows that $(\RR \CCC)_{\leq \lambda}$ is $\ZZ_2$-stable.

      Regarding the description of the $\RR[W \times \ZZ_2]$-module
      structure of $(\RR \CCC)_{\leq \lambda}$, one also checks this
      in two cases.  When $\lambda \geq \lambda_V$ so that 
      $(\RR \CCC)_{\leq \lambda}=\RR \CCC$, it follows by
      applying \ref{Equivariant-BHR-theorem} to the finite subgroup 
      $$
        \hat{W} := W \times \ZZ_2 = W \times \langle -\matid_V \rangle \subset GL(V)
      $$
      and noting that 
      \begin{enumerate}
        \item[$\bullet$] $\ZZ_2$ acts trivially on the lattice $\LLL$, 
        \item[$\bullet$] so in particular, $W$ and $\hat{W}$ have the same orbits on $\LLL$, and
        \item[$\bullet$] the $\ZZ_2$-characters $\left(\chi^{-} \right)^{\otimes\dim_\RR V/X}$
          and $\det_{V/X}$ are the same when the generator of $\ZZ_2$ acts by $-\matid_V$
          on $V$.
      \end{enumerate}
      When $\lambda < \lambda_V$, one proceeds by induction on $d$ using
      the exact sequence \eqref{BrownDiaconisSequence}
      restricted to the $(\RR \FFF_i)_{\leq \lambda}$,
      proceeding exactly as in the proof of \ref{Equivariant-BHR-theorem}.
    \end{proof}

    \begin{Example}
      It is worth examining the $d=1$ case of the preceding results in detail.
      Here the central arrangement $\AAA$ inside the real line $V=\RR^1$
      has two chambers $c,c'$, separated by the unique hyperplane $H=\{0\}$.
      So the face semigroup $\FFF$ is $\{H, c, c'\}$, and $H$ is the identity
      element of $\FFF$.
      The sequence \eqref{BrownDiaconisSequence} is 
      $$
        \begin{array}{ccccccc}
          0 \rightarrow & \RR \FFF_1& \overset{\partial_1}{\rightarrow} &\RR\FFF_0&
            \overset{\partial_0}{\rightarrow} & \RR & \rightarrow 0 \\
                        &\Vert& &\Vert& &\Vert&  \\
                        &\RR \{H\}& &\RR \{c,c'\}& &\RR\{\varnothing\}& 
        \end{array}
      $$
      with  $\partial_1(H)=c -c'$ and $\partial_0(c) = \partial_0(c') =\varnothing$.
      Let $T$ act by the element 
      $$
        a=p_0 H + p c + p' c'
      $$
      in $\RR \FFF$.  Then the $\RR[T]$-module 
      structure on $\RR \FFF_1$ has $T$ scaling $H$ by $p_0$,
      and on $\RR \FFF_{-1}$ has $T$ scaling $\varnothing$ by $p_0+p+p'$,
      while on $\RR \FFF_0$, the element $T$ acts in the ordered basis $(c,c')$ by
      $$
        \left[
          \begin{matrix}
            p_0+p & p \\
            p'    & p_0 + p'.
          \end{matrix}
        \right]
      $$
      Changing to an ordered basis of $T$-eigenvectors $(c-c',\,\, p c + p' c')$,
      will diagonalize the action of $T$ on $\RR \CCC$:
      $$
        \left[
          \begin{matrix}
            p_0 & 0 \\
            0   & p_0 + p + p'
          \end{matrix}
        \right].
      $$
      Note that 
      $$
        (\RR \CCC)_{\lambda_V} = (\RR \CCC)_{p_0 + p+ p'} 
                               = \RR \{p c + p' c'\}
      $$
      is not $\ZZ_2$-stable unless $p=p'$. However
      $$
        (\RR \CCC)_{\lambda_{\{0\}}} = (\RR \CCC)_{p_0}
        = \RR\{c-c'\}
      $$
      is always $\ZZ_2$-stable.
    \end{Example}

    \begin{Remark}
      \ref{thm:Z2-equivariant-BHR-theorem} suggests a conjectural
      stronger statement in the case of a reflection arrangement $\AAA$
      corresponding to reflection group $W$,
      tying in with the work of
      Hanlon and Hersh \cite{HanlonHersh2004} in type $A$.  We discuss this
      briefly here. 

      For a reflection arrangement, one can identify the $W \times \ZZ_2$-action on
      $\KK \CCC$ discussed above with the $W \times \ZZ_2$-action on $\KK W$ 
      where $W$ acts via left-translation
      and the generator of $\ZZ_2$ acts via right-translation by $w_0$.
      Note that here the face semigroup $\KK \FFF$ also acts on the left on the ideal
      $\KK \CCC$ inside $\KK \FFF$.

      Since $w_0$ is the unique element of $W$ having descent set all of $S$,
      it not only lies in the group algebra $\KK W$, but also inside
      the {\deffont descent algebra}, which is spanned by the sums over $w$ in $W$ having a 
      \index{descent!algebra}%
      \index{algebra!descent}%
      {\deffont fixed descent set}.  
      \index{descent!set}%
      Furthermore in type $A_{n-1}$,
      it lies in a subalgebra called the {\deffont Eulerian subalgebra},
      \index{Eulerian!subalgebra}%
      \index{subalgebra!Eulerian}%
      \index{algebra!Eulerian sub-}%
      spanned by the sums over $w$ in $W$ having%
      \footnote{Note that Hanlon and Hersh \cite{HanlonHersh2004} and other authors
       put a coefficient of $\det(w)$ in front of each $w$ in the sum. Thus for our purposes we need to 
       twist by the automorphism $w \mapsto \det(w) \cdot w$ in order to compare our \BHR
       operators with the {\emphfont signed} random-to-top shuffle operator they are using.} 
      a {\deffont fixed number of descents.}  There is a complete system of orthogonal
      \index{descent}%
      \index{descent!number}%
      \index{number of descents}%
      idempotents for this Eulerian subalgebra known as the {\deffont Eulerian idempotents}
      \index{Eulerian!idempotent}%
      $\{\idem^{(j)}_n\}_{j=1,2,\ldots,n}$ defined by the generating function
        \begin{align}
        \label{eulerianidempotents}
        \sum_{j=1}^n \idem^{(j)}_n t^j = \frac{1}{n!}\sum_{\sigma\in\symm_n}
        (t - \operatorname{des}(\sigma))^{\uparrow{n}} \sigma,
        \end{align}
      where $\operatorname{des}(\sigma)$ is the number of descents of the
      permutation $\sigma \in \symm_n$ and $t^{\uparrow{n}}$ denotes the
      {\deffont increasing factorial} $t^{\uparrow{n}} = t(t+1)(t+2) \cdots (t+n-1)$.
      These idempotents decompose spaces $U$ on which the Eulerian subalgebra
      acts into their {\deffont Hodge decomposition} $U = \oplus_j U\idem^{(j)}_n$,
      \index{Hodge decomposition}%
      and have the property that 
      $$
        \begin{aligned}
          1  &= \sum_{j=1}^n \idem^{(j)}_n,\\
          (-1)^n w_0&= \sum_{j=1}^n (-1)^j \idem^{(j)}_n.
        \end{aligned}
      $$
      (These identifies can be proved by taking $t=1$ and $t=-1$, respectively,
      in \ref{eulerianidempotents}.) Consequently, the two projectors onto the
      $\chi^+$ and $\chi^-$-isotypic components for the group $\ZZ_2=\{1,w_0\}$
      can be expressed as
      $$
        \begin{aligned}
          \frac{1}{2}\left(1+(-1)^nw_0\right)&= \sum_{j \text{ even}} \idem^{(j)}_n\\
          \frac{1}{2}\left(1-(-1)^nw_0\right)&= \sum_{j \text{ odd}} \idem^{(j)}_n.\\
        \end{aligned}
      $$
      In light of this, the following result generalizes \ref{Equivariant-BHR-theorem}
      as well as the results of \cite[Section 2]{HanlonHersh2004}.

      \begin{Theorem}[Saliola]
        Let $\AAA$ be a hyperplane arrangement, $\LLL$ its intersection
        lattice (ordered by reverse-inclusion, as usual), and $\FFF$ its
        semigroup of faces.
        \begin{enumerate}
          \item[(i)]
            There is a natural filtration of $\KK\CCC$ by $\KK\FFF$-modules
            indexed by the order ideals of $\LLL$. Explicitly, there is an
            inclusion-reversing map $\III \mapsto U_\III$ where $\III$ is any
            order ideal of $\LLL$ and
            \begin{align*}
            U_\III := \{ a \in \KK\CCC : xa = 0 \text{ for every } x \in \FFF \text{ with } \supp(x) \in \III \}.
            \end{align*}
          \item[(ii)]
            For any choice of $a \in \RR\FFF$ giving a $\RR[T]$-module
            structure on $\RR\CCC$, this poset-filtration refines
            the $T$-eigenvalue filtration $(\RR \CCC)_{\leq \lambda}$
            in the following fashion:
            one has $(\RR \CCC)_{\leq \lambda}=U_\III$ for the
            order ideal $\III=\{X \in \LLL: \lambda_X \leq \lambda\}$.
          \item[(iii)]
            Now assume $\AAA$ is the type $A_{n-1}$ reflection arrangement and the
            $p_x$ are chosen $W$-invariant, so that the $W$-invariant subalgebra
            of $\RR\FFF$ acts on $\RR\CCC \cong \RR W$, and
            can be identified with the descent algebra acting on the right
            \index{descent!algebra}%
            \index{algebra!descent}%
            within $\RR W$.  Then for any $W$-stable order ideal $\III$ of $\LLL$,
            the $j^{th}$ Hodge decomposition component
            $U_\III \idem^{(j)}_n$ of the poset-filtration space $U_\III$
            carries $\RR W$-module structure isomorphic to
            $$
              U_\III \, \idem^{(j)}_n \cong \sum
              \Ind_{\Stab_W(X)}^W \left(\ReducedHomology^*((V,X);\RR) \otimes \det{}_{V/X}\right),
            $$
            where the sum ranges over all $W$-orbits $X^W$ in $\LLL/W$ with $X\in\III$
            and $\dim_\RR(X)=j$.
        \end{enumerate}
      \end{Theorem}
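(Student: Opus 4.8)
The plan is to deduce all three parts from the structure theory of the left-regular band algebra $\KK\FFF$ (Saliola \cite{Saliola2008}, together with the projective resolution of \cite{Saliola2009}), combined with the equivariant M\"obius machinery of \ref{equivariant-Mobius-recurrence} and \ref{Equivariant-BHR-theorem}. For part (i), I would first check directly that $U_\III$ is a $\KK\FFF$-submodule of $\KK\CCC$: if $a\in U_\III$ and $y\in\FFF$, then for any $x$ with $\supp(x)\in\III$ one has $x(ya)=(xy)a$ and $\supp(xy)=\supp(x)\vee\supp(y)\ge\supp(x)$, so $\supp(xy)$ again lies in the order ideal $\III$ and hence $(xy)a=0$; thus $ya\in U_\III$. (Here I am using the order convention on $\LLL$ under which $\supp$-joins move one \emph{into} $\III$; matching this with the conventions of \ref{prop:Bidigare} is bookkeeping.) Inclusion-reversibility of $\III\mapsto U_\III$ is immediate, since a larger $\III$ imposes more vanishing conditions. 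To see that these submodules realize a filtration with the advertised subquotients I would invoke Saliola's orthogonal idempotents $\{e_X\}_{X\in\LLL}$ of $\KK\FFF$, obtained by lifting the primitive idempotents of the commutative quotient $\KK\LLL$ through the support map: they give $\KK\CCC=\bigoplus_{X\in\LLL}e_X\KK\CCC$, each piece being of the same type as a chamber module of a localized arrangement, and one checks that $U_\III$ is precisely the span of those $e_X\KK\CCC$ with $X\notin\III$ (equivalently, $f_\III:=\sum_{X\in\III}e_X$ kills $U_\III$ and is the identity on a complement). Moving between two consecutive order ideals then peels off one idempotent component.

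For part (ii), the key observation is that for a fixed $a=\sum_x p_x x$ the $T$-eigenspace decomposition refines the idempotent decomposition: on $e_X\RR\CCC$ the element $a$ acts by the single scalar $\lambda_X$, which is exactly the content of the generating-function computation behind \ref{cor:bhr-semisimplicity} and \ref{Equivariant-BHR-theorem}, read off component by component. Since $X\mapsto\lambda_X$ is monotone along $\LLL$ and $\{X:\lambda_X\le\lambda\}$ is an order ideal, summing the idempotent components indexed by that order ideal yields on the one hand $U_\III$ and on the other hand the eigenvalue-filtration step $(\RR\CCC)_{\le\lambda}$, and equating the two descriptions proves the claim. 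A small preliminary is to note that $(\RR\CCC)_{\le\lambda}$ really is a $\KK\FFF$-submodule; this follows from $y\cdot(\RR\CCC)_{\lambda_X}\subseteq\bigoplus_{Z\ge X}(\RR\CCC)_{\lambda_Z}$ together with the fact that $Z\ge X$ forces $\lambda_Z\le\lambda_X$.

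For part (iii), let $\AAA$ be the type $A_{n-1}$ reflection arrangement with $W$-invariant coefficients $p_x$. By Bidigare's identification \ref{prop:Bidigare}, the $W$-invariant subalgebra $(\RR\FFF)^W$ acting on $\RR\CCC\cong\RR W$ is the descent algebra acting by right multiplication; in particular the Eulerian idempotents $\idem^{(j)}_n$ act on every $\KK\FFF$-submodule and commute with the left $W$-action, so each $U_\III$ (for $W$-stable $\III$) splits $W$-equivariantly as $\bigoplus_j U_\III\,\idem^{(j)}_n$. It remains to identify $U_\III\,\idem^{(j)}_n$ as a $W$-module. I would show that $\idem^{(j)}_n$ acts as the identity on $e_X\RR\CCC$ exactly when $\dim_\RR X=j$: the eigenspace $e_X\RR\CCC$ occurs in the $T$-decomposition for every admissible choice of $a$, in particular for the (suitably $\det$-twisted) random-to-top shuffle, for which Hanlon--Hersh \cite{HanlonHersh2004} compute that the Hodge degree of each eigenspace equals the rank of the corresponding flat; this pins down the Hodge degree of $e_X\RR\CCC$ intrinsically. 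Feeding this into part (i) gives $U_\III\,\idem^{(j)}_n=\bigoplus_{X^W:\,X\in\III,\ \dim_\RR X=j}e_X\RR\CCC$, and \ref{Equivariant-BHR-theorem} identifies each summand with $\Ind_{\Stab_W(X)}^W\bigl(\ReducedHomology^*((V,X);\RR)\otimes\det{}_{V/X}\bigr)$, yielding the stated formula.

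The heart of the matter, and the step I expect to be the main obstacle, is part (iii): one must prove that Saliola's idempotent decomposition of $\KK\CCC$ \emph{is} the common refinement of the order-ideal filtration, the $T$-eigenvalue filtration for every $W$-invariant $a$, and the Eulerian/Hodge grading, and in particular that on the type-$A$ nose the Hodge degree of a component records the rank of the underlying flat rather than merely agreeing with it numerically. Making this precise requires carefully translating between the face-semigroup picture used here and the descent-algebra picture of \cite{HanlonHersh2004}, tracking the $\det$-twists on each $\KK\CCC_X$ and the sign convention $w\mapsto\det(w)\,w$ flagged in the footnote. Everything else reduces to bookkeeping with \ref{equivariant-Mobius-recurrence} and the \BHR theory already established above.
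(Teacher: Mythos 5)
The first thing to note is that the paper does not prove this theorem at all: immediately after the statement it says a proof ``would lead us too far afield'' and defers it to the separate paper \cite{Saliola-EulerianIdempotents}. So there is no in-paper argument to compare yours against, and your proposal has to stand on its own; as written, it does not.

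The step that fails outright is the mechanism you use for part (ii). You claim that $a=\sum_x p_xx$ acts by the single scalar $\lambda_X$ on each piece $e_X\RR\CCC$ of the idempotent decomposition, and then obtain both $U_\III$ and $(\RR\CCC)_{\leq\lambda}$ as partial sums of these pieces. The fixed decomposition $\KK\CCC=\bigoplus_X e_X\KK\CCC$ is not stable under left multiplication by $a$: already in the paper's rank-one example ($\FFF=\{H,c,c'\}$, $a=p_0H+pc+p'c'$) Saliola's construction gives $e_V=c$, $e_{\{0\}}=H-c$, so $e_V\RR\CCC=\RR c$, while $a\cdot c=(p_0+p)c+p'c'\notin\RR c$ whenever $p'\neq 0$. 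More structurally, for $n\geq 3$ the invariant subalgebra is the (noncommutative) descent algebra, and the paper itself remarks at the end of \ref{sec:second-family} that the operators $b_J$ do not pairwise commute even though each is diagonalizable; hence no single fixed decomposition can be the common eigenspace decomposition for all admissible $a$, which is exactly what your ``read off component by component'' argument asserts. What is true is only a triangularity/filtration statement ($a$ preserves the $U_\III$ and acts by $\lambda_X$ on subquotients), and proving that the resulting filtration has the right size and coincides with the eigenvalue filtration (e.g.\ via a dimension count against the \BHR multiplicities $|\mu(V,X)|$, or via the precise annihilation properties of the idempotents of \cite{Saliola2008,Saliola2009} that you dismiss with ``one checks'') is the actual content of (i)--(ii), not something that follows from \ref{cor:bhr-semisimplicity}. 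Relatedly, the convention issue you wave off as bookkeeping is load-bearing: your submodule check uses that $\supp(xy)\geq\supp(x)$ lands back in $\III$, which is the filter rather than the ideal closure under the stated order, and with the literal definitions the rank-one example has $U_{\{X:\lambda_X\leq\lambda\}}=0$ while $(\RR\CCC)_{\leq\lambda}=\RR(c-c')$; so one must fix the pairing of conventions before (ii) can even be stated correctly, let alone proved.

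For part (iii) you correctly identify the heart of the theorem --- that the Eulerian/Hodge grading of $U_\III$ matches the dimension grading of the flats --- but you do not supply it. Your proposed route (pin down the Hodge degree of each $e_X\RR\CCC$ by comparing with the signed random-to-top operator of \cite{HanlonHersh2004}) both rests on the false scalar-action claim above and defers precisely the delicate translation (left face-algebra action versus right descent-algebra action under \ref{prop:Bidigare}, the $\det_{V/X}$-twists on each $\KK\CCC_X$, and the sign twist $w\mapsto\det(w)\,w$) that makes the statement nontrivial. In short, your outline is a plausible sketch of the kind of argument one expects in \cite{Saliola-EulerianIdempotents}, but parts (ii) and (iii) are not established by what you wrote.
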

      A proof of this theorem would lead us too far afield; it will be
      published separately \cite{Saliola-EulerianIdempotents}.
    \end{Remark}

  \subsection{Consequences for the kernels}

    For $\AAA$ a real hyperplane arrangement and $W$ a finite group
    of linear symmetries, introduce a notation for the following $\RR W$-modules
    that recur in the $W$-equivariant \BHR theory:
    \begin{equation}
      \label{eqn:Whitney-homology}
         \WH_{\OOO_X} = \Ind_{\Norma_W(X)}^W \left(\ReducedHomology^*((V,X);\RR) \otimes \det{}_{V/X}\right)
    \end{equation}
    where $\OOO_X := X^W$ is the $W$-orbit on $\LLL$ represented by
    \nomenclature[ar]{$\OOO_X$}{orbit of subspace $X \in \LLL$ under group $W$}%
    \nomenclature[ar]{$\WH_{\OOO_X}$}{$\Ind_{\Norma_W(X)}^W \ReducedHomology^*((V,X);\RR) \otimes \det{}_{V/X}$}%
    some subspace $X$.
    The module $\WH_{\OOO_X}$ is {\emphfont almost} a submodule of the {\deffont Whitney cohomology} $\WH^*(P;\RR)$ with
    \index{Whitney cohomology}%
    \index{cohomology!Whitney}%
    \nomenclature[ar]{$\WH^*(P;\RR)$}{Whitney cohomology of a poset $P$ with real coefficients}%
    real coefficients of a poset $P$ with unique minimal element $\hat{0}$. The latter was introduced by
    Baclawski \cite{Baclawski1975} by truncating the usual differential of the simplicial cochain complex.
    It follows that $\WH^* (P;\RR) := \bigoplus_{p \in P} \ReducedHomology^*((\hat{0},p);\RR)$. 
    From the definition it is obvious that if a finite group $W$ acts as a group of 
    poset automorphisms on $P$ then $\WH^*(P;\RR)$ becomes a $W$-module with submodule
    $\bigoplus_{p \in \OOO} \ReducedHomology^*((\hat{0},p);\RR) \cong \Ind_{\Stab_W(q)}^W \ReducedHomology^*((\hat{0},q);\RR)$ 
    for any $W$ orbit $\OOO$ of $P$ and $q \in \OOO$.
    Clearly, if $W$ is a finite subgroup of $\GL(V)$ acting on $\AAA$ then $W$ acts on $\LLL$ and except for the
    twist with $\det{}_{V/X}$ our module $\WH_{\OOO_X}$ coincides with a submodule of the Whitney cohomology of $\LLL$.
    We have chosen this twist since if facilitates the formulation of our applications of Whitney cohomology.

    Also, define a partial order on the $W$-orbits $\OOO$ in $\LLL/W$
    by setting $\OOO \leq \OOO'$ if there exist representatives $X, X'$
    in $\OOO,\OOO'$ with $X \leq X'$ in $\LLL$, that is $X' \subseteq X$.

    \begin{Corollary}
      \label{cor:general-equivariant-kernel}
      For $\OOO \subseteq \LLL$ a single $W$-orbit, one has
      $$
        \begin{aligned}
          \ker(\nu_\OOO) & \cong 
          \bigoplus_{\substack{\OOO' \in \LLL/W:\\ \OOO' \not\leq \OOO}} 
          \WH_{\OOO'} \otimes (\chi^-)^{\otimes \dim_\RR(V/X)} \\
          \im(\nu_\OOO) & \cong 
          \bigoplus_{\substack{\OOO' \in \LLL/W: \\  \OOO' \leq \OOO}} 
          \WH_{\OOO'} \otimes (\chi^-)^{\otimes \dim_\RR(V/X)} \\
        \end{aligned}
      $$
      as $\RR[W \times \ZZ_2]$-modules.
    \end{Corollary}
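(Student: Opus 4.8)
The plan is to reduce $\ker\nu_\OOO$ to the $W\times\ZZ_2$-equivariant \BHR eigenspace theory of \ref{sec:BHR}, and then obtain $\im\nu_\OOO$ as its orthogonal complement. First I would combine \ref{prop:rectangular-square-root} (which gives $\ker\nu_\OOO=\ker\pi_\OOO$ over $\RR$) with \ref{cor:second-square-root-is-bhr}: choosing $J\subseteq S$ so that $X_0:=\supp(x(J))$ represents $\OOO$ (hence $\Centr_W(X_0)=W_J$), the operator $\nu_\OOO$ on $\RR W\cong\RR\CCC$ equals $\tfrac1{n_{X_0}}\,b_J^{T}b_J$, where $b_J$ is the \BHR operator of left multiplication on $\RR\CCC$ by $a:=\sum_{y\in x(J)^W}y\in(\RR\FFF)^W$, and in particular $\ker\nu_\OOO=\ker b_J$. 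All the maps in sight (and $\nu_\OOO$ itself, since $\ninv_\OOO(-c,-c')=\ninv_\OOO(c,c')$) commute with the $\ZZ_2$-action by the antipode $x\mapsto -x$ on $\FFF$, i.e. by right multiplication by $w_0$ on $\RR W$, so $\ker\nu_\OOO$ and $\im\nu_\OOO$ are genuine $\RR[W\times\ZZ_2]$-submodules of $\RR\CCC$ and it suffices to identify them as such.

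Since $a$ has nonnegative $W$-invariant coefficients, \ref{cor:bhr-semisimplicity} shows that $b_J$ acts semisimply with all eigenvalues in $\{\lambda_X\}_{X\in\LLL}\subseteq[0,\infty)$; therefore $\ker b_J$ is the $0$-eigenspace of $b_J$ on $\RR\CCC$, which coincides with the filtration piece $(\RR\CCC)_{\le0}$. I would then invoke \ref{thm:Z2-equivariant-BHR-theorem} with $\lambda=0$ to obtain, in the Grothendieck group $\Gamma(\RR[W\times\ZZ_2])$,
$$
  \ker\nu_\OOO=(\RR\CCC)_{\le0}
  =\sum_{\substack{X^W\in\LLL/W:\\ \lambda_X=0}}
    \big(\Ind_{\Norma_W(X)}^{W}\ReducedHomology^*((V,X);\RR)\otimes\det{}_{V/X}\big)\otimes(\chi^{-})^{\otimes\dim_\RR V/X},
$$
and the summand indexed by $X^W$ is exactly $\WH_{\OOO_X}\otimes(\chi^{-})^{\otimes\dim_\RR(V/X)}$ by definition \ref{eqn:Whitney-homology} (using $\Stab_W(X)=\Norma_W(X)$).

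The remaining point, which I expect to be the main bookkeeping obstacle, is to show that $\{X^W:\lambda_X=0\}$ is precisely $\{\OOO'\in\LLL/W:\OOO'\not\le\OOO\}$ for the orbit order introduced just before the statement. Unwinding the definition of $\lambda_X$, and using that $\supp$ is $W$-equivariant with $\supp(x(J))=X_0$ and that each fibre of $\supp\colon x(J)^W\to\OOO$ has exactly $n_{X_0}=[\Norma_W(X_0):\Centr_W(X_0)]$ elements, one gets $\lambda_X=n_{X_0}\cdot\big|\{Y\in\OOO:Y\subseteq X\}\big|$. Hence $\lambda_X>0$ exactly when some member of $\OOO$ is a subspace of $X$, which (translating by a group element) is equivalent to $\OOO_X\le\OOO$; so $\lambda_X=0\iff\OOO_X\not\le\OOO$, giving the asserted description of $\ker\nu_\OOO$. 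The only delicate issue here is getting the direction of the containment right relative to the reverse-inclusion convention on $\LLL$; I would sanity-check it against the rank-one case ($\OOO$ a single $W$-orbit of hyperplanes), where it must reproduce $\im\nu_\OOO\cong\trivial_W\oplus\Ind_{\Centr_W(s)}^{W}\chi$ from \ref{subsec:rank-one-proof}.

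Finally, because $\nu_\OOO$ is symmetric we have $\RR\CCC=\ker\nu_\OOO\oplus\im\nu_\OOO$ as $\RR[W\times\ZZ_2]$-modules. Taking $\lambda\ge\lambda_V$ in \ref{thm:Z2-equivariant-BHR-theorem} (equivalently, the $W\times\ZZ_2$-equivariant refinement of \ref{cor:chamber-representation}) gives $\RR\CCC=\bigoplus_{\OOO'\in\LLL/W}\WH_{\OOO'}\otimes(\chi^{-})^{\otimes\dim_\RR(V/X)}$; subtracting the kernel just computed leaves $\im\nu_\OOO=\bigoplus_{\OOO'\le\OOO}\WH_{\OOO'}\otimes(\chi^{-})^{\otimes\dim_\RR(V/X)}$, as claimed.
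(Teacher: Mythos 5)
Your argument is correct and follows essentially the same route as the paper: identify $\ker\nu_\OOO$ with $\ker b_J=(\RR\CCC)_{\leq 0}$ via \ref{cor:second-square-root-is-bhr}, apply \ref{thm:Z2-equivariant-BHR-theorem} at $\lambda=0$, observe that $\lambda_X=0$ exactly for the orbits $\OOO'\not\leq\OOO$, and obtain $\im\nu_\OOO$ by complementing inside the full decomposition of $\RR\CCC$ using self-adjointness. Your explicit formula $\lambda_X=n_{X_0}\cdot|\{Y\in\OOO:Y\subseteq X\}|$ and the appeal to the $\ZZ_2$-refined decomposition of all of $\RR\CCC$ are slightly more detailed than the paper's citations of \ref{cor:general-equivariant-kernel}'s ingredients, but the substance is the same.
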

    \begin{proof}
      By the semisimplicity of the self-adjoint operator
      $\nu_\OOO$ acting on $\RR \CCC$, together
      with \ref{cor:chamber-representation},
      it suffices to prove the assertion about $\ker(\nu_\OOO)$.

      \ref{cor:second-square-root-is-bhr} tells us that
      $\ker(\nu_\OOO) = \ker(b_J)$ where $b_J$ is a \BHR-operator
      that has $p_x > 0$ if and only if $x$ is in the $W$-orbit of
      some particular face $x(J)$ whose support subspace $X_0$ 
      lies in the $W$-orbit $\OOO$.  Since $\ker(b_J) = (\KK\CCC)_{\leq 0}$
      for this \BHR-operator $b_J$, one deduces from 
      \ref{thm:Z2-equivariant-BHR-theorem} that
      its kernel carries the  $\RR[W \times \ZZ_2]$-module
      structure which is the sum of 
      $\WH_{\OOO'} \otimes (\chi^-)^{\otimes \dim_\RR(V/X)}$
      over those $W$-orbits $\OOO'$ for which each representative
      subspace $X$ has $\lambda_X=\sum_{x \subset X}p_x = 0$.
      This occurs if and only if each subspace $X$ in $\OOO'$ contains
      no face in the $W$-orbit of $x(J)$, which occurs if and only if
      $X$ contains no subspace in the $W$-orbit $\OOO$ of $X_0$,
      which occurs if and only if $ \OOO' \not\leq \OOO$.
    \end{proof}

    \begin{Example}
      \label{ex:rank-one-kernel-image}
      When $\AAA$ is the reflection arrangement for a finite real reflection
      group $W$, and $\OOO$ is the $W$-orbit of an intersection subspace
      having low codimension, \ref{cor:general-equivariant-kernel}
      says that $\ker \nu_\OOO$ will be large, and $\im \nu_\OOO$ small.

      In particular, if $\OOO$ is the $W$-orbit of some hyperplane $H$
      corresponding to a reflection $s$, then $\im \nu_\OOO$ is the following sum
      over two $W$-orbits: $\OOO$ itself and
      the singleton orbit $\{V\}$.
      $$
        \im \nu_\OOO \cong 
        \left( \Ind_{\Centr_W(s)}^W \det{}_{V/H} \otimes \chi^- \right)
        \oplus
        \left( \trivial_W \otimes \chi^+ \right).
      $$ 
    \end{Example}

    \begin{Example}
      \label{ex:two-type-A-examples}
      For future use in \ref{sec:second-family} and \ref{sec:original-family}, 
      we wish to discuss two further
      examples in which $\AAA$ is the reflection arrangement of
      type $A_{n-1}$ with $W=\symm_n$.
      Recall from \ref{ex:first-type-A} that an intersection
      subspace $X$ here corresponds to the 
      set partition $[n] = \bigsqcup_i B_i$ whose
      blocks $B_i$ tell us which coordinates $x_j$ are equal on the subspace $X$.
      The $W$-orbit $\OOO_X$ is then determined by the number partition $\lambda$ of
      $n$ whose parts $\lambda_i$ are the weakly decreasing reorderings of the block sizes $|B_i|$.
      Let $X_\lambda$ be any representative of this $W$-orbit indexed by $\lambda$.
      Note that $\dim_\RR(V/X_\lambda)=n-\ell(\lambda)$
      where $\ell(\lambda)$ is the number of parts of $\lambda$.

      If $\OOO, \OOO'$ are the orbits of $X_\lambda, X_\mu$, one finds
      that $\OOO \leq \OOO'$ if and only $\mu$ refines $\lambda$, that is,
      if one can combine some of the parts of $\mu$ to obtain $\lambda$.

      Therefore, \ref{cor:general-equivariant-kernel}
      implies that if $\OOO$ is the orbit of $X_{\lambda}$ then
      $\ker \nu_\OOO$ and $\im \nu_\OOO$, respectively, are the sums of 
      $\WH_{\OOO_\mu} \otimes (\chi^-)^{\otimes n-\ell(\mu)}$
      over all $\mu$ which do or do not, respectively, refine $\lambda$.

      An interesting instance is when $\lambda=(n-k,1^k)$, and
      the set of $\mu$ which {\emphfont do not} refine $\lambda$ are those
      $\mu$ that have at most $k-1$ parts of size $1$.  When $k=1$,
      this is the set of all $\mu$ having no parts of size $1$.

      Another interesting instance is when $\lambda=(2^k,1^{n-2k})$, and
      the set of $\mu$ which {\emphfont do} refine $\lambda$ are those
      of the form $\mu=(2^j,1^{n-2j})$ for $j \leq k$.
    \end{Example}

  \subsection{Reformulation of $\WH_{\OOO_X}$}

    When $\AAA$ is the reflection arrangement for a finite real reflection
    group $W$, the representation  $\WH_{\OOO_X}$ in \eqref{eqn:Whitney-homology}
    has some well-known extra features
    and reformulations, which we discuss below.  When $W=\symm_n$, 
    there are even more reformulations, also discussed below, some of
    which will be used in \ref{sec:second-family} and \ref{sec:original-family}.

    \subsubsection{Reformulations for any reflection group}

      Our first reformulation originates in topology.  Let $\AAA$ be
      an arrangement of (real) hyperplanes in $V=\RR^d$. The chambers $\CCC$ of $\AAA$
      are the connected components of the complement. Since each of them is easily seen
      to be contractible the complement is not an {\emphfont interesting} topological space.
      One gains interesting topology when one extends scalars to $\CC$ and considers the arrangement
      $\AAA \otimes \CC$ of complex hyperplanes $H \otimes \CC$, $H \in \AAA$, in $\CC^d$ defined by the
      same linear forms as $H$.
      We call $\AAA \otimes \CC$ the {\deffont complexification} of $\AAA$ in $V_\CC = \CC^d$.
      The
      \index{complexification of an arrangement}%
      \index{arrangement!complexification}%
      {\deffont complexified complement}
      \index{complexified complement}%
      \index{arrangement!complexified complement}%
      $$
        M_\AAA:= \CC^d \setminus \bigcup_{H \in \AAA} H \otimes \CC
      $$
      is a rich and complicated mathematical object (see for example \cite{OrlikTerao1992}).
      It has cohomology algebra $\OS(\AAA):=\Homology^*(M_\AAA;\RR)$ described by
      \nomenclature[ar]{$\OS(\AAA)$}{Orlik-Solomon of $\AAA$}%
      the {\deffont Orlik-Solomon presentation} \cite[Theorem 5.2]{OrlikSolomon1980a},
      which we now recall.
      \index{Orlik-Solomon!presentation}%
      \index{cohomology!algebra}%
      \index{algebra!cohomology}%
      
      Choose for each hyperplane $H \in \AAA$ a linear form $\ell_H : \CC^d \rightarrow \CC$ with 
      kernel $H \otimes \CC$. Then there is an $\RR$-algebra surjection from the exterior
      algebra $\bigwedge \AAA$ over $\RR$ on generators $\{\unitbase_H\}_{H \in \AAA}$
      onto the cohomology algebra $\Homology^*(M_\AAA;\RR)$
      \index{exterior algebra}%
      \index{algebra!exterior}%
      $$
        \begin{aligned}
          \bigwedge(\AAA) & \longrightarrow \Homology^*(M_\AAA;\RR) \\
          \unitbase_H & \longmapsto \frac{d \ell_H}{\ell_H}
        \end{aligned}
      $$
      whose kernel is generated by the elements
      $$
        \sum_{s=0}^t (-1)^s \unitbase_{H_1} \wedge \cdots \wedge 
         \widehat{\unitbase_{H_s}} \wedge \cdots \wedge \unitbase_{H_t}
      $$
      as $\{H_1,\ldots,H_t\}$ runs through all (minimal) 
      subsets of hyperplanes in $\AAA$ that are dependent (in the sense
      that $\bigcap_{i=1}^t H_i$ has codimension strictly less than $t$).
      The algebra $\OS(\AAA)$ is called the {\deffont Orlik-Solomon algebra} of
      $\AAA$.
      \index{algebra!Orlik-Solomon}%
      \index{Orlik-Solomon!algebra}%
      Note that the result by Orlik and Solomon holds even for
      integer coefficients. We use coefficients in the real numbers 
      since we will consider the Orlik-Solomon algebra as a module.

      The above presentation of the Orlik-Solomon algebra leads to a direct sum decomposition 
      of $\OS(\AAA)$ that comes
      from the subspaces $\OS(\AAA)_X$ which are the images of the
      decomposable wedges $\unitbase_{H_1} \wedge \cdots \wedge \unitbase_{H_t}$ having
      $\bigcap_{i=1}^t H_i=X$ for some fixed $X$ in $\LLL$:
      $$
        \OS(\AAA) = \bigoplus_{X \in \LLL} \OS(\AAA)_X.
      $$

      \begin{Proposition}[Theorem 5.2 \cite{OrlikSolomon1980a} and Lemma 2.5 \cite{LehrerSolomon1986}]
        For any arrangement $\AAA$ in $V \cong \RR^d$ and subspace $X$ in $\LLL$, there is a
        natural isomorphism
        $$
          \ReducedHomology^*((V,X);\RR) \cong \OS(\AAA)_X.
        $$
        Consequently, when a finite subgroup $W$ of $\GL(V)$ acts on $\AAA$, one has
        $$
          \Ind_{\Norma_W(X)}^W \ReducedHomology^*((V,X);\RR) \cong \Ind_{\Norma_W(X)}^W \OS(\AAA)_X.
        $$
        In particular,
        $$
          \WH_{\OOO_X} \cong \Ind_{\Norma_W(X)}^W \OS(\AAA)_X \otimes \det{}_{V/X}.
        $$
      \end{Proposition}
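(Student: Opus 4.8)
The plan is to derive the first isomorphism from the two cited results after a routine localization, and then to read off the two remaining assertions formally. For the localization, observe that $\OS(\AAA)_X$ is by definition spanned by images of decomposable wedges $\unitbase_{H_1}\wedge\cdots\wedge\unitbase_{H_r}$ with $H_1\cap\cdots\cap H_r=X$, so only hyperplanes containing $X$ contribute; hence $\OS(\AAA)_X$ is canonically the top-degree ($=\dim V/X$) component of the Orlik--Solomon algebra of the subarrangement $\{H\in\AAA:H\supseteq X\}$, whose intersection lattice is the interval $[V,X]$ and which carries the same combinatorics as $\AAA/X$. Dually, the open interval $(V,X)\subset\LLL$ is exactly the proper part of $[V,X]$. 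Both objects carry compatible actions of $\Norma_W(X)$, which permutes the hyperplanes through $X$ via $w\cdot\unitbase_H=\unitbase_{wH}$. So it suffices to treat an essential arrangement whose center is $X$, together with its symmetry group $\Norma_W(X)$.

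Now $[V,X]$ is a geometric lattice of rank $r:=\dim V/X$, so the order complex of its proper part is shellable; by Folkman's theorem $\ReducedHomology^i((V,X);\RR)=0$ for $i\neq r-2$ and $\dim\ReducedHomology^{r-2}((V,X);\RR)=|\mu(V,X)|$, while $\OS(\AAA)_X$ sits in a single degree and, by the Brieskorn decomposition, has dimension $|\mu(V,X)|$ as well. The substantive point is to upgrade this numerical coincidence to a \emph{natural} isomorphism; here I would simply invoke \cite[Theorem~5.2]{OrlikSolomon1980a} and its $\LLL$-graded, equivariant refinement \cite[Lemma~2.5]{LehrerSolomon1986} (cf.\ \cite[Ch.~3]{OrlikTerao1992}). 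Concretely, after fixing orientations one sends a flag of covers $V=Y_0\lessdot Y_1\lessdot\cdots\lessdot Y_r=X$ in $[V,X]$ to $\pm\,\unitbase_{H_1}\wedge\cdots\wedge\unitbase_{H_r}$ for any hyperplanes with $H_1\cap\cdots\cap H_i=Y_i$; the Orlik--Solomon relations make this well defined on reduced simplicial cochains of $(V,X)$ modulo coboundaries, surjective onto $\OS(\AAA)_X$, and hence an isomorphism by the dimension count. Since the recipe uses only the incidence data of the subarrangement and the generators $\unitbase_H$, it intertwines the $\Norma_W(X)$-actions. (Equivalently, this is the identification of the Whitney cohomology of the geometric lattice $[V,X]$, in the sense of \cite{Baclawski1975}, with the Orlik--Solomon algebra, respecting the $\LLL$-grading and poset automorphisms.)

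The two remaining assertions are then formal. Applying the exact induction functor $\Ind_{\Norma_W(X)}^W(-)$ to the $\Norma_W(X)$-equivariant isomorphism $\ReducedHomology^*((V,X);\RR)\cong\OS(\AAA)_X$ gives the second display. Tensoring that isomorphism with the linear character $\det{}_{V/X}$ of $\Norma_W(X)$ and then inducing --- that is, substituting $\OS(\AAA)_X$ for $\ReducedHomology^*((V,X);\RR)$ inside the definition \eqref{eqn:Whitney-homology} of $\WH_{\OOO_X}$ --- gives the third.

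The main obstacle is the step in the second paragraph: producing a \emph{canonical} chain-level map that realizes the isomorphism and verifying that it is $\Norma_W(X)$-equivariant, as opposed to merely matching dimensions via Folkman's theorem and the Brieskorn decomposition. This is precisely the content of the Orlik--Solomon theorem together with the Lehrer--Solomon equivariant refinement, so I would cite it rather than reprove it here.
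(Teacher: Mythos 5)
Your proposal is correct and matches the paper's treatment: the paper states this Proposition without proof, simply attributing the key isomorphism to Orlik--Solomon and its equivariant refinement to Lehrer--Solomon, exactly as you do after your (sound) localization and dimension-count scaffolding. The remaining two displays are, as you say, formal consequences of equivariance under $\Norma_W(X)$ obtained by inducing and by tensoring with $\det_{V/X}$ inside the definition of $\WH_{\OOO_X}$, so nothing further is needed.
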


      In the case of reflection arrangements, the dimension of 
      of  $\WH_{\OOO_X}$ has a well-known reformulation.

      \begin{Proposition}[Lemma 4.7 \cite{OrlikSolomon1980b}]
        \label{moebius-proposition}
        For a finite real reflection group $W$ acting on the arrangement   
        $\LLL$ in $V = \RR^d$, and 
        for any intersection subspace $X$ in $\LLL$, one has 
        $$
          \mu(V,X) = (-1)^{\dim V/X} \Big|\big\{ w \in W: \Fix_w(V)=X \big\}\Big|,
        $$
        where $\Fix_w(V)$ is the set of elements in $V$ fixed by the action of $w$.
        \nomenclature[vw]{$\Fix_w(M)$}{set of elements in $M$ fixed by the action of $w$}%
        Consequently, 
        $$
          \dim \WH_{\OOO_X} = \Big|\big\{w \in W: \Fix_w(V) \in \OOO_X \big\}\Big|.
        $$
      \end{Proposition}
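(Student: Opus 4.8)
The plan is to derive the M\"obius formula from two classical facts about reflection arrangements, applied not to $W$ itself but to its parabolic subgroups, and then to read off the ``Consequently'' clause by an orbit count.

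\emph{Setup and reductions.} For $X \in \LLL$ set $a_X := \big|\{ w \in W : \Fix_w(V) = X \}\big|$. First I would observe that, since $W$ preserves a positive definite inner product, any $w$ fixing $X$ pointwise preserves the orthogonal decomposition $V = X \oplus X^\perp$; identifying $X^\perp$ with $V/X$ equivariantly for $\Centr_W(X)$, one has $\Fix_w(V) = X$ exactly when $w \in \Centr_W(X)$ and the induced action of $w$ on $V/X$ has no nonzero fixed vector. Hence
\[
  a_X = \big|\{ w \in \Centr_W(X) : \Fix_w(V/X) = \{0\} \}\big|,
\]
a count \emph{internal} to the reflection group $\Centr_W(X)$ acting essentially on $V/X$ with reflection arrangement $\AAA/X$ (as recalled before \ref{prop:parabolic-factorization}). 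Next I would record that the second assertion follows from the first: the open interval $(V,X)$ of $\LLL$ is the proper part of the geometric lattice $[V,X] \cong \LLL(\AAA/X)$, so by Folkman's theorem $\dim_\RR \ReducedHomology^*((V,X);\RR) = |\mu(V,X)|$; therefore, using $[W:\Norma_W(X)] = |\OOO_X|$ and the fact that $W$ acts on $\LLL$ by poset automorphisms (so $|\mu(V,-)|$ and $a_{(-)}$ are constant on $W$-orbits),
\[
  \dim_\RR \WH_{\OOO_X}
    = [W:\Norma_W(X)]\,|\mu(V,X)|
    = \sum_{X' \in \OOO_X} |\mu(V,X')|
    = \sum_{X' \in \OOO_X} a_{X'}
    = \big|\{ w \in W : \Fix_w(V) \in \OOO_X \}\big|,
\]
granting $|\mu(V,X')| = a_{X'}$ for all $X'$. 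The twist by $\det{}_{V/X}$ is one-dimensional and plays no role here.

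\emph{The M\"obius identity.} Fix $X$ and write $W' := \Centr_W(X)$, $\AAA' := \AAA/X$, $d' := \dim V/X$, so $\AAA'$ is an essential real reflection arrangement of rank $d'$. I would invoke the classical identity relating the characteristic polynomial of $\AAA'$ to the distribution of fixed-space dimensions in $W'$,
\[
  (-1)^{d'}\,\chi(\AAA',-t) \;=\; \sum_{w \in W'} t^{\dim \Fix_w(V/X)},
\]
which results from combining the Orlik--Solomon factorization $\chi(\AAA',t) = \prod_i (t-m_i)$ with the Shephard--Todd--Solomon formula $\sum_{w \in W'} t^{\dim \Fix_w(V/X)} = \prod_i (t+m_i)$, over the exponents $m_i$ of $W'$ (see, e.g., \cite{OrlikTerao1992}). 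Now I would rewrite the left-hand side through the isomorphism $\LLL(\AAA') \cong [V,X]_\LLL$, under which the bottom element corresponds to $V$, a general element corresponds to $Y/X$ for a unique $Y \in \LLL$ with $X \subseteq Y$, its rank is $\dim Y - \dim X$, and $\mu_{\LLL(\AAA')}(\hat{0},Y/X) = \mu_\LLL(V,Y)$ since isomorphic intervals carry the same M\"obius function. Thus
\[
  \chi(\AAA',t) \;=\; \sum_{\substack{Y \in \LLL \\ X \subseteq Y}} \mu_\LLL(V,Y)\, t^{\,\dim Y - \dim X},
\]
whose constant term (the $Y = X$ contribution) is $\mu_\LLL(V,X)$. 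Comparing constant terms on the two sides of the displayed identity gives $(-1)^{\dim V/X}\mu_\LLL(V,X) = a_X$, that is, $\mu(V,X) = (-1)^{\dim V/X}\,a_X$; the second assertion then follows by the reduction above.

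\emph{Main obstacle.} The computation is short once the cited classical inputs are granted, so the genuine content is imported: the hard part of a self-contained treatment would be reproving one of those inputs --- the Orlik--Solomon factorization of $\chi(\AAA,t)$ (e.g.\ via Terao's freeness theorem together with the Solomon--Terao identity) or the formula for $\sum_{w} t^{\dim\Fix_w(V)}$ (via Shephard--Todd invariant theory) --- which I would not attempt here. Apart from that, the only delicate points are bookkeeping: keeping the reverse-inclusion order on $\LLL$ straight, systematically passing from $W$ to the parabolic $\Centr_W(X)$ acting essentially on $V/X$ with arrangement $\AAA/X$, and correctly identifying the reduced (co)homology of a geometric-lattice interval with $|\mu|$ via Folkman's theorem.
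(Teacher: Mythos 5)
Your argument is correct. Note, however, that the paper offers no proof of this statement at all: the first displayed identity is quoted as Lemma 4.7 of \cite{OrlikSolomon1980b}, and the ``Consequently'' clause is left implicit --- it is exactly your reduction via Folkman's theorem ($\dim_\RR \ReducedHomology^*((V,X);\RR)=|\mu(V,X)|$ for the geometric-lattice interval $[V,X]\cong\LLL(\AAA/X)$) together with $\dim \WH_{\OOO_X}=[W:\Norma_W(X)]\,|\mu(V,X)|$ and constancy of $|\mu(V,\cdot)|$ and of the fixed-space counts on a $W$-orbit. So where the paper imports the M\"obius identity as a black box, you re-derive it by localizing at the parabolic $\Centr_W(X)$ acting essentially on $V/X$ (using $\Fix_w(V)=X$ iff $w\in\Centr_W(X)$ with $\Fix_w(V/X)=\{0\}$) and comparing constant terms in $(-1)^{\dim V/X}\chi(\AAA/X,-t)=\sum_{w\in\Centr_W(X)}t^{\dim\Fix_w(V/X)}$, which rests on the factorization $\chi(\AAA/X,t)=\prod_i(t-m_i)$ and the Shephard--Todd--Solomon formula $\prod_i(t+m_i)$ for the exponents of $\Centr_W(X)$. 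This is a legitimate, standard route --- essentially how the cited lemma is established in the literature --- with the one caveat you already flag: the factorization of the characteristic polynomial must come from a source independent of the lemma itself (e.g.\ freeness of Coxeter arrangements with arrangement exponents equal to the group exponents), since in some treatments that factorization is deduced from precisely the identity being proved. With that provenance fixed, your proof is complete and at least as self-contained as the paper's citation.
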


      This reformulation suggested an interesting conjecture
      of Lehrer and Solomon, which they verified in \cite{LehrerSolomon1986}
      for $W=\symm_n$ of type $A_{n-1}$ (see also \ref{pr:A-case-ls-conjecture}) 
      and also for dihedral groups $W=I_2(m)$.
      Note that the subset $\{w \in W: \Fix_w(V) \in \OOO_X \}$ of $W$ is stable under
      conjugation, and hence a union of $W$-conjugacy classes.

      \begin{Conjecture}[Conjecture 1.6 \cite{LehrerSolomon1986}]
        \label{cj:lehrersolomon}
        There is an isomorphism of $W$-modules
        $$
          \WH_{\OOO_X} \cong \bigoplus_{v} \Ind_{\Centr_W(v)}^W \xi_v
        $$
        where $v$ runs over a system of representatives of the
        $W$-conjugacy classes which comprise $\{w \in W: \Fix_w(V) \in \OOO_X \}$,
        and $\xi_v: \Centr_W(v) \rightarrow \CC^\times$ is a degree one
        character of $\Centr_W(v)$.
      \end{Conjecture}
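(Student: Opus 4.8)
The plan is to attack Conjecture~\ref{cj:lehrersolomon} by reducing it, via the Orlik--Solomon decomposition recalled above, to the ``top'' case $X=\hat1$ for an irreducible reflection group, and then to feed in an explicit ``Lie-type'' description of that top Whitney homology. The starting point is the Proposition identifying $\ReducedHomology^*((V,X);\RR)\otimes\det{}_{V/X}$ with $\OS(\AAA)_X\otimes\det{}_{V/X}$ and the observation that $\OS(\AAA)_X$, being the highest nonvanishing Orlik--Solomon component below $X$, depends only on the localized arrangement $\AAA/X$, that is, on the reflection arrangement of the parabolic $\Centr_W(X)$. Since $\Fix_v(V)=X$ forces $v\in\Centr_W(X)$ and then $\Centr_W(v)=\Centr_{\Norma_W(X)}(v)$, transitivity of induction lets one push a decomposition of $\OS(\AAA)_X\otimes\det{}_{V/X}$ as a $\Norma_W(X)$-module of the form $\bigoplus_v \Ind_{\Centr_{\Norma_W(X)}(v)}^{\Norma_W(X)}\xi_v$ all the way up to $W$, giving exactly the shape asserted in \ref{cj:lehrersolomon}. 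So the whole statement reduces to: for each irreducible finite real reflection group $W$ with reflection arrangement $\AAA$, the top Orlik--Solomon module $\OS(\AAA)_{\hat1}$ (suitably $\det$-twisted, and with its residual $\Norma_W(\hat1)/\Centr_W(\hat1)$-symmetry) is a sum of monomial representations induced from linear characters of centralizers of the elements $v$ with $\Fix_v(V)=\hat1$.

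The reduction step itself is bookkeeping once one is careful with two twists. First, the $\det{}_{V/X}$ factor: under the product decomposition $\Centr_W(X)=\prod_j W^{(j)}$ into irreducibles one has $\det{}_{V/X}=\bigotimes_j\det{}_{V^{(j)}}$, so the outer tensor product of the top-homology modules of the factors carries the right sign twist. Second, the outer automorphisms of $\Centr_W(X)$ realized inside $\Norma_W(X)$ permute isomorphic factors and must preserve the set of characters $\xi_v$; this is automatic if the $\xi_v$ are chosen canonically (e.g.\ from the action of $v$ on $X^\perp$). Combining these with $\OS(\AAA)_X\cong\OS(\AAA/X)_{\hat1}$ and the Proposition gives the claimed $W$-module isomorphism modulo the irreducible, $X=\hat1$ input.

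For that core input I would proceed type by type. For $W=\symm_n$ (type $A_{n-1}$) this is classical: $\OS(\AAA)_{\hat1}$ is, up to the sign twist, the top Whitney homology of the partition lattice $\Pi_n$, which affords the Lie character $\Lie_n\cong\Ind_{C_n}^{\symm_n}\zeta$ for a primitive linear character $\zeta$ of the cyclic group $C_n=\langle(1\,2\,\cdots\,n)\rangle$; together with the reduction above and the structure $\Centr_{\symm_n}(v)\cong\prod_i C_{\lambda_i}\wr\symm_{m_i}$ for $v$ of cycle type $\lambda=(1^{m_1}2^{m_2}\cdots)$ this recovers the explicit form of \ref{pr:A-case-ls-conjecture}, and one checks the $\det{}_{V/X}$-twist is absorbed by evaluating both sides on the generator of a cyclic factor. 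For the dihedral groups $I_2(m)$ the lattice $\LLL$ has only three levels and the whole statement is a short direct computation. For the remaining exceptional types the proposed route is a computer-assisted verification: for each orbit $\OOO_X$, compute the character of $\WH_{\OOO_X}$ via \ref{moebius-proposition} (or directly from the Orlik--Solomon presentation), enumerate the conjugacy classes $v$ with $\Fix_v(V)\in\OOO_X$, and exhibit for each $\Centr_W(v)$ a linear character $\xi_v$---a natural guess being $\det{}_{V/X}$ restricted through $\Centr_W(v)\hookrightarrow\Norma_W(X)$, twisted by a root-of-unity character detecting the rotation of $v$ on $X^\perp$---whose induced characters sum to that of $\WH_{\OOO_X}$.

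The main obstacle is precisely that last core input in the non-type-$A$ cases: there is no known uniform construction of the linear characters $\xi_v$, and Conjecture~\ref{cj:lehrersolomon} is in fact open in general. So the approach above should be regarded as a proof only in types $A_{n-1}$ and $I_2(m)$, plus a reduction of every other case to a finite (but substantial) character computation in the exceptional Weyl and non-crystallographic Coxeter groups $F_4,E_6,E_7,E_8,H_3,H_4$; producing either a type-independent argument or clean closed forms for the $\xi_v$ in those groups is the hard part that I do not expect the elementary moves here to settle.
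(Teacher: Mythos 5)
You have set out to prove a statement that the paper itself records only as a conjecture: \ref{cj:lehrersolomon} is quoted from Lehrer and Solomon, the paper gives no proof of it, and the only cases it records as settled are $W=\symm_n$ (restated as \ref{pr:A-case-ls-conjecture}, via the Stanley--Klyachko description of $\omega_n$ and the plethysm formula of \ref{prop:plethysm-reformulation}) and the dihedral groups $I_2(m)$. Your proposal is candid on this point and in the end does not claim to prove the statement; what you offer is a reduction to the ``top'' case $X=\hat{1}$ for each irreducible reflection group plus a plan for case-by-case verification, with the core input in the non-type-$A$, non-dihedral cases explicitly left open. So as a proof of the stated isomorphism it has a genuine and acknowledged gap: no construction of the linear characters $\xi_v$ (and no verification of the induced-character identity) is supplied for $F_4$, $E_6$, $E_7$, $E_8$, $H_3$, $H_4$, or for types $B/D$ beyond what reduces to type $A$ factors, and the conjecture remains open there.

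One further caution about the part you call ``bookkeeping.'' Passing from the top Orlik--Solomon component of each irreducible factor of $\Centr_W(X)$ to the full statement requires identifying $\OS(\AAA)_X\otimes\det{}_{V/X}$ as a module over $\Norma_W(X)$, not merely over $\Centr_W(X)$: the quotient $\Norma_W(X)/\Centr_W(X)$ permutes isomorphic factors, and one must show the resulting wreath-product-type module is still a sum of inductions of \emph{linear} characters from the centralizers $\Centr_W(v)=\Centr_{\Norma_W(X)}(v)$ of elements $v$ with $\Fix_v(V)=X$, with the characters $\xi_v$ extending coherently. In type $A$ this is exactly the content of the plethysm identity \ref{prop:plethysm-reformulation} (Lehrer--Solomon's Proposition 4.4(iii), reproved by Sundaram), i.e.\ it is substantive combinatorics rather than a formal consequence of the irreducible top case; in other types it is an additional unproved step on top of the missing top-case input. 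The type-$A$ and dihedral portions of your plan do match the classical route the paper cites, but the statement as a whole is not proved here, nor in the paper.
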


    \subsubsection{Reformulations in type $A$}

      When $W=\symm_n$, one can both
      \begin{enumerate}
        \item[$\bullet$] 
          be much more explicit in the Lehrer-Solomon reformulation, and
        \item[$\bullet$]
          tie this in with other interesting reformulations, involving
          Lyndon words, free Lie algebras, etc.
          \index{word!Lyndon}%
          \index{Lie!algebra!free}%
          \index{Lyndon!word}%
      \end{enumerate}

      As explained in \ref{ex:first-type-A}, an intersection
      subspace $X$ for $W=\symm_n$ will correspond
      to a set partition $[n] = \bigsqcup_i B_i$ of $[n]$,
      and its $W$-orbit $\OOO_X$ is determined by the number partition $\lambda$ of
      $n$ whose parts give the block sizes $|B_i|$.
      Let $X_\lambda$ be any representative of this $W$-orbit indexed by $\lambda$,
      and say that $\lambda$ contains the part of size $j$ with multiplicity $m_j$ for each $j$.
      A typical element $v_\lambda \in W$ having $V^{v_\lambda}=X_\lambda$ will be 
      a product of disjoint cycles of sizes $\lambda_i$ supported on the blocks $B_i$.
      One then has that
      $$
        \begin{aligned}
          \Norma_W(X_\lambda) & \cong & \prod_i \symm_{m_i}[ \symm_i ] \\
          \Centr_W(v_\lambda) &\cong & \prod_i \symm_{m_i}[ \ZZ_i ].
        \end{aligned}
      $$
      where $\symm_m[G]$ denotes the {\deffont wreath product} of $G$ with
      \index{wreath product}%
      the symmetric group $\symm_m$, and $\ZZ_i$ denotes the cyclic group of order $i$.

      In \cite{LehrerSolomon1986} Lehrer and Solomon describe a degree one character 
      $\xi_\lambda$ of $\Centr_W(v_\lambda)$ that sends each of the
      disjoint cycles of size $j$ in $v_\lambda$ to the same primitive $j^{th}$ root of
      unity. This character fits the motivating type $A$ case of their \ref{cj:lehrersolomon}
      which they prove in their paper.

      \begin{Proposition}[Theorem 4.5 \cite{LehrerSolomon1986}] 
        \label{pr:A-case-ls-conjecture}
        Let $W=\symm_n$ and $\lambda$ a partition of $n$.
        For an element $v_\lambda \in W$ such that $V^{v_\lambda}=X_\lambda$ we have
        $$
          \WH_{\OOO_{X_\lambda}} \cong \Ind_{\Centr_W(v_\lambda)}^W \xi_\lambda.
        $$
      \end{Proposition}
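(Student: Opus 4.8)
The plan is to deduce the proposition from the identification $\ReducedHomology^*((V,X);\RR) \cong \OS(\AAA)_X$ established above, together with the classical description of the top homology of the partition lattice as a Lie representation.

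First I would rewrite the left-hand side: by the preceding proposition,
\[
  \WH_{\OOO_{X_\lambda}} \cong \Ind_{\Norma_W(X_\lambda)}^W \big( \ReducedHomology^*((V,X_\lambda);\RR) \otimes \det{}_{V/X_\lambda} \big),
\]
so it suffices to understand the $\Norma_W(X_\lambda)$-module $\ReducedHomology^*((V,X_\lambda);\RR)$ and the effect of the sign twist. Writing the set partition for $X_\lambda$ as $[n] = \bigsqcup_i B_i$, the closed interval $[V,X_\lambda]$ in the partition lattice $\Pi_n$ is isomorphic as a poset to the product $\prod_i \Pi_{B_i}$ of partition lattices of the blocks. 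Hence the order complex of the open interval $(V,X_\lambda)$ is the join of the order complexes of the proper parts $\overline{\Pi_{B_i}}$, and the reduced Künneth formula gives a $\Norma_W(X_\lambda)$-equivariant isomorphism $\ReducedHomology^*((V,X_\lambda);\RR) \cong \bigotimes_i \ReducedHomology_{|B_i|-2}(\overline{\Pi_{B_i}};\RR)$; here $\Centr_W(X_\lambda) = \prod_i \symm_{B_i}$ acts factorwise and the top wreath factors $\symm_{m_j}$ permute the tensor factors of equal size $j$, up to the Koszul sign $(-1)^{(|B_i|-2)^2}=(-1)^{|B_i|}$ incurred when transposing homology classes of degree $|B_i|-2$.

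Next I would substitute the classical computation $\ReducedHomology_{k-2}(\overline{\Pi_k};\RR) \cong \sgn \otimes \Lie_k$ of Stanley, Hanlon and Joyal, together with Klyachko's formula $\Lie_k \cong \Ind_{\ZZ_k}^{\symm_k} \zeta_k$, where $\zeta_k$ is a faithful linear character of the cyclic group generated by a $k$-cycle. Collecting the three sources of sign characters --- one $\sgn$ on each $\symm_{B_i}$, the Koszul signs by which the top wreath factors permute equal tensor factors, and the restriction of $\det{}_{V/X_\lambda}$ to $\Norma_W(X_\lambda)$ --- one checks that they conspire so that $\ReducedHomology^*((V,X_\lambda);\RR) \otimes \det{}_{V/X_\lambda}$ becomes $\Norma_W(X_\lambda) \cong \prod_j \symm_{m_j}[\symm_j]$ acting on $\bigotimes_i \Ind_{\ZZ_{B_i}}^{\symm_{B_i}} \zeta$, with the top factors now permuting equal tensor factors without any residual sign. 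A wreath-product induction-in-stages computation --- using $\symm_{m_j}[\ZZ_j] \le \symm_{m_j}[\symm_j]$ inside $\symm_n$, and the fact that $\symm_{m_j}$-symmetrizing $\big(\Ind_{\ZZ_j}^{\symm_j}\zeta\big)^{\otimes m_j}$ and then inducing agrees with inducing the corresponding product character from $\symm_{m_j}[\ZZ_j]$ --- then yields $\Ind_{\Norma_W(X_\lambda)}^W(\cdots) \cong \Ind_{\Centr_W(v_\lambda)}^W \xi_\lambda$, provided one verifies that the linear character so produced on $\Centr_W(v_\lambda) \cong \prod_j \symm_{m_j}[\ZZ_j]$ is exactly the $\xi_\lambda$ of Lehrer and Solomon, i.e. the one sending each disjoint $j$-cycle of $v_\lambda$ to a fixed primitive $j$-th root of unity.

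The main obstacle is precisely this sign bookkeeping: one must check that the Koszul signs, the sign built into $\ReducedHomology_{k-2}(\overline{\Pi_k}) \cong \sgn \otimes \Lie_k$, and the $\det{}_{V/X_\lambda}$ twist cancel exactly, and that the surviving degree-one character on $\Centr_W(v_\lambda)$ is the specific $\xi_\lambda$ in the statement rather than some other linear character. A route that avoids the explicit equivariant Künneth bookkeeping is to compare Frobenius characteristics: both $\sum_{\lambda\vdash n} \ch \WH_{\OOO_{X_\lambda}}$ and $\sum_{\lambda\vdash n} \ch \Ind_{\Centr_W(v_\lambda)}^W \xi_\lambda$ are built by the same plethysm from $\sum_k \ch \Ind_{\ZZ_k}^{\symm_k}\zeta$, which matches the two sides orbit by orbit. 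In either approach the only nontrivial input is the classical isomorphism $\ReducedHomology_{k-2}(\overline{\Pi_k};\RR) \cong \sgn \otimes \Ind_{\ZZ_k}^{\symm_k}\zeta$; indeed the whole statement is Theorem 4.5 of \cite{LehrerSolomon1986}, so one may simply cite it, the sketch above recording the shape of its proof in the present language.
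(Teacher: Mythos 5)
The paper offers no proof of this proposition: it is quoted as Theorem~4.5 of \cite{LehrerSolomon1986}, so your closing fallback---simply citing that theorem---is exactly what the paper does (the paper also notes that the $\lambda=(n)$ case follows from Hanlon--Stanley via $\omega_n \cong \Ind_{\ZZ_n}^{\symm_n}\exp(\tfrac{2\pi i}{n})$, and that Theorem~8.24 of \cite{Reutenauer1993}, due to \cite{BergeronBergeronGarsia1988}, gives an alternative proof; your Frobenius-characteristic route is essentially the plethysm argument recorded in \ref{prop:plethysm-reformulation}, proved homologically by Sundaram).

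If you want the sketch itself to stand as a proof, two steps need repair. First, the top reduced homology of the proper part of the partition lattice $\Pi_k$ lives in degree $k-3$, not $k-2$; your Koszul sign $(-1)^{(|B_i|-2)^2}$ for transposing equal-size tensor factors is therefore computed in the wrong degree and comes out with the wrong parity. Second, the order complex of the open interval $(V,X_\lambda)$ is \emph{not} the join of the proper parts of the $\Pi_{B_i}$: since $[V,X_\lambda]\cong\prod_i \Pi_{B_i}$ is a product of bounded posets, its proper part is (by Walker's theorem) homotopy equivalent to that join together with an extra sphere factor, one suspension for each additional nontrivial factor, and the subgroup of $\Norma_W(X_\lambda)$ that permutes blocks of equal size acts on this sphere factor as well, contributing signs your bookkeeping omits. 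These are precisely the delicacies that the equivariant machinery of \cite[Lemma 1.1]{Sundaram1994} (used elsewhere in this paper) or the characteristic/plethysm route is designed to absorb, so either carry out that computation in full or, as the paper does, cite \cite{LehrerSolomon1986} directly.
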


      One has a reformulation of the previous proposition in the
      language of symmetric functions; see \cite{Macdonald1998} and
      \index{symmetric function}%
      \cite[Chapter 7 (see in particular Exercise 7.89)]{Stanley1999}
      for the basic facts used here.  

      Recall Frobenius's
      \index{Frobenius!characteristic map}%
      {\deffont characteristic map} $\ch$ giving an isomorphism between
      \index{characteristic map}%
      virtual $\symm_n$-characters and symmetric functions 
      of degree $n$.
      Under this isomorphism, one has
      $$
        \ch\left( \Ind_{ \symm_{n_1} \times \symm_{n_2} }^{\symm_{n_1+n_2}} 
             \chi_1 \otimes \chi_2
           \right)
          = \ch(\chi_1) \cdot \ch(\chi_2)
      $$
      where the product on the right is in the ring of symmetric functions.
      Also, given representations $U,V$ of $\symm_m,\symm_n$, one can
      construct a representation $U[V]$ of $\symm_m[\symm_n]$ on $V^{\otimes m} \otimes U$
      by having $(\symm_n)^m$ act on $V^{\otimes m}$ in the usual way, while
      $\symm_m$ permutes the tensor factors of $V^{\otimes m}$ and
      simultaneously acts on $U$; this construction is well-known 
      (see for example \cite[I.8 Remark 2, p. 136]{Macdonald1998}) 
      to have 
      $$
        \ch U[V] = \ch U [ \ch V ]
      $$
      where $f[g]$ denotes the {\deffont plethysm}
      \index{plethysm}%
      operation on symmetric functions $f, g$.
      We denote by $h_m := \ch (\trivial_{\symm_m})$ 
      the $m$-th {\deffont homogeneous symmetric function}.
      \index{symmetric function!homogeneous}%
      \index{homogeneous symmetric function}%
      \nomenclature[co]{$h_m$}{$m$-th homogeneous symmetric function}%

      For $W=\symm_n$, let $\omega_n$ denote the $W$-representation 
      $\WH_{\OOO_{X_{(n)}}}$, corresponding to the subspace $X_{(n)}$ which
      is the intersection of all the hyperplanes.  Thus 
      by definition of $\WH_{\OOO_X}$ the representation $\omega_n$ is the 
      $\symm_n$-representation on the top homology of the order complex
      \index{order complex}%
      of the 
      proper part of the lattice
      of all set partitions of $n$, twisted by the character $\det_V=\sgn$.
      \index{lattice!set partitions}%
      Based on work of Hanlon \cite{Hanlon1981} it was shown by Stanley \cite[Theorem 7.3]{Stanley1982} that
      $\omega_n = \Ind_{\ZZ_n}^{\symm_n} \exp(\frac{2\pi i }{n})$ is induced from a degree one character of the
      cyclic group $\ZZ_n$ generated in $S_n$ by a fixed $n$-cycle with character value $\exp(\frac{2\pi i }{n})$.
      From this the case $W = \symm_n$ and $\OOO_X = \OOO_{X_{(n)}}$ of \ref{pr:A-case-ls-conjecture} follows.
      This fact implies that Proposition 4.4 (iii) from \cite{LehrerSolomon1986} proved along the way to
      \ref{pr:A-case-ls-conjecture} actually can be stated as follows.

      \begin{Proposition}
        \label{prop:plethysm-reformulation}
        For $W=\symm_n$ and $\lambda$ any partition of $n$,
        $$
          \WH_{\OOO_{X_\lambda}} \cong 
          \Ind_{\symm_{m_1}[\symm_1] \times \symm_{m_2}[\symm_2] \times \cdots}^{\symm_n}
            \left( 
                   \trivial_{\symm_{m_1}}[ \omega_1 ] \otimes 
                   \trivial_{\symm_{m_2}}[ \omega_2 ] \otimes \cdots 
            \right),
        $$
        or in other words,
        \begin{equation}
          \label{eq:liecharacteristic}
          \ch \WH_{\OOO_{X_\lambda}} = \prod_i h_{m_i}[ \ch(\omega_i) ].
        \end{equation}
      \end{Proposition}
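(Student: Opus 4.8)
The plan is to deduce the statement directly from the explicit description of $\WH_{\OOO_{X_\lambda}}$ furnished by \ref{pr:A-case-ls-conjecture}, namely $\WH_{\OOO_{X_\lambda}} \cong \Ind_{\Centr_W(v_\lambda)}^W \xi_\lambda$, by factoring both the centralizer and the character $\xi_\lambda$ according to the distinct cycle lengths of $v_\lambda$. First I would use the isomorphism $\Centr_W(v_\lambda) \cong \prod_i \symm_{m_i}[\ZZ_i]$ recorded in the text, where $m_i$ is the multiplicity of the part $i$ in $\lambda$ and $\ZZ_i$ is generated by one of the disjoint $i$-cycles of $v_\lambda$. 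Under this isomorphism $\xi_\lambda$ is the external tensor product $\bigotimes_i \bigl(\trivial_{\symm_{m_i}}[\zeta_i]\bigr)$, where $\zeta_i\colon \ZZ_i \to \CC^\times$ is the degree one character sending a generating $i$-cycle to a fixed primitive $i$th root of unity: this is precisely the content of the description of $\xi_\lambda$ in \ref{pr:A-case-ls-conjecture} as being trivial on each symmetric factor $\symm_{m_i}$ and sending each disjoint $i$-cycle of $v_\lambda$ to that primitive root.

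Next I would induct in two stages along $\Centr_W(v_\lambda) = \prod_i \symm_{m_i}[\ZZ_i] \subseteq \prod_i \symm_{m_i}[\symm_i] = \Norma_W(X_\lambda) \subseteq \symm_n$, using transitivity of induction. Since induction commutes with direct products, the first stage gives $\bigotimes_i \Ind_{\symm_{m_i}[\ZZ_i]}^{\symm_{m_i}[\symm_i]}\bigl(\trivial_{\symm_{m_i}}[\zeta_i]\bigr)$. Here I would invoke the standard compatibility of induction with the wreath-product (plethystic) construction (see \cite[I.8]{Macdonald1998}): for $H \subseteq G$ and any $\symm_m$-module $U$ and $H$-module $V$ one has $\Ind_{\symm_m[H]}^{\symm_m[G]}\bigl(U[V]\bigr) \cong U\bigl[\Ind_H^G V\bigr]$. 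Applied with $U = \trivial_{\symm_{m_i}}$, $H = \ZZ_i$, $G = \symm_i$, $V = \zeta_i$, and combined with the Hanlon--Stanley identification $\omega_i = \Ind_{\ZZ_i}^{\symm_i}\zeta_i$ recalled just above the statement, this turns the $i$th factor into $\trivial_{\symm_{m_i}}[\omega_i]$. Inducing the resulting $\Norma_W(X_\lambda)$-module $\bigotimes_i \trivial_{\symm_{m_i}}[\omega_i]$ up to $\symm_n$ then yields exactly the right-hand side of the asserted isomorphism. Finally, applying the Frobenius characteristic map $\ch$ and using its multiplicativity under induction from Young subgroups, the identity $\ch\bigl(U[V]\bigr) = \ch(U)[\ch(V)]$, and $h_{m_i} = \ch(\trivial_{\symm_{m_i}})$, translates this into \eqref{eq:liecharacteristic}.

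The only step with genuine content beyond bookkeeping is the plethystic induction identity $\Ind_{\symm_m[H]}^{\symm_m[G]}\bigl(U[V]\bigr) \cong U[\Ind_H^G V]$, which is classical; the remaining care goes into matching $\xi_\lambda$ with the plethysms $\trivial_{\symm_{m_i}}[\zeta_i]$ and tracking the $\det_{V/X_\lambda}$ twist, which is already absorbed into \ref{pr:A-case-ls-conjecture} and into the Hanlon--Stanley formula for $\omega_i$. I expect verifying this character-matching --- in particular that the same primitive $i$th roots of unity appear on both sides, including the sign subtleties when $i$ is even --- to be the one point that needs to be written out carefully, although it is implicit in the cited results.
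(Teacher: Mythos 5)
Your overall route---factor $\Centr_W(v_\lambda)\cong\prod_i\symm_{m_i}[\ZZ_i]$, induct in two stages through $\Norma_W(X_\lambda)=\prod_i\symm_{m_i}[\symm_i]$, and combine the identity $\Ind_{\symm_m[H]}^{\symm_m[G]}(U[V])\cong U[\Ind_H^G V]$ with $\omega_i=\Ind_{\ZZ_i}^{\symm_i}\zeta_i$ and the multiplicativity of $\ch$---is sound in outline, and the wreath-product induction identity you invoke is correct. The genuine gap is at the step where you identify $\xi_\lambda$ with $\bigotimes_i\trivial_{\symm_{m_i}}[\zeta_i]$. You claim this is ``precisely the content of the description of $\xi_\lambda$'' in \ref{pr:A-case-ls-conjecture}, but that description only specifies $\xi_\lambda$ on the base group $\prod_i\ZZ_i^{m_i}$ (each $i$-cycle goes to a primitive $i$th root of unity) and says nothing about its values on the block-permuting subgroups $\symm_{m_i}$---which is exactly the datum your argument needs. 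A linear character of $\symm_{m_i}[\ZZ_i]$ can be trivial or sign on the $\symm_{m_i}$ factor, and the two choices induce different $\symm_n$-modules: for $n=4$, $\lambda=(2,2)$, the centralizer is $\langle(12),(34),(13)(24)\rangle$, and sending $(12),(34)\mapsto-1$ with the block swap $(13)(24)\mapsto+1$ induces $\chi^{(2,2)}\oplus\chi^{(1^4)}=\WH_{\OOO_{X_{(2,2)}}}$, whereas sending the block swap to $-1$ induces a different three-dimensional module. So triviality on the $\symm_{m_i}$'s cannot be read off from the cited statement; it must be proved.

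Moreover, this is not a routine sign check but the heart of the proposition. The character constructed by Lehrer and Solomon pertains to the untwisted Orlik--Solomon/Whitney component, where a swap of two $i$-blocks carries the K\"unneth sign $(-1)^{i-1}$ from permuting top cohomology classes of the two copies of the partition-lattice interval; it is precisely the twist by $\det{}_{V/X_\lambda}$ built into $\WH_{\OOO_{X_\lambda}}$ (on which the same block swap also has determinant $(-1)^{i-1}$) that cancels this sign and makes the resulting character trivial on $\symm_{m_i}$. That cancellation is exactly the difference between the uniform formula $\prod_i h_{m_i}[\ch\omega_i]$ asserted here and Sundaram's sign-twisted version mentioned right after the proposition, so dismissing it as ``implicit in the cited results'' leaves the real content unproved. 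Note also that the paper does not argue in your direction: it obtains the proposition from Lehrer--Solomon's Proposition 4.4(iii) together with the Hanlon--Stanley identification of $\omega_n$, citing Sundaram for an independent homological proof. Your derivation from \ref{pr:A-case-ls-conjecture} plus the wreath induction identity would be a legitimate alternative, but only after you either quote Lehrer--Solomon's actual definition of $\xi_\lambda$ and track the restriction of $\det{}_{V/X_\lambda}$ to the centralizer, or carry out the K\"unneth-versus-determinant computation, to establish that the character really is trivial on each $\symm_{m_i}$.
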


      A proof of this result based solely on the description of $\WH_{\OOO_{X_\lambda}}$ in terms of simplicial
      homology was first presented by Sundaram \cite[Theorem 1.7]{Sundaram1994}.
      Note that the result by Sundaram is stated for $\WH_{\OOO_{X_\lambda}}$ tensored with the sign representation.
      Tensoring once more by the sign and some standard transformations of plethysms show the equivalence of her formula with the above formula.

      \ref{prop:plethysm-reformulation} can be further reformulated in terms of
      the combinatorics of {\deffont Lyndon words} and {\deffont quasisymmetric functions}.
      \index{Lyndon!word}%
      \index{quasisymmetric function}%
      Let $A = \{ a_1 < a_2 < a_3 < \cdots \}$ be a linearly ordered alphabet. 
      Recall that a word $\xx = \xx_1 \cdots \xx_n$ with letters $\xx_i \in A$, $1 \leq i \leq n$,
      is called a {\deffont Lyndon word} if it is lexicographically strictly smaller than
      any of its cyclic rearrangements (see \cite[\S 5]{Reutenauer1993} for more details).  
      It is also well known \cite[Theorem 5.1]{Reutenauer1993} that every word $\xx$ has a
      unique {\deffont Lyndon factorization} $\xx=\xx^{(1)} \xx^{(2)} \cdots \xx^{(g)}$,
      \index{Lyndon!factorization}%
      meaning that the $\xx^{(i)}$, $1 \leq i \leq g$, are Lyndon words satisfying
      $$
        \xx^{(1)} \geq_{lex} \xx^{(2)} \geq_{lex} \cdots \geq_{lex} \xx^{(g)}.
      $$
      The {\deffont Lyndon type} of $\xx$ is the weakly decreasing rearrangement of
      \index{Lyndon!type}%
      the lengths of the $\xx^{(i)}$. We use this to define a power series in the
      ring of formal power series $\CC\llbracket t_a~|~a \in A\rrbracket$ by
      $$
        \frL_{\lambda}(\nicet):=\sum_{\substack{\text{words } \xx \\ \text{ of Lyndon type }\lambda}} t_\xx
      $$
      \nomenclature[co]{$\frL_{\lambda}(\nicet)$}{characteristic of $L(V)_\lambda$}%
      where $t_\xx:=t_{\xx_1} t_{\xx_2} \cdots t_{\xx_n}$ for $\xx = \xx_1 \cdots \xx_n$.
      In \cite[Theorem 3.6]{GesselReutenauer1993} it is stated that $\frL_\lambda(\nicet)$ coincides with
      the symmetric function on the right hand side of \eqref{eq:liecharacteristic} from 
      \ref{prop:plethysm-reformulation}. This then yields the following description
      of $\ch \WH_{\OOO_{X_\lambda}}$.

      \begin{Proposition}
        \label{pr:characteristic}
        For $W=\symm_n$ and $\lambda$ any partition of $n$,
        $$
          \ch \WH_{\OOO_{X_\lambda}} =  \frL_\lambda(\nicet).
        $$
      \end{Proposition}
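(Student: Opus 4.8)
The plan is to deduce \ref{pr:characteristic} directly from \ref{prop:plethysm-reformulation} together with the Gessel--Reutenauer identity recalled immediately before the statement. First I would invoke \ref{prop:plethysm-reformulation} to write
\[
  \ch \WH_{\OOO_{X_\lambda}} = \prod_i h_{m_i}\bigl[\, \ch(\omega_i) \,\bigr],
\]
where $m_i$ is the multiplicity of the part $i$ in $\lambda$. Next I would identify each factor $\ch(\omega_i)$ with $\frL_{(i)}(\nicet)$, the generating function for Lyndon words of length $i$ over the ordered alphabet $A$. For this, recall (Stanley, building on Hanlon, as quoted just before \ref{prop:plethysm-reformulation}) that $\omega_i \cong \Ind_{\ZZ_i}^{\symm_i}\zeta$ for a degree-one character $\zeta$ of $\ZZ_i$ sending a fixed $i$-cycle to a primitive $i$th root of unity, so that $\ch(\omega_i) = \frac{1}{i}\sum_{d\mid i}\mu(d)\,p_d^{\,i/d}$ is the Frobenius characteristic of the degree-$i$ homogeneous component of the free Lie algebra; this is precisely the $\lambda=(i)$ instance of \cite[Theorem 3.6]{GesselReutenauer1993}. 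Finally, the general form of \cite[Theorem 3.6]{GesselReutenauer1993} says that $\prod_i h_{m_i}\bigl[\frL_{(i)}(\nicet)\bigr]$ equals $\sum_{\text{words }\xx\text{ of Lyndon type }\lambda} t_\xx = \frL_\lambda(\nicet)$. Chaining the three equalities gives $\ch \WH_{\OOO_{X_\lambda}} = \frL_\lambda(\nicet)$.

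The one point that requires genuine care --- and which I expect to be bookkeeping rather than a real obstacle --- is reconciling the conventions of the three inputs. One must check: (a) that the $\sgn$-twist (by $\det{}_{V/X}$) built into the definition of $\WH_{\OOO_X}$ in \eqref{eqn:Whitney-homology} is exactly the normalization under which Stanley's higher Lie character $\omega_i$, and hence the plethystic product in \ref{prop:plethysm-reformulation}, matches the free-Lie-algebra characteristic used by Gessel and Reutenauer; and (b) that the plethystic substitution $h_m[f]$, for $f=\sum_{w} t_w$ a generating function indexed by a family of words, does encode size-$m$ multisets drawn from that family. Granting (b), a multiset of $m_i$ Lyndon words of common length $i$ has a unique arrangement as a weakly decreasing concatenation, and such a concatenation is literally its own Lyndon factorization; hence $h_{m_i}[\frL_{(i)}(\nicet)]$ counts exactly the words whose Lyndon factorization consists of $m_i$ Lyndon words of length $i$, and the product over $i$ then counts exactly the words of Lyndon type $\lambda$. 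Once (a) and (b) are verified, the proof is the one-line citation chain above.

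Alternatively, one could sidestep the symmetric-function route and argue bijectively: fix a combinatorial basis of $\ReducedHomology^*((V,X_\lambda);\RR)$ (for instance the NBC/Orlik--Solomon basis, or a labelled-forest basis), follow the $\symm_n$-action through the induction, and set up a weight-preserving bijection between this induced basis and the set of words of Lyndon type $\lambda$. This is, however, essentially the content of Gessel and Reutenauer's analysis of the free Lie algebra, so invoking \cite[Theorem 3.6]{GesselReutenauer1993} and \ref{prop:plethysm-reformulation} is the economical path and the one I would take.
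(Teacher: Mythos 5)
Your argument is correct and coincides with the paper's own route: the paper likewise derives \ref{pr:characteristic} by combining \ref{prop:plethysm-reformulation} (i.e.\ \eqref{eq:liecharacteristic}) with the statement of \cite[Theorem 3.6]{GesselReutenauer1993} that $\frL_\lambda(\nicet)$ equals that plethystic product, the sign/$\det_{V/X}$ normalization already being absorbed into \ref{prop:plethysm-reformulation} and the identification $\omega_i \cong \Ind_{\ZZ_i}^{\symm_i}\zeta$. Your unpacking of the Gessel--Reutenauer theorem into its single-part case plus the Lyndon-factorization interpretation of $h_{m_i}[\,\cdot\,]$ is just a finer reading of the same citation, not a different proof.
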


      This reformulation is proved by Gessel and Reutenauer in \cite{GesselReutenauer1993} in parallel
      with a reformulation in terms of quasisymmetric functions.
      It is derived from a bijection attributed to Gessel (see e.g. \cite[p. 175]{Reutenauer1993}, 
      \cite{DiaconisMcGrathPitman1993}, \cite{UyemuraReyes2002}) and described in \cite{GesselReutenauer1993}.
      The bijection allows one to expand $\frL_\lambda(\nicet)$ in terms of the 
      {\deffont fundamental quasisymmetric functions} associated with
      \index{fundamental!quasisymmetric functions}%
      subsets $D \subseteq [n-1]$:
      $$
        F_D:= 
            \sum_{\substack{1 \leq i_1 \leq \cdots \leq i_n:\\ 
               i_j < i_{j+1} \text{ if }j \in D}}
               t_{i_1} \cdots t_{i_n}.
      $$
      \nomenclature[co]{$F_D$}{fundamental quasisymmetric function}%
      Given a permutation $w=(w_1,\ldots,w_n)$ in $\symm_n$,
      define its {\deffont descent set} by
      \index{descent!set}%
      $$
        \Des(w):=\{j \in [n-1] : w_j > w_{j+1} \}.
      $$
      \nomenclature[co]{$\Des(w)$}{descent set of $w$}%
      Theorem 3.6 in \cite{GesselReutenauer1993} states that 
      $\sum_{w \text{ of cycle type }\lambda} F_{\Des(w)}$
      equals $\frL_\lambda$. Thus we arrive at the desired reformulation.
      
      \begin{Proposition}
        For $W=\symm_n$ and $\lambda$ any partition $\lambda$ of $n$,
        $$
          \ch \WH_{\OOO_{X_\lambda}} = \sum_{w \text{ of cycle type }\lambda} F_{\Des(w)}.
        $$
      \end{Proposition}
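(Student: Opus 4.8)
The plan is to combine the representation-theoretic identity already established in this section with a classical combinatorial identity of Gessel and Reutenauer. By \ref{pr:characteristic} we have $\ch \WH_{\OOO_{X_\lambda}} = \frL_\lambda(\nicet)$, so it suffices to prove the purely combinatorial statement
$$
  \frL_\lambda(\nicet) = \sum_{w \text{ of cycle type } \lambda} F_{\Des(w)},
$$
which is \cite[Theorem 3.6]{GesselReutenauer1993}; I sketch how I would carry it out.

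First I would expand the left-hand side by standardization. By definition $\frL_\lambda(\nicet) = \sum_\xx t_\xx$, summed over words $\xx = \xx_1 \cdots \xx_n$ over the ordered alphabet $A$ of Lyndon type $\lambda$. For such a word let $\operatorname{std}(\xx) \in \symm_n$ denote its standardization (relabel the letters by $1, \ldots, n$, reading equal letters left to right). Two observations are needed. (a) The Lyndon factorization of a word is produced using only order comparisons among its letters, and $\operatorname{std}$ is order-preserving with ties resolved consistently, so the Lyndon factorization of $\xx$ maps to that of $\operatorname{std}(\xx)$; in particular $\operatorname{std}(\xx)$ again has Lyndon type $\lambda$, and one may speak of the Lyndon type of a permutation. (b) For fixed $\sigma \in \symm_n$, the words $\xx$ with $\operatorname{std}(\xx) = \sigma$ are precisely those that increase weakly in the order prescribed by $\sigma$, with strict increases at the descents of $\sigma$; hence $\sum_{\operatorname{std}(\xx) = \sigma} t_\xx$ is a fundamental quasisymmetric function $F_{D(\sigma)}$ for an explicit subset $D(\sigma) \subseteq [n-1]$ determined by $\sigma$ (in the standard convention $D(\sigma) = \Des(\sigma^{-1})$). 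Grouping the words contributing to $\frL_\lambda$ by their standardization therefore gives
$$
  \frL_\lambda(\nicet) = \sum_{\sigma \in \symm_n \text{ of Lyndon type } \lambda} F_{D(\sigma)}.
$$

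The remaining step, which I expect to be the main obstacle, is to match this last sum with $\sum_{w \text{ of cycle type } \lambda} F_{\Des(w)}$. For this I would invoke Gessel's fundamental bijection $\symm_n \to \symm_n$ (the transformation underlying \cite{GesselReutenauer1993}, also recalled in \cite[p.~175]{Reutenauer1993} and \cite{DiaconisMcGrathPitman1993}, \cite{UyemuraReyes2002}): given $w$, write each cycle of $w$ beginning with its smallest element — a word of distinct letters starting with its minimum is automatically a Lyndon word — and concatenate the cycles in decreasing lexicographic order; the resulting word is a permutation $\sigma$ whose Lyndon factorization is exactly the list of cycles of $w$, so $\sigma$ has Lyndon type equal to the cycle type of $w$, and $w \mapsto \sigma$ is a bijection. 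The substantive content of the Gessel–Reutenauer argument is that, with these conventions fixed, this bijection transports $\Des(w)$ to the descent set $D(\sigma)$ appearing above; verifying this — tracking where the cycle boundaries land and how standardization interacts with them — is the delicate bookkeeping, and is precisely what is done in \cite{GesselReutenauer1993}. Granting it and summing over all $w$ of cycle type $\lambda$ yields $\frL_\lambda(\nicet) = \sum_{w \text{ of cycle type }\lambda} F_{\Des(w)}$, which combined with \ref{pr:characteristic} gives $\ch \WH_{\OOO_{X_\lambda}} = \sum_{w \text{ of cycle type }\lambda} F_{\Des(w)}$, as claimed.
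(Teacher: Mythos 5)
Your proposal is correct and follows essentially the same route as the paper: both reduce the statement to \ref{pr:characteristic} ($\ch \WH_{\OOO_{X_\lambda}} = \frL_\lambda$) combined with Gessel–Reutenauer's Theorem 3.6 identifying $\frL_\lambda$ with $\sum_{w \text{ of cycle type }\lambda} F_{\Des(w)}$, with the bijective bookkeeping deferred to \cite{GesselReutenauer1993}. Your added sketch of the standardization argument and Gessel's cycle-to-Lyndon-word bijection is a fair outline of that cited proof, but it is not a departure from the paper's approach.
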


      In parallel to the study of $\WH_{\OOO_{X_\lambda}}$ motivated by the action of the
      symmetric group on the Orlik-Solomon algebra, the module $\WH_{\OOO_{X_\lambda}}$ 
      appeared in a different guise already in the 1940's, in the context of 
      the {\deffont free Lie algebra}
      (see \cite{Thrall1942}, \cite{Brandt1944}).
      \index{free Lie algebra}%
      \index{Lie!algebra!free}%
      \index{algebra!Lie}%
      Before we can explain this connection we introduce the free Lie algebra and related notation
      (see \cite{Reutenauer1993} for more background information).
      The tensor algebra
      \index{tensor algebra}%
      \index{algebra!tensor}%
      $$
        T(V):=\bigoplus_{d \geq 0} V^{\otimes d}
      $$
      \nomenclature[al]{$T(V)$}{tensor algebra of $V$}%
      is an associative algebra, and hence also a Lie algebra for
      the usual bracket operation $[x,y]=xy-yx$.  The Lie subalgebra 
      $\Lie(V)$ of $T(V)$ generated by its degree one part $T^1(V)=V$ is called the {\deffont free Lie algebra}.
      \index{free Lie algebra}%
      \index{algebra!free}%
      \index{Lie algebra!free}%
      \nomenclature[al]{$\Lie(V)$}{free Lie algebra over $V$}%
      It inherits a grading 
      $$
        \Lie(V)=\oplus_{d \geq 0} {\Lie(v)_d}
      $$
      where $\Lie(v)_d=\Lie(V) \cap V^{\otimes d}$.
      Because (see \cite[Theorem 0.5]{Reutenauer1993}) $T(V)$ turns out to be the universal enveloping algebra
      for $\Lie(V)$, the Poincar\'e-Birkhoff-Witt theorem (see \cite[Theorem 0.2]{Reutenauer1993}) gives a
      \index{Poincar\'e-Birkhoff-Witt theorem}%
      $\symm_n$-equivariant vector space isomorphism
      $$
        T^n(V) = V^{\otimes n} \cong \sum_{\lambda \vdash n} \Lie_{\lambda}(V)
      $$
      where for a partition $\lambda=(1^{m_1},2^{m_2},\ldots)$ one defines
      $$
        \Lie_{\lambda}(V):=\Sym^{m_1}(\Lie(V)_1) \otimes \Sym^{m_2}(\Lie(V)_2) \otimes \cdots.
      $$
      Here $\Sym^m(U)$ denotes the $m$th graded component of the {\deffont symmetric algebra} over a vector space $U$.
      \nomenclature[al]{$\Sym^m(U)$}{$m$th graded component of symmetric algebra over vectorspace $U$}%
      \index{algebra!symmetric}%
      \index{symmetric algebra}%
      For a partition $\lambda$ of $n$ each $\Lie_{\lambda}(V)$ is itself an $\symm_n$-module. 
      Assume $\dim V = n$, fix a basis of $T^1(V) = V$ and denote by $E_n$ the space spanned
      in $T^n(V) = V^{\otimes n}$ by the tensors of the 
      $n!$ permutations of the basis elements. For $\lambda$ a partition of $n$ the multilinear part of 
      $\Lie_{\lambda}(V)$ is its 
      \index{multilinear part}%
      intersection with $E_n$. Since both $E_n$ and $\Lie_{\lambda}(V)$ are $\symm_n$-modules
      it follows that the multilinear part is also an $\symm_n$-module.
      For $\lambda = (n)$ it was shown by Klyachko \cite{Klyachko1974} that $\Lie_{(n)} (V) \cap E_n$ is isomorphic as an $\symm_n$-module to
      $\Ind_{\ZZ_n}^{\symm_n} \exp(\frac{2\pi i }{n})$ and hence by the above mentioned result
      of Stanley \cite[Theorem 7.3]{Stanley1982}, it is isomorphic to $\WH_{\OOO_{(n)}}$.
      More generally a result by F. Bergeron, N. Bergeron and 
      A. Garsia \cite{BergeronBergeronGarsia1988} (see also \cite[Theorem 8.23]{Reutenauer1993})
      shows that the characteristic of the $\symm_n$-representation on the multilinear part of $\Lie_{\lambda}(V)$ is 
      the symmetric function on the right hand side of \eqref{eq:liecharacteristic} from 
      \ref{prop:plethysm-reformulation}. 

      \begin{Proposition}
        \label{pr:multilinear} 
        Let $V \cong \RR^n$ and $\lambda$ any partition of $n$.
        Then there is an isomorphism of $\symm_n$-modules
        $$
         \WH_{\OOO_{X_\lambda}} \cong E_n \cap \Lie_{\lambda}(V).
        $$
      \end{Proposition}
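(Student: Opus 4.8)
The plan is to identify the two $\symm_n$-modules by comparing their images under the Frobenius characteristic map $\ch$, which (as recalled in the discussion preceding \ref{pr:multilinear}) is an isomorphism of abelian groups from virtual $\symm_n$-characters onto homogeneous symmetric functions of degree $n$; in particular two \emph{genuine} $\symm_n$-representations with the same $\ch$-image are isomorphic. On the Whitney-cohomology side, \ref{prop:plethysm-reformulation} already records that, writing $\lambda = (1^{m_1},2^{m_2},\ldots)$, one has $\ch \WH_{\OOO_{X_\lambda}} = \prod_j h_{m_j}[\ch(\omega_j)]$, where $\omega_j = \WH_{\OOO_{X_{(j)}}}$. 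So the whole problem reduces to computing $\ch\bigl(E_n \cap \Lie_\lambda(V)\bigr)$ and checking that it equals this same plethystic product.

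First I would dispatch the single-block case $\lambda=(j)$, i.e.\ the multilinear part $E_j \cap \Lie(V)_j$ of the degree-$j$ component of the free Lie algebra: by Klyachko's theorem \cite{Klyachko1974} this is $\Ind_{\ZZ_j}^{\symm_j}$ of a faithful degree-one character of the cyclic group $\ZZ_j$ generated by a $j$-cycle, and by Stanley's theorem \cite[Theorem 7.3]{Stanley1982} the module $\omega_j$ is isomorphic to exactly the same induced module, so $\ch(E_j\cap\Lie(V)_j)=\ch(\omega_j)$. Next I would pass to a general $\lambda$ using the Poincar\'e-Birkhoff-Witt decomposition $V^{\otimes n}\cong\bigoplus_{\lambda\vdash n}\Lie_\lambda(V)$ together with $\Lie_\lambda(V)=\bigotimes_j\Sym^{m_j}(\Lie(V)_j)$: intersecting with the permutation module $E_n$ and analyzing the multilinear part of each symmetric power yields an $\symm_n$-isomorphism
\[
  E_n\cap\Lie_\lambda(V)\;\cong\;\Ind_{\prod_j\symm_{m_j}[\symm_j]}^{\symm_n}\Bigl(\bigotimes_j\trivial_{\symm_{m_j}}\bigl[\,E_j\cap\Lie(V)_j\,\bigr]\Bigr),
\]
and applying $\ch$ — which turns induction products into products of symmetric functions and the wreath-product construction $U[\,W\,]$ into the plethysm $\ch(U)[\ch(W)]$ — gives $\ch(E_n\cap\Lie_\lambda(V))=\prod_j h_{m_j}[\ch(E_j\cap\Lie(V)_j)]=\prod_j h_{m_j}[\ch(\omega_j)]$. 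In fact this last computation is precisely the theorem of F.~Bergeron, N.~Bergeron and Garsia \cite{BergeronBergeronGarsia1988} (see also \cite[Theorem 8.23]{Reutenauer1993}), so one may instead simply quote it and skip the bookkeeping entirely.

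Combining the two sides, $\ch(E_n\cap\Lie_\lambda(V))=\ch\WH_{\OOO_{X_\lambda}}$; since both $E_n\cap\Lie_\lambda(V)$ (a submodule of the regular representation $E_n$) and $\WH_{\OOO_{X_\lambda}}$ (a simplicial cohomology group, twisted by a determinant character) are genuine $\symm_n$-representations, they have the same character and hence are isomorphic as $\symm_n$-modules. I expect the only real friction to be the wreath-product/plethysm bookkeeping in the middle step, and even that can be outsourced to \cite{BergeronBergeronGarsia1988, Reutenauer1993}; the cost is that the resulting isomorphism is non-canonical, which is all that \ref{pr:multilinear} asserts. If a canonical $\symm_n$-equivariant map were wanted, one would instead realize it via the Orlik--Solomon description $\WH_{\OOO_{X_\lambda}}\cong\Ind_{\Norma_W(X_\lambda)}^W\OS(\AAA)_{X_\lambda}\otimes\det{}_{V/X_\lambda}$ together with the classical bracketing map from the Orlik--Solomon algebra to the free Lie algebra, but that refinement is not needed here.
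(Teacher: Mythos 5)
Your proposal is correct and follows essentially the same route as the paper: the paper likewise establishes \ref{pr:multilinear} by quoting Klyachko and Stanley for the case $\lambda=(n)$ and the Bergeron--Bergeron--Garsia theorem (cf. \cite[Theorem 8.23]{Reutenauer1993}) for general $\lambda$, and then matching the resulting Frobenius characteristic with the plethystic expression for $\ch \WH_{\OOO_{X_\lambda}}$ from \ref{prop:plethysm-reformulation}. Your extra sketch of re-deriving the Bergeron--Bergeron--Garsia computation via the PBW decomposition and wreath-product/plethysm bookkeeping is a fine (optional) elaboration but does not change the argument.
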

      Indeed, using this reformulation, Theorem 8.24 of \cite{Reutenauer1993}, which was first proved in \cite{BergeronBergeronGarsia1988},
      gives an alternative proof of \ref{pr:A-case-ls-conjecture}.
      
      \medskip
      The action of the group $\GL(V)$ on $T^1(V)$ induces a diagonal action of $\GL(V)$ on each $T^d(V)$.
      By standard facts about the representation theory 
      of $\GL(V)$ and from \ref{pr:multilinear} and \ref{pr:characteristic},
      the character of $\Lie_\lambda(V)$ as a $\GL(V)$-module (that is, the trace of a diagonal element
      of $\GL(V)$ having eigenvalues $x_1,\ldots,x_n$) is $\frL_\lambda(\xx)$.
      In other words, one has the following.

      \begin{Proposition}
        For $W=\symm_n$ and $\lambda$ any partition of $n$, the symmetric function
        $
         \ch \WH_{\OOO_{X_\lambda}}
        $
        is the $\GL(V)$-character of $\Lie_{\lambda}(V)$.
      \end{Proposition}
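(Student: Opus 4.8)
The plan is to derive this as a formalization of the paragraph immediately preceding the statement, namely to combine \ref{pr:multilinear} and \ref{pr:characteristic} with the classical dictionary relating polynomial $\GL(V)$-characters to Frobenius characteristics of $\symm_n$-representations. First I would recall that dictionary in the precise form needed: if $M$ is a representation of $\GL(V)$ that is polynomial and homogeneous of degree $n$, and if $\dim V \geq n$, then the weight-$(1^n)$ subspace $M_{(1^n)}$ --- the span of the weight vectors on which a diagonal matrix $\operatorname{diag}(t_1,\dots,t_{\dim V})$ acts by $t_1 t_2 \cdots t_n$ --- is preserved by the subgroup $\symm_n \subset \GL(V)$ permuting the first $n$ coordinate vectors, and the $\GL(V)$-character of $M$, written as a symmetric polynomial in the eigenvalues $x_1,\dots,x_{\dim V}$, equals $\ch$ of the $\symm_n$-character of $M_{(1^n)}$. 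One standard proof: decompose $M \cong \bigoplus_{\mu \vdash n} \mb{S}^\mu(V)^{\oplus m_\mu}$ into Schur functors; then $\operatorname{char}_{\GL(V)} \mb{S}^\mu(V) = s_\mu(x_1,\dots,x_{\dim V})$, while $\mb{S}^\mu(V)_{(1^n)}$ is isomorphic as an $\symm_n$-module to the Specht module $S^\mu$ whenever $\ell(\mu) \leq \dim V$, so that $\operatorname{char}_{\GL(V)} M = \sum_\mu m_\mu s_\mu = \ch\bigl( \sum_\mu m_\mu \, S^\mu \bigr) = \ch_{\symm_n}\bigl( M_{(1^n)} \bigr)$; see e.g. \cite{Macdonald1998, Stanley1999}.

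Next I would apply this with $M = \Lie_\lambda(V)$. From the definition $\Lie_\lambda(V) = \Sym^{m_1}(\Lie(V)_1) \otimes \Sym^{m_2}(\Lie(V)_2) \otimes \cdots$ for $\lambda = (1^{m_1}, 2^{m_2}, \dots)$, each factor is a polynomial representation of $\GL(V)$, so $\Lie_\lambda(V)$ is polynomial and homogeneous of degree $\sum_i i\,m_i = n$; and with $\dim V = n$ as in the statement, its weight-$(1^n)$ subspace is exactly the multilinear part $E_n \cap \Lie_\lambda(V)$ described in the excerpt. Hence the dictionary gives $\operatorname{char}_{\GL(V)} \Lie_\lambda(V) = \ch_{\symm_n}\bigl( E_n \cap \Lie_\lambda(V) \bigr)$, an identity of symmetric polynomials in $n$ variables.

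Finally, \ref{pr:multilinear} supplies an isomorphism of $\symm_n$-modules $E_n \cap \Lie_\lambda(V) \cong \WH_{\OOO_{X_\lambda}}$, whence $\ch_{\symm_n}(E_n \cap \Lie_\lambda(V)) = \ch \WH_{\OOO_{X_\lambda}}$, and combining the two identities proves the statement, with $\ch \WH_{\OOO_{X_\lambda}}$ read as a symmetric polynomial in $x_1,\dots,x_n$; no information is lost, since this symmetric function is homogeneous of degree $n$ and $\dim V = n$. As a cross-check one may instead run the same two steps with variables $t_i$ in place of $x_i$ and invoke \ref{pr:characteristic} ($\ch \WH_{\OOO_{X_\lambda}} = \frL_\lambda(\nicet)$) on the other side, recovering the displayed conclusion $\operatorname{char}_{\GL(V)} \Lie_\lambda(V) = \frL_\lambda(\xx)$. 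I do not expect a genuine obstacle here: the only point demanding care is the hypothesis $\dim V = n$, which is exactly what makes the multilinear (equivalently weight-$(1^n)$) part large enough to carry the full symmetric-function content of the $\GL(V)$-character rather than a truncation to fewer variables.
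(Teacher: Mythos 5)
Your proposal is correct and follows essentially the same route as the paper, which deduces the statement from \ref{pr:multilinear} (and \ref{pr:characteristic}) together with the ``standard facts'' relating the $\GL(V)$-character of a homogeneous degree-$n$ polynomial representation to the Frobenius characteristic of the $\symm_n$-action on its weight-$(1^n)$ (multilinear) component. You have merely made those standard facts explicit via the Schur-functor decomposition, which is exactly the intended argument.
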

      
      Thus the following problems are equivalent, and attributed by Schocker \cite{Schocker2003} 
      to Thrall \cite{Thrall1942}.

      \begin{Problem}
        \label{prob:Thrall}
        Find any of
        \begin{itemize}
          \item the $\symm_n$-irreducible decomposition of $\ch \WH_{\OOO_{X_\lambda}}$,
          \item the $\GL(V)$-irreducible decomposition of $\Lie_{\lambda}(V)$, or
          \item the Schur function expansion of $\frL_\lambda(\xx)$.
        \end{itemize}
      \end{Problem}
      Only partial results are known in this regard.  
      For example 
      for $\lambda=(n)$ it was shown by by Kraskiewi\'cz and Weyman in a preprint
      from 1987, published as \cite{KraskiewiczWeyman2001}, that 
      $$
        \frL_{(n)}(\xx) = \sum_{\substack{T\text{ in }SYT_n:\\ \maj(T) \equiv 1 \mod n}} s_{\lambda(T)}.
      $$
      This result can also be seen as a reformulation of a result by Springer from \cite{Springer1974}.

\section{The family $\nu_{(2^k,1^{n-2k})}$}
           \label{sec:second-family}

  Our goal here is to prove \ref{thm:second-family}, whose
  statement we recall.

  \theoremstyle{plain}
  \newtheorem*{TheoremSecondFamily}{\ref{thm:second-family}}
  \begin{TheoremSecondFamily}
      The operators from the family 
      $\{\nu_{(2^k,1^{n-2k})}\}_{k=1,2\ldots,\lfloor \frac{n}{2}\rfloor}$
      pairwise commute, and
      have only integer eigenvalues.
  \end{TheoremSecondFamily}
    
  Recall that here one is considering the reflection arrangement for
  the reflection group $W=\symm_n$, and the $W$-orbit on $\LLL$,
  denoted here $\OOO_{(2^k,1^{n-2k})}$, consisting of all
  intersection subspaces of the form
  $$
    \{ x_{i_1}=x_{i_2} \} \cap \{ x_{i_3}=x_{i_4} \} \cap  \cdots 
    \cap \{ x_{i_{2k-1}}=x_{i_{2k}} \}.
  $$
  Then $\nu_{(2^k,1^{n-2k})}=\nu_{\OOO_{(2^k,1^{n-2k})}}$.
  Define also $\pi_{(2^k,1^{n-2k})}=\pi_{\OOO_{(2^k,1^{n-2k})}}$.
  \nomenclature[co]{$\pi_{(2^k,1^{n-2k})}$}{$\pi_{\OOO_{(2^k,1^{n-2k})}}$}%

  \subsection{A Gelfand model for $\symm_n$}

    \ref{thm:second-family}
    will follow from applying our eigenvalue integrality principle
    \ref{prop:integrality-principle} to the following 
    representation-theoretic fact, which will identify the
    nonzero eigenspaces of the operators $\{ \nu_{(2^k,1^{n-2k})} \}$ 
    with a certain {\deffont Gelfand model} for $W=\symm_n$, whose construction
    is related to the construction of the Gelfand model of $\symm_n$ in 
    \cite{AdinPostnikovRoichman2008}.
    \index{Gelfand model}%
    Recall that a Gelfand model for $W=\symm_n$ is an $\symm_n$-module that carries exactly one copy of each
    $\symm_n$-irreducible $\chi^{\lambda}$; see \cite{AdinPostnikovRoichman2008} for further 
    references.
    Given a number partition $\lambda$,
    let $\oddcols(\lambda)$ denote the number of columns of odd length in the Ferrers
    diagram for $\lambda$, or the number of parts of odd length in the conjugate
    partition $\lambda'$.

    \begin{Proposition}
      \label{prop:second-family-essence}
      For $W=\symm_n$ and nonnegative integers $a,b$ with $2a+b=n$ one has
      $$
        \WH_{\OOO_{(2^a,1^b)}} = \bigoplus_{\substack{\lambda: \\ \oddcols(\lambda)=b}} \chi^\lambda.
      $$
      Consequently, we obtain a Gelfand model for $\symm_n$ by combining the modules $\WH_{\OOO_{(2^a,1^{n-2a})}}$:
      $$
        \bigoplus_{a=0}^{\lfloor\frac{n}{2}\rfloor} \WH_{\OOO_{(2^a,1^{n-2a})}} = \bigoplus_{\lambda\vdash n} \chi^\lambda.
      $$
    \end{Proposition}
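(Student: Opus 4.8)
The plan is to compute $\ch \WH_{\OOO_{(2^a,1^b)}}$ using the plethystic formula from \ref{prop:plethysm-reformulation} and then recognize the answer as the Frobenius characteristic of the sum of irreducibles indexed by partitions with a prescribed number of odd columns. Recall that for $\lambda = (2^a,1^b)$ the multiplicities are $m_1 = b$ and $m_2 = a$, so \ref{prop:plethysm-reformulation} gives
$$
  \ch \WH_{\OOO_{(2^a,1^b)}} = h_b\bigl[\ch(\omega_1)\bigr] \cdot h_a\bigl[\ch(\omega_2)\bigr].
$$
Now $\omega_1$ is the trivial representation of $\symm_1$, so $\ch(\omega_1) = p_1 = h_1$ and $h_b[h_1] = h_b$. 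For $\omega_2$: the lattice of set partitions of $\{1,2\}$ has trivial proper part, so $\WH_{\OOO_{X_{(2)}}}$ is the one-dimensional sign representation of $\symm_2$, giving $\ch(\omega_2) = \tfrac12(p_1^2 - p_2) = e_2$. Hence
$$
  \ch \WH_{\OOO_{(2^a,1^b)}} = h_b \cdot h_a[e_2].
$$
First I would therefore reduce the proposition to the symmetric-function identity
$$
  h_b \cdot h_a[e_2] = \sum_{\substack{\lambda \vdash n \\ \oddcols(\lambda) = b}} s_\lambda,
$$
and separately observe that summing over $a$ (equivalently over $b \equiv n \bmod 2$, $0 \le b \le n$) gives $\sum_{\lambda \vdash n} s_\lambda$, which is $\ch$ of a Gelfand model; in fact $\sum_a h_b h_a[e_2]$ telescopes to $\sum_{\lambda} s_\lambda$ precisely because every $\lambda \vdash n$ has a unique value of $\oddcols(\lambda)$ with the correct parity.

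The key step is the identity $h_a[e_2] = \sum_{\mu} s_{2\mu}$, where the sum runs over all partitions $\mu$ of $a$ and $2\mu$ denotes the partition obtained by doubling each part (equivalently, all partitions of $2a$ whose columns all have even length, i.e. whose Young diagram is a horizontal doubling). This is a classical plethysm evaluation — it is the $t=1$ or $q=1$ specialization of Littlewood's formula $h_n[h_2] = \sum_{\mu \text{ even}} s_\mu$ — and I would cite it from Macdonald \cite{Macdonald1998} (I.8, Example following the plethysm discussion) or derive it quickly via the Jacobi–Trudi / Weyl character formalism. Granting this, the proof becomes the Pieri computation
$$
  h_b \cdot h_a[e_2] = \sum_{\mu \vdash a} h_b \cdot s_{2\mu} = \sum_{\mu \vdash a} \ \sum_{\lambda} s_\lambda,
$$
the inner sum being over all $\lambda$ obtained from $2\mu$ by adding a horizontal strip of size $b$. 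The main obstacle — and the real combinatorial content — is the bijective/counting argument that as $\mu$ ranges over partitions of $a$ and $\lambda$ ranges over horizontal-strip additions of size $b$ to $2\mu$, each partition $\lambda \vdash n$ with $\oddcols(\lambda) = b$ is obtained exactly once, and every other $\lambda$ is obtained not at all. Here is the structural reason I would make precise: given $\lambda$, the shape $2\mu$ must be its ``even-column truncation'' — delete from each column of $\lambda$ one cell if that column has odd length — which forces $\mu$ uniquely and forces the number of deleted cells to be exactly $\oddcols(\lambda) = b$; conversely one must check that deleting one cell from each odd column of $\lambda$ yields a genuine Young diagram all of whose columns are even (true because the column-lengths of $\lambda$ are weakly decreasing, so the parities behave so that the truncated column-lengths stay weakly decreasing and even) and that the deleted cells form a horizontal strip (true because at most one cell is removed per column). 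I expect that verifying these two shape-theoretic claims carefully — especially that the truncation is a partition and that the removed cells lie in distinct columns hence form a horizontal strip — is where the argument needs attention; everything else is bookkeeping with Pieri's rule and the already-cited plethysm. Finally, translating $\ch$-identities back to $W$-module isomorphisms via Frobenius characteristic completes both displayed formulas, the second being immediate from the first upon summing over $a$ and using that every $\lambda \vdash n$ contributes to exactly one term.
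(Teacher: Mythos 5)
Your reduction via \ref{prop:plethysm-reformulation} to $\ch \WH_{\OOO_{(2^a,1^b)}} = h_b\, h_a[e_2]$ is exactly the paper's starting point; the difference is in how the Schur expansion is obtained. The paper sums over all $a,b$ with a marker $t^b$ and quotes the identity $\prod_{i<j}(1-x_ix_j)^{-1}\prod_i(1-tx_i)^{-1} = \sum_\lambda t^{\oddcols(\lambda)} s_\lambda(\xx)$ wholesale (Stanley, Exercise 7.28(b); Macdonald, Chap.~I, \S 5, Ex.~7), whereas you quote only the coarser Littlewood expansion of $h_a[e_2]$ and then prove the $t$-refinement yourself via Pieri plus the odd-column truncation bijection. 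That bijection is correct as you describe it: deleting one cell from each odd column of $\lambda$ gives a partition (the truncated column lengths stay weakly decreasing) with all columns even, the deleted cells lie in distinct columns and hence form a horizontal strip of size $\oddcols(\lambda)$, and conversely adding a horizontal strip to an even-column shape makes odd precisely the columns it meets, so each $\lambda$ with $\oddcols(\lambda)=b$ arises exactly once. So your route is a self-contained derivation of the identity the paper merely cites, at the cost of the shape-theoretic checks you already flag.

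One genuine misstatement needs fixing, though it does not sink the argument: $h_a[e_2]$ is \emph{not} $\sum_{\mu\vdash a} s_{2\mu}$ with each part of $\mu$ doubled, and your parenthetical ``equivalently, all partitions of $2a$ whose columns all have even length'' equates two non-equivalent (conjugate) conditions. Doubling each part gives all \emph{rows} even, which is the expansion of $h_a[h_2]$ (Littlewood's $\prod_{i\le j}(1-x_ix_j)^{-1}$ identity); the correct expansion here is $h_a[e_2]=\sum_{\nu} s_\nu$ over partitions $\nu\vdash 2a$ with all \emph{columns} even, i.e.\ $\nu=\mu\cup\mu$ with every part of $\mu$ repeated twice, coming from $\prod_{i<j}(1-x_ix_j)^{-1}=\sum_{\nu'\text{ even}} s_\nu$. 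Since your truncation argument only ever uses the even-column description, the proof goes through once this identity (and its attribution) is stated correctly.
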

    \begin{proof}
      By \ref{prop:plethysm-reformulation}, one has
      $$
        \begin{aligned}
          \sum_{a,b \geq 0} \ch \left(\WH_{\OOO_{(2^a,1^b)}}\right) t^b 
            & = \sum_{a,b \geq 0} h_a[ \ch \omega_2 ] \,\, h_b[ \ch \omega_1 ] \,\, t^b \\
            & = \sum_{a,b \geq 0} h_a[ e_2(\xx) ] \,\, h_b(\xx) \,\, t^b \\
            & = \prod_{i<j} (1-x_i x_j)^{-1} \prod_{i} (1 - t x_i)^{-1} \\
            & = \sum_{\lambda} t^{\oddcols(\lambda)} s_\lambda(\xx)
        \end{aligned}
      $$
      where the last equality uses a well-known identity (see 
      \cite[Exercise 7.28(b)]{Stanley1999}, \cite[Chap. I, \S 5,  Ex. 7]{Macdonald1998}).
    \end{proof}

  \subsection{Proof of \ref{thm:second-family}}

We will actually prove the following precision of Theorem~\ref{thm:second-family}, which tells us a bit more about the eigenvalues of the operators
$\nu_{(2^k,1^{n-2k})}$.

\begin{Theorem}
\label{thm:second-family-precision}
There exists an orthogonal direct sum decomposition of $\RR[\symm_n \times \ZZ_2]$-modules
\begin{equation}
\label{eqn:second-family-eigenspaces}
        \RR \symm_n = K \oplus \left( \bigoplus_{\lambda \vdash n} U^{\lambda} \right)
\end{equation}
with these properties.
\begin{enumerate}
\item[(i)]
The subspace $K$ is annihilated by each of the
operators $\{ \nu_{(2^k,1^{n-2k})} \}_{k=0,1,2,\ldots,\lfloor \frac{n}{2} \rfloor}$.
\item[(ii)]
The subspace $U^\lambda$ 
lies inside the eigenspace for $\nu_{(2^k,1^{n-2k})}$ having eigenvalue
\begin{equation}
\label{explicit-eigenvalue-for-second-family}
\gamma_{(2^k,1^{n-2k}),\lambda} = 
\sum_{ w \in \symm_n } \ninv_{(2^k,1^{n-2k})}(w) \cdot \chi^\lambda(w).
\end{equation}
\item[(iii)] 
The subspace $U^\lambda$ affords the irreducible $\RR[\symm_n \times \ZZ_2]$-module
$
\chi^\lambda \otimes \left(\chi^{-}\right)^{\otimes \frac{n-\oddcols(\lambda)}{2} }.
$
\end{enumerate}
\end{Theorem}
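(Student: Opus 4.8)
The plan is to build the decomposition \eqref{eqn:second-family-eigenspaces} out of the equivariant \BHR description of $\im \nu_\OOO$ (\ref{cor:general-equivariant-kernel}, specialized to type $A$ in \ref{ex:two-type-A-examples}), the Gelfand-model identity \ref{prop:second-family-essence}, and the self-adjointness of $\nu_\OOO$ coming from \ref{prop:rectangular-square-root}, and then to pin down the eigenvalues via a Schur's-lemma-plus-trace argument together with the Fourier-transform reduction \ref{Fourier-transform-prop}.

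First I would record the combinatorial fact that, among all set-partition types, the ones refining $(2^k,1^{n-2k})$ are exactly the $(2^j,1^{n-2j})$ with $0 \le j \le k$. Feeding this into \ref{ex:two-type-A-examples}, and using $\dim_\RR\bigl(V/X_{(2^j,1^{n-2j})}\bigr)=n-\ell\bigl((2^j,1^{n-2j})\bigr)=j$, gives an isomorphism of $\RR[\symm_n\times\ZZ_2]$-modules
\[
  \im \nu_{(2^k,1^{n-2k})} \;\cong\; \bigoplus_{j=0}^{k} \WH_{\OOO_{(2^j,1^{n-2j})}} \otimes \bigl(\chi^{-}\bigr)^{\otimes j},
\]
so in particular $\im \nu_{(2^k,1^{n-2k})} \subseteq \im \nu_{(2^{k'},1^{n-2k'})}$ for $k \le k'$ (equivalently, the kernels are nested, \ref{prop:nested-kernels}). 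By \ref{prop:second-family-essence} each $\WH_{\OOO_{(2^j,1^{n-2j})}}=\bigoplus_{\oddcols(\lambda)=n-2j}\chi^\lambda$ is multiplicity-free with pairwise disjoint constituents, so the image of the largest operator,
\[
  \im \nu_{(2^{\lfloor n/2\rfloor},1^{n-2\lfloor n/2\rfloor})} \;\cong\; \bigoplus_{\lambda \vdash n} \chi^\lambda \otimes \bigl(\chi^{-}\bigr)^{\otimes (n-\oddcols(\lambda))/2},
\]
is a Gelfand model. Now set $K:=\ker\nu_{(2^{\lfloor n/2\rfloor},1^{n-2\lfloor n/2\rfloor})}$ and let $U^\lambda$ be the $\chi^\lambda$-isotypic component of $\im\nu_{(2^{\lfloor n/2\rfloor},1^{n-2\lfloor n/2\rfloor})}$; this $U^\lambda$ is a single copy of $\chi^\lambda$, it is $\symm_n\times\ZZ_2$-stable since the $\ZZ_2$-action commutes with left translation, and it affords $\chi^\lambda\otimes(\chi^{-})^{\otimes(n-\oddcols(\lambda))/2}$, which is (iii). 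Self-adjointness makes $\RR\symm_n=K\oplus\im\nu_{(2^{\lfloor n/2\rfloor},1^{n-2\lfloor n/2\rfloor})}$ an orthogonal decomposition, which is \eqref{eqn:second-family-eigenspaces}, and \ref{prop:nested-kernels} gives $K\subseteq\ker\nu_{(2^k,1^{n-2k})}$ for every $k\le\lfloor n/2\rfloor$, which is (i).

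For (ii), fix $k$ and $\lambda$ and write $j_\lambda:=(n-\oddcols(\lambda))/2$. If $j_\lambda>k$ then $\lambda$ does not index a constituent of $\im\nu_{(2^k,1^{n-2k})}$, so $U^\lambda\subseteq\ker\nu_{(2^k,1^{n-2k})}$; if $j_\lambda\le k$ then $U^\lambda$ is the unique copy of $\chi^\lambda$ inside $\im\nu_{(2^k,1^{n-2k})}$, and since $\nu_{(2^k,1^{n-2k})}$ commutes with left translation and preserves this multiplicity-free module, Schur's lemma forces it to act on $U^\lambda$ by a scalar $\gamma$. To identify $\gamma$, use \ref{Fourier-transform-prop}: positive semidefiniteness and self-adjointness of $\nu_{(2^k,1^{n-2k})}$ make $\rho_{\chi^\lambda}\bigl(\nu_{(2^k,1^{n-2k})}\bigr)$ diagonalizable with nonnegative eigenvalues, and the multiplicity of $\chi^\lambda$ in $\im\nu_{(2^k,1^{n-2k})}$ — which is $1$ when $j_\lambda\le k$ and $0$ otherwise — equals the number of nonzero eigenvalues of this matrix; hence it has a single nonzero eigenvalue (resp. none), which therefore equals its trace $\sum_{w\in\symm_n}\ninv_{(2^k,1^{n-2k})}(w)\,\chi^\lambda(w)=\gamma_{(2^k,1^{n-2k}),\lambda}$ (resp. the trace is $0$). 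In either case $U^\lambda$ lies in the $\gamma_{(2^k,1^{n-2k}),\lambda}$-eigenspace of $\nu_{(2^k,1^{n-2k})}$, which is (ii). These eigenvalues are integers because the $\ninv$-values and the $\symm_n$-character values $\chi^\lambda(w)$ are integers — alternatively, \ref{prop:integrality-principle} applied to the multiplicity-free submodule $\im\nu_{(2^k,1^{n-2k})}$ of $\RR\symm_n$ gives the same conclusion since $\symm_n$ is a Weyl group. Finally, since \eqref{eqn:second-family-eigenspaces} simultaneously diagonalizes the whole family, the operators $\nu_{(2^k,1^{n-2k})}$ pairwise commute, which recovers \ref{thm:second-family}.

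The one step requiring genuine care is identifying $\gamma$ with the character sum \eqref{explicit-eigenvalue-for-second-family}: this hinges on $\chi^\lambda$ appearing in $\im\nu_{(2^k,1^{n-2k})}$ with multiplicity exactly one, so that the trace of $\rho_{\chi^\lambda}\bigl(\nu_{(2^k,1^{n-2k})}\bigr)$ collapses onto a single eigenvalue; this is precisely what the Gelfand-model identity \ref{prop:second-family-essence} combined with the orbit-refinement bookkeeping provides. The remainder is a routine assembly of results established earlier in the paper.
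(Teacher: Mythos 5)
Your proposal is correct and follows essentially the same route as the paper's proof: take $K$ to be the kernel of the largest operator, use the nested kernels for (i), identify $K^\perp$ with the multiplicity-free Gelfand model via the equivariant \BHR description (\ref{cor:second-square-root-is-bhr}, \ref{ex:two-type-A-examples}, \ref{prop:second-family-essence}), and extract the eigenvalue from the trace of $\rho_{\chi^\lambda}(\nu_{(2^k,1^{n-2k})})$ via \ref{Fourier-transform-prop}. The only cosmetic difference is in (iii), where you read the $\ZZ_2$-character directly off the $\RR[\symm_n\times\ZZ_2]$-isomorphism type of $\im\nu_{(2^{\lfloor n/2\rfloor},1^{n-2\lfloor n/2\rfloor})}$ given by \ref{cor:general-equivariant-kernel}, whereas the paper localizes $U^\lambda$ in the layer $\im(\nu_{(2^a,1^{n-2a})})\cap\im(\nu_{(2^{a-1},1^{n-2a+2})})^\perp$; both rest on the same equivariant kernel/image description, so the arguments are the same in substance.
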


    \begin{proof}[Proof of \ref{thm:second-family-precision}]
      We set $k_{\max} := \lfloor \frac{n}{2} \rfloor$, and define
      $$
       K:= \ker \pi_{(2^{k_{\max}},1^{n-2k_{\max}})} 
         = \ker \nu_{(2^{k_{\max}},1^{n-2k_{\max}})}.
      $$
      Since one can find a nested chain of representative subspaces
      for the $W$-orbits $\OOO_{(2^k,1^{n-2k})}$ as $k$ varies,
      \ref{prop:nested-kernels} implies the following inclusions of kernels:
      \begin{equation}
        \label{eqn:second-family-filtration}
        \begin{array}{cccccccc}
          K=& \ker \pi_{(2^{k_{\max}},1^{n-2k_{\max}})} & 
               \subset \cdots \subset 
             & \ker \pi_{(2^2,1^{n-4})} & 
               \subset 
             & \ker \pi_{(2^1,1^{n-2})} &
               \subset 
             & \ker \pi_{(1^n)} \\
             & \Vert & & \Vert & & \Vert & & \Vert \\
             & \ker \nu_{(2^{k_{\max}},1^{n-2k_{\max}})} & 
               & \ker \nu_{(2^2,1^{n-4})} & 
               & \ker \nu_{(2^1,1^{n-2})} &
               & \ker \nu_{(1^n)} \\
        \end{array}
      \end{equation}
      In particular, $K$ is annihilated, and hence preserved, 
      by every one of the self-adjoint
      operators $\{ \nu_{(2^k,1^{n-2k})} \}_{k=0,1,2,\ldots,k_{\max}}$.  
      
      Hence they also preserve the perpendicular
      space $U:=K^\perp$, a $\QQ$-rational subspace of $\RR \symm_n$.  Note that
      \ref{cor:second-square-root-is-bhr} implies that the $\symm_n$-representation 
      afforded by $U$ is the same as that 
      afforded by the nonzero eigenspaces of a certain \BHR operator $b_J$.
      Meanwhile, \ref{ex:two-type-A-examples} shows that this $\symm_n$-representation
      is the sum of $\bigoplus_{a=0}^{k_{\max}} \WH_{\OOO_{(2^a,1^{n-2a})}}$.  Note that
      this sum is isomorphic to the multiplicity-free Gelfand model described in
      \ref{prop:second-family-essence}.

      This multiplicity-freeness has two consequences.  
      First, it shows that by combining the $\symm_n$-isotypic 
      decomposition $U = \bigoplus_\lambda U^{\lambda}$
      together with the complementary space $K$, one obtains a direct sum decomposition
      as in \eqref{eqn:second-family-eigenspaces}
      that simultaneously diagonalizes all of the operators 
      $\{ \nu_{(2^k,1^{n-2k})} \}_{k=0,1,2,\ldots,k_{\max}}$.  

      Secondly, the eigenvalue integrality principle, \ref{prop:integrality-principle},
      implies that each operator $\nu_{(2^k,1^{n-2k})}$ 
      acts on $U$ with integer eigenvalues.  
      Since $\nu_{(2^k,1^{n-2k})}$ also annihilates the subspace $K=U^\perp$
      complementary to $U$, it has only integer eigenvalues on all
      of $\RR \symm_n$.  

      However, we know more about the eigenvalue\footnote{The first author thanks C.E. Csar for discussions leading to this expression for $\gamma_{(2^k,1^{n-2k}),\lambda}$.} 
      $\gamma_{(2^k,1^{n-2k}),\lambda}$  with which $\nu_{(2^k,1^{n-2k})}$ acts on $U^\lambda$.
      Picking any realization $\symm_n \overset{\rho_\lambda}{\rightarrow} GL_\CC(V)$
      of the irreducible $\symm_n$-representation with character $\chi^\lambda$,
      the Fourier transform reduction, \ref{Fourier-transform-prop}, tells us that
      $\rho_\lambda( \nu_{(2^k,1^{n-2k})} )$ has $\gamma_{(2^k,1^{n-2k}),\lambda}$ as its only
      potential nonzero eigenvalue, and hence 
$$
\begin{aligned}
\gamma_{(2^k,1^{n-2k}),\lambda}&= \Trace \rho_\lambda( \nu_{(2^k,1^{n-2k})} )\\  
&= \Trace\left( \sum_{w \in \symm_n} \ninv_{(2^k,1^{n-2k})}(w) \cdot \rho_\lambda(w) \right)\\
&= \sum_{w \in \symm_n} \ninv_{(2^k,1^{n-2k})}(w) \cdot \Trace \rho_\lambda(w)   \\
&= \sum_{w \in \symm_n} \ninv_{(2^k,1^{n-2k})}(w) \cdot \chi^\lambda(w)   
\end{aligned}
$$

      Lastly, to see how $\ZZ_2$ acts on $U^\lambda$, note that
      \ref{prop:second-family-essence} implies that $U^\lambda$ lies in 
\begin{equation}
\label{eqn:second-family-layer}
\im( \nu_{(2^a,1^{n-2a})} ) \cap \im( \nu_{(2^{a-1},1^{n-2a+2})} )^\perp
\end{equation}
      where $a:=\frac{n-\oddcols(\lambda)}{2}$.   Since $\nu_{(2^a,1^{n-2a})},\pi_{(2^a,1^{n-2a})}$
      share the same kernels, one has an isomorphism of
      $\RR[\symm_n \times \ZZ_2]$-modules
$$
\im( \nu_{(2^a,1^{n-2a})} ) \cong \im( \pi_{(2^a,1^{n-2a})} ).
$$
      Consequently the space \eqref{eqn:second-family-layer} carries
      $\RR[\symm_n \times \ZZ_2]$-module structure isomorphic to that of
$$
\im( \pi_{(2^a,1^{n-2a})} ) / \im( \pi_{(2^{a-1},1^{n-2a+2})} )
$$ 
      which is $\WH_{\OOO_{(2^a,1^{n-2a})}} \otimes (\chi^-)^{\otimes a}$ by 
      \ref{ex:two-type-A-examples}.  Thus $\ZZ_2$ acts by $(\chi^-)^{\otimes a}$ on $U^\lambda$.
    \end{proof}

    \begin{Remark}
      In contrast with the situation for the original family 
      $\{\nu_{(k,1^{n-k})} \}_{k=1,2\ldots,n}$
      one does {\emphfont not} have that the
      associated \BHR-operators $b_J$ pairwise commute.
    \end{Remark}

\begin{Remark}
The formula for the eigenvalue $\gamma_{(2^k,1^{n-2k}),\lambda}$
given in \eqref{explicit-eigenvalue-for-second-family} is somewhat explicit,
but still leaves something to be desired.
For example, the character values $\chi^\lambda(w)$ for $w$ 
in $\symm_n$ are integers, but they can be negative.  
Thus \eqref{explicit-eigenvalue-for-second-family} does not manifestly show
the fact that $\gamma_{(2^k,1^{n-2k}),\lambda}$ is {\it nonnegative},
nor does it show the fact that  
$\gamma_{(2^k,1^{n-2k}),\lambda}$ vanishes unless $\oddcols(\lambda) \geq n-2k$.
This suggests the following problem.

      \begin{Problem}
        \label{prob:second-family-eigenvalues}
        For each partition $\lambda$ of $n$, and each $k$ with 
        $\oddcols(\lambda) \geq n-2k$, find a more explicit 
        formula for the nonzero eigenvalue $\gamma_{(2^k,1^{n-2k}),\lambda}$ 
        of $\nu_{(2^k,1^{n-2k})}$ acting on its (non-kernel) eigenspace $U^\lambda$
        affording $\chi^\lambda$.
      \end{Problem}

      We have computed some of these eigenvalues using {\tt Sage} \cite{Sage},
      and we present this data for $3\leq n \leq 6$ in the tables below.
      The data is presented as follows: 
      \begin{itemize}
      \item 
          each row of the table corresponds to the subspace $U^\lambda$ affording $\chi^\lambda$;
      \item
          the entry in the column indexed by $\nu_{(2^k,1^{n-2k})}$ is the
          eigenvalue $\gamma_{(2^k,1^{n-2k}),\lambda}$;
      \item 
          the entry in the column indexed by $w_0$ is the eigenvalue for the
          $\ZZ_2$-action on $U^\lambda$.
      \end{itemize}
      To enhance the presentation of the data, every zero eigenvalue has been replaced by
      a dot.
    \end{Remark}

    \begin{center}
    \begin{tabular}{cc}

      \begin{tabular}{lrrr}
      \toprule
      & $\nu_{(1^3)}$ & $\nu_{(2^1,1^1)}$ & $\ZZ_2$ \\
      \midrule[0.1em]
       $\chi^{3}$ & 6 & 9 & 1 \\
       $\chi^{21}$ & $\cdot$ & 4 & $-1$ \\
       $\chi^{111}$ & $\cdot$ & 1 & $-1$ \\
      \bottomrule
      \end{tabular}

      &

      \qquad
      \begin{tabular}{lrrrr}
      \toprule
      & $\nu_{(1^4)}$ & $\nu_{(2^1,1^2)}$ & $\nu_{(2^2)}$ & $\ZZ_2$ \\
      \midrule[0.1em]
       $\chi^{4}$ & 24 & 72 & 18 & 1 \\
       $\chi^{31}$ & $\cdot$ & 20 & 10 & $-1$ \\
       $\chi^{211}$ & $\cdot$ & 4 & 2 & $-1$ \\
       $\chi^{22}$ & $\cdot$ & $\cdot$ & 8 & 1 \\
       $\chi^{1111}$ & $\cdot$ & $\cdot$ & 2 & 1 \\
      \bottomrule
      \end{tabular}

      \\
      \vspace{1em}
      \\

      \begin{tabular}{lrrrr}
      \toprule
      & $\nu_{(1^5)}$ & $\nu_{(2^1,1^3)}$ & $\nu_{(2^2,1^1)}$ & $\ZZ_2$ \\
      \midrule[0.1em]
       $\chi^{5}$ & 120 & 600 & 450 & 1 \\
       $\chi^{41}$ & $\cdot$ & 120 & 180 & $-1$ \\
       $\chi^{311}$ & $\cdot$ & 20 & 30 & $-1$ \\
       $\chi^{32}$ & $\cdot$ & $\cdot$ & 68 & 1 \\
       $\chi^{221}$  & $\cdot$ & $\cdot$ & 12 & 1 \\
       $\chi^{2111}$  & $\cdot$ & $\cdot$ & 12 & 1 \\
       $\chi^{11111}$ & $\cdot$ & $\cdot$ & 2 & 1 \\
      \bottomrule
      \end{tabular}

      &

      \qquad
      \begin{tabular}{lrrrrr}
      \toprule
      & $\nu_{(1^6)}$ & $\nu_{(2^1,1^4)}$ & $\nu_{(2^2,1^2)}$ & $\nu_{(2^3)}$ & $\ZZ_2$ \\
      \midrule[0.1em]
       $\chi^{6}$ & 720 & 5400 & 8100 & 1350 & 1 \\
       $\chi^{51}$ & $\cdot$ & 840 & 2520 & 630 & $-1$ \\
       $\chi^{411}$ & $\cdot$ & 120 & 360 & 90 & $-1$ \\
       $\chi^{42}$ & $\cdot$ & $\cdot$ & 616 & 308 & 1 \\
       $\chi^{321}$ & $\cdot$ & $\cdot$ & 96 & 48 & 1 \\
       $\chi^{3111}$ & $\cdot$ & $\cdot$ & 84 & 42 & 1 \\
       $\chi^{222}$ & $\cdot$ & $\cdot$ & 24 & 12 & 1 \\
       $\chi^{21111}$ & $\cdot$ & $\cdot$ & 12 & 6 & 1 \\
       $\chi^{33}$ & $\cdot$ & $\cdot$ & $\cdot$ & 204 & $-1$ \\
       $\chi^{2211}$ & $\cdot$ & $\cdot$ & $\cdot$ & 36 & $-1$ \\
       $\chi^{111111}$ & $\cdot$ & $\cdot$ & $\cdot$ & 6 & $-1$ \\
      \bottomrule
      \end{tabular}
    \end{tabular}

    \medskip
    {Eigenvalues of $\nu_{(2^k,1^{n-2k})}$ acting on the non-kernel eigenspace afforded by $\chi^\lambda$.}
    \end{center}
      \bigskip

    \begin{center}
    \end{center}

      \begin{center}
        \centering
        \begin{tabular}{lrrrrr}
        \toprule
         & $\nu_{(1^7)}$&$\nu_{(2^1,1^5)}$&$\nu_{(2^2,1^3)}$&$\nu_{(2^3,1^1)}$&$\ZZ_2$ \\
         \midrule
         $\chi^{7}$ & 5040 & 52920 & 132300 & 66150 & 1 \\
         $\chi^{61}$ & $\cdot$ &  6720 &  33600 &  25200 & -1 \\
         $\chi^{511}$ & $\cdot$ &  840 &  4200 &  3150 & -1 \\
         $\chi^{52}$ & $\cdot$ & $\cdot$ & 6048 & 9072 & 1 \\
         $\chi^{421}$ & $\cdot$ & $\cdot$ & 840 & 1260 & 1 \\
         $\chi^{4111}$ & $\cdot$ & $\cdot$ & 672 & 1008 & 1 \\
         $\chi^{322}$ & $\cdot$ & $\cdot$ & 168 & 252 & 1 \\
         $\chi^{31111}$ & $\cdot$ & $\cdot$ & 84 & 126 & 1 \\
         $\chi^{43}$ & $\cdot$ & $\cdot$ & $\cdot$ &  2976 & -1 \\
         $\chi^{331}$ & $\cdot$ & $\cdot$ & $\cdot$ &  396 & -1 \\
         $\chi^{3211}$ & $\cdot$ & $\cdot$ & $\cdot$ &  396 & -1 \\
         $\chi^{2221}$ & $\cdot$ & $\cdot$ & $\cdot$ &  96 & -1 \\
         $\chi^{22111}$ & $\cdot$ & $\cdot$ & $\cdot$ &  48 & -1\\
         $\chi^{211111}$ & $\cdot$ & $\cdot$ & $\cdot$ &  48 & -1\\
         $\chi^{1111111}$ & $\cdot$ & $\cdot$ & $\cdot$ &  6 & -1\\
        \bottomrule
        \end{tabular}

        \bigskip
        \begin{tabular}{lrrrrrr}
        \toprule
         & $\nu_{(1^8)}$&$\nu_{(2^1,1^6)}$&$\nu_{(2^2,1^4)}$&$\nu_{(2^3,1^2)}$&$\nu_{(2^4)}$&$\ZZ_2$\\
        \midrule
         $\chi^{8}$ & 40320 & 564480 & 2116800 & 2116800 & 264600 & 1 \\
         $\chi^{71}$ & $\cdot$ & 60480 & 453600 & 680400 & 113400 & -1 \\
         $\chi^{611}$ & $\cdot$ & 6720 & 50400 & 75600 & 12600 & -1 \\
         $\chi^{62}$ & $\cdot$ & $\cdot$ & 64512 & 193536 & 48384 & 1 \\
         $\chi^{521}$ & $\cdot$ & $\cdot$ & 8064 & 24192 & 6048 & 1 \\
         $\chi^{5111}$ & $\cdot$ & $\cdot$ & 6048 & 18144 & 4536 & 1 \\
         $\chi^{422}$ & $\cdot$ & $\cdot$ & 1344 & 4032 & 1008 & 1 \\
         $\chi^{41111}$ & $\cdot$ & $\cdot$ & 672 & 2016 & 504 & 1 \\
         $\chi^{53}$ & $\cdot$ & $\cdot$ & $\cdot$ & 42240 & 21120 & -1 \\
         $\chi^{431}$ & $\cdot$ & $\cdot$ & $\cdot$ & 5376 & 2688 & -1 \\
         $\chi^{4211}$ & $\cdot$ & $\cdot$ & $\cdot$ & 4544 & 2272 & -1 \\
         $\chi^{332}$ & $\cdot$ & $\cdot$ & $\cdot$ & 960 & 480 & -1 \\
         $\chi^{3221}$ & $\cdot$ & $\cdot$ & $\cdot$ & 896 & 448 & -1 \\
         $\chi^{32111}$ & $\cdot$ & $\cdot$ & $\cdot$ & 512 & 256 & -1 \\
         $\chi^{311111}$ & $\cdot$ & $\cdot$ & $\cdot$ & 432 & 216 & -1 \\
         $\chi^{22211}$ & $\cdot$ & $\cdot$ & $\cdot$ & 128 & 64 & -1 \\
         $\chi^{2111111}$ & $\cdot$ & $\cdot$ & $\cdot$ & 48 & 24 & -1 \\
         $\chi^{44}$ & $\cdot$ & $\cdot$ & $\cdot$ & $\cdot$ & 11904 & 1 \\
         $\chi^{3311}$ & $\cdot$ & $\cdot$ & $\cdot$ & $\cdot$ & 1584 & 1 \\
         $\chi^{2222}$ & $\cdot$ & $\cdot$ & $\cdot$ & $\cdot$ & 384 & 1 \\
         $\chi^{221111}$ & $\cdot$ & $\cdot$ & $\cdot$ & $\cdot$ & 192 & 1 \\
         $\chi^{11111111}$ & $\cdot$ & $\cdot$ & $\cdot$ & $\cdot$ & 24 & 1 \\
        \bottomrule
        \end{tabular}

        \smallskip
        {Eigenvalues of $\nu_{(2^k,1^{n-2k})}$ acting on the non-kernel eigenspace afforded by $\chi^\lambda$.}
      \end{center}

\section{The original family $\nu_{(k,1^{n-k})}$}
           \label{sec:original-family}

  Let us return to the context of \ref{thm:original-family-commutativity}.
  Here $W=\symm_n$ and $\OOO=\OOO_{(k,1^{n-k})}$ is the $W$-orbit containing
  the subspace $X_{(k,1^{n-k})}$, so we wish to analyze the elements 
  $$
    \nu_\OOO= \nu_{(k,1^{n-k})} := \sum_{w \in W} \ninv_k(w) \cdot w
  $$
  where $\ninv_k(w)$ is the number of increasing sequences of
  length $k$ contained in $w$.

   \subsection{Proof of \ref{thm:original-family-commutativity}.}

     Recall the statement of \ref{thm:original-family-commutativity}.

     \theoremstyle{plain}
     \newtheorem*{TheoremFirstFamily}{\ref{thm:original-family-commutativity}}
     \begin{TheoremFirstFamily}
         The operators from the family $\{\nu_{(k,1^{n-k})} \}_{k=1,2,\ldots,n}$
         pairwise commute.
     \end{TheoremFirstFamily}
     \begin{proof}
       Fix $k$ and $\ell$. One  has $\nu_{(k,1^{n-k})} \nu_{(\ell,1^{n-\ell})} 
       = \sum_{w \in \symm_n} d^{k,\ell}_w \cdot w$, where
       \begin{equation} \label{unreformulated-coefficient-equation}
         d^{k,\ell}_w 
           = \sum_{\substack{1 \leq i_1 < \cdots <i_k \leq n \\ 1 \leq j_1 < \cdots <j_\ell \leq n}}
           \left| \left\{(u,v) \in \symm_n \times \symm_n :
           \begin{matrix} u v= w, & \\
                     u(i_1) < \cdots < u(i_k), \\
                     v(j_1) < \cdots < v(j_\ell) 
           \end{matrix} \right\} \right|.
       \end{equation}
       We want to show that $d^{k,\ell}_w = d^{\ell,k}_w$ for any permutation $w$ in $\symm_n$.

       \medskip
       Let us reformulate this coefficient $d^{k,\ell}_w$ a bit.  First get
       rid of $v$ using $v=u^{-1}w$.  Second, if one names the sequences of lengths $k$ and 
       $\ell$
       $$
         \begin{aligned}
           K&:=(u(i_1),\ldots,u(i_k)) \\
           L&:=(j_1,\ldots,j_\ell)\\
           \text{ so }w(L)&:=(w(j_1),\ldots,w(j_\ell)),
         \end{aligned}
       $$
       then the condition on $u$ in \eqref{unreformulated-coefficient-equation}
       is that both sequences $K$ and $w(L)$ appear from left-to-right
       as subsequences inside $(u_1,u_2,\ldots,u_n)$.  In other words, $u$ lies in the set 
       $\LLL(P_{K,w(L)})$ of all linear extensions of the poset $P_{K,w(L)}$ on 
       $[n]=\{1,2,\ldots,n\}$
       defined as the transitive closure of the order relations 
       $$
         \begin{array}{rcl}
           u(i_1)&< \cdots <&u(i_k)\\
           w(j_1)&< \cdots <&w(j_\ell).
         \end{array}
       $$
       \begin{Example}
       If $n=10$, $k=6$, $\ell=4$ and 
       $$
         \begin{aligned}
           w&=\left[ \begin{matrix} 1&2&3&4&5&6&7&8&9&10 \\
                       9&1&7&3&5&2&6&8&10&4 \\ \end{matrix} \right] \\
           K&=(1,3,5,6,8,10) \\
           w(L)&=(7,3,2,8) \\
         \end{aligned}
       $$
       then the poset $P_{K,w(L)}$ for this example is shown in \ref{fig:exampleposet}.

       \begin{figure}[ht]
         \begin{picture}(0,0)%
           \includegraphics{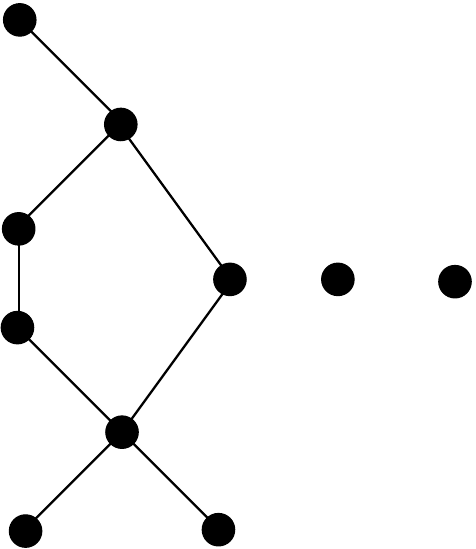}%
         \end{picture}%
         \setlength{\unitlength}{4100sp}%
         \begin{picture}(2000,3000)(4000,-2000)
           \put(4250, 0500){\makebox(0,0)[lb]{\color[rgb]{0,0,0}10}}
           \put(4300,-0100){\makebox(0,0)[lb]{\color[rgb]{0,0,0}8}}
           \put(3850,-0550){\makebox(0,0)[lb]{\color[rgb]{0,0,0}6}}
           \put(4850,-0820){\makebox(0,0)[lb]{\color[rgb]{0,0,0}2}}
           \put(5350,-0820){\makebox(0,0)[lb]{\color[rgb]{0,0,0}4}}
           \put(6220,-0820){\makebox(0,0)[lb]{\color[rgb]{0,0,0}9}}
           \put(3850,-1100){\makebox(0,0)[lb]{\color[rgb]{0,0,0}5}}
           \put(4360,-1520){\makebox(0,0)[lb]{\color[rgb]{0,0,0}3}}
           \put(3900,-2050){\makebox(0,0)[lb]{\color[rgb]{0,0,0}1}}
           \put(4800,-2050){\makebox(0,0)[lb]{\color[rgb]{0,0,0}7}}
         \end{picture}
         \caption{The poset $P_{K,w(L)}$ with 
            $K=(1,3,5,6,8,10)$ and $w(L)=(7,3,2,8)$.}
         \label{fig:exampleposet}
       \end{figure}
       \end{Example}

       Lastly note that when $w$ is written
       in two-line notation, $K$ is a $k$-subsequence of the top line, while $w(L)$ is
       an $\ell$-subsequence of the bottom line.
       Thus one has
       $$
         d^{k,\ell}_w = \sum_{\text{ such }K,L} \#\LLL(P_{K,w(L)})
       $$
       and we wish to show that $d^{k,\ell}_w = d^{\ell,k}_w$ for all permutations
       $w$ in $\symm_n$.

       \medskip
       First we fix the intersection and union of the underlying sets of $K$ and $w(L)$
       $$
         \begin{aligned}
           I &:=K \cap w(L)\\
           J &:=K \cup w(L)
         \end{aligned}
       $$
       and define a new coefficient
       \begin{equation}
       \label{refined-d-coefficients}
         d^{k,\ell}_{(w,I,J)} := 
         \sum_{\substack{K,L:\\ K\cap w(L)=I\\ K \cup w(L)=J}} \#\LLL(P_{K,w(L)}).
       \end{equation}
       Thus it suffices to show that for each fixed pair $I \subseteq J \subseteq [n]$,
       one has $d^{k,\ell}_{(w,I,J)} = d^{\ell,k}_{(w,I,J)}$.

       \medskip
       One can reduce to the case where $J=K \cup w(L)=[n]$ as follows.
       If $m$ lies in the complement $[n] \setminus J$, then
       $m$ is incomparable to all other elements in $P_{K \cup w(L)}$ (such as
       $m=4$ or $m=9$ in the previous example), and one finds
       $$
         \Big|\LLL(P_{K,w(L)})\Big| = n \cdot 
         \Big|\LLL(\hat{P}_{K,w(L)})\Big| 
       $$
       where $\hat{P}_{K,w(L)}$ is the poset on $[n] \setminus \{m\}$
       in which the element $m$ has been removed.
       Thus we will assume without loss of generality that $J:=K \cup w(L)=[n]$.

       \medskip
       We reformulate further.  Think of the fixed elements in $I:=K \cap w(L)$ as a set of $i:=|I|$
       vertical {\emphfont dividers} that partition the remaining elements $[n] \setminus I$ in the
       \index{dividers}%
       top and bottom of $w$ into $i+1$ divisions:
       $$
         w=\left[ 
             \begin{matrix} 
                t^{(1)} &|& t^{(2)} &|& \cdots &|& t^{(i+1)} \\
                b^{(1)} &|& b^{(2)} &|& \cdots &|& b^{(i+1)} 
             \end{matrix} 
           \right]
       $$
       Note that the sequences $t^{(j)}$ and $b^{(j)}$ need not have the same length,
       and any of them are allowed to be empty sequences.  
       
       \begin{Example}
       If one has $k=7$, $\ell=5$, 
       \begin{gather*}
        \begin{array}{@{}l@{}l@{}l@{}l@{}l@{}l@{}l@{}l@{}l@{}l@{}l@{}}
            w = \left[\begin{array}{c}  1 \\  9 \end{array}\right.
            & \begin{array}{c}  2 \\  1 \end{array}
            & \begin{array}{c}  3 \\  7 \end{array}
            & \begin{array}{c}  4 \\  3 \end{array}
            & \begin{array}{c}  5 \\  5 \end{array}
            & \begin{array}{c}  6 \\  2 \end{array}
            & \begin{array}{c}  7 \\  6 \end{array}
            & \begin{array}{c}  8 \\  8 \end{array}
            & \begin{array}{c}  9 \\  10 \end{array}
            & \left.\begin{array}{c} 10 \\ 6 \end{array}\right].
          \end{array}
       \end{gather*}
       and $I=\{3,6\}$, then the divided $w$ looks like
       \begin{gather}
       \label{divided-w}
        \begin{array}{@{}l@{}l@{}l@{}l@{}l@{}l@{}l@{}l@{}l@{}}
            & & & \,\text{\scriptsize3} & & \,\text{\scriptsize6} \\
            w = \left[\begin{array}{c}  1 \\  9 \end{array}\right.
            & \begin{array}{c}  2 \\  1 \end{array}
            & \begin{array}{c}  \phantom{3} \\  7 \end{array}
            & \,\left|\,\begin{array}{c}  4 \\  5 \end{array}\right.
            & \begin{array}{c}  5 \\  2 \end{array}
            & \,\left|\,\begin{array}{c}  7 \\  8 \end{array}\right.
            & \begin{array}{c}  8 \\ 10 \end{array}
            & \begin{array}{c}  9 \\  4 \end{array}
            & \left.\begin{array}{c} 10 \\  \phantom{6} \end{array}\right].
          \end{array}
       \end{gather}
       \end{Example}

       One may as well relabel \ref{divided-w}
       to look like the following {\emphfont divided permutation}
       \index{divided permutation}%
       $w'$ of $[n']:=[8]$ where $n':=n-|I|$:
       \begin{gather*}
        \begin{array}{@{}l@{}l@{}l@{}l@{}l@{}l@{}l@{}l@{}l@{}}
            w' = \left[\begin{array}{c}  1 \\  7 \end{array}\right.
            & \begin{array}{c}             2 \\  1 \end{array}
            & \begin{array}{c}    \phantom{3} \\ 5 \end{array}
            & \,\left|\,\begin{array}{c}       3 \\  4 \end{array}\right.
            & \begin{array}{c}             4 \\  2 \end{array}
            & \,\left|\,\begin{array}{c}       5 \\  6 \end{array}\right.
            & \begin{array}{c}             6 \\  8 \end{array}
            & \begin{array}{c}             7 \\  3 \end{array}
            & \left.\begin{array}{c}       8 \\  \phantom{6} \end{array}\right].
          \end{array}
       \end{gather*}

       Note that the remaining choice of 
       $$
         \begin{aligned}
           K'&:=K \setminus I\\
           L'&:=w(L) \setminus I
         \end{aligned}
       $$
       gives a {\emphfont disjoint decomposition}
       $
        [n] \setminus I = K' \sqcup L'.
       $
       So the number of linear extensions of $P_{K,w(L)}$ becomes the product,
       running over
       each of the $i+1$ divisions $(t^{(j)},b^{(j)})$ in $w$, 
       of the number of shuffles of the two sequences $K' \cap t^{(j)}$ and
       $L' \cap b^{(j)}$.  
       Therefore, to count the linear extensions that make up $d^{k,\ell}_{(w,I,J)}$
       in \ref{refined-d-coefficients}, it is equivalent to sum over 
       the decompositions of $[n']:=K' \sqcup L'$ that have
       $$
         \begin{aligned}
           k'&:=|K'|=k-i\\
           l'&:=|L'|=\ell-i
         \end{aligned}
       $$
       and for each such decomposition, sum up the product of the number of shuffles of
       $K' \cap t^{(j)}$ with $L' \cap b^{(j)}$, that is, the product
       \begin{equation}
         \label{product-of-shuffles}
           \prod_{j=1}^{i+1} 
             \binom{|K' \cap t^{(j)}|+|L' \cap b^{(j)}|}{|K' \cap t^{(j)}|,|L' \cap b^{(j)}|}.
       \end{equation}

       Call such a choice of decomposition and
       the shuffles, a {\emphfont decomposition-shuffle} of the divided permutation $w'$ of $[n']$,
       \index{decomposition-shuffle}%
       and call the total number of them $d^{k',\ell'}_{w'}$.
       Thus we wish to show that $d^{k',\ell'}_{w'}=d^{\ell',k'}_{w'}$ for
       every divided permutation $w'$ of $[n']$ and every pair $(k',\ell')$ with 
       $k'+\ell'=n'$.  This will be done by induction on $n'$.

       \medskip
       First, note that one can reorder the elements in any of the $t^{(j)}, b^{(j)}$
       arbitrarily; this does not affect the possible choices of a decomposition
       $[n']=K' \sqcup L'$ nor does it affect the product
       \eqref{product-of-shuffles}.

       So without loss of generality, assume that the numbers appear in integer
       order in each $t^{(j)}$ and $b^{(j)}$;  in particular, the largest
       number $n'$ will appear {\emphfont last}
       within both its division on the top of $w'$ and its division on the bottom.
       For example, the divided permutation $w'$ from \ref{divided-w} would be reordered to
       \begin{gather*}
        \begin{array}{@{}l@{}l@{}l@{}l@{}l@{}l@{}l@{}l@{}l@{}}
            w' = \left[\begin{array}{c}  1 \\  1 \end{array}\right.
            & \begin{array}{c}             2 \\  5 \end{array}
            & \begin{array}{c}    \phantom{3} \\ 7 \end{array}
            & \,\left|\,\begin{array}{c}       3 \\  2 \end{array}\right.
            & \begin{array}{c}             4 \\  4 \end{array}
            & \,\left|\,\begin{array}{c}       5 \\  3 \end{array}\right.
            & \begin{array}{c}             6 \\  6 \end{array}
            & \begin{array}{c}             7 \\  8 \end{array}
            & \left.\begin{array}{c}       8 \\  \phantom{6} \end{array}\right].
          \end{array}
       \end{gather*}
       We now count $d^{k',\ell'}_{w'}$ by classifying the
       decompositions-shuffles according to the entry $m$ (if any) that
       appears directly after the entry $n'$, within the shuffle that contains $n'$.

       \vskip.1in
       \noindent
       {\sf Case 1.}
         The decomposition-shuffle has no such value of $m$, that is, $n'$ appears last in the shuffle
         for its division. 

         Let $w''$ be obtained from $w'$ by removing $n'$ from both the top and bottom.

         \vskip.05in
         \noindent
         {\sf Subcase 1a.}
           The decomposition $[n']:=K' \sqcup L'$ has $n'$ in $K'$.
           It is straightforward  to check that these decomposition-shuffles are counted by 
           $d^{k'-1,\ell'}_{w''}$.

         \vskip.05in
         \noindent
         {\sf Subcase 1b.}
           The decomposition $[n']:=K' \sqcup L'$ has $n'$ in $L'$.
           Similarly, it is straightforward to check that these decomposition-shuffles are counted by 
           $d^{k',\ell'-1}_{w''}$.

         Putting together the two subcases,
         the decomposition/shuffles in this Case 1 are counted by the
         sum $d^{k'-1,\ell'}_{w''}+d^{k',\ell'-1}_{w''}$.

         Note that when considering the corresponding 
         decomposition/shuffles counted by $d^{\ell',k'}_{w'}$ (where the roles of
         $k', \ell'$ have been reversed, but $w'$ is the same), 
         those that fall in this Case 1 
         will analogously be counted by the sum $d^{\ell'-1,k'}_{w''}+d^{\ell',k'-1}_{w''}$,
         where $w''$ is the {\emphfont same} permutation derived from $w'$.
         By induction, 
         $$
           \begin{aligned}
             d^{\ell'-1,k'}_{w''} &= d^{k',\ell'-1}_{w''} \\
             d^{\ell',k'-1}_{w''} &=d^{k'-1,\ell'}_{w''} 
           \end{aligned}
         $$
         and hence these two sums are the same.

       \vskip.1in
       \noindent
       {\sf Case 2.} 
         The decomposition/shuffle has such a value $m$ (i.e. something appearing
         directly after $n'$ within the shuffle that contains $n'$).

         Then $n', m$ must appear in opposite sets within
         the decomposition $[n']:=K' \sqcup L'$, due to the fact
         fact that $n'$ appears last in its division.  

         \vskip.1in
         \noindent
         {\sf Subcase 2a.} 
           The decomposition puts $m \in K'$ and $n' \in L'$.

           Since $m$ appears directly after $n'$ in its shuffle,  both $m,n'$ must
           appear in the same division, i.e. $m \in K' \cap t^{(j)}$ and
           $n' \in L' \cap b^{(j)}$ for some $j$.

           This time let $w''$ be obtained from $w'$ by removing all occurrences of $n',m$ and
           replacing the division 
           $
            \left[
              \begin{smallmatrix}
                t^{(j)} \\
                b^{(j)}
              \end{smallmatrix}
            \right]
           $ 
           with two divisions separated by a divider labelled $(n',m)$, 
           \begin{gather*}
            \begin{array}{@{}r@{}l}
                \left[\begin{array}{c}  t' \\  b^{(j)} \end{array}\right.
                & \left|
                    \begin{array}{c}  t'' \\  - \end{array}\right]
              \end{array}
           \end{gather*}
           in which $t', t''$ are the elements that appeared before and
           after $m$ within $t^{(j)}$.  
           
           \begin{Example} If $n'=8$ and $m=6$ in the above
           example, with $n' \in K$ and $m \in L$, one would replace the third division
           $$
             \left[
               \begin{matrix}
                 t^{(3)} \\
                 b^{(3)}
               \end{matrix}
             \right]
             =
             \left[ 
               \begin{matrix} 
                 5&6&7&8 \\
                 3&6&8&   
               \end{matrix} 
             \right]
           $$
           obtaining
           \begin{gather*}
            \begin{array}{@{}l@{}l@{}l@{}l@{}l@{}l@{}l@{}l@{}l@{}}
                & & & & & & \text{\scriptsize{(8,6)}} \\
                w'' = \left[\begin{array}{c}   1 \\  1 \end{array}\right.
                & \begin{array}{c}             2 \\  5 \end{array}
                & \begin{array}{c}    \phantom{3}\\ 7 \end{array}
                & \,\left|\,\begin{array}{c}   3 \\  2 \end{array}\right.
                & \begin{array}{c}             4 \\  4 \end{array}
                & \,\left|\,\begin{array}{c}   5 \\  3 \end{array}\right.
                & \phantom{\text{\scriptsize 8}}\left|\phantom{\text{\scriptsize 6}}
                    \begin{array}{c}   7 \\  - \end{array}\right].
              \end{array}
           \end{gather*}
           \end{Example}

           It is not hard to check that the
           decomposition-shuffles in Subcase 2a are then counted by 
           $d^{k'-1,\ell'-1}_{w''}$.
           Note that by induction on $n'$ one has
           $$
             d^{k'-1,\ell'-1}_{w''} =d^{\ell'-1,k'-1}_{w''}.
           $$
           Hence the decomposition-shuffles in the same Subcase 2a (with the
           same value of $m$) when
           the roles of $k',\ell'$ are reversed will have the same cardinality.

         \vskip.05in
         \noindent
         {\sf Subcase 2b.} The decomposition puts $m \in L'$ and $n' \in K'$.

           Same as Subcase 2a, with an analogous construction of $w''$ from $w'$
           by introducing one new divider labelled $(n',m)$.

         \vskip.1in
         Thus in each case, reversing the roles of $k',\ell'$ leads to
         cases with the same cardinalities.  Hence $d^{k',\ell'}_{w'}=d^{\ell',k'}_{w'}$,
         completing the proof.
    \end{proof}

    \begin{Problem}
      \label{prob:better-commutativity-proof}
      Find a more enlightening (noninductive?) proof of 
      \ref{thm:original-family-commutativity}!
    \end{Problem}

    It turns out at that one also has pairwise commutativity for the family of
    \BHR operators $\{ b_{(k,1^{n-k})} \}_{k\in[n]}$ (this follows by combining
    \ref{prop:Bidigare} and \cite[Main Theorem 2.1]{Schocker2003derangement}),
    which are closely related to the operators $\nu_{(k,1^{n-k})}$ by
    \ref{cor:second-square-root-is-bhr}. Perhaps this fact can be used
    as a starting point to prove \ref{thm:original-family-commutativity}?

  \subsection{The kernel filtration and block-diagonalization}
    \label{subsec:kernelfiltration}

    There is a way to get a good start on simultaneously diagonalizing
    the commuting family $\{ \nu_{(k,1^{n-k})} \}$, by looking
    at a filtration that comes from their kernels.

    As in the proof of \ref{thm:second-family},
    since one can find a nested chain of representative subspaces
    for the $W$-orbits $\OOO_{(k,1^{n-k})}$ as $k$ varies, 
    \ref{prop:nested-kernels} implies the following inclusions of kernels:
    \begin{equation}
      \label{eqn:first-family-filtration}
      \begin{array}{ccccccccccc}
        0=& \ker \pi_{(n)} & \subset & \ker \pi_{(n-1,1)} & \subset &\ker \pi_{(n-2,1^2)} &
            \subset \cdots \subset 
          & \ker \pi_{(2,1^{n-2})}& \subset & \ker \pi_{(1^n)} & \subset \RR\symm_n \\
            & \Vert &         & \Vert &         & \Vert &
            & \Vert &         & \Vert  & \\
          & \ker \nu_{(n)} &         & \ker \nu_{(n-1,1)} &         &\ker \nu_{(n-2,1^2)} &
            & \ker \nu_{(2,1^{n-2})}&         & \ker \nu_{(1^n)} & \\
      \end{array}
    \end{equation}

    Since \ref{thm:original-family-commutativity}
    says that the $\nu_{(k,1^{n-k})}$ pairwise commute, they
    preserve each others kernels, and hence \eqref{eqn:first-family-filtration}
    gives an $\RR[W \times \ZZ_2]$-module filtration of $\RR \symm_n$ which
    is preserved by each of the $\nu_{(k,1^{n-k})}$.  Denote the filtration
    factors for $j=1,2,\ldots,n$ by
    $$
      \begin{aligned}
        F_{n,j}&= \ker \nu_{(n-j-1,1^{j+1})} / \ker \nu_{(n-j,1^j)} \\
         &= \ker \pi_{(n-j-1,1^{j+1})} / \ker \pi_{(n-j,1^j)} \\
      \end{aligned}
    $$
    with the convention that $F_{n,n}=\RR\symm_n/\ker \pi_{(1^n)}$.
    One knows from the self-adjointness of each $\nu_{(k,1^{n-k})}$
    that there exists an (orthogonal) direct sum decomposition 
    \begin{equation}
      \label{eqn:block-diagonalization}
      \RR \symm_n = \bigoplus_{j=1}^n V_{n,j}
    \end{equation}
    of $\RR[W \times \ZZ_2]$-modules in which
    $$
      V_{n,j} = \ker \nu_{(n-j-1,1^{j+1})}  \cap \ker \nu_{(n-j,1^j)}^\perp \cong F_{n,j}
    $$
    and hence \eqref{eqn:block-diagonalization} gives a simultaneous block diagonalization of the
    operators $\{ \nu_{(k,1^{n-k})} \}_{k\in[n]}$.

    At this point, we can use some of the equivariant \BHR theory to analyze
    the $\RR[W \times \ZZ_2]$-module structure of each $V_{n,j}$ or $F_{n,j}$:
    one has $\ker \nu_{(n-j,1^j)}=\ker b_{(n-j,1^j)}$ for a certain
    \BHR operator $b_{(n-j,1^j)}$, whose kernel was analyzed in 
    \ref{ex:two-type-A-examples}.  This shows that
    \begin{equation}
      \label{eqn:filtration-factor-inexplicitly}
      V_{n,j} \cong F_{n,j} \cong
      \bigoplus_{\substack{\lambda 
      \text{ has exactly }\\j\text{ parts of size }1} } 
      \WH_{\OOO_{X_\lambda}} \otimes (\chi^-)^{\otimes n-\ell(\lambda)}.
    \end{equation}
    This shows that the dimension of $V_{n,j}$
    is the number of permutations $w$ in $\symm_n$ having
    $j$ fixed points, in light of \ref{moebius-proposition}.
    However it is not very explicit as decomposition into
    $\RR[W \times \ZZ_2]$-modules since we do not have solution for 
    \ref{prob:Thrall} in general.

    It turns outs that with a little work, we {\emphfont can} provide a much 
    more explicit description of $V_{n,j}$.  The representation theory of 
    $W=\symm_n$ asserts an $\RR W$-module decomposition into irreducibles
    \begin{equation}
      \label{eqn:ordinary-tableau-decomposition}
      \RR \symm_n = \bigoplus_{Q} \chi^{\shape(Q)}
    \end{equation}
    where $Q$ runs over all {\deffont standard Young tableaux} of size $n$, and
    \index{standard Young tableau}%
    where $\shape(Q)$ is the partition whose {\deffont Ferrers diagram} gives the
    \index{standard Young tableau!shape}%
    \index{Ferrers diagram}%
    \nomenclature[co]{$\shape(Q)$}{shape of the standard Young tableau $Q$}%
    shape of $Q$.  Although we will not need it here,
    one can also incorporate the $\ZZ_2$-action in
    \eqref{eqn:ordinary-tableau-decomposition} and give an
    explicit $\RR[W \times \ZZ_2]$-module decomposition
    \begin{equation}
      \label{eqn:Z2-ordinary-tableau-decomposition}
      \RR \symm_n = \bigoplus_{Q} \chi^{\shape(Q)} \otimes (\chi^{-})^{\otimes \maj(Q)}
    \end{equation}
    where $\maj(Q)$ is the {\deffont major index} statistic on standard Young tableaux;  this
    \nomenclature{$\maj(Q)$}{major index of the standard Young tableau $Q$}%
    \index{major index}%
    \index{standard Young tableau!major index}%
    follows from Springer's theory of regular elements \cite{Springer1974}, the fact that
    $w_0$ is a regular element of $\symm_n$ \cite[Lemma 8.4]{RStantonWhite}, 
    and the formula     for the fake degree polynomials in type $A_{n-1}$ in terms of major 
    indices \cite{KraskiewiczWeyman2001}.

    Instead our goal in the next few subsections, culminating in 
    \ref{thm:Z2-equivariant-filtration-factor},  will be to provide a similar
    decomposition of $V_{n,j}$, as a sum of irreducible $\RR[W \times \ZZ_2]$-modules
    of the form 
    \begin{equation}
      \label{eqn:desired-tableau-form}
      \sum_Q \chi^{\shape(Q)} \otimes \chi^{\epsilon(Q)}
    \end{equation}
    where $Q$ runs over a certain class of standard Young tableaux that depends on $j$,
    and $\epsilon(Q)$ is a $\pm$ sign that depends upon $Q$.
    Here is an outline of how this goal is achieved.
    \begin{enumerate}
      \item[{\sf Step 1.}]
        Relate the bottom kernel $F_{n,0}=\ker \pi_{(n-1,1)}$ in the
        filtration to the homology of the {\deffont complex of injective words},
        \index{complex of injective words}%
        by showing that $\pi_{(n-1,1)}$ is a sign-twisted version of
        the top boundary map in this complex.  This is achieved 
        in \ref{prop:kernels-sign-twisted}.
      \item[{\sf Step 2.}]
        Use homological techniques to describe this
        homology as an $\RR[W \times \ZZ_2]$-module {\emphfont recursively}.
        This is achieved in \ref{Z2-plus-kernel-recurrence}.
      \item[{\sf Step 3.}]
        Show that a description of $F_{n,0}$ automatically leads
        to one for $F_{n,j}$.  This is achieved in \ref{Z2-filtration-analysis}.
      \item[{\sf Step 4.}]
        Solve this recursion for $F_{n,0}$ and $F_{n,j}$
        in the form of \eqref{eqn:desired-tableau-form}.  This
        is achieved in \ref{thm:Z2-equivariant-filtration-factor}.
    \end{enumerate}

  \subsection{The (unsigned) maps on injective words}

    \begin{Definition}
      Given a finite alphabet $A$, and an integer $i$ in the range $0 \leq i \leq |A|$
      let $A^{\langle i \rangle}$ denote the set of {\deffont injective words} of length $i$ with
      \index{injective word}%
      \index{word!injective}%
      \nomenclature[co]{$A^{\langle i \rangle}$}{set of injective words of length $i$ over $A$}%
      letters taken from the alphabet $A$, that is, those words 
      \index{alphabet}%
      which use each letter at most once.  

      For a set $M$ let $\RR^M$ denote an $\RR$-vector space with basis indexed by $M$.
      Given integers $i,j$ with $0 \leq i \leq j \leq |A|$, define a map
      $$ 
        \delplus(A,j,i): \RR^{A^{\langle j \rangle}} \longrightarrow \RR^{A^{\langle i \rangle}}
      $$
      that sends an injective word $a=(a_1,\ldots,a_j)$ of length $j$ to the sum $\sum b$
      of its subwords 
      $b=(a_{k_1},\ldots,a_{k_i})$, $1 \leq k_1 < \cdots < k_i \leq j$, of length $i$.
    \end{Definition}

    Note that the $\RR$-linear maps $\delplus(A,j,i)$ are actually maps of
    $\RR[\symm_A \times \ZZ_2]$-modules, when we 
    consider $\RR^{A^{\langle i \rangle}}$ as an $\RR[\symm_A \times \ZZ_2]$-module
    in the following fashion:
    \begin{enumerate}
      \item[$\bullet$] $\symm_A$ permutes the letters $A$, and 
      \item[$\bullet$] the nonidentity element
         of $\ZZ_2$ sends a word $a=(a_1,a_2,\ldots,a_i)$ to its {\deffont reversed word} 
         \index{reversed word}%
         \index{word!reversed}%
         $a^{\rev}:=(a_i,\ldots,a_2,a_1)$.
         \nomenclature[co]{$a^{\rev}$}{word $a$ reversed}%
    \end{enumerate}

    Our goal in the next few subsections is to begin by understanding the kernel of the
    {\emphfont first} of the maps $\delplus(A,j,i)$, for which we use an abbreviated
    notation:
    $$
      \pi_A := \delplus(A,|A|,|A|-1).
    $$ 
    The kernel of this map will turn out be closely related to the
    homology of the complex of injective words on $A$; see \ref{su:injectivewords}.

    \begin{Remark}
      In fact, the maps $\delplus(A,|A|,i)$ are simply instances
      of the maps $\pi_\OOO$ where $W=\symm_A$ and $\OOO$ is the
      $W$-orbit of intersection subspaces where $i$ of the coordinates are
      set equal.
    \end{Remark}

  \subsection{The complex of injective words}
    \label{su:injectivewords}

    \begin{Definition}
      The {\deffont complex of injective words on $A$}
      \index{complex of injective words}%
      \index{injective word!complex}%
      is the chain complex $(K_A, \delminus(A,\cdot))$ having 
      $i^{th}$ chain group 
      $K_{A,i}:=\RR^{A^{\langle i+1 \rangle}}$ and whose 
      $i^{th}$ boundary map
      $$
        \delminus(A,i): K_{A,i}  \longrightarrow K_{A,i-1}
      $$
      is a {\deffont signed} version of the map $\delplus(A,i+1,i)$:
      $$
        \delminus(A,i)(a_0,a_1,\ldots,a_i) := 
               \sum_{m=0}^i (-1)^m (a_0,\ldots,\widehat{a_m},\ldots,a_i).
      $$
    \end{Definition}

    One can check that 
    the complex $(K_A, \delminus(A,\cdot))$ becomes a complex
    of $\RR[\symm_A \times \ZZ_2]$-modules
    only after we slightly twist our previously-defined $\ZZ_2$-action:  
    one must now have the nonidentity
    element of $\ZZ_2$ send an injective word $a$ of length $\ell$ to 
    $(-1)^{\lfloor \frac{\ell}{2} \rfloor} \cdot a^{\rev}$.

    There is a very simple relation between
    the maps $\pi_A$ and $\delminus(A,|A|-1)$, once one identifies their
    source and targets with the group algebra $\RR \symm_n$ appropriately.
    Define an $\RR$-linear map 
    $i_A: \RR^{A^{\langle |A|-1 \rangle}} \longrightarrow \RR \symm_A$
    that sends an injective word $u$ of length $|A|-1$
    that is missing exactly one letter $a$ from $A$
    to the permutation of the set $A$ which starts with the letter $a$ and continues
    with the word $u$.  The following proposition is straightforward.

    \begin{Proposition}
      \label{prop:kernels-sign-twisted}
      The map $i_A: \RR^{A^{\langle |A|-1 \rangle}} \longrightarrow \RR \symm_A$
      is an $\RR$-linear isomorphism that makes the following diagram commute:
      $$
      \xymatrix@C+=4em{
        \RR\symm_A
            \ar[r]^{\pi_A}
            \ar[d]_{\sgn}
      & \RR^{A^{\langle |A|-1\rangle}}
            \ar[r]^{i_A}
      & \RR\symm_A
            \ar[d]^{\sgn}
      \\
        \RR\symm_A
            \ar[r]^{\delminus(A,|A|-1)\ }
      & \RR^{A^{\langle |A|-1\rangle}}
            \ar[r]^{i_A}
      & \RR\symm_A
      }
      $$
      where $\RR \symm_n \overset{\sgn}{\longrightarrow} \RR \symm_n$ is the 
      involutive map that scales the basis element corresponding to a permutation $w$ in 
      $\symm_A$ by the sign of $w$.

      In particular, as subspaces of $\RR \symm_A$, the 
      kernels of the two maps $\pi_A$ and $\delminus(A,|A|-1)$
      are sent to each other by the map $\sgn$.
    \end{Proposition}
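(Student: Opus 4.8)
The plan is to prove both assertions of Proposition~\ref{prop:kernels-sign-twisted} by a direct computation on the standard bases. Set $n:=|A|$ and identify $\RR\symm_A$ with $\RR^{A^{\langle n\rangle}}$ by reading a permutation $w$ of $A$ as the injective word $(w_1,\dots,w_n)$ given by its one-line notation. First I would record that $i_A$ is an isomorphism: every injective word $u=(u_1,\dots,u_{n-1})$ in $A^{\langle n-1\rangle}$ uses all of $A$ except exactly one letter $a$, and the recipe $u\mapsto(a,u_1,\dots,u_{n-1})$ is visibly inverse to deleting the first letter of a permutation of $A$; hence it is a bijection $A^{\langle n-1\rangle}\to\symm_A$ and $i_A$ is the induced $\RR$-linear isomorphism.

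Next I would verify that the square commutes by evaluating the two composites from the top-left corner to the bottom-right corner on a basis vector $w=(w_1,\dots,w_n)$. Along the top-then-right path, $\pi_A(w)=\sum_{m=1}^n(w_1,\dots,\widehat{w_m},\dots,w_n)$ is the sum of the $n$ length-$(n-1)$ subwords of $w$; the $m$-th summand omits exactly the letter $w_m$, so $i_A$ sends it to the permutation $(w_m,w_1,\dots,\widehat{w_m},\dots,w_n)$, and $\sgn$ then scales that term by its own sign. Along the left-then-bottom path, $w$ is first scaled by $\sgn(w)$, and $\delminus(A,|A|-1)$ applied to $(w_1,\dots,w_n)$ (read as $(a_0,\dots,a_{n-1})$ with $a_j=w_{j+1}$) yields $\sum_{m=1}^n(-1)^{m-1}(w_1,\dots,\widehat{w_m},\dots,w_n)$, after which $i_A$ again sends the $m$-th summand to $(w_m,w_1,\dots,\widehat{w_m},\dots,w_n)$. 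The two outputs agree because moving $w_m$ from position $m$ to the front is an $m$-cycle on the first $m$ positions, so $\sgn(w_m,w_1,\dots,\widehat{w_m},\dots,w_n)=(-1)^{m-1}\sgn(w)$, which is precisely the sign built into the definition of $\delminus(A,\cdot)$.

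Finally, the statement that $\sgn$ carries $\ker\pi_A$ onto $\ker\delminus(A,|A|-1)$ is then formal: the commuting square together with the fact that $i_A$ and $\sgn$ are isomorphisms forces $\sgn$ to restrict to an isomorphism between the kernels of the horizontal composites $i_A\circ\pi_A$ and $i_A\circ\delminus(A,|A|-1)$, and since $i_A$ is injective these kernels coincide with $\ker\pi_A$ and $\ker\delminus(A,|A|-1)$.

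I do not expect any genuine obstacle; the one place demanding care is the sign bookkeeping in the middle step — checking that the factor $(-1)^{m-1}$ produced by linearizing the $m$-cycle that brings $w_m$ to the front really does match the alternating sign in $\delminus(A,\cdot)$ under the index shift relating the two descriptions. If one wanted the finer $\RR[\symm_A\times\ZZ_2]$-equivariant version rather than the purely $\RR$-linear statement asserted here, one would additionally have to keep the twist $(-1)^{\lfloor\ell/2\rfloor}$ in the $\ZZ_2$-action on injective words consistent across the diagram, but the proposition as stated does not require this.
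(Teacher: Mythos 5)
Your proof is correct, and it is exactly the verification the paper has in mind: the paper offers no written proof beyond declaring the proposition "straightforward," and your direct basis computation — identifying $w$ with its one-line word, matching the $(-1)^{m-1}$ from the sign of the $m$-cycle that brings $w_m$ to the front with the alternating sign in $\partial_{A,|A|-1}$, and then deducing the kernel statement from the commuting square since $i_A$ and $\sgn$ are isomorphisms — is precisely that straightforward check. Nothing further is needed.
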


  \subsection{Pieri formulae for $\symm_n$ and $\symm_n \times \ZZ_2$}

    We quickly review here the Pieri rules from 
    the representation theory of $\symm_n$ and $\symm_n \times \ZZ_2$
    that we will need, and introduce a more compact notation for
    induction products of characters.

    Recall that for a finite group $G$, the irreducible complex representations
    $\Irr(G)$ are determined by their characters $\chi$. Therefore, we will
    often speak of irreducible characters when we speak of elements of $\Irr(G)$. 
    \nomenclature[al]{$\Irr(G)$}{irreducible representations of $G$ over the complex numbers}%

    The irreducible characters $\Irr(\symm_n)$ are
    indexed $\chi^{\lambda}$ by partitions $\lambda$ of $n$, with $\chi^{(n)} = \trivial$
    and $\chi^{(1^n)} = \sgn$.  Since $\ZZ_2$ is abelian, its
    irreducible characters are both of degree $1$:
    $$
      \Irr(\ZZ_2)=\{\chi^+=\trivial,\chi^-\}.
    $$
    Therefore, the product group $\symm_n \times \ZZ_2$ has irreducible characters
    $$
      \Irr(\symm_n \times \ZZ_2) = \{ 
          \chi^{\lambda,+}: = \chi^\lambda \otimes \chi^+, \quad 
          \chi^{\lambda,-}: = \chi^\lambda \otimes \chi^- : \lambda \text{ a partition of }n\}.
    $$

    Given this setup, the following is a corollary to \ref{prop:kernels-sign-twisted}.
    Recall that $\lambda^T$ denotes the conjugate partition of $\lambda$.
    \nomenclature[co]{$\lambda^T$}{conjugate partition of number partition $\lambda$}
    \index{conjugate partition}
    \index{partition!conjugate}

    \begin{Corollary} 
      \label{sign-twisting-corollary}
      For $A=[n]$, as $\RR[\symm_n \times \ZZ_2]$-modules, one has
      $$
        \ker \pi_A \cong \bigoplus_\alpha \ \chi^{\lambda_\alpha,\epsilon_\alpha}
      $$
      if and only if
      $$
        \ker \delminus(A,|A|-1) \cong \bigoplus_\alpha \chi^{\lambda_\alpha^T,\epsilon_\alpha}.
      $$
    \end{Corollary}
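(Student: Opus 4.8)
The plan is to deduce Corollary~\ref{sign-twisting-corollary} directly from the commuting diagram in \ref{prop:kernels-sign-twisted}. That proposition says the map $\sgn\colon\RR\symm_n\to\RR\symm_n$ carries $\ker\pi_A$ isomorphically onto $\ker\delminus(A,|A|-1)$, so the only real content is to track how the $\RR[\symm_n\times\ZZ_2]$-module structure transforms under $\sgn$. First I would observe that $\sgn$ is the linear operator given by right (equivalently left) multiplication by the sign character twist $w\mapsto\sgn(w)w$; concretely, it is the $\RR$-algebra automorphism of $\RR\symm_n$ induced by $w\mapsto\sgn(w)\,w$, and it intertwines the left-regular $\symm_n$-action with its $\sgn$-twist. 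Hence if $U\subseteq\RR\symm_n$ is a $\symm_n$-submodule affording character $\chi$, then $\sgn(U)$ affords $\chi\otimes\sgn$; since $\chi^\lambda\otimes\sgn=\chi^{\lambda^T}$, the index partition is conjugated.

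Next I would check that the $\ZZ_2$-factor is untouched. The $\ZZ_2$-action on the source $\RR\symm_n$ of $\pi_A$ is right multiplication by $w_0$ (as in \ref{subsec:Z2-action} and the reflection-group identification), while on the target of $\delminus(A,|A|-1)$, after the identification $i_A$, the $\ZZ_2$-action is the slightly twisted reversal described just before \ref{prop:kernels-sign-twisted}. The point is that $\sgn$ commutes with this $\ZZ_2$-action up to nothing: right multiplication by $\sgn(w_0)w_0$ versus right multiplication by $w_0$ differ only by the global scalar $\sgn(w_0)=(-1)^{\binom n2}$, which acts the same on every $\ZZ_2$-isotypic piece and therefore does not change which character $\chi^{\pm}$ appears. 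More carefully, I would verify that the vertical maps $\sgn$ in the diagram of \ref{prop:kernels-sign-twisted} are $\ZZ_2$-equivariant for the respective $\ZZ_2$-actions on source and target — this is exactly the compatibility that makes that diagram a diagram of $\RR[\symm_n\times\ZZ_2]$-modules, once one builds in the $(-1)^{\lfloor\ell/2\rfloor}$ sign twist on the injective-words side, which was introduced precisely so that $\delminus$ is $\ZZ_2$-equivariant. So $\sgn$ is an isomorphism of $\RR\ZZ_2$-modules from $\ker\pi_A$ to $\ker\delminus(A,|A|-1)$, preserving the $\epsilon$-label, while twisting the $\symm_n$-character by $\sgn$.

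Combining the two observations: if $\ker\pi_A\cong\bigoplus_\alpha\chi^{\lambda_\alpha,\epsilon_\alpha}$ as $\RR[\symm_n\times\ZZ_2]$-modules, then applying the isomorphism $\sgn$ and using $\chi^{\lambda,\epsilon}\otimes(\sgn\boxtimes\trivial)=\chi^{\lambda^T,\epsilon}$ gives $\ker\delminus(A,|A|-1)\cong\bigoplus_\alpha\chi^{\lambda_\alpha^T,\epsilon_\alpha}$, and the argument is reversible since $\sgn$ is an involution. The main obstacle — really the only subtlety — is bookkeeping the two different $\ZZ_2$-twists: the untwisted reversal action on $\RR^{A^{\langle i\rangle}}$ used for the maps $\delplus$, versus the $(-1)^{\lfloor\ell/2\rfloor}$-twisted reversal needed to make $(K_A,\delminus)$ a $\ZZ_2$-equivariant complex. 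I would make sure to state explicitly which $\ZZ_2$-structure is meant on each side of the isomorphism, confirm that $\sgn$ respects exactly that structure (this is where one uses that $\sgn(w^{-1})=\sgn(w)$ and that $\sgn$ of a reversal equals $\sgn$ times a sign depending only on word length mod $4$), and then the character-conjugation statement falls out immediately. Since this corollary is labelled ``straightforward'' in the source, I would keep the written proof to a few lines, citing \ref{prop:kernels-sign-twisted} for the isomorphism and the standard fact $\chi^\lambda\otimes\sgn=\chi^{\lambda^T}$ for the conjugation.
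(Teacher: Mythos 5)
Your proposal is correct and is essentially the paper's own argument: apply the map $\sgn$ from \ref{prop:kernels-sign-twisted}, observe that it twists the $\symm_n$-structure by the sign character (so $\chi^{\lambda}\mapsto\chi^{\lambda^T}$), and check that it is $\ZZ_2$-equivariant because the sign $\sgn(w_0)=(-1)^{\binom{n}{2}}$ it introduces on the action $w\mapsto ww_0$ matches exactly the twist $(-1)^{\lfloor n/2\rfloor}$ built into the $\ZZ_2$-action on injective words of length $n$. One caveat: your intermediate sentence claiming the global scalar $\sgn(w_0)$ ``does not change which character $\chi^{\pm}$ appears'' is not valid reasoning by itself (negating the generator's action swaps $\chi^{+}$ and $\chi^{-}$); it is your subsequent careful check --- that $\sgn$ intertwines the untwisted right-$w_0$ action on the $\pi_A$ side with the $(-1)^{\lfloor \ell/2\rfloor}$-twisted reversal action on the injective-words side, using $(-1)^{\binom{n}{2}}=(-1)^{\lfloor n/2\rfloor}$ --- that actually carries the $\ZZ_2$-bookkeeping, exactly as in the paper's proof.
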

    \begin{proof}
      The map $\RR \symm_n \overset{\sgn}{\longrightarrow} \RR \symm_n$
      has these effects:
      \begin{enumerate}
        \item[$\bullet$] For the $\RR \symm_n$-module structure, it tensors with
          the $\sgn$-character, which on irreducibles does the following:
          $$
            \chi^\lambda \mapsto \sgn \otimes \chi^\lambda = \chi^{\lambda^T}.
          $$
        \item[$\bullet$] For the $\RR \ZZ_2$-module structure it is equivariant, since
          the nonidentity element of $\ZZ_2$ acts by $w \mapsto ww_0$,
          when we are thinking of $\RR \symm_n$ as the domain of $\pi_A$,
          but this nonidentity element introduces an extra sign in front when we are 
          thinking of $\RR \symm_n$ as the domain of $\delminus(A,|A|-1)$ within 
          the complex of injective words:
          $$
          \xymatrix@C+=4em@R+=5ex{
            w \ar@{|->}[r]^{i_A \circ \pi_A}
              \ar@{|->}[d]_{\sgn}
          & ww_0 \ar@{|->}[d]_{\sgn}
          \\
            \sgn(w) \cdot w
                \ar@{|->}[r]^(0.4){i_A\circ\delminus(A,|A|-1)\ }
          & {\ \begin{array}{c}
            \sgn(ww_0) \cdot ww_0 \\
            = (-1)^{\lfloor\frac{n}{2}\rfloor}\sgn(w)\cdot w w_0
            \end{array}}}
          $$
      \end{enumerate}
    \end{proof}

    The Young subgroup embedding $\symm_{n_1}\times \symm_{n_2} \hookrightarrow \symm_{n_1+n_2}$
    leads to the usual {\deffont induction product} of characters $\chi_i$
    \index{induction product}%
    in $\Irr(\symm_{n_i})$, for $i=1,2$, defined by
    $$
      \chi_1 * \chi_2 
         := \Ind_{\symm_{n_1} \times \symm_{n_2}}^{\symm_{n_1+n_2}} \chi_1 \otimes \chi_2.
    $$
    The {\deffont Pieri formulae} give two important special cases of the
    \index{Pieri formulae}%
    the irreducible expansion for the induction product of two $\symm_n$-irreducibles:
    \begin{equation}
      \label{usual-Pieris}
      \begin{aligned}
        \chi^\mu * \chi^{(j)} 
          &= \sum_{ \substack{ \lambda: 
                      \\ \lambda/\mu \text{ is a horizontal}
                      \\\text{strip of size }j }} \chi^\lambda \\
           \chi^\mu * \chi^{(1^j)} 
          &= \sum_{ \substack{ \lambda: 
                      \\ \lambda/\mu \text{ is a vertical}
                      \\\text{strip of size }j }} \chi^\lambda 
      \end{aligned}
    \end{equation}

    One can define an induction product of characters
    $\chi_i$ in $\Irr(\symm_{n_i} \times \ZZ_2)$ for $i=1,2$ by
    $$
      \chi_1 * \chi_2 
        := \Res_{\symm_{n_1+n_2} \times \ZZ_2}^{\symm_{n_1+n_2} \times (\ZZ_2)^2}
          \Ind_{\symm_{n_1}\times \ZZ_2 \times \symm_{n_2} \times \ZZ_2}^{\symm_{n_1+n_2} \times (\ZZ_2)^2} 
             \left( \chi_1 \otimes \chi_2 \right).
    $$
    where the restriction map above comes from the {\deffont diagonal} embedding
    $ 
     \ZZ_2 \hookrightarrow (\ZZ_2)^2 
    $
    that sends $x \mapsto (x,x)$.

    \begin{Proposition}
      For any partition $\mu$ and signs $\epsilon_1, \epsilon_2$ in $\{+,-\}$, one has
      \begin{equation}
        \label{Z2-Pieris}
        \begin{aligned}
          \chi^{\mu,\epsilon_1} * \chi^{(j),\epsilon_2} 
            &= \sum_{ \substack{ \lambda: 
                      \\ \lambda/\mu \text{ is a horizontal}
                      \\\text{strip of size }j }} \chi^{\lambda,\epsilon_1 \epsilon_2} \\
          \chi^{\mu,\epsilon_1} * \chi^{(1^j),\epsilon_2} 
            &= \sum_{ \substack{ \lambda: 
                      \\ \lambda/\mu \text{ is a vertical}
                      \\\text{strip of size }j }} \chi^{\lambda,\epsilon_1 \epsilon_2} \\
        \end{aligned}
      \end{equation}
    \end{Proposition}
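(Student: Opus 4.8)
The plan is to reduce the statement to the ordinary Pieri formulae \eqref{usual-Pieris} by observing that the induction product for $\symm_n \times \ZZ_2$ factors completely into a ``$\symm_n$-part'' and a ``$\ZZ_2$-part'', the latter being a trivial computation in the (order-two) character group of $\ZZ_2$. I treat the horizontal-strip identity in detail; the vertical-strip identity is obtained by the identical argument, replacing $\chi^{(j)}$ by $\chi^{(1^j)}$ and citing the second line of \eqref{usual-Pieris} in place of the first.

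First I would record the standard fact that induction commutes with external tensor products of finite groups: if $H_i \leq G_i$ and $M_i$ is an $H_i$-module for $i = 1,2$, then there is a natural $G_1 \times G_2$-module isomorphism
\[
 \Ind_{H_1 \times H_2}^{G_1 \times G_2}\bigl(M_1 \otimes M_2\bigr) \;\cong\; \bigl(\Ind_{H_1}^{G_1} M_1\bigr) \otimes \bigl(\Ind_{H_2}^{G_2} M_2\bigr),
\]
with the outer action on each side; this is immediate from $\CC[G_1 \times G_2] \cong \CC G_1 \otimes_\CC \CC G_2$ and the tensor-product description of induction. I apply this after reordering the tensor factors in the inner induction of the definition of $*$, so that the two copies of $\ZZ_2$ sit together, identifying $\symm_{n_1}\times\ZZ_2\times\symm_{n_2}\times\ZZ_2$ with $(\symm_{n_1}\times\symm_{n_2}) \times (\ZZ_2)^2$ inside $\symm_{n_1+n_2}\times(\ZZ_2)^2$. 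Taking $G_1 = \symm_{n_1+n_2}$, $H_1 = \symm_{n_1}\times\symm_{n_2}$, $M_1 = \chi^\mu \otimes \chi^{(j)}$, and $G_2 = H_2 = (\ZZ_2)^2$ (so the second induction is the identity), this yields, as $\symm_{n_1+n_2}\times(\ZZ_2)^2$-modules,
\[
 \Ind_{\symm_{n_1}\times\ZZ_2\times\symm_{n_2}\times\ZZ_2}^{\symm_{n_1+n_2}\times(\ZZ_2)^2}\bigl(\chi^{\mu,\epsilon_1} \otimes \chi^{(j),\epsilon_2}\bigr) \;\cong\; \bigl(\chi^\mu * \chi^{(j)}\bigr) \otimes \bigl(\chi^{\epsilon_1}\otimes\chi^{\epsilon_2}\bigr).
\]

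Next I would restrict along the diagonal embedding $\ZZ_2 \hookrightarrow (\ZZ_2)^2$, $x \mapsto (x,x)$, as the definition of $*$ for $\symm_n \times \ZZ_2$ prescribes. This leaves the $\symm_{n_1+n_2}$-factor untouched, while on the $\ZZ_2$-factor it sends the one-dimensional $(\ZZ_2)^2$-character $\chi^{\epsilon_1}\otimes\chi^{\epsilon_2}$, whose value on the generator $(\tau,\tau)$ of the diagonal is $\epsilon_1\epsilon_2$, to the character $\chi^{\epsilon_1\epsilon_2}$ of $\ZZ_2$. Hence $\chi^{\mu,\epsilon_1} * \chi^{(j),\epsilon_2} \cong (\chi^\mu * \chi^{(j)}) \otimes \chi^{\epsilon_1\epsilon_2}$. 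Now the ordinary Pieri rule \eqref{usual-Pieris} expands $\chi^\mu * \chi^{(j)}$ as the sum of the $\chi^\lambda$ over all $\lambda$ with $\lambda/\mu$ a horizontal strip of size $j$, and tensoring each summand with the linear character $\chi^{\epsilon_1\epsilon_2}$ of $\ZZ_2$ produces exactly $\chi^{\lambda,\epsilon_1\epsilon_2}$, which is the first displayed identity of the proposition.

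I do not expect a genuine obstacle here; the content is essentially bookkeeping. The two points that deserve care are the reordering of factors needed to separate the $\ZZ_2$-parts before inducing (so that the ``induction commutes with $\boxtimes$'' lemma applies cleanly), and the elementary but easy-to-misstate identification of the diagonal restriction of $\chi^{\epsilon_1}\otimes\chi^{\epsilon_2}$ with $\chi^{\epsilon_1\epsilon_2}$. Once those are pinned down, \eqref{Z2-Pieris} is nothing more than \eqref{usual-Pieris} with a character of $\ZZ_2$ carried along multiplicatively.
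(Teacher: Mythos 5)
Your argument is correct, and it reaches the same reduction as the paper — namely, that the twisted rule is just the ordinary Pieri rule \eqref{usual-Pieris} with the $\ZZ_2$-character carried along multiplicatively — but by a different mechanism. The paper proves a more general claim: for any embedding $G_1 \times G_2 \hookrightarrow G$ of finite groups and any abelian group $A$, the restriction along the diagonal $A \hookrightarrow A^2$ of $\Ind_{G_1 \times A \times G_2 \times A}^{G \times A^2}(\chi_1 \otimes \epsilon_1 \otimes \chi_2 \otimes \epsilon_2)$ decomposes as $\sum_{\chi} \langle \Ind_{G_1\times G_2}^{G}\chi_1\otimes\chi_2, \chi\rangle_G \,\chi\otimes\epsilon_1\epsilon_2$, and it establishes this by a character-level computation: pairing against an arbitrary irreducible $\chi\otimes\epsilon$ of $G\times A$, applying Frobenius reciprocity in both directions, and factoring the resulting inner product over the product group so that the $A$-part contributes $\langle \Res^{A^2}_A \epsilon_1\otimes\epsilon_2, \epsilon\rangle$. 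You instead invoke the structural isomorphism $\Ind_{H_1\times H_2}^{G_1\times G_2}(M_1\otimes M_2)\cong(\Ind_{H_1}^{G_1}M_1)\otimes(\Ind_{H_2}^{G_2}M_2)$ (after regrouping the factors so the two copies of $\ZZ_2$ sit together, with $H_2=G_2=(\ZZ_2)^2$ so that induction is trivial there), and then compute the diagonal restriction of $\chi^{\epsilon_1}\otimes\chi^{\epsilon_2}$ directly. This is slightly more direct and avoids all multiplicity bookkeeping; note that the same isomorphism would in fact prove the paper's general $(G_1\times G_2\hookrightarrow G,\ A)$ statement verbatim, since the $A^2$-factor of the subgroup is all of $A^2$ there as well. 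What the paper's inner-product route buys is that it works purely at the level of virtual characters without appealing to the induction-commutes-with-external-products isomorphism, but the two proofs are equally rigorous, and your identification of the diagonal restriction with $\chi^{\epsilon_1\epsilon_2}$ — the one step you flag as easy to misstate — is handled correctly.
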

    \begin{proof}
      More generally, for any embedding of finite groups 
      $G_1 \times G_2 \hookrightarrow G$, and an abelian group $A$, along
      with characters $\chi_i$ in $\Irr(G_i)$ for $i=1,2$, and characters 
      $\epsilon_1,\epsilon_2$ in $\Irr(A)$, we claim
      $$
        \begin{aligned}
          &\Res_{G \times A}^{G \times A^2}
           \Ind_{G_1 \times A \times G_2 \times A}^{G \times A^2} 
           \left( \chi_1 \otimes \epsilon_1 \otimes \chi_2 \otimes \epsilon_2 \right)  \\
          &\qquad = \bigoplus_{\chi \in \Irr(G)} 
           \left\langle \Ind_{G_1 \times G_2}^G \chi_1 \otimes \chi_2 , \chi  \right\rangle_{G} 
           \,\, \cdot \,\, \chi \otimes \epsilon_1 \epsilon_2. 
        \end{aligned}
      $$
      This comes, for example, using Frobenius reciprocity to calculate the
      \index{Frobenius!reciprocity}%
      inner product with an irreducible $\chi \otimes \epsilon$ in $\Irr(G \times A)$:
        \begin{align*} 
          & \left\langle \quad 
            \Res_{G \times A}^{G \times A^2}
            \Ind_{G_1 \times A \times G_2 \times A}^{G \times A^2} 
            \left( \chi_1 \otimes \epsilon_1 \otimes \chi_2 \otimes \epsilon_2 \right) \,\,  
            ,\,\, \chi \otimes \epsilon 
            \quad \right\rangle_{G \times A} \\
          & \qquad 
            =\left\langle \quad 
            \chi_1 \otimes \epsilon_1 \otimes \chi_2 \otimes \epsilon_2  \,\,  , \,\, 
            \Res_{G_1 \times A \times G_2 \times A}^{G \times A^2} 
            \Ind_{G \times A}^{G \times A^2}
            \chi \otimes \epsilon 
            \quad \right\rangle_{G_1 \times A \times G_2 \times A} \\
          & \qquad 
            =\left\langle \,\,  \chi_1 \otimes \chi_2 \,\, , \,\, 
                 \Res_{G_1 \times G_2}^G \chi \,\, \right\rangle_{G_1 \times G_2} 
            \,\, \cdot \,\, 
            \left\langle \,\, \epsilon_1 \otimes \epsilon_2 \,\, , \,\, \Ind_A^{A^2}\epsilon  \,\, \right\rangle_A \\
          & \qquad =\left\langle 
            \,\, \Ind_{G_1 \times G_2}^G \chi_1 \otimes \chi_2 \,\, , \,\, \chi \,\,
            \right\rangle_G
            \,\, \cdot \,\, 
            \left\langle \,\, \Res^{A^2}_A \epsilon_1 \otimes \epsilon_2 \,\, , \,\, \epsilon \,\, \right\rangle_A \\
          & \qquad 
            =\begin{cases}
               \left\langle
               \,\,  
               \Ind_{G_1 \times G_2}^G \chi_1 \otimes \chi_2 \,\, , \,\, \chi \,\, 
               \right\rangle_G & \text{ if } \epsilon = \epsilon_1 \epsilon_2 \\
               0 & \text{ otherwise.}
             \end{cases}
             \qedhere
        \end{align*}
    \end{proof}

  \subsection{Some derangement numerology}

    The nullity of either map $\pi_A$ or $\delminus(A,|A|-1)$
    turns out to be the number of {\deffont derangements} (that is, permutations
    \index{derangement}%
    with no fixed points) in $\symm_n$.  We review here some
    easy, classical, enumerative results about derangements, along with
    a few somewhat more recent results about even and odd derangements, relevant
    for the $\RR[\symm_n \times \ZZ_2]$-module structures on the kernels;
    see also Chapman \cite{Chapman2001}, 
    Mantaci and Rakotandrajao \cite{MantaciRakotandrajao2003},
    Gordon and McMahon \cite[\S 4]{GordonMcMahon2009}.

    \begin{Definition}
    \label{def:derangements}
      For $n \geq 1$, let $d_n, d_n^{+}, d_n^{-}$ denote, respectively, 
      the total number of derangements in $\symm_n$,
      the number whose sign is positive, and the number whose sign is negative.  
      (The table in \ref{fig:derangement-numbers} lists the first few values.)
    \end{Definition}

    \begin{figure}[ht]
      \begin{tabular}{rrrr}
        $n$ & $d_n$ & $d_n^+$ & $d_n^-$ \\ \toprule
          0 & 1     &   1     &  0    \\
          1 & 0     &   0     &  0    \\
          2 & 1     &   0     &  1    \\
          3 & 2     &   2     &  0    \\
          4 & 9     &   3     &  6    \\
          5 & 44    &   24    &  20   \\
          6 & 265   &   130   &  135
      \end{tabular}
      \caption{The first few values of $d_n$, $d_n^+$ and $d_n^-$: the total
      number of derangements, even derangements and odd derangements in
      $\symm_n$, respectively.}
      \label{fig:derangement-numbers}
    \end{figure}

    \begin{Proposition}
      \label{derangement-numerology}
      The numbers $d_n, d_n^{+}, d_n^{-}$ satisfies the
      initial conditions
      $$
        \begin{aligned}
          d_0 & =d_0^+=1, d_0^-=0\\
          d_1 & =d_1^+=d_1^-=0
        \end{aligned}
      $$
      as well as the following recurrences and identities:
      \begin{subequations}
      \begin{align}
        \label{(i)}
            d_n =& (n-1)(d_{n-1} + d_{n-2})  \text{ for } n \geq 2; \\[1ex]
        \label{(ii)}
            d_n^{+} =& (n-1)(d_{n-1}^- + d_{n-2}^-)  \text{ for } n \geq 2; \\[1ex]
        \label{(iii)}
            d_n^{-} =& (n-1)(d_{n-1}^+ + d_{n-2}^+)  \text{ for } n \geq 2; \\[1ex]
        \label{(iv)}
            d_n =& n d_{n-1} + (-1)^n  \text{ for } n \geq 1; \\[1ex]
        \label{(v)}
            d_n^{+} - d_n^{-} =& (-1)^{n-1}(n-1)  \text{ for } n \geq 0; \\[1ex]
        \label{(vi)}
            d_n =& \binom{n}{2} 2d_{n-2} + (-1)^n(n-1)  \text{ for } n \geq 2; \\[1ex]
        \label{(vii)}
            n! =& \sum_{j = 0}^n \binom{n}{j} d_{n-j},
      \end{align}
        \begin{quote}
        as $\binom{n}{j} d_{n-j}$ (resp. $\binom{n}{j} d^+_{n-j},
        \binom{n}{j} d^-_{n-j}$) counts the number of permutations (resp. even,
        odd permutations) having exactly $j$ fixed points.
        \end{quote}
      \end{subequations}
    \end{Proposition}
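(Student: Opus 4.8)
The plan is to treat the seven statements in three groups: the basic recurrence (i) for $d_n$ together with its sign-refinements (ii) and (iii); the closed-form identities (iv), (v), (vi); and the summation identity (vii) with its parenthetical refinement. Throughout I would work from the definitions in \ref{def:derangements}; the initial conditions $d_0 = d_0^+ = 1$, $d_0^- = 0$, $d_1 = d_1^+ = d_1^- = 0$ are immediate, since the empty permutation is an even derangement and the unique permutation of a singleton is not a derangement.

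First I would establish (i), (ii), (iii) simultaneously by the classical ``look at $w(n)$'' bijection, carried out once with signs tracked. Let $w$ be a derangement of $[n]$ for $n \geq 2$ and put $k := w(n)$; there are $n-1$ choices for $k \in \{1,\dots,n-1\}$, and for each we split according to whether $w(k) = n$ or $w(k) \neq n$. In the first case $(k\,n)$ is a $2$-cycle of $w$, and writing $w = (k\,n)\cdot w''$ with $w''$ fixing $k$ and $n$ identifies this case with the derangements of the $(n-2)$-element set $[n]\setminus\{k,n\}$, bijectively and with sign reversed; this contributes $(n-1)d_{n-2}$ to $d_n$ and $(n-1)d_{n-2}^{\mp}$ to $d_n^{\pm}$. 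In the second case the cycle of $w$ containing $n$ has length at least $3$, and deleting $n$ from it (replacing the arc $j \to n \to k$ by $j \to k$, where $j = w^{-1}(n)$) gives a bijection onto the derangements of $[n-1]$; this shortens one cycle by one and hence again reverses the sign, contributing $(n-1)d_{n-1}$ to $d_n$ and $(n-1)d_{n-1}^{\mp}$ to $d_n^{\pm}$. Adding the two cases yields (i), (ii) and (iii) at once.

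Next I would derive the closed forms purely formally. Identity (iv), $d_n = n d_{n-1} + (-1)^n$, follows by induction from (i), since $d_n - n d_{n-1} = -(d_{n-1} - (n-1)d_{n-2})$; identity (vi) then follows by substituting (iv) for $d_{n-1}$ into (iv) for $d_n$ and collecting the sign terms. For the signed count (v) I would offer two routes: either set $e_n := d_n^+ - d_n^-$, deduce from subtracting (iii) from (ii) that $e_n = -(n-1)(e_{n-1} + e_{n-2})$, and verify $e_n = (-1)^{n-1}(n-1)$ by induction; or, more conceptually, observe that $e_n = \sum_{w \in \symm_n,\ w(i)\neq i\ \forall i} \sgn(w) = \det(J_n - I_n)$, where $J_n$ is the all-ones matrix, whose eigenvalues $n-1$ (simple) and $-1$ (multiplicity $n-1$) give $\det(J_n - I_n) = (-1)^{n-1}(n-1)$ immediately.

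Finally, (vii) and its refinement are the standard fixed-point decomposition: every $w \in \symm_n$ restricts to a derangement on the complement of its fixed-point set $F$, and since $\sgn(w)$ depends only on $w|_{[n]\setminus F}$, grouping by $|F| = j$ gives $n! = \sum_{j=0}^n \binom{n}{j} d_{n-j}$, while $\binom{n}{j} d^+_{n-j}$ (resp. $\binom{n}{j} d^-_{n-j}$) counts the even (resp. odd) permutations with exactly $j$ fixed points. The only point requiring genuine care is the sign bookkeeping in the two cases of the bijection underlying (ii) and (iii) — precisely, checking that peeling off the $2$-cycle $(k\,n)$ and deleting one element from a longer cycle each flip the sign — so I would write that step out in full; every remaining assertion is a routine induction or a one-line classical count.
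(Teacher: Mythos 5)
Your proposal is correct and takes essentially the same route as the paper: the same "erase $n$ from its cycle and recover $w$ by specifying $w(n)$" bijection with sign tracking yields (i)–(iii), identities (iv)–(vi) are obtained by the same induction and iteration of (i) and (iv), and (vii) is the same fixed-point decomposition. Your alternative evaluation of $d_n^+-d_n^-$ as $\det(J_n-I_n)=(-1)^{n-1}(n-1)$ is a valid extra shortcut for (v), but it does not change the overall argument.
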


    \begin{proof}
      Recurrences \ref{(i)}, \ref{(ii)}, \ref{(iii)} follow from the fact that given
      a derangement $w$ in $\symm_n$, erasing $n$ from the cycle structure of $w$
      results in one of two possibilities.
      \begin{enumerate}
        \item[$\bullet$]
          A derangement $\hat{w}$ in $\symm_{n-1}$ having opposite sign to $w$.
          From $\hat{w}$ one can uniquely recover $w$ by specifying the value $w(n)$ in $[n-1]$.
        \item[$\bullet$]
          A permutation in $\symm_{n-1}$ with exactly one fixed point.  After removing
          this fixed point $w(n)$, one obtains a derangement $\hat{w}$ in $\symm_{n-2}$ 
          having opposite sign to $w$.  And again, 
          from $\hat{w}$ one can uniquely recover $w$ by specifying the 
          value $w(n)$ in $[n-1]$.
      \end{enumerate}

      Recurrence \ref{(iv)} follows from \ref{(i)} by induction on $n$.  The base cases where $n=0,1$
      are easily checked.  In the inductive step where $n \geq 2$, recurrence \ref{(i)} 
      implies 
      $$
        d_n-nd_{n-1} = -(d_{n-1}-(n-1)d_{n-2}) = -(-1)^{n-1}
      $$
      where the second equality uses induction.

      Recurrence \ref{(v)} follows from \ref{(ii)} and \ref{(iii)} by induction on $n$.  The base cases where $n=0,1$
      are easily checked.  In the inductive step where $n \geq 2$, recurrences \ref{(ii)} and \ref{(iii)}
      imply
      $$
        \begin{aligned}
          d_n^+-d_n^- & = (n-1)\left( (d_{n-1}^--d_{n-2}^-) - (d_{n-1}^+-d_{n-2}^+) \right) \\
            & = (n-1)\left( (d_{n-2}^+-d_{n-2}^-) - (d_{n-1}^+-d_{n-1}^-) \right) \\
            & = (n-1)\left( (-1)^{n-3}(n-3) - (-1)^{n-2}(n-2) \right) \\
            & = (n-1) (-1)^{n-1}
        \end{aligned}
      $$
      where the third equality uses induction.

      Recurrence \ref{(vi)} is a rewriting of the first iterate of recurrence \ref{(iv)}:
      $$
        \begin{aligned}
          d_n &= n d_{n-1} + (-1)^n\\
            & = n \left( (n-1) d_{n-2} + (-1)^{n-1} \right) + (-1)^n \\
            & = \binom{n}{2} \cdot 2 d_{n-2} + (-1)^{n-1} (n-1) 
        \end{aligned}
      $$

      The assertions in \ref{(vii)} all come from the fact that every permutation $w$ in $\symm_n$
      having $j$ fixed points gives rise to a derangement $\hat{w}$ on its 
      set of $n-j$ nonfixed points;  this $\hat{w}$ has the same sign as $w$.
    \end{proof}

  \subsection{$(\symm_n \times \ZZ_2)$-structure of the first kernel}

    We begin with a proposition showing that the kernel of the top boundary map $\delminus([n],n-1)$ 
    in the complex of injective words satisfies the representation-theoretic analogues of the
    derangement number recurrences in \ref{(iv)} and \ref{(vi)}. 
    For the $\RR\symm_n$-module structure, this was observed in \cite[\S 2]{ReinerWebb2004};
    for the $\RR[\symm_n \times \ZZ_2]$-module structure it appears to be new.

    \begin{Proposition}
      Considered as a virtual character of $\symm_n$,
      \begin{equation}
        \label{kernel-recurrence}
        \ker \delminus([n],n-1) 
        = \ker \delminus([n-1],n-2) * \chi^{(1)} + (-1)^n \chi^{(n)}.
      \end{equation}

      Considered as a virtual character of $\symm_n \times \ZZ_2$,
      \begin{equation}
        \label{Z2-kernel-recurrence}
        \ker \delminus([n],n-1) 
        = \ker \delminus([n-2],n-3) * \left( \chi^{(2),-} + \chi^{(1^2),+} \right) 
         + (-1)^{n-1} \chi^{(n-1,1),+}.
      \end{equation}
    \end{Proposition}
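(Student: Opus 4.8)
The plan is to identify $\ker\delminus([n],n-1)$ with the top homology of the complex of injective words, write that homology as an alternating sum of its chain groups via the Hopf trace formula, and then recognize each chain group as an induction product by peeling letters off the ends of words. Let $\Gamma_n$ be the complex $(K_{[n]},\delminus)$ of \S\ref{su:injectivewords}, augmented by a copy of $\RR$ (the empty word) in degree $-1$. By the classical acyclicity of the complex of injective words below its top dimension (see \cite[\S2]{ReinerWebb2004} and the references there), $\Gamma_n$ has homology only in degree $n-1$, where it is exactly $\ker\delminus([n],n-1)=\widetilde H_{n-1}(\Gamma_n)$. Since $\Gamma_n$ is a complex of $\RR[\symm_n\times\ZZ_2]$-modules for the twisted action described just before \ref{prop:kernels-sign-twisted}, the Hopf trace formula gives, in $\Gamma(\RR[\symm_n\times\ZZ_2])$,
\[
  \ker\delminus([n],n-1)=\sum_{k=0}^{n}(-1)^{n-k}\,[C_k],\qquad C_k:=\RR^{[n]^{\langle k\rangle}},
\]
where $\symm_n$ permutes letters and the generator of $\ZZ_2$ sends a length-$k$ word $a$ to $(-1)^{\lfloor k/2\rfloor}a^{\rev}$; in particular $C_0=\chi^{(n),+}$. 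Only the chain groups, not the differentials, are needed here.

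The heart of the argument is to express $C_k$ through chain groups of smaller complexes. Splitting a word $w=(w_1,\dots,w_k)$ as $(\{w_1\},(w_2,\dots,w_k))$ gives a $\symm_n$-equivariant bijection of $[n]^{\langle k\rangle}$ with the set of pairs consisting of a point $b\in[n]$ and an injective word of length $k-1$ on $[n]\setminus\{b\}$, so $[C_k]=[C^{[n-1]}_{k-1}]*\chi^{(1)}$ as $\symm_n$-modules for $k\ge1$ (with $C^{[n-1]}_{j}:=\RR^{[n-1]^{\langle j\rangle}}$, and similarly $C^{[n-2]}_{j}$). Peeling off the first \emph{and} last letters, $w\mapsto(\{w_1,w_k\},(w_1,w_k),(w_2,\dots,w_{k-1}))$, gives a $\symm_n$-equivariant bijection of $[n]^{\langle k\rangle}$ with the pairs consisting of a two-element $B\subseteq[n]$ with an ordering together with an injective word of length $k-2$ on $[n]\setminus B$, and twisted reversal of $w$ corresponds under it to the simultaneous reversal of the ordering of $B$ and twisted reversal of the inner word, the signs agreeing since $(-1)^{\lfloor(k-2)/2\rfloor}(-1)^{\lfloor 2/2\rfloor}=(-1)^{\lfloor k/2\rfloor}$. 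Hence $[C_k]=[C^{[n-2]}_{k-2}]*N$ as $\symm_n\times\ZZ_2$-modules for $k\ge2$, where $N$ is $\RR\symm_2$ with the $\ZZ_2$-action ``$-1$ times right multiplication by the transposition''; a one-line eigenvalue computation on the trivial and sign lines of $\RR\symm_2$ shows $N=\chi^{(2),-}\oplus\chi^{(1^2),+}$.

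Both recurrences now follow by re-indexing the alternating sum. Forgetting $\ZZ_2$ and isolating the $k=0$ term,
\[
  \ker\delminus([n],n-1)=(-1)^n\chi^{(n)}+\Bigl(\textstyle\sum_{j=0}^{n-1}(-1)^{(n-1)-j}[C^{[n-1]}_{j}]\Bigr)*\chi^{(1)}=\ker\delminus([n-1],n-2)*\chi^{(1)}+(-1)^n\chi^{(n)},
\]
which is \eqref{kernel-recurrence} (for this part one could instead cite \cite[\S2]{ReinerWebb2004}). Keeping $\ZZ_2$ and isolating the $k=0,1$ terms, and using that reversal acts trivially on words of length $0$ and $1$, so $[C_0]=\chi^{(n),+}$ and $[C_1]=\chi^{(n),+}\oplus\chi^{(n-1,1),+}$, the tail $k\ge2$ collapses to $\ker\delminus([n-2],n-3)*N$ and one gets
\[
  \ker\delminus([n],n-1)=\ker\delminus([n-2],n-3)*\bigl(\chi^{(2),-}+\chi^{(1^2),+}\bigr)+(-1)^n\chi^{(n),+}+(-1)^{n-1}\bigl(\chi^{(n),+}+\chi^{(n-1,1),+}\bigr);
\]
the two $\chi^{(n),+}$ terms cancel, leaving the claimed correction $(-1)^{n-1}\chi^{(n-1,1),+}$ of \eqref{Z2-kernel-recurrence}. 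Taking dimensions recovers the derangement recurrences of \ref{derangement-numerology}, a convenient check, and the small cases are verified directly.

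The Hopf trace computation, the single-letter bijection, and the re-indexing are all routine. The one step that genuinely requires care — and the only place where an error could creep in — is the two-letter peeling: one must check that this bijection intertwines the \emph{twisted} $\ZZ_2$-action on $\RR^{[n]^{\langle k\rangle}}$ with the stated action on $[C^{[n-2]}_{k-2}]*N$, tracking the $(-1)^{\lfloor\ell/2\rfloor}$ factors through the splitting, and that the identification $N=\chi^{(2),-}\oplus\chi^{(1^2),+}$ carries the correct signs. The remaining manipulations use only additivity in the Grothendieck group together with the $\symm_n\times\ZZ_2$ Pieri rules \eqref{Z2-Pieris}.
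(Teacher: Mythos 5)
Your proof is correct and follows essentially the same route as the paper: identify $\ker \delminus([n],n-1)$ with the top homology of the complex of injective words (acyclicity below the top dimension), apply the Hopf trace formula, and reduce the chain groups via the recurrences $[C_k]=[C^{[n-1]}_{k-1}]*\chi^{(1)}$ and $[C_k]=[C^{[n-2]}_{k-2}]*\bigl(\chi^{(2),-}+\chi^{(1^2),+}\bigr)$, with the same boundary terms yielding $(-1)^n\chi^{(n)}$ and $(-1)^{n-1}\chi^{(n-1,1),+}$. The only cosmetic difference is that you obtain the two-step chain-group recurrence by peeling off the outermost pair of letters directly, whereas the paper derives it from the decomposition of the twisted reversal according to the cycle structure of $w_0$ in $\symm_{i+1}$ — the same sign bookkeeping either way.
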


    \begin{proof}
      (cf. Proof of \cite[Propositions 2.1, 2.2]{ReinerWebb2004})
      The complex of injective words $(K_{[n]}, \delminus([n],\cdot))$ 
      is known to be the augmented cellular chain complex corresponding 
      to a regular $CW$-complex of dimension $n-1$, homotopy equivalent to
      a bouquet of spheres of dimension $n-1$; see Farmer \cite{Farmer1978}, 
      Bj\"orner and Wachs \cite{BjornerWachs1983}.
      Consequently, its homology $\ReducedHomology_\bullet(K_{[n]})$ 
      is concentrated in dimension $n-1$, and coincides with 
      $\ker \delminus([n],n-1)$.

      On the other hand, the Hopf trace formula gives the following identity of
      \index{Hopf trace formula}%
      virtual characters for any finite group acting on $K_{[n]}$:
      $$
        \sum_{i \geq -1} (-1)^i \ReducedHomology_i(K_{[n]}) = \sum_{i \geq -1} (-1)^i K_{[n],i}.
      $$
      From this we conclude that
      \begin{equation}
        \label{kernel-as-alternating-sum}
        \ker \delminus([n],n-1) = \sum_{i \geq -1} (-1)^{n-i-1} K_{[n],i}.
      \end{equation}
      Using this expression \eqref{kernel-as-alternating-sum}, the
      two recurrences in the proposition will follow after deriving
      recurrences for $\symm_n$ and $\symm_n \times \ZZ_2$-structures
      on the chain groups $K_{[n],i}$.

      The recurrence as characters of $\symm_n$ takes the form
      $$
        K_{[n],i} = 
        \begin{cases}
          \chi^{(n)} &\text{ if }i = -1\\
          K_{[n-1],i-1} * \chi^{(1)} &\text{ if }i \geq 0. \\
        \end{cases}
      $$
      This is because as $\RR \symm_n$-modules one has the general description
      $$
        \begin{aligned}
          K_{[n],i} &= \RR^{[n]^{\langle i+1 \rangle}} \\ 
            &\cong \chi^{(n-i-1)} *  
            \underbrace{\chi^{(1)} * \cdots  * \chi^{(1)}}_{i+1\text{ factors}}.
        \end{aligned}
      $$

      The recurrence as characters of $\symm_n \times \ZZ_2$ takes the form
      $$
        K_{[n],i} = 
        \begin{cases}
          \chi^{(n),+} &\text{ if }i = -1\\
          \chi^{(n-1),+} * \chi^{(1),+} &\text{ if }i = 0\\
          K_{[n-2],i-2} * \left( \chi^{(2),-} + \chi^{(1^2),+} \right) &\text{ if }i \geq 1. \\
        \end{cases}
      $$
      To understand this, note that the $\ZZ_2$-action reversing the positions in injective
      words of length $i+1$ decomposes according to the cycle structure of the 
      reversing permutation $w_0$ in $\symm_{i+1}$.
      This allows one to describe the chain groups via the induction product as follows:  
      as $\RR[\symm_n \times \ZZ_2]$-modules,
      $$
        \begin{aligned}
          K_{[n],i} &= \RR^{[n]^{\langle i+1 \rangle}} \\
          &\cong \chi^{(n-i-1),+} *
            \begin{cases}
                \underbrace{\RR \symm_2 * \cdots 
                               * \RR \symm_2}_{\frac{i+1}{2}\text{ factors}}
                                             & \text{ if }i \text{ is odd,}\\
                \underbrace{\RR \symm_2 * \cdots 
                               * \RR \symm_2}_{\frac{i}{2}\text{ factors}}
                               * \RR \symm_1 & \text{ if }i \text{ is even}.
            \end{cases}
        \end{aligned}
      $$
      Also note that 
      $$
        \begin{aligned}
          \RR \symm_1 &\cong \chi^{(1),+} \\ 
          \RR \symm_2 &\cong \chi^{(2),-} + \chi^{(1^2),+}.
        \end{aligned}
      $$

      It only remains to explain the last term on the right 
      in \eqref{Z2-kernel-recurrence}, arising from the
      following computation:
        \begin{align*}
          (-1)^{n-1} K_{[n],0} + (-1)^{n} K_{[n],-1} 
          & = (-1)^{n-1} \left( \chi^{(n-1),+} * \chi^{(1),+} - \chi^{(n),+} \right)  \\
          & = (-1)^{n-1} \chi^{(n-1,1),+}
          \qedhere
        \end{align*}
    \end{proof}

    Combining this with \ref{sign-twisting-corollary} immediately gives the following
    version of the same recurrences, which are again analogues of
    the derangement recurrences \ref{(iv)} and \ref{(vi)}.

    \begin{Corollary}
      Considered as a virtual character of $\symm_n$,
      \begin{equation}
        \label{plus-kernel-recurrence}
        \ker \pi_{[n]} 
        = \ker \pi_{[n-1]} * \chi^{(1)} + (-1)^n \chi^{(1^n)}.
      \end{equation}

      Considered as a virtual character of $\symm_n \times \ZZ_2$,
      \begin{equation}
        \label{Z2-plus-kernel-recurrence}
        \ker \pi_{[n]} 
        = \ker \pi_{[n-2]} * \left( \chi^{(2),+} + \chi^{(1^2),-} \right) 
         + (-1)^{n-1} \chi^{(2,1^{n-2}),+}. \qed
      \end{equation}
    \end{Corollary}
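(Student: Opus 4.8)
The plan is to deduce the corollary by running the recurrences \eqref{kernel-recurrence} and \eqref{Z2-kernel-recurrence} of the preceding proposition through the conjugation dictionary of \ref{sign-twisting-corollary}. Write $\Theta$ for the operation on virtual characters that replaces each constituent $\chi^{\lambda,\epsilon}$ of $\symm_m\times\ZZ_2$ by $\chi^{\lambda^T,\epsilon}$; equivalently, $\Theta$ is the ring involution obtained by tensoring a $\symm_m\times\ZZ_2$-character with $\sgn\otimes\trivial$, where $\sgn$ is the sign character of $\symm_m$ and $\trivial$ the trivial character of $\ZZ_2$. By \ref{sign-twisting-corollary} applied with $A=[n]$, one has $\ker\pi_{[n]}=\Theta\big(\ker\delminus([n],n-1)\big)$ in $\Gamma(\RR[\symm_n\times\ZZ_2])$ — and likewise with the $\ZZ_2$-factor suppressed for the $\symm_n$-statement — while applying it with $A=[n-1]$ and $A=[n-2]$ identifies $\ker\pi_{[n-1]}=\Theta\big(\ker\delminus([n-1],n-2)\big)$ and $\ker\pi_{[n-2]}=\Theta\big(\ker\delminus([n-2],n-3)\big)$. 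Thus the corollary follows once we know that $\Theta$ commutes with the induction product $*$ and once we evaluate $\Theta$ on the explicit "error terms".

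For the first point I would record the identity $\Theta(\chi_1*\chi_2)=\Theta(\chi_1)*\Theta(\chi_2)$ for $\chi_i$ in $\Gamma(\RR[\symm_{n_i}\times\ZZ_2])$. This holds because the sign character of $\symm_{n_1+n_2}$ restricts along the Young embedding $\symm_{n_1}\times\symm_{n_2}\hookrightarrow\symm_{n_1+n_2}$ to $\sgn\otimes\sgn$, the $\ZZ_2$-labels being untouched by $\Theta$; feeding this into the definition of $*$ for $\symm_n\times\ZZ_2$ (induction from $\symm_{n_1}\times\ZZ_2\times\symm_{n_2}\times\ZZ_2$ followed by restriction along the diagonal $\ZZ_2\hookrightarrow\ZZ_2^2$) and using the projection formula gives the claim. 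Alternatively one reads it off from the Pieri rules \eqref{usual-Pieris} and \eqref{Z2-Pieris}, since conjugation of partitions interchanges horizontal and vertical strips while leaving the $\epsilon$-label alone.

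It then remains only to compute conjugates: $(1)^T=(1)$ and $(n)^T=(1^n)$ turn \eqref{kernel-recurrence} into \eqref{plus-kernel-recurrence}; and $(2)^T=(1^2)$, $(1^2)^T=(2)$, $(n-1,1)^T=(2,1^{n-2})$ turn \eqref{Z2-kernel-recurrence} into \eqref{Z2-plus-kernel-recurrence}, using that $\Theta$ distributes over sums and over $*$. I expect the only step requiring genuine care to be the compatibility $\Theta(\chi_1*\chi_2)=\Theta(\chi_1)*\Theta(\chi_2)$ in the $\symm_n\times\ZZ_2$-equivariant setting, where one must keep track of the fact that $\Theta$ modifies only the $\symm_n$-part of each constituent while the sign $\epsilon$ is carried along unchanged; but this is the same short Frobenius-reciprocity bookkeeping already used to prove \eqref{Z2-Pieris}, and everything else is routine manipulation of conjugate partitions.
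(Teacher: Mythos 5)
Your proposal is correct and is exactly the route the paper takes: the paper deduces this corollary by "combining" the recurrences \eqref{kernel-recurrence} and \eqref{Z2-kernel-recurrence} with \ref{sign-twisting-corollary}, leaving implicit precisely the bookkeeping you spell out (that sign-twisting commutes with the induction product $*$ while carrying the $\ZZ_2$-label along unchanged, plus the conjugate-partition computations). Your justification of that compatibility via the projection formula, or alternatively via the Pieri rules \eqref{usual-Pieris} and \eqref{Z2-Pieris}, is sound, so nothing is missing.
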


    \begin{Corollary}
      \label{kernel-dimension-corollary}
      The kernels of the two maps $\pi_{[n]}$ and $\delminus([n],n-1)$
      both have dimension $d_n$, the number of derangements in $\symm_n$.
      Furthermore, both have the dimension of their $\ZZ_2$-isotypic components
      equal to $d_n^+$, $d_n^-$, the number of even, odd derangements
      in $\symm_n$, respectively.
    \end{Corollary}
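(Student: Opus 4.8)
The plan is to deduce both assertions from dimension counts for the single map $\pi_{[n]}$, matched against the derangement recurrences of \ref{derangement-numerology}. First I would reduce to $\ker\pi_{[n]}$ alone: by \ref{sign-twisting-corollary}, a decomposition $\ker\pi_{[n]}\cong\bigoplus_\alpha\chi^{\lambda_\alpha,\epsilon_\alpha}$ into irreducible $\RR[\symm_n\times\ZZ_2]$-modules is equivalent to $\ker\delminus([n],n-1)\cong\bigoplus_\alpha\chi^{\lambda_\alpha^T,\epsilon_\alpha}$; since $\dim\chi^{\lambda}=\dim\chi^{\lambda^T}$ and the $\ZZ_2$-signs $\epsilon_\alpha$ are untouched, the two kernels have the same total dimension and the same dimensions for their $\chi^+$- and $\chi^-$-isotypic components. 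So it is enough to prove the statement for $\ker\pi_{[n]}$.

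Next, to get $\dim\ker\pi_{[n]}=d_n$, I would take dimensions in the virtual-character identity \eqref{plus-kernel-recurrence}. Using $\dim(\chi^\mu*\chi^\nu)=\binom{|\mu|+|\nu|}{|\mu|}\dim\chi^\mu\dim\chi^\nu$ and $\dim\chi^{(1^n)}=1$, it becomes $\dim\ker\pi_{[n]}=n\dim\ker\pi_{[n-1]}+(-1)^{n}$, and since $\ker\pi_{[n]}$ is an honest module this virtual count equals its actual dimension. With the base cases $n\le 1$ (where $\ker\pi_{[1]}=0$), this is precisely recurrence \eqref{(iv)}, so induction gives $\dim\ker\pi_{[n]}=d_n$.

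For the $\ZZ_2$-refinement, set $c_n^{\pm}:=\dim(\ker\pi_{[n]})^{\chi^{\pm}}$, so $c_n^{+}+c_n^{-}=d_n$ by the previous step. I would take $\chi^{\pm}$-isotypic dimensions in \eqref{Z2-plus-kernel-recurrence}, invoking the $\ZZ_2$-Pieri rules \eqref{Z2-Pieris}: tensoring by $\chi^{(2),+}$ preserves the $\ZZ_2$-sign of every constituent, tensoring by $\chi^{(1^2),-}$ interchanges the two signs, and the correction term $(-1)^{n-1}\chi^{(2,1^{n-2}),+}$ lies entirely in the $\chi^+$-part with virtual dimension $(-1)^{n-1}(n-1)$ (as $\dim\chi^{(2,1^{n-2})}=n-1$). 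This yields
\[
c_n^{+}=\binom{n}{2}\bigl(c_{n-2}^{+}+c_{n-2}^{-}\bigr)+(-1)^{n-1}(n-1),
\qquad
c_n^{-}=\binom{n}{2}\bigl(c_{n-2}^{+}+c_{n-2}^{-}\bigr),
\]
hence $c_n^{+}-c_n^{-}=(-1)^{n-1}(n-1)$, which is exactly identity \eqref{(v)} for $d_n^{+}-d_n^{-}$. Combining $c_n^{+}+c_n^{-}=d_n=d_n^{+}+d_n^{-}$ with $c_n^{+}-c_n^{-}=d_n^{+}-d_n^{-}$ forces $c_n^{\pm}=d_n^{\pm}$, and by the first paragraph the same holds for $\ker\delminus([n],n-1)$.

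The argument is essentially bookkeeping, so there is no real obstacle; the points needing care are the dimension formula for an induction product and the sign tracking in \eqref{Z2-Pieris} — in particular the fact that the $\ZZ_2$-character of $\chi_1*\chi_2$ is the product of those of $\chi_1$ and $\chi_2$, so that tensoring by $\chi^{(1^2),-}$ genuinely swaps the two isotypic pieces. One must also remember that \eqref{plus-kernel-recurrence} and \eqref{Z2-plus-kernel-recurrence} are identities of virtual characters, so dimensions are computed termwise and then reconciled with the fact that the kernels are actual modules; no cancellation subtlety arises because every negative term is one of the two explicit correction characters.
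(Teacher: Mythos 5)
Your argument is correct and is essentially the paper's own proof: both deduce the total dimension by comparing the character recurrence \eqref{plus-kernel-recurrence} (equivalently \eqref{kernel-recurrence}) with the derangement recurrence \ref{(iv)}, and both obtain the $\ZZ_2$-isotypic dimensions by extracting the $\chi^{\pm}$-parts of \eqref{Z2-plus-kernel-recurrence}, subtracting to match \ref{(v)}, and combining with the sum $d_n = d_n^+ + d_n^-$. Your explicit use of \ref{sign-twisting-corollary} to transfer the statement between $\ker\pi_{[n]}$ and $\ker\delminus([n],n-1)$, and the dimension formula for the induction product, are exactly the bookkeeping the paper leaves implicit.
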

    \begin{proof}
      The first assertion follows upon comparison of the recurrence
      \eqref{kernel-recurrence} with \ref{(iv)}.

      For the second assertion, note that
      \eqref{Z2-kernel-recurrence} implies that the dimensions $\hat{d}_n^+,\hat{d}_n^-$
      of the $\ZZ_2$-isotypic components of the kernel of $\pi_{[n]}$
      satisfy for $n \geq 2$ the recurrences
      $$
        \begin{aligned}
          \hat{d}_n^+ &= \binom{n}{2} \hat{d}_{n-2}^- 
               + \binom{n}{2} \hat{d}_{n-2}^+ 
               + (-1)^{n-1} (n-1) \\
          \hat{d}_n^- &= \binom{n}{2} \hat{d}_{n-2}^+ 
               + \binom{n}{2} \hat{d}_{n-2}^-.
        \end{aligned}
      $$
      Subtracting these gives for $n \geq 2$,
      $$
        \hat{d}_n^+ - \hat{d}_n^- = (-1)^{n-1}(n-1) = d_n^+ - d_n^-
      $$
      where the last equality is \ref{(v)}.
      One can directly verify that this holds also for $n=0,1$.
      On the other hand, by the first assertion of the corollary, one has 
      $$
        \hat{d}_n^+ + \hat{d}_n^- = d_n = d_n^+ + d_n^-
      $$
      and hence one concludes that $\hat{d}_n^{\epsilon} = d_n^{\epsilon}$ for $\epsilon = +, -$.
    \end{proof}

  \subsection{$(\symm_n \times \ZZ_2)$-structure of the kernel filtration}

    We continue our study of the $(\symm_n \times \ZZ_2)$-structure 
    of the filtration factors $F_{n,j}$ in \eqref{eqn:first-family-filtration}.
    The following proposition is a straightforward special case
    of \ref{eqn:nested-kernel-equation}, and will be used in the proof
    of \ref{image-technical-lemma} and \ref{Z2-filtration-analysis}.

    \begin{Proposition}
      \label{delplus-composites}
      For any finite set $A$ and $0 \leq i \leq j \leq k \leq |A|$, one has
      $$
        \delplus(A,j,i) \circ \delplus(A,k,j) = \binom{k-i}{j-i} \delplus(A,k,i). 
      $$
    \end{Proposition}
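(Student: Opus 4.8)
The plan is to compute directly what the composite $\delplus(A,j,i) \circ \delplus(A,k,j)$ does to a basis element, namely an injective word $a=(a_1,\ldots,a_k)$ of length $k$, and to track multiplicities combinatorially. Applying $\delplus(A,k,j)$ to $a$ yields the sum over all length-$j$ subwords $b$ of $a$; then applying $\delplus(A,j,i)$ to each such $b$ yields the sum over all length-$i$ subwords of $b$. Hence the result is $\sum_c m_c \cdot c$, where $c$ ranges over length-$i$ subwords of $a$ and $m_c$ counts the number of ways to choose an intermediate length-$j$ subword $b$ of $a$ containing $c$ as a subword. Since $a$ is injective, a subword is determined by the set of positions used, so this bookkeeping is purely about nested position sets $P_c \subseteq P_b \subseteq \{1,\ldots,k\}$ with $|P_c|=i$ and $|P_b|=j$.

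The key observation is that $m_c$ does not depend on $c$: once $P_c$ is fixed, choosing $P_b$ with $P_c \subseteq P_b$ and $|P_b|=j$ amounts to choosing the remaining $j-i$ positions of $P_b$ from the $k-i$ positions in $\{1,\ldots,k\}\setminus P_c$, giving exactly $\binom{k-i}{j-i}$ choices. Therefore $m_c = \binom{k-i}{j-i}$ for every length-$i$ subword $c$ of $a$, and
$$
\delplus(A,j,i)\bigl(\delplus(A,k,j)(a)\bigr) = \binom{k-i}{j-i} \sum_{c} c = \binom{k-i}{j-i}\,\delplus(A,k,i)(a),
$$
where the last sum is over all length-$i$ subwords of $a$. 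Since this holds on every basis element and all three maps are $\RR$-linear, the claimed operator identity follows.

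There is essentially no obstacle here; the only point requiring a word of care is the use of injectivity to identify subwords with position sets (so that no subword is counted with extra multiplicity coming from repeated letters), and the edge cases $i=j$ or $j=k$, where $\binom{k-i}{j-i}$ correctly evaluates to $1$ and the identity reduces to $\delplus(A,k,i)=\delplus(A,k,i)$ composed with an identity map. One may alternatively phrase this as a special case of \ref{eqn:nested-kernel-equation} from \ref{prop:nested-kernels}, identifying $\delplus(A,m,i)$ with the map $\pi_\OOO$ for $W=\symm_A$ and $\OOO$ the orbit of subspaces setting $i$ coordinates equal, as noted in the Remark preceding \ref{su:injectivewords}; then $c_{\OOO,\OOO'}=\binom{k-i}{j-i}$ is exactly the count above. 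I would present the direct counting argument as the main proof and mention the reduction to \ref{prop:nested-kernels} as a remark.
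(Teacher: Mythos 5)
Your proof is correct. The paper gives no standalone argument for this proposition: it simply declares it ``a straightforward special case'' of \ref{eqn:nested-kernel-equation} in \ref{prop:nested-kernels}, whose content is exactly the orbit/subspace count underlying your computation. Your main argument is the direct elementary verification: expand the composite on a basis injective word $a$ of length $k$, use injectivity to identify subwords of $a$ with position sets, and observe that each length-$i$ subword $c$ arises from exactly $\binom{k-i}{j-i}$ intermediate length-$j$ subwords, namely one for each choice of the remaining $j-i$ positions. This is the same count the paper has in mind, just carried out explicitly rather than by citation, and it has the advantage of being self-contained. One small caution about your closing remark: the identification of $\delplus(A,m,i)$ with a map $\pi_\OOO$ (and of the composite with $\pi^{\OOO}_{\OOO'}\circ\pi_\OOO$) applies literally only when the top index is $|A|$; for $k<|A|$ the constant $c_{\OOO,\OOO'}$ for the full type-$A$ arrangement on the alphabet $A$ would be $\binom{|A|-i}{j-i}$, so to deduce the stated formula from \ref{prop:nested-kernels} one should first decompose $\RR^{A^{\langle k\rangle}}$ according to the letter support $B\in\binom{A}{k}$ and apply the nested-kernel equation inside each $\symm_B$. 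Your direct count sidesteps this entirely, which is why it is the cleaner way to present the proof, with the reduction to \ref{prop:nested-kernels} left as the remark you suggest.
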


    We first use this proposition in the proof of a technical lemma.

    \begin{Definition}
      Given a set $A$, let $\binom{A}{j}$ denote the collection of all $j$-element
      subsets $J$ of $A$.  Given $J$ in $\binom{A}{j}$,  
      define an $\RR$-bilinear  concatenation product
      $$
        \RR \symm_J \times \RR \symm_{A \setminus J} \longrightarrow \RR \symm_A
      $$
      by sending $(u,v)$ to $u\concat v:=(u_1,\ldots,u_j,v_1,\ldots,v_{n-j})$; that is
      the permutation sending $i$ to $u_i$ if $1 \leq i \leq j$ and $i$ to $v_{i-j}$
      if $j+1 \leq i \leq n$.
    \end{Definition}

    \begin{Lemma}
      \label{image-technical-lemma}
      For any $J$ in $\binom{[n]}{j}$,
      $$
        \ker \pi_{[n] \setminus J} \subset \im \left( \delplus([n],n,n-j) \right).
      $$
    \end{Lemma}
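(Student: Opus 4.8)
The plan is to produce an explicit element of $\RR\symm_n$ that maps under $\delplus([n],n,n-j)$ to a prescribed element of $\ker\pi_{[n]\setminus J}$, lying inside $\RR\symm_{[n]\setminus J}$ (viewed inside $\RR\symm_n$ via the concatenation embedding $v \mapsto u_0 \concat v$ for a fixed reference permutation $u_0$ of $J$). The key structural input is \ref{delplus-composites}, which says that composites of the $\delplus$-maps are scalar multiples of a single $\delplus$-map. Taking $A=[n]$, $k=n$, and the intermediate value $n-j$, one gets $\delplus([n],n-j,i) \circ \delplus([n],n,n-j) = \binom{n-i}{j} \delplus([n],n,i)$ for all $i \le n-j$; more useful will be the ``local'' version: for an injective word $a$ of length $n-j$ whose letter set is exactly $[n]\setminus J$, the effect of $\delplus$ on $a$ only sees the order of those letters.

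First I would set up the right target space. A word of length $n-j$ using exactly the letters of $[n]\setminus J$ is the same data as a permutation of $[n]\setminus J$, so $\RR^{([n]\setminus J)^{\langle n-j\rangle}} \cong \RR\symm_{[n]\setminus J}$, and under this identification the map $\delplus([n]\setminus J, n-j, n-j-1)$ becomes exactly the map $\pi_{[n]\setminus J}$. Next I would exhibit a section: consider the $\RR$-linear map $s: \RR\symm_{[n]\setminus J} \to \RR\symm_n$ defined on a permutation $v = (v_1,\dots,v_{n-j})$ of $[n]\setminus J$ by inserting the $j$ letters of $J$ in increasing order at the front, i.e. $s(v) = (J_1,\dots,J_j,v_1,\dots,v_{n-j})$ where $J_1<\dots<J_j$ are the elements of $J$. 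I claim that for $x \in \ker\pi_{[n]\setminus J}$ one can scale $s(x)$ (or a small modification of it) so that $\delplus([n],n,n-j)$ returns $x$. To see this, compute $\delplus([n],n,n-j)\circ s$: applied to a single permutation $v$, it sums all length-$(n-j)$ subwords of $(J_1,\dots,J_j,v_1,\dots,v_{n-j})$; the unique subword using no letter of $J$ is $v$ itself, and every other subword uses at least one letter of $J$. Grouping the ``error terms'' by how many letters of $J$ they contain, and using that $\delplus([n]\setminus J,\,\cdot,\,\cdot)$ annihilates $x$ together with \ref{delplus-composites} to telescope, the contributions from subwords touching $J$ should collapse to a multiple of $x$ (the terms that would otherwise survive are killed by $\pi_{[n]\setminus J}x = 0$ after iterated application of $\delplus$). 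Concretely, one expects $\delplus([n],n,n-j)(s(x)) = x + (\text{terms in } \im \text{ of maps factoring through } \pi_{[n]\setminus J})$, which on $\ker\pi_{[n]\setminus J}$ leaves just $x$ up to a positive combinatorial scalar.

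The cleanest way to organize the bookkeeping is probably to filter $\RR^{[n]^{\langle n-j\rangle}}$ by the size of the intersection of a word's letter set with $J$, observe that $\delplus([n],n,n-j)\circ s$ is block-triangular with respect to this filtration and equals a nonzero scalar on the ``$J$-free'' block, and invoke \ref{delplus-composites} to identify the off-block pieces as composites through $\delplus([n]\setminus J,\cdot,\cdot)$, hence zero on $\ker\pi_{[n]\setminus J}$. Then $x = c^{-1}\,\delplus([n],n,n-j)(s(x))$ for a suitable scalar $c \ne 0$, giving $x \in \im(\delplus([n],n,n-j))$.

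The main obstacle I anticipate is the combinatorial identity controlling the scalar and, more importantly, verifying that all the ``mixed'' error terms genuinely factor through $\pi_{[n]\setminus J}$ rather than through some other $\delplus$-map that need not annihilate $x$. One has to be careful that a subword of $s(v)$ which omits, say, $r$ letters of $J$ and $n-j-(j-r)$... — getting the counting right — and that the resulting operator on these words is a scalar times $\delplus([n]\setminus J, n-j, n-j-1)$ and not $\delplus([n]\setminus J, n-j, n-j-r)$ for $r \ge 2$ (the latter would still kill $\ker\pi$ by \ref{delplus-composites}, so this is fine, but it needs to be said). If the direct insertion-at-the-front section does not produce a clean scalar, the fallback is to take $s$ to insert $J$ in all $j$-subset positions with signs or binomial weights chosen so that \ref{delplus-composites} makes the composite exactly $\binom{n}{j}^{-1}$-normalizable; in the worst case one argues abstractly that $\delplus([n],n,n-j)$ restricted to $\RR^{\{v : \text{letters}(v) = [n]\setminus J\}}$ surjects onto $\ker\pi_{[n]\setminus J}$ by a rank/Schur-type argument using $W$-equivariance for $\symm_{[n]\setminus J}$, but I expect the explicit section to work.
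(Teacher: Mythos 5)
Your proposal is correct and is essentially the paper's own argument: the paper fixes a permutation $u$ of $J$ (your section $s$ is the special case of the increasing word), expands $\delplus([n],n,n-j)(u \concat x)$ by grouping length-$(n-j)$ subwords according to which letters of $J$ they retain, and kills every term retaining $r \geq 1$ letters of $J$ because \ref{delplus-composites} shows $x \in \ker \pi_{[n]\setminus J}$ also lies in $\ker \delplus([n]\setminus J, n-j, n-j-r)$ -- exactly the fallback you flag as needing to be said. The only cosmetic difference is that the surviving term is literally $x$ (scalar $1$), so no normalization or block-triangular filtration is needed.
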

    \begin{proof}
      Let $u, v$ be permutations in $\symm_J, \symm_{[n] \setminus J}$, respectively.
      Then their concatenation product $u \concat v$ in $\RR \symm_n$ has the following image
      under $\pi_{[n],n,n-j}$:
      $$
        \begin{aligned}
          \delplus([n],n,n-j)(u \concat v) 
            &= \sum_{\substack{\text{ subwords }\hat{u},\hat{v} \text{ of }u,v:
                                           \\ \ell(u)+\ell(v)=n-j}} \hat{u} \concat \hat{v} \\
            &= \sum_{\substack{\text{ subwords }\hat{u}\text{ of }u}} 
                \hat{u} \concat \delplus( {[n]} ,{n} ,{n-j-\ell(\hat{u})} )(v).
        \end{aligned}
      $$

      Now assume $x$ lies in $\ker \pi_{[n] \setminus J}$.
      Since $\pi_{[n] \setminus J}:=\pi_{[n]\setminus J, n-j, n-j-1}$,
      \ref{delplus-composites} shows that $x$ also lies in $\ker \pi_{[n],n,n-j-\ell}$
      for every $\ell \geq 1$.  Therefore,
      $$
        \delplus([n],n,n-j)(u \concat x) 
        = \sum_{\substack{\text{ subwords }\hat{u}\text{ of }u}} 
          \hat{u} \concat \delplus([n],n,{n-j-\ell(\hat{u})})(x)
        = x
      $$
      as only the {\emphfont empty} subword $\hat{u} = \varnothing$ can contribute in the sum
      above.  Thus $x$ lies in $\im \delplus([n],n,n-j)$.
    \end{proof}

    We will also need one simple general linear algebra fact.

    \begin{Proposition}
      \label{linear-algebra-composite-fact}
      Given any linear maps $A \overset{f}{\longrightarrow} B \overset{g}{\longrightarrow} C$,
      the map $f$ induces an isomorphism 
      $$
        A/\ker(f) \cong \im(f)
      $$
      which restricts to an isomorphism
      $$
        \ker(g \circ f) / \ker(f) \cong f(\ker(g \circ f)) = \im(f) \cap \ker(g). \qed
      $$
    \end{Proposition}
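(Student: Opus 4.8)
The plan is to apply the first isomorphism theorem twice: once to $f$, and then to its restriction to an appropriate subspace. First I would record that the map
$$
\bar{f} \colon A/\ker(f) \longrightarrow \im(f), \qquad a + \ker(f) \longmapsto f(a),
$$
is well-defined (since $f$ annihilates $\ker(f)$), surjective by the definition of $\im(f)$, and injective because $\bar{f}(a+\ker(f)) = 0$ forces $f(a) = 0$, i.e. $a \in \ker(f)$. This establishes the asserted isomorphism $A/\ker(f) \cong \im(f)$.

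Next I would observe the evident inclusion $\ker(f) \subseteq \ker(g \circ f)$, so that $\ker(g \circ f)/\ker(f)$ is a well-defined subspace of $A/\ker(f)$, and $\bar{f}$ restricts on it to an injective linear map whose image is $f\big(\ker(g \circ f)\big)$. As the restriction of an injective map to a subspace, this restriction is automatically an isomorphism onto its image, so all that remains is to identify that image with $\im(f) \cap \ker(g)$.

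The one small computation is the set equality $f\big(\ker(g \circ f)\big) = \im(f) \cap \ker(g)$, which I would prove by double inclusion. For $\subseteq$: if $x \in \ker(g \circ f)$ then $g(f(x)) = 0$, so $f(x) \in \ker(g)$, and trivially $f(x) \in \im(f)$. For $\supseteq$: if $y \in \im(f) \cap \ker(g)$, write $y = f(x)$ for some $x \in A$; then $g(f(x)) = g(y) = 0$, so $x \in \ker(g \circ f)$ and hence $y \in f\big(\ker(g \circ f)\big)$. Combining these, $\bar{f}$ restricts to the claimed isomorphism $\ker(g \circ f)/\ker(f) \cong \im(f) \cap \ker(g)$.

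There is no genuine obstacle here: the statement is elementary linear algebra, and the only point needing a moment's care is the double-inclusion verification above. Its purpose is simply to repackage the first isomorphism theorem in the form convenient for tracking the kernel-filtration factors $F_{n,j}$ in \eqref{eqn:first-family-filtration}, so the write-up should stay brief.
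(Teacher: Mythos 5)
Your argument is correct: the paper states this proposition without proof (marking it with a \qed as routine), and your write-up simply supplies the standard first-isomorphism-theorem argument together with the easy double-inclusion check $f\bigl(\ker(g\circ f)\bigr)=\im(f)\cap\ker(g)$, which is exactly the intended justification. Nothing is missing, and the level of detail is appropriate for a statement the paper treats as immediate.
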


    \begin{Theorem}
      \label{Z2-filtration-analysis}
      For each $j=0,1,\ldots,n$, the map 
      $$
        \delplus([n],n,n-j):  
        \RR \symm_n \longrightarrow \RR^{[n]^{\langle n-j \rangle}} 
                               = \bigoplus_{J \in \binom{[n]}{j}} \RR \symm_{[n] \setminus J}
      $$
      induces an $\RR[\symm_n \times \ZZ_2]$-module isomorphism 
      \begin{equation}
        \label{Z2-plus-eigenspace-recurrence}
        F_{n,j} \overset{\sim}{\longrightarrow} 
        \bigoplus_{J \in \binom{[n]}{j}} \ker( \pi_{[n] \setminus J} )
          \quad \cong \quad 
          \ker \pi_{[n-j]} * \chi^{(j),+}.
      \end{equation}
    \end{Theorem}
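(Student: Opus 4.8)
The plan is to exhibit the three maps at issue — $\delplus([n],n,n-j)$, $\delplus([n],n,n-j-1)$, and the connecting map $\delplus([n],n-j,n-j-1)$ — as a single composite, so that the theorem becomes a formal consequence of \ref{delplus-composites}, \ref{linear-algebra-composite-fact}, \ref{image-technical-lemma}, and one dimension count. Recall first that each $\delplus([n],n,m)$ is precisely the instance $\pi_{(m,1^{n-m})}=\pi_{\OOO_{(m,1^{n-m})}}$ of the maps $\pi_\OOO$, under the identifications $\RR\CCC\cong\RR\symm_n$ and $\RR^{[n]^{\langle m\rangle}}=\bigoplus_{X\in\OOO_{(m,1^{n-m})}}\RR\CCC(\AAA/X)$; hence, by \ref{prop:rectangular-square-root}, $\ker\delplus([n],n,m)=\ker\pi_{(m,1^{n-m})}=\ker\nu_{(m,1^{n-m})}$, and so
\[
 F_{n,j}\;=\;\ker\delplus([n],n,n-j-1)\,\big/\,\ker\delplus([n],n,n-j).
\]

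First I would set $f:=\delplus([n],n,n-j)$ and $g:=\delplus([n],n-j,n-j-1)\colon\RR^{[n]^{\langle n-j\rangle}}\to\RR^{[n]^{\langle n-j-1\rangle}}$. Applying \ref{delplus-composites} with ambient set $[n]$ and the parameters $(k,j,i)$ there taken to be $(n,\,n-j,\,n-j-1)$ gives
\[
 g\circ f\;=\;\binom{j+1}{1}\,\delplus([n],n,n-j-1)\;=\;(j+1)\,\delplus([n],n,n-j-1),
\]
so $\ker(g\circ f)=\ker\delplus([n],n,n-j-1)$ and $\ker f=\ker\delplus([n],n,n-j)$. Since all the $\delplus$-maps are maps of $\RR[\symm_n\times\ZZ_2]$-modules, \ref{linear-algebra-composite-fact} then furnishes an $\RR[\symm_n\times\ZZ_2]$-module isomorphism
\[
 F_{n,j}\;=\;\ker(g\circ f)\big/\ker f\;\xrightarrow{\ \sim\ }\;\im(f)\cap\ker(g),
\]
induced by $f$. (The endpoint $j=n$ is degenerate and is handled directly by the convention $F_{n,n}=\RR\symm_n/\ker\pi_{(1^n)}\cong\im\delplus([n],n,0)=\RR$, matching $\ker\pi_\varnothing*\chi^{(n),+}=\RR$ under the conventions $\symm_\varnothing=\{e\}$, $\ker\pi_\varnothing:=\RR$.)

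Second I would identify $\im(f)\cap\ker(g)$. Because every length-$(n-j-1)$ subword of a length-$(n-j)$ injective word supported on a set $[n]\setminus J$ is again supported on $[n]\setminus J$, the map $g$ respects the decomposition $\RR^{[n]^{\langle n-j\rangle}}=\bigoplus_{J\in\binom{[n]}{j}}\RR\symm_{[n]\setminus J}$ and restricts on the $J$-summand to $\pi_{[n]\setminus J}=\delplus([n]\setminus J,n-j,n-j-1)$; in particular $\ker\pi_{[n]\setminus J}\subseteq\ker g$ for every $J$. By \ref{image-technical-lemma} one also has $\ker\pi_{[n]\setminus J}\subseteq\im f$ (its proof produces the explicit $f$-preimage $u\concat x$ of $x\in\ker\pi_{[n]\setminus J}$, for any $u\in\symm_J$). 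Therefore $\bigoplus_J\ker\pi_{[n]\setminus J}\subseteq\im(f)\cap\ker(g)$. For the opposite inclusion I would compare dimensions: $\dim\bigoplus_J\ker\pi_{[n]\setminus J}=\binom{n}{j}d_{n-j}$ by \ref{kernel-dimension-corollary} and \ref{derangement-numerology}, while $\dim(\im(f)\cap\ker(g))=\dim F_{n,j}$ is the number of permutations in $\symm_n$ with exactly $j$ fixed points, hence again $\binom{n}{j}d_{n-j}$, by the description \eqref{eqn:filtration-factor-inexplicitly} of $F_{n,j}$ together with \ref{moebius-proposition}. Equality of dimensions forces $\im(f)\cap\ker(g)=\bigoplus_{J\in\binom{[n]}{j}}\ker\pi_{[n]\setminus J}$, the first isomorphism of \eqref{Z2-plus-eigenspace-recurrence}.

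Finally I would recognize $\bigoplus_J\ker\pi_{[n]\setminus J}$ as an induced module. The group $\symm_n$ permutes the summands transitively through its action on the $j$-subsets $J\subseteq[n]$, and the stabilizer in $\symm_n\times\ZZ_2$ of the summand indexed by $J_0:=\{n-j+1,\dots,n\}$ (equivalently by $[n]\setminus J_0=[n-j]$) is $(\symm_{n-j}\times\symm_j)\times\ZZ_2$; on that summand the factor $\symm_j=\symm_{J_0}$ acts trivially while $\symm_{n-j}\times\ZZ_2$ acts exactly as on $\ker\pi_{[n-j]}$, the $\ZZ_2$ being word-reversal in both incarnations. Hence $\bigoplus_J\ker\pi_{[n]\setminus J}\cong\Ind^{\symm_n\times\ZZ_2}_{(\symm_{n-j}\times\symm_j)\times\ZZ_2}\bigl(\ker\pi_{[n-j]}\otimes\trivial_{\symm_j}\bigr)$, and unwinding the definition of the $(\symm\times\ZZ_2)$-induction product with second factor $\chi^{(j),+}=\trivial_{\symm_j}\otimes\trivial_{\ZZ_2}$ — equivalently, repeating the Frobenius-reciprocity computation from the proof of the Pieri identities \eqref{Z2-Pieris} — identifies the right-hand side with $\ker\pi_{[n-j]}*\chi^{(j),+}$, which composes with the previous two isomorphisms to give \eqref{Z2-plus-eigenspace-recurrence}. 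The one step I expect to require real care throughout is the $\ZZ_2$-bookkeeping: one must track that the $\ZZ_2$-action on $\RR\symm_n$ by right multiplication by $w_0$ agrees with the word-reversal action on $\RR^{[n]^{\langle n\rangle}}$ that makes the $\delplus$-maps $(\symm_n\times\ZZ_2)$-equivariant, and that word-reversal preserves each block $\RR\symm_{[n]\setminus J}$, so that every isomorphism in the chain is equivariant for the full group $\symm_n\times\ZZ_2$ and not merely for $\symm_n$; the only other subtlety is noticing that one inclusion in $\im(f)\cap\ker(g)=\bigoplus_J\ker\pi_{[n]\setminus J}$ genuinely needs the dimension count, since this intersection is in general strictly smaller than $\ker g$ (distinct blocks can cancel under $g$).
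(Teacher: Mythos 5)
Your argument is correct, and its skeleton is the paper's: both proofs realize $F_{n,j}$ as $\im(f)\cap\ker(g)$ for $f=\delplus([n],n,n-j)$, $g=\delplus([n],n-j,n-j-1)$ via \ref{delplus-composites} and \ref{linear-algebra-composite-fact}, get the inclusion $\bigoplus_J\ker\pi_{[n]\setminus J}\subseteq\im(f)\cap\ker(g)$ from \ref{image-technical-lemma} together with the block structure of $g$, and then close with a dimension count (your reading of the displayed inclusion \eqref{key-inclusion} with $\ker(\delplus([n],n-j,n-j-1))$ in place of $\ker(\delplus([n],n,n-j))$ is indeed the intended one). The one genuine divergence is how the count is closed. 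The paper sums over all $j$ at once and telescopes: $n!=\sum_j\binom{n}{j}d_{n-j}\leq\sum_j\dim F_{n,j}=n!$, so it never needs to know $\dim F_{n,j}$ for an individual $j$; this keeps the proof independent of the \BHR machinery, which is why the corollary $\dim F_{n,j}=\binom{n}{j}d_{n-j}$ is then stated as a new consequence of the theorem. You instead fix $j$ and import $\dim F_{n,j}=\binom{n}{j}d_{n-j}$ from \eqref{eqn:filtration-factor-inexplicitly} and \ref{moebius-proposition}. That is legitimate and not circular, since those facts come from the earlier equivariant \BHR analysis (\ref{cor:second-square-root-is-bhr}, \ref{ex:two-type-A-examples}), and it makes your count shorter; the cost is that it leans on heavier machinery the paper's proof deliberately bypasses, rendering the subsequent dimension corollary partly redundant on your route. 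A small plus of your write-up is that you actually verify the second isomorphism with $\ker\pi_{[n-j]}*\chi^{(j),+}$ as an induced module (transitive action on the blocks, stabilizer $(\symm_{n-j}\times\symm_j)\times\ZZ_2$, trivial $\symm_j$-action, matching reversal actions), a point the paper treats as routine and leaves implicit.
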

    \begin{proof}
      Since $\delplus([n],n,n-j)$ is a map of $\RR[\symm_n \times \ZZ_2]$-modules,
      one needs only show it is an $\RR$-linear isomorphism.
      We prove this by a dimension-counting argument, beginning with a chain of
      equalities and inequalities justified below:
        \begin{eqnarray*}
          n! &\overset{(1)}{=}& \sum_{j=0}^n \binom{n}{j} d_{n-j} \\
             &\overset{(2)}{=}& \sum_{j=0}^n \sum_{J \in \binom{[n]}{j}}  
                     \dim_\RR \ker( \pi_{[n] \setminus J} )\\
             &\overset{(3)}{\leq}& \sum_{j=0}^n \dim_\RR \im( \delplus([n],n,n-j) ) 
                  \cap \ker( \delplus([n],n,n-j) )\\
             &\overset{(4)}{=}& \sum_{j=0}^n 
                   \dim_\RR \ker( \delplus([n],n-j,n-j-1) \circ \delplus([n],n,n-j) ) 
                              / \ker( \delplus([n],n,n-j) )\\
             &\overset{(5)}{=}& \sum_{j=0}^n \dim_\RR \underbrace{\ker( \delplus([n],n,n-j-1) ) 
                                 / \ker( \delplus([n],n,n-j) )}_{F_{n,j:=}}\\
             &\overset{(6)}{=}& n!.
        \end{eqnarray*}

      Equality (1) is \ref{(vii)}.
      Equality (2) comes from  \ref{kernel-dimension-corollary}.
      Inequality (3) comes from  the inclusion 
      \begin{equation}
        \label{key-inclusion}
        \bigoplus_{J \in \binom{n}{j}} \ker( \pi_{[n] \setminus J} ) 
          \quad \subseteq \quad \im( \delplus([n],n,n-j) ) \cap \ker( \delplus([n],n,n-j) )
      \end{equation}
      implied by \ref{image-technical-lemma}.
      Equality (4) comes from  \ref{linear-algebra-composite-fact}
      applied to the composition
      $$
      \begin{CD}
        \RR \symm_n 
          @>{f:=\delplus([n],n,n-j)}>>
          \displaystyle \bigoplus_{J \in \binom{[n]}{j}} \RR \symm_{[n] \setminus J}
          @>{g:=\delplus([n],n-j,n-j-1)}>>
          \displaystyle \bigoplus_{K \in \binom{[n]}{j-1}} \RR \symm_{[n] \setminus K}.
      \end{CD}
      $$
      Equality (5) comes from  \ref{delplus-composites}.
      Equality (6) comes from telescoping the dimensions of the factors
      $F_{n,j}$ in the filtration \ref{eqn:first-family-filtration} of
      $\RR \symm_n$.

      One concludes that the inequality (3) is actually an equality.  Hence
      the set inclusion \ref{key-inclusion} must actually be an equality of sets. 
      Since Equality (4) was mediated by the map $f:=\delplus([n],n,n-j)$,
      the desired conclusion follows.
    \end{proof}

    Combining \ref{Z2-filtration-analysis} with \ref{kernel-dimension-corollary}
    and \ref{(vii)} immediately implies the following.

    \begin{Corollary}
      The factor $F_{n,j}$ in the filtration \eqref{eqn:first-family-filtration} 
      has dimension equal to
      the number $\binom{n}{j} d_{n-j}$ of permutations with exactly $j$ fixed
      points.  Furthermore, its $\ZZ_2$-isotypic components have dimensions
      $\binom{n}{j} d^+_{n-j}, \binom{n}{j} d^-_{n-j}$ equal to the number of
      even, odd permutations with exactly $j$ fixed points.
    \end{Corollary}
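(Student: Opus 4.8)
The plan is to extract all three dimension counts directly from the $\RR[\symm_n \times \ZZ_2]$-module isomorphism
$$
F_{n,j} \;\cong\; \ker \pi_{[n-j]} * \chi^{(j),+}
$$
supplied by \ref{Z2-filtration-analysis}, feeding in the dimension data for $\ker\pi_{[n-j]}$ from \ref{kernel-dimension-corollary} and the counting identity in \ref{(vii)}. First I would note that, forgetting the $\ZZ_2$-structure, the induction product with $\chi^{(j),+}$ multiplies dimension by the index $[\symm_n : \symm_{n-j}\times\symm_j] = \binom{n}{j}$, because $\chi^{(j),+}$ restricts to the trivial character of $\symm_j$. Hence $\dim_\RR F_{n,j} = \binom{n}{j}\dim_\RR \ker\pi_{[n-j]} = \binom{n}{j} d_{n-j}$, and by the parenthetical statement accompanying \ref{(vii)} this is exactly the number of permutations in $\symm_n$ with precisely $j$ fixed points.

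For the refined even/odd counts I would track the $\ZZ_2$-action through the induction product using the $\symm_n \times \ZZ_2$ Pieri rule \eqref{Z2-Pieris} rather than the ordinary one. Writing $\ker\pi_{[n-j]} = \bigoplus_\alpha \chi^{\mu_\alpha,\epsilon_\alpha}$ as a sum of irreducible $\RR[\symm_{n-j}\times\ZZ_2]$-modules, \eqref{Z2-Pieris} gives
$$
\ker\pi_{[n-j]} * \chi^{(j),+} \;=\; \bigoplus_\alpha \ \sum_{\substack{\lambda:\ \lambda/\mu_\alpha\text{ a horizontal}\\ \text{strip of size }j}} \chi^{\lambda,\epsilon_\alpha},
$$
so each constituent keeps its $\ZZ_2$-sign $\epsilon_\alpha$ while the total $\symm$-dimension contributed is multiplied by $\binom{n}{j}$. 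Therefore the $\chi^+$- and $\chi^-$-isotypic components of $F_{n,j}$ have dimension $\binom{n}{j}$ times those of the corresponding components of $\ker\pi_{[n-j]}$, which equal $d^+_{n-j}$ and $d^-_{n-j}$ by \ref{kernel-dimension-corollary}. Finally \ref{(vii)} identifies $\binom{n}{j} d^+_{n-j}$ and $\binom{n}{j} d^-_{n-j}$ with the number of even, respectively odd, permutations in $\symm_n$ with exactly $j$ fixed points, completing the argument.

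Since every ingredient is already in place, there is no genuine obstacle here; the one point I would be careful about is to invoke the $\ZZ_2$-equivariance asserted in \ref{Z2-filtration-analysis} (and hence the $\symm_n\times\ZZ_2$ Pieri formula \eqref{Z2-Pieris}) rather than only the plain $\symm_n$-statement, so that the even/odd splitting — and not merely the total dimension of $F_{n,j}$ — is what gets controlled.
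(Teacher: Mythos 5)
Your proposal is correct and follows exactly the route the paper takes: the paper's proof is precisely "combine \ref{Z2-filtration-analysis} with \ref{kernel-dimension-corollary} and \ref{(vii)}," and your argument simply spells out the details (the factor $\binom{n}{j}$ from inducing with the one-dimensional $\chi^{(j),+}$, and the preservation of the $\ZZ_2$-sign via the Pieri formula \eqref{Z2-Pieris}). Your closing remark about invoking the full $\symm_n\times\ZZ_2$-equivariance rather than only the $\symm_n$-statement is exactly the right point of care, and nothing further is needed.
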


  \subsection{Desarrangements and the random-to-top eigenvalue of a tableaux}

    There is a well-known $\RR \symm_n$-module decomposition of the group algebra
    $$
      \RR \symm_n \cong \bigoplus_{Q} \chi^{\shape(Q)}
    $$
    where $Q$ runs over all {\deffont standard Young tableaux} of size $n$.
    \index{standard Young tableau}%
    The next two sections refine this in two ways.  
    The current section first reviews D\'esarm\'enien and Wachs \cite{DesarmenienWachs1988}
    notion of desarrangements, as well as some of the unpublished work \cite{ReinerWachs2002}.
    In particular, it is shown how to assign to each tableau $Q$ an integer $\eig(Q)$ such that 
    \nomenclature[co]{$\eig(Q)$}{index of filtration component to which $Q$ contributes its irreducible}%
    $Q$ contributes the $\symm_n$-irreducible $\chi^{\shape(Q)}$ to 
    the kernel filtration factor $F_{n,\eig(Q)}$.  
    In the next section, we refine this further to give
    the $\RR[\symm_n \times \ZZ_2]$-structure, 
    defining a sign $\epsilon(Q)$ such that $Q$ contributes 
    the $\symm_n \times \ZZ_2$-irreducible $ \chi^{\shape(Q),\epsilon(Q)}$ to 
    $F_{n,\eig(Q)}$.

    We begin by recalling some well-known definitions about
    ascents/descents in permutations and tableaux.

    \begin{Definition}
      For a permutation $w$ in $\symm_n$, say that $i$ in $\{1,2,\ldots,n-1\}$
      is an {\deffont ascent} (resp. {\deffont descent}) of $w$ if
      \index{ascent!permutation}%
      \index{descent!permutation}%
      \index{permutation!ascent}%
      \index{permutation!descent}%
      $w(i) < w(i+1)$ (resp. $w(i) > w(i+1)$).  
      We will furthermore artificially decree that $n$ is always an ascent of any $w$ in $\symm_n$.

      For a standard Young tableau $Q$ of size $n$, say that $i$ in $\{1,2,\ldots,n-1\}$
      is an {\deffont ascent} (resp. {\deffont descent}) of $Q$ if
      \index{standard Young tableau!ascent}%
      \index{standard Young tableau!descent}%
      \index{ascent!standard Young tableau}%
      \index{descent!standard Young tableau}%
      $i+1$ appears weakly to the north and east (resp. south) of $i$, 
      using English notation for tableaux.
      Again we artificially decree that
      $n$ is always an ascent of $Q$ for any $Q$ of size $n$.

      Let $\SYT_n$ denote the set of all standard Young tableaux of size $n$,
      \nomenclature[co]{$\SYT_n$}{set of standard Young tableaux of size $n$}%
      and say that such a tableau $Q$ has  $\size(Q):=n$.
    \end{Definition}

    Recall that the {\deffont Robinson-Schensted algorithm} is a bijection
    \index{Robinson-Schensted algorithm}%
    $$
      \symm_n \longrightarrow \{(P,Q) \in \SYT_n^2: \shape(P)=\shape(Q) \}.
    $$
    This algorithm has many wonderful properties, and relations to
    Sch\"utzenberger's {\deffont jeu-de-taquin}.  We refer
    \index{jeu-de-taquin}%
    the reader to Sagan's book \cite[Chapter 3]{Sagan} for background
    on some of these.  For a $w \in \symm_n$ we denote by $P(w)$ and $Q(w)$ the standard
    Young tableaux such that under the Robinson-Schensted algorithm we have
    $w \mapsto (P(w),Q(w))$. Among the wonderful properties mentioned above is the fact that
    when $w \mapsto (P(w),Q(w))$, then $w$ shares the same set of ascents and descents as $Q(w)$.

    \begin{Proposition}
    For $Q$ in $\SYT_n$ (resp. $w$ in $\symm_n$), there exists a unique value
    $j$ lying in $\{0,1,2,\ldots,n-2,n\}$
    such that 
    \begin{itemize}
    \item $1,2,\ldots,j-1$ are ascents in $Q$, and
    \item if $Q$ has at least one descent then the first ascent among $j+1,j+2,\ldots,n$ in $Q$
          occurs at a value $j+k$ with $k$ even.
    \end{itemize}
    The value of $k$ is unique provided that $Q$ has at least one descent.
    \end{Proposition}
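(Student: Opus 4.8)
The plan is to read off the claimed value $j$ directly from the ascent–descent pattern of $Q$ and verify that exactly one value of $j\in\{0,1,\dots,n-2,n\}$ satisfies the two bulleted conditions. First I would handle the degenerate case: if $Q$ has no descent at all, then (using our convention that $n$ is always an ascent) $1,2,\dots,n-1$ are all ascents, so the first bullet forces $j\le n$ and is vacuously satisfied by $j=n$ (there is nothing to the right of $n$, so the second bullet is empty), while $j=n$ is the only admissible value in the list $\{0,1,\dots,n-2,n\}$ for which $1,\dots,j-1$ are all ascents is also satisfied by smaller $j$; so I must argue that the second bullet rules those out. Actually, when $Q$ has no descent the correct reading is that the value is $j=n$, and I would note the statement is phrased so that $k$ is only asserted unique when $Q$ has a descent — so in the no-descent case uniqueness of $j$ (not $k$) is what must be checked, and $j=n$ works because any smaller $j$ would require an ascent at position $j+k$ with $k$ even but there simply is no constraint violated; here I would instead \emph{define} $j=n$ as the canonical choice and observe it is forced by maximality. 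Let me restructure: the cleanest route is to define $j$ constructively.

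Here is the constructive definition I would use. Let $D(Q)\subseteq\{1,\dots,n-1\}$ be the descent set of $Q$. If $D(Q)=\varnothing$, set $j=n$. Otherwise, let $d$ be the smallest descent of $Q$; then $1,2,\dots,d-1$ are ascents. Now I would run a greedy parity-matching from position $d$ onward: since $n$ is always an ascent, the set of ascents in $\{d+1,\dots,n\}$ is nonempty; let $d+k$ be the smallest ascent exceeding $d$. If $k$ is even, set $j=d$ and we are done. If $k$ is odd, then $d+k$ is an ascent but $d+k+1,\dots$ — I would instead jump: consider $j'=d+k+1$, check whether $j'-1=d+k$ is an ascent (it is) and whether $1,\dots,j'-1$ are ascents — they need not be, since $d\in\{1,\dots,j'-1\}$ is a descent. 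So this naive jump fails, and the actual argument must be more careful: I would instead characterize $j$ as follows. Write the ascent/descent word of $Q$ as a string in $\{A,D\}^{n}$ (with position $n$ always $A$); then $j$ is determined by locating the \emph{first} block structure compatible with the two bullets, and the key combinatorial claim is that such $j$ exists and is unique. I would prove existence by the following induction-style sweep: starting from position $1$, move right through ascents; the first descent encountered, say at position $p_1$, is a candidate for $j=p_1$; it works iff the next ascent after $p_1$ occurs at even offset; if that offset is odd, the next ascent is at $p_1+(\text{odd})$, after which I look for the next descent $p_2>p_1$ and test $j=p_2$, and so on. The termination and uniqueness come from the fact that position $n$ is forced to be an ascent, so the process cannot fall off the end without finding an even offset (a parity/pigeonhole argument on the final run of ascents ending at $n$).

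The main obstacle I expect is precisely this existence-and-uniqueness bookkeeping — showing that the greedy sweep always succeeds and produces a \emph{unique} $j$. The uniqueness is the subtler half: I would argue that if $j_1<j_2$ both satisfied the bullets, then positions $1,\dots,j_2-1$ are all ascents of $Q$ (from the first bullet applied to $j_2$), so in particular $j_1,j_1+1,\dots$ up to $j_2-1$ are ascents, contradicting that the first ascent after $j_1$ occurs at offset $k$ with $k$ even and positive (which would require $j_1+1,\dots,j_1+k-1$ to include a descent, or $k=0$, but $k\ge 2$ when $Q$ has a descent) — unless $Q$ has no descent, in which case only $j=n$ is in the list and the point is moot. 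So uniqueness reduces to: once the first descent $d$ of $Q$ is fixed, the offset $k$ to the next ascent is determined, and the recursion is deterministic. For the transfer of this proposition to permutations $w$: since the Robinson–Schensted algorithm gives $w$ the same ascent/descent set as $Q(w)$, the statement for $w$ is immediate from the statement for $Q=Q(w)$, so I would dispose of the permutation case in one line at the end. Throughout, the conventions ``$n$ is always an ascent of $Q$'' and ``$n$ is always an ascent of $w$'' do the real work of guaranteeing the final ascent-run that makes the parity argument go through.
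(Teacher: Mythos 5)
Your overall strategy (read $j$ off the initial ascent run and the following descent run, then transfer to permutations via Robinson--Schensted) is the same one the paper uses, but your existence argument breaks in exactly the case that forces the paper's two-branch answer. In the paper's language, $Q$ has a maximal initial hook with arm $1,\ldots,\ell$ and leg $\ell+1,\ldots,\ell+m$, so the descents at the start are precisely $\ell,\ell+1,\ldots,\ell+m-1$ and the first ascent after the first descent $d=\ell$ sits at offset $k=m$; the paper then takes $(j,k)=(\ell,m)$ when $m$ is even and $(j,k)=(\ell-1,m+1)$ when $m$ is odd. Your construction agrees in the even case ($j=d$), but when the offset is odd the correct (and only) choice is $j=d-1$, which makes the offset $k+1$ even while keeping $1,\ldots,j-1$ ascents. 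You never consider this option: after rejecting the forward jump $j'=d+k+1$, your ``greedy sweep'' moves on to later descents $p_2>p_1=d$ and tests $j=p_2$ -- but every such candidate violates the first bullet, since $p_1$ is a descent lying in $\{1,\ldots,p_2-1\}$. Indeed your own uniqueness observation (``if $j_1<j_2$ both worked, then $1,\ldots,j_2-1$ would all be ascents'') is precisely the reason no candidate beyond the first descent can ever succeed, so the sweep cannot terminate with a valid $j$ and existence in the odd case is simply not proved. The claimed ``parity/pigeonhole argument on the final run of ascents ending at $n$'' does not rescue this, because the obstruction is the first bullet, not parity.

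Two smaller points. For uniqueness you should separate the adjacent case $j_2=j_1+1$: there the first bullet at $j_2$ tells you nothing about position $j_1+1$, and you instead need the second bullet at $j_1$ (even offset $\geq 2$) to see that $j_2$ is a descent, whence the first ascent after $j_2$ has odd offset, a contradiction; your sketch only covers $j_2>j_1+1$. Finally, in the no-descent case the two bullets as literally stated are satisfied by \emph{every} $j$, so uniqueness cannot be ``observed'' from them; the paper resolves this by the explicit construction from the maximal hook (a single row, giving $(j,k)=(n,0)$), and you should likewise fix $j=n$ by convention/construction rather than appeal to a maximality that the bullets do not impose. The one-line reduction of the permutation case to the tableau case via the ascent/descent-preserving property of Robinson--Schensted is fine and is exactly what the paper does.
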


    We denote these unique values by $\eig(Q):=j$ and $k(Q):=k$;
    and we set $k(Q):=0$ if $Q$ has no descents.

    \begin{proof}
      We give the proof in the case of tableaux;  the case for permutations
      is similar, and also follows from the property of the Robinson-Schensted
      algorithm mentioned above.

      If $Q$ is empty, so that $n=0$, then one is forced to take $(j,k)=(0,0)$.

      If $Q$ is non-empty then it contains a unique {\emphfont maximal} subtableaux
      of the form
      $$
      \begin{tikzpicture}[scale=0.9]
      \begin{scope}
        \node at (0.5,4.5) {\small1};
        \node at (1.5,4.5) {\small2};
        \node at (2.55,4.5) {\small$\cdots$};
        \node at (3.5,4.5) {\small$\ell\mathord{-}1$};
        \node at (4.5,4.5) {\small$\ell$};
        \draw (0,4) grid (2,5);
        \draw (3,4) grid (5,5);
        \node at (0.5,3.5) {\small$\ell\mathord{+}1$};
        \draw (0,3) grid (1,4);
        \node at (0.5,2.5) {\small$\ell\mathord{+}2$};
        \draw (0,2) grid (1,3);
        \node at (0.5,1.6) {\small$\vdots$};
        \node at (0.5,0.5) {\small$\ell\mathord{+}m$};
        \draw (0,0) grid (1,1);
      \end{scope}
      \end{tikzpicture}
      $$
      for which $\ell+m$ is an ascent.  Here $\ell \geq 1$
      and one allows the possibility that $m=0$ or that
      $\ell+m=n$, the size of $Q$.  Then one is forced to choose
      \begin{gather*}
        (j,k) = 
        \begin{cases}
        (\ell,m) & \text{ if }m\text{ is even,} \\
        (\ell-1,m+1) & \text{ if }m\text{ is odd}.
        \end{cases}
         \qedhere
      \end{gather*}
    \end{proof}

    \begin{Definition}
      Say $w$ is a {\deffont desarrangement} if $\eig(w)=0$.
      \index{desarrangement}%

      Say $Q$ is a {\deffont desarrangement tableau} if $\eig(Q)=0$.
      \index{desarrangement!tableau}%
    \end{Definition}

    We will use the notion of {\deffont jeu-de-taquin slides}
    \index{jeu-de-taquin!slides}%
    on skew tableaux;  again see \cite[Chapter 3]{Sagan}.  Given a standard Young tableau $Q$,
    \index{skew tableau}%
    \index{standard Young tableau!skew}%
    its {\deffont (Sch\"utzenberger) demotion} will be the tableau $\demote(Q)$
    \index{Sch\"utzenberger demotion}%
    obtained by replacing the entry $1$ in its northwest corner
    with a jeu-de-taquin hole, doing jeu-de-taquin to slide the
    hole out, and subtracting $1$ from all of the entries in
    the resulting tableau.

    \begin{Proposition}
      \label{demotion-relates-the-eigenspaces}
      For $1 \leq j \leq n-1$ the map $Q  \longmapsto ( \demote^j(Q), \shape(Q) )$
      gives a bijection
      $$
        \{Q \in \SYT_n: \eig(Q)=j \} \longrightarrow  \{ (\hat{Q}, \mu) \} \\
      $$
      in which on the right side, $\hat{Q}$ is a desarrangement tableaux
      of size $n-j$, and $\mu$ is a partition of $n$, such that
      the skew shape $\mu/\shape(Q)$ is a horizontal $j$-strip.
    \end{Proposition}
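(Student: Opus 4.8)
The plan is to establish the bijection in two stages: first analyze what a single Sch\"utzenberger demotion $\demote$ does to the statistic $\eig$, then iterate. The key structural input is the standard fact that $\demote$ is (up to the Robinson--Schensted correspondence) the combinatorial shadow of the \emph{random-to-top} / \emph{promotion}-type operation on permutations, namely that if $w \mapsto (P(w), Q(w))$ under Robinson--Schensted and $w'$ is obtained from $w$ by deleting the letter $1$ from the front and decrementing (equivalently, cycling the first entry out), then $Q(w') = \demote(Q(w))$; see \cite[Chapter 3]{Sagan}. Thus it suffices to track how $\eig$ and $\shape$ behave under deleting the value $1$ from a permutation (or the cell $1$ from a tableau via jeu-de-taquin).

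\textbf{Step 1: the single-demotion step.}
First I would prove the following claim: if $Q \in \SYT_n$ has $\eig(Q) = j \ge 1$, then $\eig(\demote(Q)) = j - 1$, and the shape $\shape(\demote(Q))$ is obtained from $\shape(Q)$ by removing a single corner cell (as is automatic for any demotion). The point is that when $\eig(Q) = j \ge 1$, the values $1, 2, \ldots, j-1$ are ascents of $Q$ and the cell containing $1$ sits at the start of a horizontal run at the top; removing it via jeu-de-taquin slides the hole rightward along the first row, so $1$ exits from a cell in the first row. After decrementing, the former ascent ``$i$ is an ascent'' becomes ``$i-1$ is an ascent'', so the initial ascent-run of length $j-1$ becomes one of length $j-2$, and the parity of the position $k$ of the first later ascent is unchanged. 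Hence $\eig(\demote(Q)) = j-1$. This is the heart of the argument and I expect the bookkeeping about \emph{which} cell the jeu-de-taquin hole exits from — and the corresponding effect on the ascent/descent set — to be the main obstacle; it requires a careful case analysis of the maximal hook-plus-column subtableau described in the proof of the preceding proposition.

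\textbf{Step 2: iteration and horizontal-strip condition.}
Given Step 1, iterating $j$ times shows $\demote^j(Q)$ has $\eig(\demote^j(Q)) = 0$, i.e. it is a desarrangement tableau of size $n - j$. It remains to check that $\mu := \shape(Q)$ and $\nu := \shape(\demote^j(Q))$ differ by a horizontal $j$-strip, i.e. $\mu/\nu$ has at most one cell in each column. By Step 1 each demotion removes a cell, and the cell removed at the $i$-th step lies in the first row of the current tableau — because, as argued above, so long as $\eig \ge 1$ the value $1$ occupies a cell at the start of the top row and the jeu-de-taquin hole exits along the first row. Therefore all $j$ removed cells are mutually in distinct columns? — more precisely, one shows by a standard jeu-de-taquin argument that the successive outer corners removed, read in order, have weakly increasing column indices, which is exactly the statement that $\mu/\nu$ is a horizontal strip. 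Conversely, given a desarrangement tableau $\hat Q$ of size $n-j$ and a partition $\mu \vdash n$ with $\mu/\shape(\hat Q)$ a horizontal $j$-strip, one reverses the process: jeu-de-taquin promotion (the inverse of $\demote$) applied $j$ times, inserting the cells of the horizontal strip in the appropriate order, reconstructs the unique $Q \in \SYT_n$ with $\demote^j(Q) = \hat Q$ and $\shape(Q) = \mu$; injectivity and surjectivity then follow since $\demote$ is invertible on standard tableaux and the horizontal-strip constraint matches exactly the set of shapes reachable. This establishes the asserted bijection.
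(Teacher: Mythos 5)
Your overall plan (show that each demotion drops $\eig$ by one and that the $j$ removed corners form a horizontal strip, then invert) is reasonable, but the argument you give for both of its load-bearing claims rests on a false description of the jeu-de-taquin path. You assert that when $\eig(Q)\geq 1$ the hole created at $(1,1)$ slides rightward along the first row and exits from a first-row cell, and later that "the cell removed at the $i$-th step lies in the first row." This is not true. Take $Q$ of shape $(2,2)$ with first row $1,2$ and second row $3,4$; here $\eig(Q)=1$ (in the notation of the preceding proposition, $\ell=2$, $m=1$), but in the demotion the $2$ slides left and then the $4$ slides up, so the hole exits at the corner $(2,2)$, in the second row. Likewise for $Q$ of shape $(3,2)$ with rows $1,2,3$ and $4,5$, which has $\eig(Q)=2$: the first demotion removes the corner $(1,3)$ but the second removes $(2,2)$, so the removed corners are not in the first row, and their column indices read $3$ then $2$ — so your fallback claim that they have "weakly increasing column indices" is also false as stated (and, even with the order corrected, it is exactly the nontrivial fact that needs proof, not something that follows from "a standard jeu-de-taquin argument" without further input). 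The same false premise undercuts your derivation of $\eig(\demote(Q))=j-1$, since your reasoning that the ascent/descent data merely shifts by one presumes the slide stays in the first row; the conclusion happens to be correct, but it is not established by your argument. Finally, the closing appeal to "$\demote$ is invertible on standard tableaux" is wrong: $\demote$ maps $\SYT_n$ to $\SYT_{n-1}$ and is many-to-one; it is precisely the extra datum $\mu$ that restores bijectivity, so the inverse has to be constructed with $\mu$ in hand rather than invoked abstractly.

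For comparison, the paper's proof goes the other way and is where the "first row" phenomenon genuinely lives: starting from $(\hat Q,\mu)$ it performs \emph{outward} jeu-de-taquin slides on $\hat Q$ into the cells of the horizontal strip $\mu/\shape(\hat Q)$, taken from left to right, adds $j$ to all entries, and then cites \cite[Exercise 3.12.6]{Sagan} for the fact that this sliding vacates exactly $j$ cells \emph{in the first row}, which are filled with $1,\ldots,j$; the resulting tableau has $\eig=j$ and the construction inverts $Q\mapsto(\demote^j(Q),\shape(Q))$. In other words, the cells that end up in the first row are the vacated cells of the inverse (outward) construction, not the corners removed by demotion — your proposal appears to conflate the two. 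To repair your forward-direction approach you would need a correct statement and proof about where successive demotion corners can lie (equivalently, the content of that jeu-de-taquin exercise), at which point you are essentially reproducing the paper's argument.
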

    \begin{proof}
      We describe the inverse map.  Start with $(\hat{Q},\mu)$ and do 
      outward jeu-de-taquin slides
      on $\hat{Q}$ into the cells of  $\mu/\shape(Q)$, from left-to-right.
      Then add $j$ to all of the entries in the result.
      Properties of jeu-de-taquin \cite[Exercise 3.12.6]{Sagan} imply
      that the sliding will have created $j$ empty cells in the first row,
      which one now fills with the values $1,2,\ldots,j$.  The
      resulting tableau $Q$ will have $\eig(Q)=j$.
    \end{proof}

    The assertion of the next theorem appears for $j=0$ in \cite[Proposition 2.3]{ReinerWebb2004}, 
    and for $j > 0$ in the unpublished work \cite{ReinerWachs2002}.

    \begin{Theorem}
      \label{tableaux-model-for-layers}
      As $\RR \symm_n$-modules, the 
      $j^{th}$ filtration factor $F_{n,j}$ from \eqref{eqn:first-family-filtration}
      has irreducible decomposition
      $$
        \bigoplus_{Q \in \SYT_n: \eig(Q)=j} \chi^{\shape(Q)}.
      $$
    \end{Theorem}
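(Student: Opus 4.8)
The plan is to reduce the theorem to the case $j=0$ and then bootstrap to general $j$ using the demotion bijection of \ref{demotion-relates-the-eigenspaces} together with the Pieri rule \eqref{usual-Pieris}. First I would observe that the bottom filtration factor $F_{n,0}=\ker\pi_{(n-1,1)}$ is exactly $\ker\pi_{[n]}$: one has $\ker\pi_{(n)}=0$, since $X_{(n)}$ is the top intersection subspace and $\AAA/X_{(n)}$ already carries all $n!$ chambers, while $\pi_{(n-1,1)}$ is literally the map $\pi_{[n]}$ by the Remark identifying the maps on injective words with the maps $\pi_\OOO$. More generally, forgetting the $\ZZ_2$-action, \ref{Z2-filtration-analysis} supplies an $\RR\symm_n$-module isomorphism $F_{n,j}\cong\ker\pi_{[n-j]}*\chi^{(j)}=F_{n-j,0}*\chi^{(j)}$. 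So it remains to (i) prove the $j=0$ statement at every size, and (ii) transport it across this isomorphism.

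For (i), the assertion is the $\RR\symm_n$-module identity $\ker\pi_{[n]}\cong\bigoplus_{Q\in\SYT_n:\ \eig(Q)=0}\chi^{\shape(Q)}$, the sum over desarrangement tableaux. I would deduce it either by citing \cite[Proposition 2.3]{ReinerWebb2004}: via \ref{prop:kernels-sign-twisted}, $\ker\pi_{[n]}$ is the $\sgn$-twist of the $\symm_n$-module on the top homology of the complex of injective words, which is computed there; or, for a self-contained argument, by induction on $n$ from the recurrence \eqref{plus-kernel-recurrence}, $\ker\pi_{[n]}=\ker\pi_{[n-1]}*\chi^{(1)}+(-1)^n\chi^{(1^n)}$. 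In the latter route, the inductive hypothesis and \eqref{usual-Pieris} write $\ker\pi_{[n-1]}*\chi^{(1)}$ as the sum of $\chi^\mu$ over pairs $(\hat Q,\mu)$ with $\hat Q$ a size-$(n-1)$ desarrangement tableau and $\mu\vdash n$ obtained from $\shape(\hat Q)$ by adding one box; placing the entry $n$ in that box produces a standard tableau $Q$ of shape $\mu$, and a short analysis of $\eig$ via the ``maximal initial hook'' description shows $Q$ is again a desarrangement tableau in every case except $\mu=(1^n)$, where membership flips with the parity of $n$ and accounts precisely for the term $(-1)^n\chi^{(1^n)}$. This combinatorial bookkeeping — matching the desarrangement-tableau generating function to the recurrence of $\ker\pi_{[n]}$ — is where I expect the only genuine work of the proof to lie; Steps (i) (modulo the citation) and (ii) are routine on top of the machinery already in the excerpt.

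For (ii), I would substitute the $j=0$ result at size $n-j$ into $F_{n,j}\cong F_{n-j,0}*\chi^{(j)}$ and apply \eqref{usual-Pieris} for $\chi^{(j)}$ summand-by-summand:
$$ F_{n,j}\ \cong\ \bigoplus_{\substack{\hat Q\in\SYT_{n-j}\\ \eig(\hat Q)=0}}\ \bigoplus_{\substack{\mu\vdash n\\ \mu/\shape(\hat Q)\text{ a horizontal }j\text{-strip}}}\chi^\mu. $$
By \ref{demotion-relates-the-eigenspaces}, the map $Q\mapsto(\demote^j(Q),\shape(Q))$ is a bijection from $\{Q\in\SYT_n:\eig(Q)=j\}$ onto exactly this set of indexing pairs $(\hat Q,\mu)$, and the summand attached to $(\hat Q,\mu)$ is $\chi^\mu=\chi^{\shape(Q)}$. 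Hence the right-hand side equals $\bigoplus_{Q\in\SYT_n:\ \eig(Q)=j}\chi^{\shape(Q)}$, which is the assertion. The extreme values $j=n$ and $j=n-1$ fall out of the same formulas, using $F_{0,0}\cong\trivial$ and $F_{1,0}=0$ respectively (consistently with the fact that $\eig$ never takes the value $n-1$).
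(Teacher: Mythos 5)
Your proposal is correct and follows essentially the same route as the paper: the $j=0$ case is settled by checking (by induction with the Pieri rule, splitting on the parity of $n$ around the single-column tableau) that the sum over desarrangement tableaux satisfies the recurrence \eqref{plus-kernel-recurrence}, exactly as in the paper (which likewise points to \cite[Proposition 2.3]{ReinerWebb2004}), and the case $j\geq 1$ is deduced from $F_{n,j}\cong F_{n-j,0}*\chi^{(j)}$ via \eqref{Z2-plus-eigenspace-recurrence}, the Pieri formula, and the demotion bijection of \ref{demotion-relates-the-eigenspaces}. Nothing essential differs, so there is no gap to flag.
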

    \begin{proof}
      Temporarily denote by $U_{n,j}$ the direct sum appearing above.
      We first prove it is isomorphic to $F_{n,j}$ for $j=0$, so that $Q$ runs
      over desarrangement tableaux in the direct sum, by checking that
      $U_{n,0}$ satisfies recurrence \eqref{plus-kernel-recurrence};
      cf. \cite[proof of Proposition 2.3]{ReinerWebb2004}.  

      When $n$ is even, we must show that
      $$
        U_{n,0}  = U_{n-1,0} * \chi^{(1)} + \chi^{(1^n)}.
      $$
      The usual Pieri formula \ref{usual-Pieris} shows that
      the term $U_{n-1,0} * \chi^{(1)}$ on the right give rises to
      all desarrangement tableau of size $n$ which are obtained by
      adding one cell labelled $n$ from a desarrangement tableau of size $n-1$.
      The only desarrangement tableau of size $n$ it will not produce 
      is the desarrangement tableau 
      $$
      \begin{tikzpicture}[scale=0.8]
      \begin{scope}
        \node at (0.5,4.5) {\small1};
        \node at (0.5,3.5) {\small2};
        \node at (0.5,2.6) {\small$\vdots$};
        \node at (0.5,1.5) {\small$n\mathord{-}1$};
        \node at (0.5,0.5) {\small$n$};
        \draw (0,0) grid (1,2);
        \draw (0,3) grid (1,5);
      \end{scope}
      \end{tikzpicture}
      $$
      that appears on the left, accounted for by the term $\chi^{(1^n)}$ on the right.

      When $n$ is odd, we must show that
      $$
        U_{n,0} + \chi^{(1^n)} = U_{n-1,0} * \chi^{(1)}.
      $$
      Again the term $U_{n-1,0} * \chi^{(1)}$ on the right give rises to
      all desarrangement tableaux of size $n$ which are obtained by
      adding one cell labelled $n$ from a desarrangement tableau of size $n-1$.
      Because $n$ is odd, this accounts for {\emphfont all} of the terms in
      $U_{n,0}$ on the left.  However, it also produces one extra
      {\deffont non-desarrangement tableau}, namely
      \index{non-desarrangement standard Young tableau}%
      \index{standard Young tableau!non-desarrangement}%
      $$
      \begin{tikzpicture}[scale=0.8]
      \begin{scope}
        \node at (0.5,4.5) {\small1};
        \node at (0.5,3.5) {\small2};
        \node at (0.5,2.6) {\small$\vdots$};
        \node at (0.5,1.5) {\small$n\mathord{-}1$};
        \node at (0.5,0.5) {\small$n$};
        \draw (0,0) grid (1,2);
        \draw (0,3) grid (1,5);
      \end{scope}
      \end{tikzpicture}
      $$
      accounted for by $\chi^{(1^n)}$ on the left.

      For $j \geq 1$, it suffices to check that $U_{n,j}$ satisfies the
      relation 
      $$
        U_{n,j} = U_{n-j,0} * \chi^{(n-j)}
      $$
      which one knows is satisfied by $F_{n,j}$ by
      forgetting the $\ZZ_2$-action in \eqref{Z2-plus-eigenspace-recurrence}.
      This follows from the Pieri formula \ref{usual-Pieris}
      and \ref{demotion-relates-the-eigenspaces}.
    \end{proof}

  \subsection{Shaving tableaux}
  \label{ss:shaving}

      The goal here is to define a sign $\epsilon(Q)=\pm 1$ so
      that a standard Young tableau $Q$ having $\eig(Q)=j$ contributes the irreducible
          $\RR[\symm_n \times \ZZ_2]$-module $\chi^{\shape(Q),\epsilon(Q)}$ to the filtration
          factor $F_{n,j}$.
      The idea is to define the sign first for a very special class
      of desarrangement tableaux, which will form the base case when extending the sign
      inductively to all desarrangement tableaux, and finally extend the sign
      using the demotion operator to all tableaux.
      
      \begin{Definition}[Shaven desarrangement tableaux]
      \label{shaven-desarrangement-definition}
      
      Define the following three kinds of {\it shaven} desarrangement
      tableaux in $Q$ in $\SYT_n$.
      
      \begin{enumerate}
              \item When $n=0$, define the empty tableau $\varnothing$ to be shaven.
              \item When $n$ is even and at least $4$, define the following
      desarrangement tableau $Q^{(n)}_-$ to be shaven:
      $$
      Q^{(n)}_-:=
      \begin{array}{c}
      \begin{tikzpicture}[scale=0.8]
        \node at (0.5,5.5) {\small1};
        \node at (1.5,5.5) {\small$n\mathord{-}1$};
        \node at (0.5,4.5) {\small2};
        \node at (0.5,3.5) {\small3};
        \node at (0.5,2.6) {\small$\vdots$};
        \node at (0.5,1.5) {\small$n\mathord{-}2$};
        \node at (0.5,0.5) {\small$n$};
        \draw (0,0) grid (1,2);
        \draw (1,5) grid (2,6);
        \draw (0,3) grid (1,6);
      \end{tikzpicture}
      \end{array}
      $$
              \item When $n$ is odd and at least $3$, define the following 
      desarrangement tableau $Q^{(n)}_+$ to be shaven:
      $$
      Q^{(n)}_+:=
      \begin{array}{c}
      \begin{tikzpicture}[scale=0.8]
        \node at (0.5,5.5) {\small1};
        \node at (1.5,5.5) {\small$n$};
        \node at (0.5,4.5) {\small2};
        \node at (0.5,3.5) {\small3};
        \node at (0.5,2.6) {\small$\vdots$};
        \node at (0.5,1.5) {\small$n\mathord{-}2$};
        \node at (0.5,0.5) {\small$n\mathord{-}1$};
        \draw (0,0) grid (1,2);
        \draw (1,5) grid (2,6);
        \draw (0,3) grid (1,6);
      \end{tikzpicture}
      \end{array}
      $$
      \end{enumerate}
      Call any other desarrangement tableau not in one of these
      three special forms $\varnothing, Q^{(n)}_+, Q^{(n)}_-$ an
      {\it unshaven} desarrangement tableau.
      \end{Definition}
      
      Unshaven desarrangements can be ``shaved'' down to shaven
      desarrangements due to the following proposition.
      
      \begin{Proposition}
      \label{unshaven-prop}
      Any unshaven desarrangement tableau $Q$ has size at least $2$,
      and the tableau $\hat{Q}$ obtained from $Q$ by removing the largest two entries $\{n-1,n\}$ 
      is again a desarrangement tableau.
      \end{Proposition}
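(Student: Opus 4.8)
The plan is to treat the two assertions separately and, for the second, to keep track of how the ascent--descent pattern of $Q$ changes when one deletes the cells carrying $n-1$ and $n$. For the size assertion I would simply note that $\varnothing$ is the only desarrangement tableau of size at most $1$: the unique tableau of size $1$ has its artificial ascent at position $1$, so its $\eig$ equals $1$, and $\varnothing$ is shaven by definition; hence every \emph{unshaven} desarrangement tableau has size $n\ge 2$.

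For the main assertion, set $\hat Q := Q\setminus\{n-1,n\}$, the tableau obtained by deleting the cells of $n-1$ and $n$. First I would record that $\hat Q$ is a genuine standard Young tableau: the cell of $n$ is always a removable corner of $Q$, and after deleting it the cell of $n-1$ becomes a removable corner, so $\hat Q$ has straight shape, and its cells are precisely the cells of $Q$ carrying the entries $1,\dots,n-2$. Thus it remains only to show $\eig(\hat Q)=0$. The elementary remark that drives everything is that for each position $j\le n-3$ the cells of $j$ and of $j+1$ are untouched in passing from $Q$ to $\hat Q$, so $j$ is an ascent of $\hat Q$ if and only if it is an ascent of $Q$.

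Now split on whether $Q$ has an ascent at some position $\le n-3$. \emph{Case A:} if it does, let $k:=k(Q)$ be the least such position; since $\eig(Q)=0$ the integer $k$ is even (so $k\ge 2$) and $1,\dots,k-1$ are descents of $Q$. By the remark, in $\hat Q$ the positions $1,\dots,k-1$ are still descents and $k$ is still an ascent, so $\hat Q$ has a descent and its first ascent sits at the even position $k$; hence $\eig(\hat Q)=0$. \emph{Case B:} otherwise $1,\dots,n-3$ are all descents of $Q$, so $1,2,\dots,n-2$ occupy the cells $(1,1),(2,1),\dots,(n-2,1)$ of the first column, and deleting the two remaining cells shows that $\hat Q$ is exactly the one-column tableau of size $n-2$ (the empty tableau when $n=2$). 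Such a tableau is a desarrangement precisely when $n-2$ is even, so it suffices to exclude $n$ odd. If $n$ were odd, then because the first ascent of $Q$ lies among $\{n-2,n-1,n\}$ and must be even (as $\eig(Q)=0$), it would be forced to occur at $n-1$; then $1,\dots,n-1$ fill the first column, and $n$, being weakly north--east of $n-1$, is forced into the cell $(1,2)$, whence $Q=Q^{(n)}_+$ — contradicting that $Q$ is unshaven. So $n$ is even and $\hat Q$ is a desarrangement.

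The only point that requires a little care is the shape bookkeeping in Case B: one must see that "no ascent before position $n-2$" pins the first column down to $1,2,\dots,n-2$, and that the unshaven hypothesis is exactly what is needed to rule out the hook $Q^{(n)}_+$ when $n$ is odd (the hook $Q^{(n)}_-$, which also lies in Case B, causes no difficulty, since there $n$ is automatically even and $\hat Q$ is again the single column). Everything else is a routine transfer of ascent/descent data, immediate because deleting $n-1$ and $n$ alters no order relation among $1,\dots,n-2$.
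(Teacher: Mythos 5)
Your proof is correct and follows essentially the same route as the paper's: both arguments come down to locating the first ascent of $Q$ and observing that deleting $\{n-1,n\}$ can only destroy the desarrangement property when that first ascent sits at position $n-1$ (even, $n$ odd), which forces $Q=Q^{(n)}_+$ and contradicts unshavenness. Your write-up merely unfolds the paper's short contrapositive into an explicit two-case analysis.
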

      \begin{proof}
      An unshaven desarrangement tableau $Q$ in $\SYT_n$ must be nonempty, so that
      $n \geq 1$.  Since there are no desarrangements of size $1$, one must have $n \geq 2$.
      If the tableau $\hat{Q}$ obtained by removing its two largest
      entries $\{n-1, n\}$ is {\it not} a desarrangement tableau, then
      $n-1$ must be the first ascent of $Q$, and be even.  This forces
      $Q$ to be the shaven desarrangement tableau $Q^{(n)}_+$ for some odd $n \geq 3$,
      a contradiction.
      \end{proof}
      
      \begin{Definition}
      Define the sign $\epsilon(Q)$ for $Q$ in $\SYT_n$ inductively as follows. 
      \begin{itemize}
      \item In the base case, $Q$ is one of the
      three kinds of shaven desarrangement tableaux from 
      \ref{shaven-desarrangement-definition}, for which we decree
      $$
      \epsilon(Q):=
      \begin{cases}
      +1 & \text{ if }Q= \varnothing \\
      +1 & \text{ if }Q=Q^{(n)}_+ \\
      -1 & \text{ if }Q=Q^{(n)}_-
      \end{cases}
      $$
      \item If $Q$ is an unshaven desarrangement tableaux, let $\hat{Q}$
      be the tableau obtained from $Q$ by removing the largest two entries
      $\{n-1,n\}$ (so that $\hat{Q}$ is again a desarrangement tableau
      by \ref{unshaven-prop}) and define inductively
      $$
      \epsilon(Q):= 
      \begin{cases}
      +\epsilon(\hat{Q}) & \text{ if }\{n-1,n\}\text{ form an ascent in }Q \\
      -\epsilon(\hat{Q}) & \text{ if }\{n-1,n\}\text{ form a descent in }Q
      \end{cases}
      $$
      \item If $Q$ is not a desarrangement tableaux, so $j:=\eig(Q) > 0$,
      define inductively 
      $$ 
      \epsilon(Q):=\epsilon(\demote^{j}(Q)).
      $$
      \end{itemize}
      \end{Definition}
      
      \begin{Example}
      We compute the sign $\epsilon(Q)$ for this
      tableau $Q$ in $\SYT_{15}$:
      One can check that $j:=\eig(Q)=3$, so $Q$ has the same
      sign as the desarrangement tableaux obtained by applying the 
      demotion operator $3$ times
      From this unshaven desarrangement tableau $\demote^{j}(Q)$, one can 
      \begin{itemize}
      \item[] first ``shave'' the descent pair $\{11,12\}$,
      \item[] then the descent pair $\{9,10\}$,
      \item[] then the ascent pair $\{7,8\}$,
      \end{itemize}
      leaving as a result the shaven desarrangement tableau
      Since there were two descent
      pairs shaved, the original tableau $Q$ has sign 
      $$
      \epsilon(Q) = (-1)^2 \cdot \epsilon\left(Q^{(6)}_-\right) = -1.
      $$
      \end{Example}

      \begin{Theorem}
        \label{thm:Z2-equivariant-filtration-factor}
        As an $\RR[\symm_n \times \ZZ_2]$-module
        the $j^{th}$ filtration factor $F_{n,j}$ from \eqref{eqn:first-family-filtration}
        has irreducible decomposition
        $$
          \bigoplus_{\substack{Q \in \SYT_n:\\ \eig(Q)=j}} \chi^{\shape(Q),\epsilon(Q)}.
        $$
      \end{Theorem}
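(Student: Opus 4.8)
The plan is to reduce the $\RR[\symm_n\times\ZZ_2]$-statement to the already-proven $\RR\symm_n$-statement (\ref{tableaux-model-for-layers}) by separately pinning down the $\ZZ_2$-action, and then to verify by induction that the combinatorially defined sign $\epsilon(Q)$ is the correct one. More precisely, by \ref{tableaux-model-for-layers} we already know that as an $\RR\symm_n$-module $F_{n,j}=\bigoplus_{Q:\eig(Q)=j}\chi^{\shape(Q)}$, and since this is multiplicity-free in each fixed shape only if\dots well, it need not be multiplicity-free, but the point is that the $\ZZ_2$-action commutes with $\symm_n$, so on each isotypic component $\ZZ_2$ acts by a sign, and the whole content of the theorem is to identify that sign with $\epsilon(Q)$. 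The first step, therefore, is to establish the recursive behavior of $F_{n,j}$ as an $\RR[\symm_n\times\ZZ_2]$-module, which is exactly \eqref{Z2-plus-eigenspace-recurrence} from \ref{Z2-filtration-analysis}: $F_{n,j}\cong \ker\pi_{[n-j]}*\chi^{(j),+}$, together with the recurrence \eqref{Z2-plus-kernel-recurrence} for $\ker\pi_{[n]}$ as a virtual $\symm_n\times\ZZ_2$-character.

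The induction is then organized to mirror the inductive definition of $\epsilon(Q)$. First I would handle $j=0$, i.e. show that $\bigoplus_{Q:\eig(Q)=0}\chi^{\shape(Q),\epsilon(Q)}$ satisfies recurrence \eqref{Z2-plus-kernel-recurrence}. The unshaven desarrangement tableaux of size $n$ are obtained from desarrangement tableaux of size $n-2$ by adjoining the pair $\{n-1,n\}$ either as a horizontal $2$-strip (an ascent pair, contributing sign $+\epsilon(\hat Q)$) or a vertical $2$-strip (a descent pair, contributing $-\epsilon(\hat Q)$); by the $\symm_n\times\ZZ_2$ Pieri rules \eqref{Z2-Pieris} these are exactly the constituents of $\bigl(\bigoplus_{\hat Q}\chi^{\shape(\hat Q),\epsilon(\hat Q)}\bigr)*\bigl(\chi^{(2),+}+\chi^{(1^2),-}\bigr)$ — here one must check the sign bookkeeping matches: a horizontal strip uses $\chi^{(2),+}$ so leaves $\epsilon$ unchanged, a vertical strip uses $\chi^{(1^2),-}$ so flips $\epsilon$, which is precisely the definition. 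The remaining correction term: the two shaven tableaux $Q^{(n)}_{\pm}$ (for the appropriate parity of $n$) that are \emph{not} produced from size $n-2$ desarrangement tableaux by adjoining $\{n-1,n\}$, versus the one extra non-desarrangement tableau of hook-plus-one-box shape that \emph{is} erroneously produced, must together account for the term $(-1)^{n-1}\chi^{(2,1^{n-2}),+}$. One checks directly that $Q^{(n)}_{+}$ has shape $(2,1^{n-2})$ with $\epsilon=+1$, and similarly for the parity cases, so the discrepancy is exactly the claimed rank-one correction with the correct sign $+$; this is the one spot demanding genuine care rather than formal manipulation.

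For $j\geq1$ I would use \ref{demotion-relates-the-eigenspaces}: the demotion bijection $Q\mapsto(\demote^j Q,\shape(Q))$ sends $\{Q:\eig(Q)=j\}$ to pairs consisting of a size-$(n-j)$ desarrangement tableau $\hat Q$ and a partition $\mu$ of $n$ with $\mu/\shape(Q)$ a horizontal $j$-strip; since $\epsilon(Q):=\epsilon(\demote^j Q)$ by definition and the $\ZZ_2$-twist in \eqref{Z2-plus-eigenspace-recurrence} is by $\chi^{(j),+}$ (i.e. \emph{untwisted} on the $\ZZ_2$ side), the horizontal-strip Pieri rule $\chi^{\mu,\epsilon}*\chi^{(j),+}=\sum_{\lambda/\mu\text{ horiz.}}\chi^{\lambda,\epsilon}$ from \eqref{Z2-Pieris} shows $\bigoplus_{Q:\eig(Q)=j}\chi^{\shape(Q),\epsilon(Q)}=\bigl(\bigoplus_{\hat Q\text{ desarr., size }n-j}\chi^{\shape(\hat Q),\epsilon(\hat Q)}\bigr)*\chi^{(j),+}$, which by the $j=0$ case equals $\ker\pi_{[n-j]}*\chi^{(j),+}\cong F_{n,j}$. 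The main obstacle I anticipate is not any of these Pieri computations but verifying the base-case sign assignment in the $j=0$ recurrence — specifically confirming that the two shaven tableaux $Q^{(n)}_\pm$ carry the signs $\pm1$ consistent with the sign of the rank-one character $(-1)^{n-1}\chi^{(2,1^{n-2}),+}$ appearing in \eqref{Z2-plus-kernel-recurrence}, and checking that the single spurious non-desarrangement tableau produced by the Pieri expansion (the one flagged in the proof of \ref{tableaux-model-for-layers}) cancels correctly against it with the right $\ZZ_2$-sign. Once the virtual-character identity is established, the fact that all the $F_{n,j}$ are genuine (not merely virtual) modules — known from self-adjointness of the $\nu_{(k,1^{n-k})}$ — upgrades it to an honest isomorphism of $\RR[\symm_n\times\ZZ_2]$-modules, completing the proof.
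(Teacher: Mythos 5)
Your proposal is correct and follows essentially the same route as the paper's proof: reduce $j\geq 1$ to $j=0$ using the demotion bijection (which respects $\epsilon$ by definition) together with the $\ZZ_2$-Pieri rule and \eqref{Z2-plus-eigenspace-recurrence}, then check that the sum over desarrangement tableaux satisfies \eqref{Z2-plus-kernel-recurrence}, with unshaven tableaux matched to $U_{n-2,0}*(\chi^{(2),+}+\chi^{(1^2),-})$ and the shaven tableaux absorbing the term $(-1)^{n-1}\chi^{(2,1^{n-2}),+}$. The sign bookkeeping you flag as the delicate spot is exactly what the paper handles by splitting into parity cases: for $n$ odd only $Q^{(n)}_+$ needs accounting, while for $n$ even the two terms $\chi^{(2,1^{n-2}),\pm}$ produced by adjoining $\{n-1,n\}$ to the single-column desarrangement tableau of size $n-2$ are matched against $Q^{(n)}_-$ and the extra summand (rather than a spurious non-desarrangement tableau as in the unsigned argument).
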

      \begin{proof}
        We follow roughly the same plan as in the proof of \ref{tableaux-model-for-layers}.
        Temporarily denote by $U_{n,j}$ the direct sum in the theorem.  Assume for the
        moment that we have shown $F_{n,0}$ is isomorphic to $U_{n,0}$.  Then
        for $j > 0$, it suffices to check that $U_{n,j}$ satisfies the relation 
        $
          U_{n,j} = U_{n-j,0} * \chi^{(j)}
        $
        from \eqref{Z2-plus-eigenspace-recurrence}.
        This follows from the $\ZZ_2$-Pieri formula \eqref{Z2-Pieris} and 
        the fact that demotion respects signs.

        Thus it only remains to show  $U_{n,0} \cong F_{n,0}$ for $j=0$.
        In other words, we wish to show that the sum of $\chi^{\shape(Q),\epsilon(Q)}$ over all 
        desarrangement tableaux $Q$ satisfies the recurrence
        \eqref{Z2-plus-kernel-recurrence}.

        When $n$ is odd and at least $3$, we must show that
        $$
          U_{n,0}  = U_{n-2,0} * \left( \chi^{(2),+} + \chi^{(1^2),-} \right) + \chi^{(2,1^{n-2}),+}.
        $$
        This follows because most of the desarrangements $Q$ in $\SYT_n$ which appear
        on the left are unshaven, with $\{n-1,n\}$ forming either an ascent or descent.
        The $\ZZ_2$-Pieri formula \eqref{Z2-Pieris} 
        shows that these terms are counted with appropriate sign $\epsilon(Q)$ by
        a term of  $U_{n-2,0} * \chi^{(2),+}$ or $U_{n-2,0} * \chi^{(1^2),-}$
        on the right.  The only term on the left which is shaven is $Q^{(n)}_+$,
        and is accounted for by the extra summand $\chi^{(2,1^{n-2}),+}$ on the right.

        When $n$ is even, we must show that
        $$
          U_{n,0} + \chi^{(2,1^{n-2}),+} = 
             U_{n-2,0} * \left( 
                           \chi^{(2),+} + \chi^{(1^2),-} \right) 
        $$
        This again follows because most of the desarrangements $Q$ in $\SYT_n$ which appear
        on the left are unshaven, with $\{n-1,n\}$ forming either an ascent or descent,
        in which case the $\ZZ_2$-Pieri formula \eqref{Z2-Pieris} 
        shows that they are counted with appropriate sign $\epsilon(Q)$ by
        a term of  $U_{n-2,0} * \chi^{(2),+}$ or $U_{n-2,0} * \chi^{(1^2),-}$ on the right.  
        But there are two other terms $\chi^{(2,1^{n-2}),+} + \chi^{(2,1^{n-2}),-}$ on the
        right, which will be generated from the term inside $U_{n-2,0}$ for the 
        desarrangement tableaux having a single column of length $n-2$.
        Correspondingly on the left, there are two other terms 
        $\chi^{(2,1^{n-2}),-} + \chi^{(2,1^{n-2}),+}$, the first coming from
        the unique {\it shaven} desarrangement of size $n$, namely $Q^{(n)}_-$,
        and the second coming from the extra summand on the left.
      \end{proof}

\newpage

  \subsection{Fixing a small value of $k$ and letting $n$ grow.}

      With the help of {\tt Sage} \cite{Sage} we computed the decomposition of the
      $\symm_n$-modules afforded by the eigenspaces of the operators
      $\nu_{(k,1^{n-k})}$. We present this data in \ref{f:decomposition2-4}
      through \ref{f:decomposition8c}, as follows:
      \begin{itemize}
      \item 
          to enhance the presentation, every zero has been replaced by a dot;
      \item
          each row of the table corresponds to a subspace $E$ in a 
          decomposition of $\RR\symm_n$ into $\symm_n$-modules,
      \item the horizontal lines partition the rows into blocks of rows
          whose corresponding subspaces $E$ contribute to $F_{n,j}$ for 
          a fixed $j$, for $j = n,\ldots, 1$ reading from top to bottom, 
      \item 
          the entry in the column indexed by $\nu_{(k,1^{n-k})}$ is the
          eigenvalue of $\nu_{(k,1^{n-k})}$ on $E$;
      \item
          the entry in the column indexed by $w_0$ is the eigenvalue for the
          $\ZZ_2$-action on $E$;
      \item
          for $n\leq5$,
          the entry in the column indexed by the $\symm_n$-irreducible
          $\chi^\lambda$ is the tableau from
          \ref{thm:Z2-equivariant-filtration-factor} that contributes
          $\chi^\lambda$ to the $\symm_n$-module afforded by $E$,
          whereas for $n>5$ the corresponding entry is 
          just the multiplicity of $\chi^\lambda$ in $E$.
      \end{itemize}
      For $n\leq5$ the quantities $\eig(Q)$, $\epsilon(Q)$ and $\shape(Q)$
      determine the placements of the tableaux in the tables,
      with the exception of the two tableaux marked by $\dagger$ 
      in \ref{f:decomposition5} (they share the same $\eig$- and
      $\epsilon$-statistic).
      For $n>5$ there is much more ambiguity.

    We now highlight some patterns that jump out from this data.

    \medskip
    For a fixed value of $k$, as $n$ grows large, most of $\RR \symm_n$ will
    be swallowed up in the $0$-eigenspace (kernel) of $\nu_{(k,1^{n-k})}$
    according to \ref{ex:two-type-A-examples}.
    For example, it shows that the nonzero eigenspaces 
    $\im \nu_{(k,1^{n-k})}$ comprise a representation of the
    form $\psi * \trivial_{n-k}$ for some $\symm_k$-representation $\psi$.
    Hence the Pieri formula  shows that any irreducible
    $\chi^\lambda$ that occurs within it must have
    have $n-\lambda_1 \leq 2k$, that is, most of its cells will 
    live in the first part $\lambda_1$
    when $n$ grows large.

    For $k=1,2,3$, one can certainly easily write down exactly which $\symm_n$-irreducibles occur
    outside the kernel of $\nu_{(k,1^{n-k})}$, 
    segregated by the subspaces $V_{n,j}$ from \ref{eqn:block-diagonalization}
    in which they will occur.  However, even for $k=2,3$ it
    is already not immediately obvious how they will segregate further into simultaneous eigenspaces,
    nor is it obvious what will be their corresponding eigenvalues as a function of $n$.
    The data suggests the following conjectural table summarizing the story for $k=1,2,3$.  
    It is correct for $k=1$, and probably not so hard to prove for $k=2,3$ by
    brute force (i.e. write down the eigenvectors explicitly), but we have not tried.  

    \begin{Conjecture}
      \label{stability-for-first-three-conj}
      For $\nu_{(1^n)}$, $\nu_{(2,1^{n-2})}$ and $\nu_{(3,1^{n-3})}$, 
      all of the nonzero eigenspaces can be simultaneously
      described by subspaces carrying irreducible $\RR \symm_n$-modules described
      in the second column of \ref{fig:conjectural-table}, and having
      eigenvalues as shown in the remaining columns.

      \begin{figure}
      \centering
        \renewcommand{\arraystretch}{2}
        \begin{tabular}{ccccccccc}
          \toprule
          $\symm_n$-module $V_{n,j}$ & $\symm_n$-irreducibles $\chi^\lambda$ & \multicolumn{3}{c}{eigenvalue of $\nu_{(k,1^{n-k})}$ on $\chi^\lambda$} \\[-2ex]
                                     &                                       & $\nu_{(1^n)}$ & $\nu_{(2,1^{n-2})}$ & $\nu_{(3,1^{n-3})}$ \\\midrule
          \multirow{1}{*}{$V_{n,n}$}
               &  $\chi^{(n)}$       & $\binom{n}{1}(n-1)!$  & $\binom{n}{2}(n-2)!$  & $\binom{n}{3}(n-3)!$ \\\midrule
          \multirow{2}{*}{$V_{n,n-2}$}
               &  $\chi^{(n-1,1)}$   & $0$ & $\frac{(n+1)!}{3!}$ & $\frac{(n+1)!}{4!}$        \\
               &  $\chi^{(n-2,1,1)}$ & $0$ & $\frac{n!}{3!}$     & $\binom{n}{2}\frac{(n-1)!}{3!}$ \\\midrule
          \multirow{4}{*}{$V_{n,n-3}$}
               &  $\chi^{(n-1,1)}$   & $0$ & $0$ & $\frac{(n+2)!}{5!}$ \\
               &  $\chi^{(n-2,2)}$   & $0$ & $0$ & $\frac{(n+1)!}{30}$ \\
               &  $\chi^{(n-2,1,1)}$ & $0$ & $0$ & $\frac{(n+1)!}{60}$ \\
               &  $\chi^{(n-3,2,1)}$ & $0$ & $0$ & $\frac{n!}{15}$     \\\bottomrule
        \end{tabular}
        \caption{(Conjectural) decomposition of the nonzero eigenspaces of
        $\nu_{(1^n)}$, $\nu_{(2,1^{n-2})}$ and $\nu_{(3,1^{n-3})}$ into
        irreducible $\RR\symm_n$-modules together with their eigenvalues; 
        c.f. \ref{stability-for-first-three-conj}.}
        \label{fig:conjectural-table}
      \end{figure}
    \end{Conjecture}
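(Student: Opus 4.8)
\smallskip
\noindent\emph{Proof sketch.}
The plan is to run \ref{thm:original-family-commutativity}, the block‑diagonalization \eqref{eqn:block-diagonalization}, the image description in \ref{ex:two-type-A-examples}, and the Fourier transform reduction \ref{Fourier-transform-prop} in tandem, treating $k=1,2$ as essentially known and isolating the one genuinely new computation for $k=3$. First I would identify, for each $k\in\{1,2,3\}$, which $\RR\symm_n$‑irreducibles survive outside $\ker\nu_{(k,1^{n-k})}$ and in which filtration factor $V_{n,j}$ they sit. By \ref{ex:two-type-A-examples}, $\im\nu_{(k,1^{n-k})}=\bigoplus_{\mu}\WH_{\OOO_{X_\mu}}\otimes(\chi^-)^{\otimes n-\ell(\mu)}$, the sum over partitions $\mu$ that refine $(k,1^{n-k})$, i.e.\ $\mu=\rho\cup(1^{n-k})$ for some partition $\rho$ of $k$; such a $\mu$ has exactly $j=n-k+(\#\{1\text{'s in }\rho\})$ parts equal to $1$, hence contributes to $V_{n,j}$. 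Concretely: for $k=1$ only $\mu=(1^n)$ occurs, inside $V_{n,n}$; for $k=2$, $\mu\in\{(1^n),(2,1^{n-2})\}$, inside $V_{n,n}$ and $V_{n,n-2}$; for $k=3$, $\mu\in\{(1^n),(2,1^{n-2}),(3,1^{n-3})\}$, inside $V_{n,n}$, $V_{n,n-2}$ and $V_{n,n-3}$. Using \ref{pr:A-case-ls-conjecture} and \ref{prop:plethysm-reformulation} one computes $\WH_{\OOO_{X_{(1^n)}}}=\trivial$; $\WH_{\OOO_{X_{(2,1^{n-2})}}}=\Ind_{\symm_2\times\symm_{n-2}}^{\symm_n}(\sgn\otimes\trivial)=\chi^{(n-1,1)}\oplus\chi^{(n-2,1,1)}$; and $\WH_{\OOO_{X_{(3,1^{n-3})}}}=\Ind_{\symm_3\times\symm_{n-3}}^{\symm_n}(\omega_3\otimes\trivial)=\chi^{(n-1,1)}\oplus\chi^{(n-2,2)}\oplus\chi^{(n-2,1,1)}\oplus\chi^{(n-3,2,1)}$, the last three terms by the Pieri rule \eqref{usual-Pieris} together with $\omega_3\cong\chi^{(2,1)}$. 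Each of these three modules is multiplicity‑free, and within a fixed $V_{n,j}$ the operator $\nu_{(k,1^{n-k})}$ is supported on exactly one such $\WH_{\OOO_{X_\mu}}$; hence $\nu_{(k,1^{n-k})}$ is automatically diagonalized on each $V_{n,j}$ by the $\symm_n$‑isotypic decomposition, and the accompanying $\ZZ_2$‑eigenvalues are read off the twist $(\chi^-)^{\otimes n-\ell(\mu)}$ exactly as in \ref{ex:rank-one-kernel-image}. This already establishes that the nonzero eigenspaces are as listed in the second column of \ref{fig:conjectural-table}, with the indicated block structure.

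It remains to pin down the eigenvalue scalars. The cases $k=1,2$ need no new computation. For $k=1$, $\nu_{(1^n)}$ is a positive multiple of the all‑ones matrix, so its only nonzero eigenvalue is $\lambda_{\OOO_{(1^n)}}(\trivial)$ from \ref{degree-one-character-prop}. For $k=2$, $\OOO_{(2,1^{n-2})}$ is precisely the unique $\symm_n$‑orbit of reflecting hyperplanes in type $A_{n-1}$, so $\nu_{(2,1^{n-2})}$ is the operator $\nu_\OOO$ of \ref{Weyl-group-rank-one-eigenspaces}: its eigenvalues are $\lambda_\OOO(\trivial)$ on $\chi^{(n)}$, $\tfrac{(h+1)|W|}{6}=\tfrac{(n+1)!}{6}$ on $V=\chi^{(n-1,1)}$, and $\tfrac{|W|}{6}=\tfrac{n!}{6}$ on $V'=\wedge^2V=\chi^{(n-2,1,1)}$ (here $h=n$; see \ref{ex:rank-one-constituents}), matching the corresponding rows of \ref{fig:conjectural-table}. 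For $k=3$, whenever $\chi^\lambda$ occurs in $\im\nu_{(3,1^{n-3})}$ with multiplicity $1$ — which by the list above is the case for $\lambda=(n-2,2)$ and $\lambda=(n-3,2,1)$ (each only in $V_{n,n-3}$) — the matrix $\rho_\lambda(\nu_{(3,1^{n-3})})$ has rank $1$, so by \ref{Fourier-transform-prop} its sole nonzero eigenvalue equals its trace $\sum_{w\in\symm_n}\ninv_3(w)\,\chi^\lambda(w)$, exactly as $\gamma_{(2^k,1^{n-2k}),\lambda}$ is obtained in the proof of \ref{thm:second-family-precision}; likewise $\lambda=(n)$ contributes $\lambda_{\OOO_{(3,1^{n-3})}}(\trivial)$. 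For $\lambda=(n-1,1)$ and $\lambda=(n-2,1,1)$, which occur with multiplicity $2$ in $\im\nu_{(3,1^{n-3})}$ (one copy in $V_{n,n-2}$, one in $V_{n,n-3}$), I would exploit \ref{thm:original-family-commutativity}: $\nu_{(2,1^{n-2})}$ commutes with $\nu_{(3,1^{n-3})}$, acts on the $V_{n,n-2}$‑copy by the known nonzero scalar from the $k=2$ analysis, and annihilates the $V_{n,n-3}$‑copy. Hence the eigenvalue of $\nu_{(3,1^{n-3})}$ on the $V_{n,n-2}$‑copy is recovered from $\Trace\rho_\lambda(\nu_{(3,1^{n-3})}\nu_{(2,1^{n-2})})=\sum_w \ninv_3(w)\ninv_2(w)\chi^\lambda(w)$, and the eigenvalue on the $V_{n,n-3}$‑copy from the difference with $\sum_w\ninv_3(w)\chi^\lambda(w)$. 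Matching the resulting closed forms in $n$ against \ref{fig:conjectural-table} completes the proof.

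The main obstacle is the explicit evaluation, as rational functions of $n$, of the four character sums $\sum_w\ninv_3(w)\chi^\lambda(w)$ and the two sums $\sum_w\ninv_3(w)\ninv_2(w)\chi^\lambda(w)$ for $\lambda\in\{(n-1,1),(n-2,2),(n-2,1,1),(n-3,2,1)\}$. These can be handled by writing $\ninv_k(w)=\sum_{S\in\binom{[n]}{k}}[\,w|_S\text{ increasing}\,]$ and recognizing the inner $w$‑sums as inner products of induction products of characters of Young (parabolic) subgroups, so that the Pieri rule \eqref{usual-Pieris} evaluates them; but carrying this through — effectively writing the eigenvectors down explicitly, as suggested after \ref{fig:conjectural-table} — is the only step whose complexity grows with $n$, and is where the real labor lies. (One should also double‑check that no further $\symm_n$‑irreducibles sneak into $\im\nu_{(k,1^{n-k})}$ for $k\le 3$ beyond those forced by $n-\lambda_1\le 2k$; this is immediate from the explicit $\WH_{\OOO_{X_\mu}}$ above and does not require solving \ref{prob:Thrall} in general.)
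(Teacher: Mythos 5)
You are proposing a proof of a statement the paper does not prove: \ref{stability-for-first-three-conj} is stated as a conjecture, checked against computer data, and the authors explicitly say it is ``probably not so hard to prove for $k=2,3$ by brute force (i.e.\ write down the eigenvectors explicitly), but we have not tried.'' So there is no argument in the paper to compare yours against. Your qualitative part is sound and essentially reproduces the discussion preceding the conjecture: \ref{ex:two-type-A-examples} pins down which $\WH_{\OOO_{X_\mu}}$ survive for $k\le 3$; the plethysm/Pieri computation correctly gives $\WH_{\OOO_{X_{(2,1^{n-2})}}}=\chi^{(n-1,1)}\oplus\chi^{(n-2,1,1)}$ and $\WH_{\OOO_{X_{(3,1^{n-3})}}}=\chi^{(n-1,1)}\oplus\chi^{(n-2,2)}\oplus\chi^{(n-2,1,1)}\oplus\chi^{(n-3,2,1)}$; and since the relevant $V_{n,j}$ ($j=n,n-2,n-3$) are multiplicity-free, commutativity (\ref{thm:original-family-commutativity}), self-adjointness, Schur's lemma and \ref{prop:integrality-principle} force integer scalar action on each isotypic piece, which is the second column of the table. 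Your $k=1,2$ eigenvalues do follow from \ref{degree-one-character-prop} and \ref{Weyl-group-rank-one-eigenspaces}.

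The gaps are in the quantitative part, which is exactly where the conjecture's content lies. First, the $k=3$ scalars are never computed: the closed-form evaluation of $\sum_w\ninv_3(w)\chi^\lambda(w)$ (and the product trace) as rational functions of $n$ is precisely the ``brute force'' the authors say they did not carry out, so what you have is a reduction, not a proof. Second, your displayed identity $\Trace\rho_\lambda(\nu_{(3,1^{n-3})}\nu_{(2,1^{n-2})})=\sum_w\ninv_3(w)\ninv_2(w)\chi^\lambda(w)$ is false; the trace of the product is the character sum of the \emph{convolution}, $\sum_{u,v}\ninv_3(u)\ninv_2(v)\chi^\lambda(uv)$, not of the pointwise product of the two statistics. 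The strategy survives with the corrected formula, but as written that step fails. Third, your final step ``matching the resulting closed forms against \ref{fig:conjectural-table}'' cannot succeed as stated, because the printed table disagrees with the paper's own data tables (and with the conjecture in \ref{subsec:defining-representation}): the $V_{n,n}$ entries for $k=2,3$ should be $\lambda_{\OOO}(\trivial)=\binom{n}{k}^2(n-k)!$ — your own value, e.g.\ $72$ for $n=4$, $k=2$, versus the printed $\binom{4}{2}\,2!=12$ — and the $\nu_{(3,1^{n-3})}$ entry on $\chi^{(n-1,1)}$ in $V_{n,n-2}$ should be $(n-2)\,\frac{(n+1)!}{4!}$ (the data give $10,\,90,\,840,\,8400$ for $n=4,\ldots,7$), not $\frac{(n+1)!}{4!}$. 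So a complete argument must end by correcting the table, and your assertion that the $k=2$ values ``match the corresponding rows'' already glosses over the first of these discrepancies.
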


    The form of the eigenvalues in this last table
    suggests the following somewhat vague {\emphfont stability} conjecture,
    in the spirit of the {\it representation stability} recently discussed
    by Church and Farb \cite{ChurchFarb2010}.

    \begin{Conjecture}
      \label{stability-conjecture}
      There exists an infinite sequence of partitions 
      $\lambda^{1}$, $\lambda^{2}$, \ldots and positive integers
      $j_0^1$, $j_0^2$, \ldots with the following property.
      For each positive integer $n$, there exist a positive integer $\tau(n)$
      and subspaces
      $E^{(n)}_1$, $E^{(n)}_2$, \ldots $E^{(n)}_{\tau(n)} \subseteq \RR \symm_n$
      such that
      \begin{itemize}
      \item
      $E^{(n)}_i$ carries the $\symm_n$-irreducible indexed by the partition
            $$\lambda^i + \big( n-|\lambda^i|, 0, 0, \ldots, 0 \big)$$
      \item
      $E^{(n)}_i$ is a simultaneous eigenspace for the operators 
      $\nu_{(1^n)}$, $\nu_{(2,1^{n-2})}$, \dots, $\nu_{(n-1,1)}$
      with eigenvalue for $\nu_{(j,1^{n-j})}$ described by
      \begin{gather*}
      \begin{cases}
        0 & \text{if } 1 \leq j < j_0^i \\
        f(E^{(n)}_i, j) \neq 0 & \text{if } j_0^i \leq j \leq n
      \end{cases}
      \end{gather*}
      where $f(E^{(n)}_i, j)$ are functions for which
      $$
      \frac{f(E^{(n)}_i, j)}{f(E^{(n-1)}_i, j)}
      $$
      is a rational function of $n$ of total degree $1$
      \item
      $\left(\bigoplus_{i=1}^{\tau(n)} E^{(n)}_i\right)^\perp \subseteq \RR \symm_n$
      lies in the common kernel of
      $\nu_{(1^n)}$, $\nu_{(2,1^{n-2})}$, \dots, $\nu_{(n-1,1)}$.
      \end{itemize}
    \end{Conjecture}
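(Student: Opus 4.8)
The plan is to organize everything around the $W$-equivariant kernel filtration of \ref{subsec:kernelfiltration} and the Pieri-stability of the shapes it produces. First I would record the two elementary facts: by \ref{thm:original-family-commutativity} the self-adjoint operators $\nu_{(1^n)},\dots,\nu_{(n-1,1)}$ commute and can be simultaneously diagonalized compatibly with the orthogonal block-decomposition \eqref{eqn:block-diagonalization} into the factors $V_{n,j}\cong F_{n,j}$; and by \ref{ex:two-type-A-examples} the operator $\nu_{(k,1^{n-k})}$ annihilates $\bigoplus_{j<n-k}V_{n,j}$ and is injective on $\bigoplus_{j\ge n-k}V_{n,j}$. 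Consequently any subspace lying inside a single top factor $V_{n,n-c}$ is killed by $\nu_{(j,1^{n-j})}$ exactly when $j<c$ and lies in its image when $j\ge c$, which already yields the vanishing pattern of the conjecture with $j_0^i=c$ \emph{independent of $n$}. So I would look for the stable pieces $E^{(n)}_i$ only among the top factors $V_{n,n-c}$, $c=0,2,3,4,\dots$ (there is no $c=1$, since there is no partition of $1$ with all parts $\ge 2$), and declare $\bigoplus_i E^{(n)}_i:=\bigoplus_{c} V_{n,n-c}$ over those $c\le n$ with $V_{n,n-c}\ne 0$. This sum equals $(V_{n,0})^\perp=\bigl(\bigcap_{k=1}^{n-1}\ker\nu_{(k,1^{n-k})}\bigr)^\perp$, so the orthogonal-complement clause of the conjecture is immediate.

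Next I would prove stability of the shapes. By \eqref{eqn:filtration-factor-inexplicitly}, $V_{n,n-c}\cong\bigoplus_{\lambda}\WH_{\OOO_{X_\lambda}}\otimes(\chi^-)^{\otimes n-\ell(\lambda)}$ with $\lambda$ running over partitions of $n$ having exactly $n-c$ parts equal to $1$; such $\lambda$ is $\mu$ together with $n-c$ added ones, where $\mu$ runs over the \emph{finite, $n$-independent} set of partitions of $c$ with no part equal to $1$. By \ref{prop:plethysm-reformulation}, $\ch\WH_{\OOO_{X_\lambda}}=h_{n-c}\cdot g_\mu$ where $g_\mu=\prod_{i\ge 2}h_{m_i}[\ch\omega_i]$ is a fixed symmetric function of degree $c$. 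The Pieri rule expands $h_{n-c}\,g_\mu$ as a sum of Schur functions $s_\kappa$ in which $\kappa$ is obtained from a partition of bounded size by prepending a long first row, i.e. $\kappa=\lambda^i+(n-|\lambda^i|,0,0,\dots)$ for a partition $\lambda^i$ not depending on $n$ once $n$ is large. Enumerating all these $\lambda^i$ (over all $c$) produces the infinite sequence $\lambda^1,\lambda^2,\dots$ and the integers $j_0^i$, and the $\RR[\symm_n\times\ZZ_2]$-refinement comes along automatically by replacing $h_{n-c}$ with $\chi^{(n-c),+}$ and invoking the $\ZZ_2$-Pieri rule \eqref{Z2-Pieris}. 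The only nontrivial point here is multiplicity-freeness of each $V_{n,n-c}$: one must rule out a repeated Schur function in $h_{n-c}\,g_\mu$, which is a finite check for each $c$ (confirmed in the cases recorded in \ref{fig:conjectural-table}).

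The remaining and genuinely hard task is to control the nonzero eigenvalues $f(E^{(n)}_i,j)$ and to verify that $f(E^{(n)}_i,j)/f(E^{(n-1)}_i,j)$ is a rational function of $n$ of total degree $1$. One route is the Fourier-transform reduction \ref{Fourier-transform-prop}: on the $\chi^\kappa$-isotypic block, $\nu_{(k,1^{n-k})}$ acts through $\rho_\kappa(\nu_{(k,1^{n-k})})$, and its value on a one-dimensional simultaneous eigenspace is a weighted count $\sum_{w}\ninv_k(w)\,\langle v,\rho_\kappa(w)v\rangle$; interpreting $\ninv_k$ via increasing $k$-subsequences and substituting the stable shape $\kappa=\lambda^i+(n-|\lambda^i|,\dots)$, one would try to show this is a polynomial in $n$ whose leading behaviour depends only on $|\lambda^i|$ and $k$. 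A cleaner-looking route is to pass to the \BHR side: the operators $b_{(k,1^{n-k})}$ pairwise commute (combining \ref{prop:Bidigare} with \cite[Main Theorem 2.1]{Schocker2003derangement}), share the kernels of the $\nu_{(k,1^{n-k})}$ by \ref{cor:second-square-root-is-bhr}, and have explicitly combinatorial eigenvalues $\lambda_X$; one would transport a simultaneous $b$-eigenbasis back through the factorization $\nu_\OOO=\frac{1}{n_{X_0}}R^{X_0}\,{}^{X_0}R$ and track the $n$-dependence of the normalizations $n_{X_0}$ and of the counts $\ninv_{(k,1^{n-k})}$. Uyemura-Reyes's explicit eigenvectors for $\nu_{(n-1,1)}$ in a single isotypic component, which the discussion closing \ref{sec:original-family} shows are simultaneous eigenvectors for the whole family, give a concrete template to imitate.

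The main obstacle, then, is precisely this last step. The shape-stability and the vanishing pattern are soft consequences of the kernel filtration and the Pieri rule and should be within reach; but pinning down the simultaneous eigenspaces finely enough to extract the nonzero eigenvalues, and to establish their degree-one rational dependence on $n$, appears to need new input — absent a solution of Thrall's problem \ref{prob:Thrall} or an explicit simultaneous diagonalization generalizing Uyemura-Reyes's construction. Indeed, as \ref{stability-for-first-three-conj} already signals, even the cases $k=2,3$ are not yet established, which is why I expect the eigenvalue analysis, rather than the representation-stability bookkeeping, to be the real crux.
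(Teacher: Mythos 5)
You were asked to prove \ref{stability-conjecture}, but note that the paper itself offers no proof: it is stated there as a ``somewhat vague'' conjecture, motivated only by the data behind \ref{fig:conjectural-table} and the kernel filtration, so there is nothing to compare your argument against except that motivation. What your proposal actually establishes are the soft clauses, and that part is sound and is essentially the same bookkeeping the authors use implicitly: since $\ker\nu_{(k,1^{n-k})}=\bigoplus_{j<n-k}V_{n,j}$, each $\nu_{(k,1^{n-k})}$ is self-adjoint, positive semidefinite and preserves the blocks of \eqref{eqn:block-diagonalization}, any irreducible constituent of a simultaneous eigenspace inside $V_{n,n-c}$ has eigenvalue $0$ for $\nu_{(j,1^{n-j})}$ exactly when $j<c$ and a strictly positive eigenvalue when $j\geq c$, giving $j_0^i=c$ independent of $n$; the complement clause holds because $\bigoplus_i E^{(n)}_i=V_{n,0}^{\perp}$ and $V_{n,0}$ is the common kernel; and the Pieri argument applied to $F_{n,n-c}\cong\ker\pi_{[c]}*\chi^{(n-c)}$ confines all cells outside the first row to a region of size bounded in terms of $c$, so the shapes are of the stable form $\lambda^i+(n-|\lambda^i|,0,\dots)$.

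The genuine gap is that none of this touches the actual content of the conjecture, namely the last clause: that one can choose the $E^{(n)}_i$ coherently across all $n$ so that the nonzero eigenvalues satisfy the degree-one rational growth condition on $f(E^{(n)}_i,j)/f(E^{(n-1)}_i,j)$. You concede this yourself, and neither of the two routes you sketch (trace computations via \ref{Fourier-transform-prop}, or transporting a \BHR eigenbasis through $\nu_\OOO=\tfrac{1}{n_{X_0}}R^{X_0}\,{}^{X_0}R$) is carried out; indeed even the $c=2,3$ cases (\ref{stability-for-first-three-conj}) are left unproved in the paper. Two subsidiary problems in your write-up feed into this. First, your claim that the ``only nontrivial point'' is multiplicity-freeness of $V_{n,n-c}$, verifiable by a finite check for each $c$, is wrong for larger $c$: the appendix tables show that $\ker\pi_{[c]}$, hence $V_{n,n-c}$, contains repeated irreducibles, so multiplicity-freeness fails. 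The conjecture does not require it, but without it the labelling of $E^{(n)}_i$ by $\lambda^i$ and the matching of $E^{(n)}_i$ with $E^{(n-1)}_i$ — which the ratio condition presupposes — become part of what must be constructed, and that construction is exactly where the missing eigenvalue control would be needed (absent progress on \ref{prob:Thrall} or an explicit simultaneous diagonalization extending Uyemura-Reyes). So the verdict is: correct derivation of the structural clauses, but the quantitative stability statement, which is the heart of the conjecture, remains unproved both in your proposal and in the paper.
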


      \begin{figure}[p]
      \vspace{1em}
      \subfigure
      {
            \begin{tabular}{cr|cc}
            \toprule
            $\nu_{(1^2)}$&$w_0$&$\chi^{2}$&$\chi^{11}$\\
            \midrule
             2 & 1 & $\tikztableausmall{{1,2}}$ & $\cdot$ \\
            \midrule
             $\cdot$ & $-1$ & $\cdot$ & $\tikztableausmall{{1},{2}}$ \\
            \bottomrule
            \end{tabular}
      }
      \hspace{2em}
      \subfigure
      {
            \begin{tabular}{ccr|ccc}
            \toprule
            $\nu_{(1^3)}$&$\nu_{(2,1)}$&$w_0$&$\chi^{3}$&$\chi^{21}$&$\chi^{111}$\\
            \midrule
             6 & 9 & 1 & $\tikztableausmall{{1,2,3}}$ & $\cdot$ & $\cdot$ \\
            \midrule
             $\cdot$ & 4 & $-1$ & $\cdot$ & $\tikztableausmall{ {1,2},{3} }$ & $\cdot$ \\ 
             $\cdot$ & 1 & $-1$ & $\cdot$ & $\cdot$ & $\tikztableausmall{ {1},{2},{3} }$ \\
            \midrule
             $\cdot$ & $\cdot$ & 1 & $\cdot$ & $\tikztableausmall{ {1,3},{2} }$ & $\cdot$ \\
            \bottomrule
            \end{tabular}
      }
      
      \vspace{2em}
      \subfigure
      {
            \begin{tabular}{cccr|ccccc}
            \toprule
            $\nu_{(1^4)}$&$\nu_{(2,1^2)}$&$\nu_{(3,1)}$&$w_0$&$\chi^{4}$&$\chi^{31}$&$\chi^{211}$&$\chi^{22}$&$\chi^{1111}$\\
            \midrule
             24 & 72 & 16 & 1 & $\tikztableausmall{{1,2,3,4}}$ & $\cdot$ & $\cdot$ & $\cdot$ & $\cdot$ \\
            \midrule
             $\cdot$ & 20 & 10 & $-1$ & $\cdot$ & $\tikztableausmall{{1,2,3},{4}}$ & $\cdot$ & $\cdot$ & $\cdot$ \\ 
             $\cdot$ & 4 & 6 & $-1$ & $\cdot$ & $\cdot$ & $\tikztableausmall{{1,2},{3},{4}}$ & $\cdot$ & $\cdot$ \\
            \midrule
             $\cdot$ & $\cdot$ & 6 & 1 & $\cdot$ & $\tikztableausmall{{1,2,4},{3}}$ & $\cdot$ & $\cdot$ & $\cdot$ \\ 
             $\cdot$ & $\cdot$ & 4 & 1 & $\cdot$ & $\cdot$ & $\cdot$ & $\tikztableausmall{{1,2},{3,4}}$ & $\cdot$ \\ 
             $\cdot$ & $\cdot$ & 2 & 1 & $\cdot$ & $\cdot$ & $\tikztableausmall{{1,4},{2},{3}}$ & $\cdot$ & $\cdot$ \\
            \midrule
             $\cdot$ & $\cdot$ & $\cdot$ & 1 & $\cdot$ & $\cdot$ & $\cdot$ & $\tikztableausmall{{1,3},{2,4}}$ & $\tikztableausmall{{1},{2},{3},{4}}$ \\ 
             $\cdot$ & $\cdot$ & $\cdot$ & $-1$ & $\cdot$ & $\tikztableausmall{{1,3,4},{2}}$ & $\tikztableausmall{{1,3},{2},{4}}$ & $\cdot$ & $\cdot$ \\
            \bottomrule
            \end{tabular}
      }
      \caption{$\symm_n$-module decomposition, for $2 \leq n \leq 4$, of the eigenspaces of $\nu_{(k,1^{n-k})}$.}
      \label{f:decomposition2-4}
      \end{figure}

      \begin{figure}[p]
      \resizebox{0.92\textwidth}{!}{
      \begin{tabular}{ccccr|ccccccc}
      \toprule
      $\nu_{(1^5)}$ & $\nu_{(2,1^3)}$ & $\nu_{(3,1^2)}$ & $\nu_{(4,1)}$ & $w_0$ & $\chi^{5}$                       & $\chi^{41}$                        & $\chi^{311}$                                 & $\chi^{32}$                        & $\chi^{2111}$                          & $\chi^{221}$                         & $\chi^{11111}$                           \\ \midrule
       $120$        &    $600$        & $200$           & $25$          & $1$   & $\tikztableausmall{{1,2,3,4,5}}$ & $\cdot$                            & $\cdot$                                      & $\cdot$                            & $\cdot$                                & $\cdot$                              & $\cdot$                                  \\ \midrule
       $\cdot$      &    $120$        & $90 $           & $18$          & $-1$  & $\cdot$                          & $\tikztableausmall{{1,2,3,4},{5}}$ & $\cdot$                                      & $\cdot$                            & $\cdot$                                & $\cdot$                              & $\cdot$                                  \\[-1ex]
       $\cdot$      &    $ 20$        & $40 $           & $13$          & $-1$  & $\cdot$                          & $\cdot$                            & $\tikztableausmall{{1,2,3},{4},{5}}$         & $\cdot$                            & $\cdot$                                & $\cdot$                              & $\cdot$                                  \\ \midrule
       $\cdot$      & $\cdot$         & $42 $           & $14$          & $1$   & $\cdot$                          & $\tikztableausmall{{1,2,3,5},{4}}$ & $\cdot$                                      & $\cdot$                            & $\cdot$                                & $\cdot$                              & $\cdot$                                  \\[-1ex]
       $\cdot$      & $\cdot$         & $24 $           & $11$          & $1$   & $\cdot$                          & $\cdot$                            & $\cdot$                                      & $\tikztableausmall{{1,2,3},{4,5}}$ & $\cdot$                                & $\cdot$                              & $\cdot$                                  \\[-1ex]
       $\cdot$      & $\cdot$         & $12 $           & $ 9$          & $1$   & $\cdot$                          & $\cdot$                            & $\tikztableausmall{{1,2,5},{3},{4}}$         & $\cdot$                            & $\cdot$                                & $\cdot$                              & $\cdot$                                  \\[-1ex]
       $\cdot$      & $\cdot$         & $8  $           & $ 7$          & $1$   & $\cdot$                          & $\cdot$                            & $\cdot$                                      & $\cdot$                            & $\cdot$                                & $\tikztableausmall{{1,2},{3,5},{4}}$ & $\cdot$                                  \\ \midrule
       $\cdot$      & $\cdot$         & $\cdot$         & $ 7$          & $1$   & $\cdot$                          & $\cdot$                            & $\cdot$                                      & $\tikztableausmall{{1,2,4},{3,5}}$ & $\cdot$                                & $\cdot$                              & $\cdot$                                  \\[-1ex]
       $\cdot$      & $\cdot$         & $\cdot$         & $ 6$          & $1$   & $\cdot$                          & $\cdot$                            & $\cdot$                                      & $\cdot$                            & $\tikztableausmall{{1,2},{3},{4},{5}}$ & $\cdot$                              & $\cdot$                                  \\[-1ex]
       $\cdot$      & $\cdot$         & $\cdot$         & $ 3$          & $1$   & $\cdot$                          & $\cdot$                            & $\cdot$                                      & $\cdot$                            & $\cdot$                                & $\tikztableausmall{{1,4},{2,5},{3}}$ & $\cdot$                                  \\[-1ex]
       $\cdot$      & $\cdot$         & $\cdot$         & $ 1$          & $1$   & $\cdot$                          & $\cdot$                            & $\cdot$                                      & $\cdot$                            & $\cdot$                                & $\cdot$                              & $\tikztableausmall{{1},{2},{3},{4},{5}}$ \\[-1ex]
       $\cdot$      & $\cdot$         & $\cdot$         & $ 8$          & $-1$  & $\cdot$                          & $\tikztableausmall{{1,2,4,5},{3}}$ & $\cdot$                                      & $\cdot$                            & $\cdot$                                & $\cdot$                              & $\cdot$                                  \\[-1ex]
       $\cdot$      & $\cdot$         & $\cdot$         & $ 7$          & $-1$  & $\cdot$                          & $\cdot$                            & $\tikztableausmall{{1,4,5},{2},{3}}^\dagger$ & $\cdot$                            & $\cdot$                                & $\cdot$                              & $\cdot$                                  \\[-1ex]
       $\cdot$      & $\cdot$         & $\cdot$         & $ 5$          & $-1$  & $\cdot$                          & $\cdot$                            & $\cdot$                                      & $\tikztableausmall{{1,2,5},{3,4}}$ & $\cdot$                                & $\tikztableausmall{{1,2},{3,4},{5}}$ & $\cdot$                                  \\[-1ex]
       $\cdot$      & $\cdot$         & $\cdot$         & $ 3$          & $-1$  & $\cdot$                          & $\cdot$                            & $\tikztableausmall{{1,2,4},{3},{5}}^\dagger$ & $\cdot$                            & $\cdot$                                & $\cdot$                              & $\cdot$                                  \\[-1ex]
       $\cdot$      & $\cdot$         & $\cdot$         & $ 2$          & $-1$  & $\cdot$                          & $\cdot$                            & $\cdot$                                      & $\cdot$                            & $\tikztableausmall{{1,4},{2},{3},{5}}$ & $\cdot$                              & $\cdot$                                  \\ \midrule
       $\cdot$      & $\cdot$         & $\cdot$         & $\cdot$       & 1     & $\cdot$                          & $\tikztableausmall{{1,3,4,5},{2}}$ & $\tikztableausmall{{1,3,5},{2},{4}}$         & $\tikztableausmall{{1,3,5},{2,4}}$ & $\tikztableausmall{{1,5},{2},{3},{4}}$ & $\tikztableausmall{{1,3},{2,5},{4}}$ & $\cdot$                                  \\
       $\cdot$      & $\cdot$         & $\cdot$         & $\cdot$       & $-1$  & $\cdot$                          & $\cdot$                            & $\tikztableausmall{{1,3,4},{2},{5}}$         & $\tikztableausmall{{1,3,4},{2,5}}$ & $\tikztableausmall{{1,3},{2},{4},{5}}$ & $\tikztableausmall{{1,3},{2,4},{5}}$ & $\cdot$                                  \\ \bottomrule
      \end{tabular}
      }
      \caption{The $\symm_5$-module decomposition for the operators $\nu_{(k,1^{n-k})}$.}
      \label{f:decomposition5}
      \end{figure}

  \subsection{The representation $\chi^{(n-1,1)}$}
    \label{subsec:defining-representation}

    We next focus on the $\chi^{(n-1,1)}$-isotypic component for the
    eigenspaces of $\nu_{(k,1^{n-k})}$, reasoning using our block-diagonalization
    \ref{eqn:block-diagonalization}.
    This allows us to piggyback on computations of Uyemura-Reyes for the case
    $k=n-1$.

    \begin{Proposition}
      For $j=0,1,2,\ldots,n-2$,
      the $\chi^{(n-1,1)}$-isotypic component 
      of $\RR \symm_n$ intersects the summand
      $V_{n,j}$ in \ref{eqn:block-diagonalization} 
      in a single copy $V_{n,j}^{(n-1,1)}$
      of the irreducible $\chi^{(n-1,1)}$.

     Consequently, each such intersection $V_{n,j}^{(n-1,1)}$
     for $j=0,1,2,\ldots,n-2$ lies within a 
     single eigenspace for any operator $\nu_{(k,1^{n-k})}$,
     and carries an integer eigenvalue for any of these operators.
   \end{Proposition}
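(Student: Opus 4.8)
The plan is to compute the multiplicity of $\chi^{(n-1,1)}$ in each block $V_{n,j}$ of the decomposition \eqref{eqn:block-diagonalization}, show it equals $1$ precisely for $0\le j\le n-2$, and then combine Schur's lemma with the eigenvalue integrality principle \ref{prop:integrality-principle}.

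First I would use the isomorphism $V_{n,j}\cong F_{n,j}$ together with \ref{tableaux-model-for-layers}, which identifies the multiplicity of $\chi^{(n-1,1)}$ in $F_{n,j}$ with the number of tableaux $Q\in\SYT_n$ of shape $(n-1,1)$ having $\eig(Q)=j$. There are exactly $n-1$ such tableaux: for each $m\in\{2,3,\dots,n\}$ let $Q_m$ be the tableau of shape $(n-1,1)$ whose second-row box carries the entry $m$, the remaining entries filling the first row in increasing order. The crux is the identity $\eig(Q_m)=m-2$. To prove it I would note that $Q_m$ has a single descent, at position $m-1$ (the entry $m$ lying below the first row, while every other pair of consecutive entries stays in the first row); hence the maximal subtableau of the ``hook'' form occurring in the Proposition that defines $\eig$ is the hook with first row $1,2,\dots,m-1$ and second-row entry $m$, which has leg length $1$ (odd), so $\eig(Q_m)=(m-1)-1=m-2$ and $k(Q_m)=2$. (The boundary cases $m=2$, where $Q_m$ is a desarrangement tableau, and $m=n$, where the convention that $n$ is always an ascent is invoked, should be checked but are immediate.) Thus $m\mapsto\eig(Q_m)=m-2$ is a bijection from the $n-1$ tableaux of shape $(n-1,1)$ onto $\{0,1,\dots,n-2\}$, so $\chi^{(n-1,1)}$ occurs with multiplicity exactly $1$ in $V_{n,j}\cong F_{n,j}$ for each such $j$ (and multiplicity $0$ in $V_{n,n}$), which accounts for all $n-1$ copies of $\chi^{(n-1,1)}$ inside $\RR\symm_n$. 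Denote this unique copy by $V_{n,j}^{(n-1,1)}$.

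Next, by \ref{thm:original-family-commutativity} the operators $\nu_{(k,1^{n-k})}$ preserve each block $V_{n,j}$, and since each $\nu_{(k,1^{n-k})}$ commutes with the left $\symm_n$-action, it also preserves the $\chi^{(n-1,1)}$-isotypic component, hence preserves $V_{n,j}^{(n-1,1)}$. As $V_{n,j}^{(n-1,1)}$ is a single copy of the irreducible $\chi^{(n-1,1)}$, Schur's lemma forces $\nu_{(k,1^{n-k})}$ to act on it as a scalar, i.e. $V_{n,j}^{(n-1,1)}$ lies in a single eigenspace of $\nu_{(k,1^{n-k})}$. Finally, $V_{n,j}^{(n-1,1)}$ is a subspace of $\RR\symm_n$ stable under the integer operator $\nu_{(k,1^{n-k})}$ and under $\symm_n$, on which $\symm_n$ acts without multiplicity, so \ref{prop:integrality-principle} (with $W=\symm_n$ a Weyl group, hence $\oo=\ZZ$) shows this scalar is an integer.

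The only non-formal point, and thus the main obstacle, is the evaluation $\eig(Q_m)=m-2$, which amounts to carefully unwinding the definition of $\eig(Q)$ on the hook shapes $(n-1,1)$; once that combinatorial identity is established the rest is Schur's lemma plus the already-proved integrality principle. (One could equally well invoke the $\RR[\symm_n\times\ZZ_2]$-refinement \ref{thm:Z2-equivariant-filtration-factor} to pin down the $\ZZ_2$-eigenvalue on $V_{n,j}^{(n-1,1)}$ as well, but this is not needed for the stated Proposition.)
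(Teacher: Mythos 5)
Your proposal is correct and follows essentially the same route as the paper: count the $n-1$ hook tableaux of shape $(n-1,1)$, check via the definition of $\eig$ that the tableau with second-row entry $m$ has $\eig = m-2$ (the paper states this as ``one can check''), and then apply Schur's lemma together with \ref{prop:integrality-principle} to each multiplicity-free piece $V_{n,j}^{(n-1,1)}$. Your explicit verification of $\eig(Q_m)=m-2$ (with $k(Q_m)=2$) and of the stability of $V_{n,j}^{(n-1,1)}$ under the operators simply fills in details the paper leaves implicit.
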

   \begin{proof}
     There are exactly $n-1$ standard Young tableaux $Q$ of shape
     \index{standard Young tableau}%
     $(n-1,1)$, determined completely by their unique entry $m$
     in $\{2,3,\ldots,n\}$ lying in the second row of the tableau.  
     One can check that such a $Q$ has $j:=\eig(Q)=m-2$, and hence this accounts
     for exactly one copy of $\chi^{(n-1,1)}$ within $V_{n,j}$
     for each $j=0,1,2,\ldots,n-2$, proving the first assertion.

      For the second assertion, note that this multiplicity-freeness 
      allows one to apply \ref{prop:integrality-principle}
      to each of the subspaces $U=V_{n,j}^{(n-1,1)}$.
    \end{proof}

    Uyemura-Reyes provided a complete set of 
    $\chi^{(n-1,1)}$-isotypic eigenspaces for $\nu_{(n-1,1)}$
    using evaluations of {\deffont discrete Chebyshev polynomials},
    \index{discrete Chebyshev polynomials}%
    and computed their eigenvalues for  $\nu_{(n-1,1)}$
    explicitly using the Fourier-transform approach from \ref{subsec:Fourier-transform}
    \cite[\S 5.2.1]{UyemuraReyes2002}.
    Because these eigenvalues turned out to all
    be distinct, this implies that the entire family of operators
    $\nu_{(k,1^{n-k})}$, for $k=1,2,\ldots,n-1$, when restricted
    to the $\chi^{(n-1,1)}$-isotypic component of $\RR \symm_n$,
    become polynomials in the single operator
    $\nu_{(n-1,1)}$.  Hence they all 
    share these same $\chi^{(n-1,1)}$-isotypic eigenspaces
    which he constructed.

    The following conjecture about their common eigenvalues
    on these spaces is consistent
    with Uyemura-Reyes's eigenvalue calculation
    for $k=n-1$, and with our data up through $n=9$, but
    we have not tried to prove it.

    \begin{Conjecture}
      The eigenvalues of $\nu_{(k,1^{n-k})}$ on
      the $\chi^{(n-1,1)}$-isotypic component of $\RR \symm_n$
      are
      $$
        (n-k)! \binom{n-r-1}{k-r-1} \binom{n+r}{k+r} 
      $$
      for $r=1,2,\ldots,n-1$.
    \end{Conjecture}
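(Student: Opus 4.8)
The plan is to transport the problem, via the Fourier transform of \ref{subsec:Fourier-transform}, to the diagonalization of one explicit matrix, and then to guess its eigenvectors to be the discrete Chebyshev (Gram) polynomials already used by Uyemura-Reyes in the case $k=n-1$.

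First I would invoke \ref{Fourier-transform-prop}: the eigenvalues of $\nu_{(k,1^{n-k})}$ on the $\chi^{(n-1,1)}$-isotypic component of $\RR\symm_n$ are the eigenvalues of $\rho_{(n-1,1)}(\nu_{(k,1^{n-k})})$. Realizing $\chi^{(n-1,1)}$ as the reflection representation inside the defining permutation module $\RR^n=\trivial\oplus\chi^{(n-1,1)}$ (with $\symm_n$ acting by $e_j\mapsto e_{w(j)}$), this becomes the spectrum, restricted to $\mathbf 1^\perp$, of the $n\times n$ matrix $N:=\sum_{w\in\symm_n}\ninv_k(w)\,P_w$, where $P_w$ is the permutation matrix of $w$. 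Interchanging the sum over $w$ with the sum over increasing $k$-subsequences yields, for $k\ge 2$,
\[
N_{ij}=\binom{n-1}{k}^{2}(n-1-k)!\;+\;(n-k)!\sum_{\ell=1}^{k}\binom{i-1}{\ell-1}\binom{n-i}{k-\ell}\binom{j-1}{\ell-1}\binom{n-j}{k-\ell},
\]
i.e.\ $N=cJ+(n-k)!\sum_{\ell=1}^{k}u_\ell u_\ell^{T}$ with $J$ the all-ones matrix, $c=\binom{n-1}{k}^{2}(n-1-k)!$, and $u_\ell\in\RR^{n}$ the vector $u_\ell(i)=\binom{i-1}{\ell-1}\binom{n-i}{k-\ell}$. (The endpoints $k=1$ and $k=n$ are degenerate, with $\nu_{(1^{n})}=|W|\,\idem_\trivial$ and $\nu_{(n)}$ the identity, and give the claimed eigenvalue $0$ on $\chi^{(n-1,1)}$ by inspection.)

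Next comes a structural reduction. Each $u_\ell$ is the restriction to $\{1,\dots,n\}$ of a polynomial of degree $k-1$ in $i$; the matrix $(u_\ell(j))_{1\le\ell,j\le k}$ is triangular with nonzero diagonal, so $u_1,\dots,u_k$ are a basis of the space $\mc P_{k-1}$ of such polynomials. Since $\im N\subseteq\mc P_{k-1}$ and $N$ annihilates $\mc P_{k-1}^{\perp}$, the spectrum of $N$ on $\mathbf 1^\perp$ consists of $n-k$ zeros together with the nonzero eigenvalues of $N$ on $\mc P_{k-1}\cap\mathbf 1^\perp$, which (using $\sum_\ell u_\ell=\binom{n-1}{k-1}\mathbf 1$) are controlled by the $k\times k$ Gram matrix $G_{\ell m}=\langle u_\ell,u_m\rangle=\sum_{i=1}^{n}u_\ell(i)u_m(i)$. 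I would then guess and verify: let $p_0=\mathbf 1,p_1,\dots,p_{n-1}$ be the discrete Chebyshev polynomials (orthogonal for the uniform measure on $\{1,\dots,n\}$, $\deg p_r=r$) — exactly Uyemura-Reyes's eigenvectors for $\nu_{(n-1,1)}$ — and claim $Np_r=(n-k)!\binom{n-r-1}{k-r-1}\binom{n+r}{k+r}\,p_r$ for $1\le r\le k-1$, and $Np_r=0$ for $r\ge k$ (immediate, since then $p_r\perp\mc P_{k-1}\supseteq\im N$). As $N$ is symmetric with image in $\mc P_{k-1}$, proving $Np_r=\lambda p_r$ amounts to (a) the orthogonality $\langle Np_r,p_s\rangle=\sum_{\ell}\langle u_\ell,p_r\rangle\langle u_\ell,p_s\rangle=0$ for $0\le s\le k-1$, $s\ne r$, and (b) the evaluation $\lambda=\langle Np_r,p_r\rangle/\|p_r\|^{2}$. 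Both are sums of products of binomial coefficients, and a natural tool is the generating function $\sum_{\ell=1}^{k}u_\ell(i)x^{\ell-1}=[z^{k-1}](1+xz)^{i-1}(1+z)^{n-i}$ combined with the finite geometric identity $\sum_{i=1}^{n}A^{i-1}B^{n-i}=(A^{n}-B^{n})/(A-B)$, which converts $\langle u_\ell,u_m\rangle$ into coefficient extractions from a rational function in two auxiliary variables.

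The hard part is precisely this last step: proving that the Chebyshev basis diagonalizes $G$ (equivalently, the vanishing in (a)) and extracting the closed form in (b). This is a concrete but delicate hypergeometric/Vandermonde identity. A safer route may bypass it: since the family $\{\nu_{(k,1^{n-k})}\}_k$ commutes (\ref{thm:original-family-commutativity}) and $\nu_{(n-1,1)}$ has $n-1$ distinct eigenvalues on the $\chi^{(n-1,1)}$-isotypic component (Uyemura-Reyes), every $\nu_{(k,1^{n-k})}$ is a polynomial in $\nu_{(n-1,1)}$ on that component and therefore already has the $p_r$ as eigenvectors; one then only needs $Np_r=\lambda p_r$ to hold numerically, which can be read off by pairing $Np_r$ with a single well-chosen linear functional (for instance a falling-factorial vector, for which the sum telescopes), and then matching the resulting ratio of binomials with $(n-k)!\binom{n-r-1}{k-r-1}\binom{n+r}{k+r}$.
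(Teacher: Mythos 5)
You should first note that the paper does not prove this statement at all: it appears there as a conjecture, stated to be consistent with Uyemura-Reyes's computation for $k=n-1$ and with data up to $n=9$, and the authors say explicitly that they have not tried to prove it. So there is no proof in the paper to match your argument against; the only question is whether your proposal itself closes the gap, and it does not. Your reductions are sound: the passage via \ref{Fourier-transform-prop} to the single matrix $N=\sum_{w}\ninv_k(w)P_w$ on $\mathbf{1}^\perp\subset\RR^n$, the computation $N=cJ+(n-k)!\sum_{\ell=1}^{k}u_\ell u_\ell^{T}$ with $c=\binom{n-1}{k}^2(n-1-k)!$ and $u_\ell(i)=\binom{i-1}{\ell-1}\binom{n-i}{k-\ell}$, and the observation that $Np_r=0$ for $r\ge k$ are all correct, and your ``safer route'' (commutativity from \ref{thm:original-family-commutativity} plus the distinctness of Uyemura-Reyes's eigenvalues forces the discrete Chebyshev vectors to be simultaneous eigenvectors) is precisely the observation the paper already makes in \ref{subsec:defining-representation}.

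The genuine gap is the step you yourself flag as ``the hard part'': you never verify that the resulting eigenvalue equals $(n-k)!\binom{n-r-1}{k-r-1}\binom{n+r}{k+r}$ for $1\le r\le k-1$. Whether one phrases it as diagonalizing the Gram matrix $G_{\ell m}=\langle u_\ell,u_m\rangle$ in the Chebyshev basis, or as evaluating $\langle Np_r,f\rangle/\langle p_r,f\rangle$ against a well-chosen functional $f$, the content of the conjecture is exactly this closed-form binomial/hypergeometric identity, and asserting that it ``can be read off'' or will ``match'' the conjectured product of binomials is not a proof — no telescoping, no Vandermonde-type evaluation, not even a verification in a nontrivial special case such as $k=2$ is carried out. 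As written, your proposal reduces the conjecture to an explicit finite identity (a useful and correct reduction, and a reasonable route toward a proof), but it does not establish the statement; to complete it you would have to actually evaluate $\sum_{\ell=1}^{k}\langle u_\ell,p_r\rangle\,u_\ell$ (or the corresponding pairing) in closed form and check it against the claimed eigenvalue.
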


\section{Acknowledgements}
  The first author thanks Michelle Wachs for several enlightening e-mail conversations in
  2002 regarding the random-to-top, random-to-random shuffling operators,
  and for her permission to include the results of some of these conversations
  here. He also thanks C.E. Csar for helpful conversations regarding 
  equation \ref{explicit-eigenvalue-for-second-family}.
  
  We thank Alain Lascoux for explaining how to efficiently construct
  irreducible matrix representations of the symmetric group. The resulting
  computations were very helpful throughout the course of this project. The
  computations were performed using the open-source mathematical software
  \texttt{Sage}~\cite{Sage}.

  We thank Paul Renteln for pointing out to us the relevance of his results 
  from \cite{Renteln2011} after this paper appeared on the arxiv. 
  Building on ideas from his paper we were able to add substantial 
  quantitative results in \ref{sec:rank-one}.

  We thank Persi Diaconis for informing us about the content of the
  computational experiments mentioned on pages 152-153 of 
  \cite{UyemuraReyes2002}.

  Part of this research was facilitated by long-term visits of the first
  and second authors to the
  {\emphfont Laboratoire d'informatique Gaspard-Monge} at the {\emphfont
  Universit\'e Paris-Est Marne-la-Vall\'ee}. We sincerely thank them for their
  hospitality.

  We are grateful to Eyke H\"ullermeier for posing a version of
  \ref{qu:HuellermeierQuestion} which arose in Computer Science that initiated
  the research. 

\newpage

\section*{Appendix: $\symm_n$-module decomposition of $\nu_{(k,1^{n-k})}$}

We include here the $\symm_n$-module decomposition of the simultaneous
eigenspaces for the operators $\nu_{(1^{n})}$, $\nu_{(2,1^{n-2})}$, \ldots
$\nu_{(n-1,1)}$ for $6 \leq n \leq 8$. 
See \ref{ss:shaving} for an explanation of the presentation of this data;
and \ref{f:decomposition2-4} and \ref{f:decomposition5} for the decomposition for $2\leq n \leq5$.

\begin{figure}[ht]
\resizebox{0.97\textwidth}{!}{

}
\caption{$\symm_8$-module decomposition for the operators $\nu_{(k,1^{n-k})}$ (continued from \ref{f:decomposition8a} and \ref{f:decomposition8b})}
\label{f:decomposition8c}
\end{figure}

\end{landscape}

\newpage

\newpage
\printnomenclature[2.8cm]

\printindex

\end{document}